\definecolor{LinkColor}{RGB}{145,80,170}
\definecolor{CiteColor}{RGB}{230,125,65}
\definecolor{FileColor}{RGB}{75,170,130}
\theoremstyle{plain}
\newtheorem{thm}{Theorem}[section]
\newtheorem{proposition}[thm]{Proposition}
\newtheorem{lemma}[thm]{Lemma}
\newtheorem{theorem}[thm]{Theorem}
\newtheorem{corollary}[thm]{Corollary}
\newtheorem*{theorem*}{Theorem}
\theoremstyle{plain}
\newtheorem{definition}[thm]{Definition}
\newtheorem{exmp}[thm]{Example}
\newtheorem*{definition*}{Definition}
\newtheorem*{proposition*}{Proposition}
\newtheorem*{lem*}{Lemma}
\theoremstyle{plain}
\theoremstyle{plain}
\newtheorem{rem}[thm]{Remark}
\theoremstyle{plain}
\newtheorem{remark}[thm]{Remark}
\theoremstyle{definition}
\theoremstyle{definition}
\theoremstyle{definition}
\newtheorem{lem}[thm]{Lemma}
\newcommand\inner[2]{\langle #1, #2 \rangle}
\newcommand\im[1]{\text{im } #1}
\newcommand\supp[1]{\text{supp } #1}
\renewcommand{\Im}{\text{Im}}
	\def \scurve{\Sigma_{\phi}}
	\def \snetwork{\mathcal{S}}
	\def \primitive{W}
	\def \canliouvile{\lambda_{re}}
	\def \holliouvile{\lambda}
	\def \slit{\xi}
	\def \alp{d}
	\def \halfradius{\tau}
	\def \tloc{\widetilde{(L,\nabla)}}
	\def \loc{(L,\nabla)}
	\def \injradius{r}
	\def \sheetgap{\rho}
	\def \heightR{R}
	\def \tloc{\widetilde{(L,\nabla)}}
	\def \loc{(L,\nabla)}
	\def \slit{\xi}
	\def \stdm{\triangle_m}
	\def \npunct{m}
\numberwithin{equation}{subsection}
\title{Family Floer theory, non-abelianization, and Spectral Networks}
\author{Yoon Jae Nho}
\date{}
\begin{document}
%\spacing{1.5}
	\maketitle
	\begin{abstract}
		In this paper, we study the relationship between Gaiotto-Moore-Neitzke's non-abelia\\nization map and Floer theory. Given a complete GMN quadratic differential $\phi$ defined on a closed Riemann surface $C$, let $\tilde{C}$ be the complement of the poles of $\phi$. In the case where the spectral curve $\scurve$ is exact with respect to the canonical Liouville form on $T^{\ast}\tilde{C}$, we show that an ``almost flat" $GL(1;\mathbb{C})$-local system $\mathcal{L}$ on $\scurve$ defines a Floer cohomology local system $HF_{\epsilon}(\scurve,\mathcal{L};\mathbb{C})$ on $\tilde{C}$ for $0< \epsilon\leq 1$. Then we show that for small enough $\epsilon$, the non-abelianization of $\mathcal{L}$ is isomorphic to the family Floer cohomology local system $HF_{\epsilon}(\scurve,\mathcal{L};\mathbb{C})$.
	\end{abstract}
	\tableofcontents
%	\section{Thesis Summary}\label{section:thesissummary}
%\input{./section_thesissummary.tex}
\section{Introduction}\label{introduction}
\subsection{Statement of results}
 Let $C$ be a closed Riemann surface, and let $\omega_C$ be the canonical line bundle of $C$. A \textit{quadratic differential} $\phi$ is a meromorphic section of $\omega_C^{\otimes 2}$. We say that a quadratic differential is a \textit{complete GMN quadratic differential} if all of its zeroes are simple, it does not have poles of order $1$ and has at least one pole.

Let $\tilde{C}$ be the complement of the poles of $\phi$, and let $(T^{\ast}_{\mathbb{C}})^{1,0}\tilde{C}$ denote the total space of $\omega_{\tilde{C}}$. The complete GMN 
quadratic differential $\phi$ defines a smooth embedded algebraic subvariety $\scurve$ of $(T^{\ast}_{\mathbb{C}})^{1,0}\tilde{C}$ 
called the \textit{spectral curve associated to $\phi$}. It becomes a simple branched double covering of $\tilde{C}$ by restricting the projection map $\pi:(T^{\ast}_{\mathbb{C}})^{1,0}\tilde{C}\to \tilde{C}$ to $\scurve$. This curve is defined using the canonical \textit{holomorphic Liouville form} $\holliouvile=p^z dz$ via
\begin{align}
	\scurve:=\{\holliouvile^2-\pi^{\ast}\phi=0\}\subset  {T^{1,0}_{\mathbb{C}}}^{\ast}\tilde{C}.
\end{align}
Here, $p^z$ is the complex fibre coordinate, and $z$ is the complex base coordinate \footnote{For the definition, see \cref{introquadss}}. 

We have the canonical \textit{holomorphic symplectic form} $\Omega$ on $(T^{\ast}_{\mathbb{C}})^{1,0}\tilde{C}$ defined by $\Omega:=d\holliouvile$. The spectral curve $\scurve$ is a \textit{holomorphic Lagrangian submanifold} in the sense that it is a holomorphic submanifold of $(T^{\ast}_{\mathbb{C}})^{1,0}\tilde{C}$ and the holomorphic symplectic form $\Omega$ vanishes on $\scurve$. There is also a diffeomorphism between $(T^{\ast}_{\mathbb{C}})^{1,0}\tilde{C}$ and the real cotangent bundle $T^{\ast}\tilde{C}$, sending the real part of the holomorphic Liouville form to the canonical real Liouville form $\canliouvile=\sum p^i dq_i$. Then the spectral curve $\scurve$ becomes an $\omega$-Lagrangian submanifold of $T^{\ast}\tilde{C}$ under this identification, where $\omega=d\canliouvile$.

Suppose now that the spectral curve $\scurve$ is exact with respect to $\canliouvile$, then so is $\epsilon\scurve$ for any $\epsilon\in \mathbb{R}_{>0}$. Such quadratic differentials cut out a totally real submanifold of the space of quadratic differentials of fixed pole type. Now let $C^{\circ}$ be the complement of the zeroes and poles of $\phi$, and $\scurve^{\circ}=\pi^{-1}(C^{\circ})$. Following \cite[Section 4.2]{SpectralNetworkFenchelNielsen}, we say that a rank $1$ local system $\mathcal{L}$ over $\scurve^{\circ}$ is \textit{almost flat} if the monodromy along a small loop around any of the ramification points in $\pi^{-1}(zero(\phi))$ is $-Id$.  Let $\mathfrak{s}$ be a spin structure on $C$, then $\mathfrak{s}$ induces a continuous family of spin structures $\mathfrak{f}_z$ on the cotangent fibres $F_z$, for $z\in C$.  Given an almost flat $GL(1;\mathbb{Z})$-local system $\mathcal{B}$, the spin structure $\tilde{\mathfrak{s}}=\mathfrak{s}\otimes \mathcal{B}$ on $\scurve^{\circ}$ extends to a global spin structure on $\scurve$, which we still denote as $\tilde{\mathfrak{s}}$.  Furthermore, given an almost flat $GL(1;\mathbb{C})$-local system $\mathcal{L}$, $\mathcal{L}\otimes \mathcal{B}$ extends to a $GL(1;\mathbb{C})$-local system on $\scurve$.

We show that together with an almost flat $GL(1;\mathbb{C})$-local system $\mathcal{L}$ on $\scurve$, spin structures $\tilde{\mathfrak{s}}$ and $\mathfrak{f}_z$, and a choice of $\mathcal{B}$, we can define the family Floer cohomology local system
\begin{align}\label{localsystemflat}
	HF_\epsilon(\scurve,\mathcal{L},\mathfrak{s},\mathcal{B};\mathbb{C}): z\mapsto HF(\epsilon\scurve,F_{z},\mathfrak{\tilde{s}},\mathfrak{f}_z,\mathcal{L}\otimes \mathcal{B};\mathbb{C}),
\end{align}
for any $\epsilon\in \mathbb{R}_{>0}$. Here we are taking Floer cohomology over $\mathbb{C}$ twisted by the $GL(1;\mathbb{C})$-local system $\mathcal{L}\otimes \mathcal{B}$. It turns out that \eqref{localsystemflat} is concentrated in the zeroth degree, is free and has rank $2$. 

In \cite{GNMSN,WallcrossingHitchinSystemWKBapproximation}, Gaiotto, Moore and Neitzke constructed the non-abelianization map which sends an almost flat $GL(1;\mathbb{C})$-local system on $\scurve^{\circ}$ to a $GL(2;\mathbb{C})$-local system on $\tilde{C}$. The main theorem of this paper is that for small enough $\epsilon$, $HF_{\epsilon}(\scurve,\mathcal{L},\mathfrak{s},\mathcal{B};\mathbb{C})$ and the non-abelianization of $\mathcal{L}$ are isomorphic. 

\begin{theorem} Suppose $\scurve$ is exact with respect to the real Liouville form $\canliouvile$. Let $\mathfrak{s}$ be a global spin structure on $C$, and let $\mathcal{L}$ be an almost flat $GL(1;\mathbb{C})$-local system on the spectral curve. Let $\mathcal{B}$ be an almost flat $GL(1;\mathbb{Z})$-local system, and extend $\tilde{\mathfrak{s}}=\pi^{\ast}\mathfrak{s}\otimes \mathcal{B}$ and $\mathcal{L}\otimes \mathcal{B}$ to $\scurve$. 
	
	There exists an $\epsilon_0(\delta;E)>0$ such that for $0<\epsilon<\epsilon_0$, the Floer cohomology local system 
	\[	HF_\epsilon(\scurve,\mathcal{L},\mathfrak{s},\mathcal{B};\mathbb{C}): z\mapsto HF(\epsilon\scurve,F_{z},\mathfrak{\tilde{s}},\mathfrak{f}_z,\mathcal{L}\otimes \mathcal{B};\mathbb{C})\]	is isomorphic to the non-abelianization of $\mathcal{L}$.\label{maintheoremimprecise} 
\end{theorem}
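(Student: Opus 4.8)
The plan is to compare the two rank-$2$ local systems over the stratification of $\tilde C$ cut out by the spectral network $\snetwork(\phi)$: show that each restricts, over a cell of the complement of the walls, to the pushforward local system $\pi_{\ast}(\mathcal L\otimes\mathcal B)$, and then match the gluing data across the walls and the monodromy around the branch points. \emph{Local model over a cell.} For $z$ in such a cell the fibre $F_z$ meets $\epsilon\scurve$ transversally in two points $z^{+},z^{-}$, one on each local sheet of $\pi$. I would first re-derive, from the grading (both generators lie in degree $0$) together with an action estimate using the primitive of $\canliouvile|_{\epsilon\scurve}$, that $CF(\epsilon\scurve,F_z)=\mathcal L_{z^{+}}\oplus\mathcal L_{z^{-}}$ with vanishing differential, and that the continuation maps associated to paths remaining inside a cell are the diagonal parallel transports weighted by the holonomy of $\mathcal L\otimes\mathcal B$. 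The mechanism is an adiabatic limit: as $\epsilon\to0$, holomorphic strips in $T^{\ast}\tilde C$ with boundary on $\epsilon\scurve$ and $F_z$ concentrate onto WKB trajectories of $\phi$ on $\tilde C$ joining the two sheets, the action of a strip being the difference of the two values of the primitive and hence governed by the WKB integral $\int\sqrt{\phi}$; a cell contains no such trajectory short enough to be counted, so only the trivial strips near a single sheet survive. This identifies $HF_\epsilon(\scurve,\mathcal L,\mathfrak s,\mathcal B;\mathbb C)$ over each cell with $\pi_{\ast}(\mathcal L\otimes\mathcal B)$, which is exactly the sheet data underlying the non-abelianization.

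\emph{Wall-crossing and branch points.} When $z$ crosses a wall of $\snetwork(\phi)$ of type $ij$, exactly one new rigid WKB trajectory from sheet $i$ to sheet $j$ appears, so by the adiabatic-limit correspondence the continuation map acquires, for $\epsilon$ small, a single off-diagonal contribution and becomes unipotent and triangular; I would compute its coefficient and verify that it equals the signed holonomy of $\mathcal L\otimes\mathcal B$ along the lift of the trajectory to $\scurve$, i.e. the Gaiotto--Moore--Neitzke Stokes factor $\mathcal S_{ij}$. Comparing the Floer orientation data determined by $\tilde{\mathfrak s}$ on $\scurve$ and $\mathfrak f_z$ on $F_z$ with the conventions of the detour rule is what upgrades this from a match up to $\pm1$ to an equality. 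Near a zero of $\phi$ the points $z^{\pm}$ collide and the cell description degenerates; there I would insert a Lefschetz-type local model ($w^{2}=z$, a thimble over a small disk), compute the monodromy of the Floer local system around the branch point directly, and check that it agrees with the local non-abelianization monodromy, namely the ordered product of the three Stokes factors attached to the walls emanating from the zero, the $-\mathrm{Id}$ monodromy forced by almost-flatness of $\mathcal L$ being exactly compensated by the twist $\mathcal B$, so that both local systems extend across a punctured neighbourhood of the branch locus in the same way. Nothing extra is required near the poles, since $\tilde C$ omits them.

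\emph{Globalization and the main difficulty.} The previous steps provide, over each cell, a canonical isomorphism between $HF_\epsilon(\scurve,\mathcal L,\mathfrak s,\mathcal B;\mathbb C)$ and the non-abelianized local system, intertwining the two wall-crossing maps and compatible around the branch points; patching these gives the asserted global isomorphism on $\tilde C$, and tracking where the smallness of $\epsilon$ entered produces the explicit threshold $\epsilon_0(\delta;E)$. The main obstacle is the adiabatic-limit analysis underpinning the first two steps: establishing a signed bijection, uniform in $z$ and in particular as $z$ approaches a branch point where $z^{+}$ and $z^{-}$ merge, between rigid Floer strips in $T^{\ast}\tilde C$ and the WKB trajectories of the spectral network on $\tilde C$. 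This requires Gromov compactness together with a priori $C^{0}$ and energy estimates forcing the strips to concentrate on the base, a gluing construction producing one strip from each trajectory, and the careful comparison of Floer orientations with the spin data $\tilde{\mathfrak s},\mathfrak f_z$. Exactness of $\scurve$ with respect to $\canliouvile$ is used throughout to bound energies and exclude disk and sphere bubbling, and working over $\mathbb C$ guarantees that the resulting local system is free of rank $2$.
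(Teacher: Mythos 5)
Your chamber-level step is essentially the paper's: diagonality of the transports inside each chamber is obtained from the non-existence of $\epsilon$-BPS strips (\cref{maintheorem}, implemented Floer-theoretically in Proposition \ref{No-Go moving}), and the identification of the diagonal entries with the holonomy of $\mathcal{L}\otimes\mathcal{B}$ is a sign/spin bookkeeping exercise. The genuine gap is in your wall-crossing and branch-point steps. You propose to determine the off-diagonal entry by establishing a signed bijection between rigid continuation strips across a wall and WKB trajectories (a gluing theorem producing one strip per trajectory, with orientations compared to the detour-rule conventions), uniformly in $z$ and in particular as $z$ approaches a branch point, and separately to compute the Floer monodromy around a zero from a Lefschetz local model. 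You yourself flag this signed bijection as the main obstacle, and it is never supplied; it is also substantially harder than anything the theorem needs, since the adiabatic limit degenerates precisely at the branch points (the paper's entire analysis is set up to stay outside $U((2+\eta)\delta)$, and the path-groupoid base points are chosen away from the zeroes), so "uniform up to the branch locus" gluing is exactly what is being avoided.

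The point you are missing is that no disc count and no gluing are required. First, by Definition \ref{NonAb} a $\mathcal{W}$-pair only demands that the transport along a short path have the form $Id+\mu_w$ for \emph{some} morphism $\mu_w$ as in \eqref{wallcrossingterm}; you do not need to verify that $\mu_w$ equals a prescribed Stokes factor, because (GMN's observation, used in the paper) homotopy invariance together with $\mathcal{L}$ determines the $\mu_w$ uniquely. Second, upper-triangularity of the short-path transport follows from pure energy positivity: the primitive $\primitive$ satisfies $\primitive(z^{+})>\primitive(z^{-})$ off $\snetwork(\pi/2)$, so strips from $z^{+}$ to $z^{-}$ would have negative area (Corollary \ref{uppertriangular}, Proposition \ref{proposition:shortpathspassive}), while the diagonal entries are forced by zero-energy constancy. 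Third, the actual value of $\mu_w$, and with it the behaviour at the branch points, is then extracted algebraically: writing the identity for the contractible loop around a zero as the ordered product of the three wall transports and the three diagonal chamber transports yields closed formulas for $\mu(w_0),\mu(w_{\pm1})$ in terms of $\Phi^{\mathcal{L}}$ (Proposition \ref{uniquedetermination}). This replaces both your gluing bijection and your Lefschetz-model monodromy computation; the $-Id$ almost-flatness monodromy enters only through these formulas and through the spin twist by $\mathcal{B}$, not through any local analysis at the ramification points. Without either this algebraic shortcut or a complete gluing theorem, your proposal does not close.
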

In particular, the isomorphism class of the local system does not depend on the choice of $\mathfrak{s}$ and $\mathcal{B}$ and so we write $HF_\epsilon(\scurve,\mathcal{L};\mathbb{C})$ instead. 

The key analytic theorem needed to show \cref{maintheoremimprecise} is the following adiabatic degeneration type theorem.  There exists a natural flat metric $g^{\phi}$ on $C^{\circ}$ given by $g^{\phi}=\abs{\phi(z)}\abs{dz}$. As we will see in \cref{{introquadss}}, $g^{\phi}$ gives rises to a codimension $1$ network $\snetwork(0)$ inside $\tilde{C}$ called the \textit{spectral network}. Given some small parameter $\delta>0$, we find a suitable K\"{a}hler metric $g^{\phi}_{\delta}$ over $\tilde{C}$ which agrees with $g^{\phi}$ outside a small neighbourhood of the zeroes of $\phi$. Given $\phi$ such that $\scurve$ is exact, and a large energy cut-off $E\gg 0$, we construct some bounded open subdomain $C(\delta;E)$ of $C$ which is a deformation retract of $\tilde{C}-\snetwork(0)$, such that $g^{\phi}_{\delta}=g^{\phi}$ on $C(\delta;E)$. 

The metric $g^{\phi}_{\delta}$ gives rise to an induced almost complex structure $J=J(\delta)$ on $T^{\ast}\tilde{C}$ called the Sasaki almost complex structure (see \cref{sasakialmostcomplexstructuredefini}). For $z\in C^{\circ}$, and $0<\epsilon\leq 1$, a $J$-holomorphic strip $u$ bounded between $\epsilon\scurve$ and $F_z$, that travels between \textbf{distinct} lifts of $z$ on $\scurve$, is called an \textit{$\epsilon$-BPS disc} ending at $z$. The main analytic theorem of the paper is the following \textit{non-existence} result for $\epsilon$-BPS discs ending at $z$ for small enough $\epsilon>0$ and $z\in C(\delta;E)$.

%For $0<t\leq 1$, , the cotangent fibre at $z$ for $z\in C(\delta;E)$ %\textit{$J$-holomorphic disc bounded between $F_z$ and $\epsilon\scurve$}. 

%$J$-holomorphic discs bounded between $F_z$ and $\epsilon\scurve$ 

\begin{theorem}\label{maintheorem}
	Given $E\gg 0$, $\delta\ll 1$, there exists a metric $g^{\phi}_{\delta}$ on $\tilde{C}$, a deformation retract $C(\delta;E)$ of $\tilde{C}-\snetwork(0)$ over which $g^{\phi}_{\delta}=g^{\phi}$ such that the following holds.
	\begin{itemize}\item[] Let $J$ be the Sasaki almost complex structure associated to $g^{\phi}_{\delta}$. Then there exists a scaling parameter $\epsilon_0=\epsilon_0(\delta;E)>0$ such that for $0<\epsilon\leq \epsilon_0$, there are \textit{no} non-constant $J$-holomorphic strips bounded between $F_z$ and $\epsilon\scurve$ for $z\in C(\delta;E)$. 
	\end{itemize}
\end{theorem}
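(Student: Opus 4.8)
The plan is to argue by contradiction, extracting from a hypothetical sequence of strips a finite trajectory of $\phi$ lying inside a chamber of $\tilde C-\snetwork(0)$, which cannot exist. The metric $g^\phi_\delta$ is, as indicated above, a smooth K\"ahler metric (so that the Sasaki $J$ is defined) agreeing with the flat metric $g^\phi$ outside fixed $\delta$-neighbourhoods of the zeroes of $\phi$, and $C(\delta;E)$ is a compact core of a union of chambers of $\tilde C-\snetwork(0)$, shaved so as to stay $g^\phi$-distance at least a positive constant (depending on $E$) away from both the zeroes of $\phi$ and the walls of $\snetwork(0)$.

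\emph{Step 1: reduction to BPS discs and an energy bound.} Since $\canliouvile$ restricts to zero on the cotangent fibre $F_z$ and $\epsilon\scurve$ is exact with primitive $\epsilon f$, where $f$ is a primitive of $\canliouvile$ on $\scurve$, Stokes' theorem gives, for any finite-energy strip $u$ running between intersection points $x_\pm\in F_z\cap\epsilon\scurve$ with lifts $\tilde x_\pm\in\scurve$,
\[\int u^*\omega\;=\;\epsilon\bigl(f(\tilde x_+)-f(\tilde x_-)\bigr),\]
which is $O(\epsilon)$ uniformly for $z\in\overline{C(\delta;E)}$, where $f$ is bounded. Since $J$ is $\omega$-compatible, $\int u^*\omega\ge 0$ with equality only for constant $u$; and since $F_z\cap\epsilon\scurve$ consists of exactly the two lifts of $z$ (as $z\in C^\circ$), a strip running from a lift to itself has zero area, hence is constant. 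So every non-constant strip is an $\epsilon$-BPS disc, running between the two \emph{distinct} lifts $\tilde x_1(z)\neq\tilde x_2(z)$, and the theorem is equivalent to the non-existence of $\epsilon$-BPS discs ending at points of $C(\delta;E)$. Moreover $f(\tilde x_1(z))\neq f(\tilde x_2(z))$ whenever $z\notin\snetwork(0)$ --- this is one characterisation of the spectral network --- so a BPS disc ending at $z\in C(\delta;E)$ has area $\epsilon\,\kappa(z)$, where $\kappa(z):=\abs{f(\tilde x_1(z))-f(\tilde x_2(z))}$ is bounded above and below by positive constants over $\overline{C(\delta;E)}$.

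\emph{Step 2: adiabatic rescaling and compactness.} Suppose, for contradiction, that such discs $u_n$ exist for points $z_n\in C(\delta;E)$ and scales $\epsilon_n\to 0$. The fibrewise dilation $\Phi_\epsilon\colon(q,p)\mapsto(q,\epsilon p)$ carries $\epsilon\scurve$ back to $\scurve$, fixes each fibre $F_z$, and satisfies $\Phi_\epsilon^*\omega=\epsilon\omega$; hence the rescaled maps $v_n:=\Phi_{\epsilon_n}^{-1}\circ u_n$ have boundary on the \emph{fixed} Lagrangians $F_{z_n}$ and $\scurve$, area exactly $\kappa(z_n)$, and solve the Cauchy--Riemann equation for the adiabatically degenerating family $J_{\epsilon_n}:=\Phi_{\epsilon_n}^*J$. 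One then runs an adiabatic-degeneration argument in the spirit of Fukaya--Oh, adapted to the fact that $g^\phi_\delta$ is flat away from, and degenerates near, the zeroes of $\phi$: holomorphic sphere and disc bubbles are excluded by exactness of $\omega$, $F_z$ and $\scurve$; escape to infinity in the fibres, or towards the poles, is excluded by the $J$-convexity of $T^*\tilde C$ at infinity together with the area bound; and since $F_{z_\infty}\cap\scurve$ has only two points, a broken limit has finitely many pieces, each of the fixed positive area $\kappa(z_\infty)$. After passing to a subsequence one obtains a limiting configuration whose base projection to $\tilde C$ is a non-constant finite trajectory $\gamma$ of $\phi$ through $z_\infty\in\overline{C(\delta;E)}$, whose lift to $\scurve$ runs from $\tilde x_1(z_\infty)$ to $\tilde x_2(z_\infty)$.

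\emph{Step 3: the trajectory meets a zero, conclusion, and the main obstacle.} Over the locus where $g^\phi_\delta=g^\phi$, the flat coordinate $w=\int\sqrt\phi$ trivialises $T^*\tilde C$ as $\CC_w\times\CC_\eta$ so that $\scurve$ restricts to the \emph{disjoint} union of the two sheets $\{\eta=\pm 1\}$; since the lift of $\gamma$ runs between the two distinct sheets over $z_\infty$, $\gamma$ cannot remain in this flat locus, so it reaches a zero $p^*$ of $\phi$. A model computation at the simple branch point --- carried out in the fixed local K\"ahler model determined by $g^\phi_\delta$, after a further rescaling concentrating at $p^*$ --- shows that the two lifted branches of $\gamma$ meet there to form the standard branch-point teardrop. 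Hence $z_\infty$ lies on a finite trajectory emanating from $p^*$, i.e.\ $z_\infty\in\snetwork(0)$, contradicting $z_\infty\in\overline{C(\delta;E)}\subset\tilde C-\snetwork(0)$; taking $\epsilon_0(\delta;E)$ to be a positive lower bound for $\epsilon_n$ along any such hypothetical sequence then completes the proof. The main obstacle is Step 2: running the adiabatic compactness uniformly while the almost complex structure $J_{\epsilon_n}$ degenerates near the zeroes of $\phi$, which requires a genuinely two-scale analysis --- a uniform ``outer'' estimate over the flat region together with an ``inner'' model analysis at each branch point --- both to exclude any limiting configuration other than a finite trajectory and to control the dependence of $\epsilon_0$ on $\delta$ and on the energy cut-off $E$, the latter bounding the length of the trajectories that must be considered and hence how far $C(\delta;E)$ must be shaved away from the walls and zeroes.
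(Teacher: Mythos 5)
Your overall strategy is the same as the paper's: argue by contradiction, pass to the adiabatic limit $\epsilon\to 0$, identify the limit with a horizontal ($\phi$-)trajectory through the limit point, and use the fact that over $C(\delta;E)$ such trajectories avoid the zeroes of $\phi$, so the two sheets of the spectral curve can never be exchanged and the boundary cannot close up. But as a proof there is a genuine gap, and you have located it yourself: Step 2. The statement that the rescaled strips converge (after a subsequence) to a finite trajectory of $\phi$ is precisely what the paper spends the bulk of Section 5 establishing, and it is not a routine Fukaya--Oh compactness argument: one needs the a priori energy bound $2E\epsilon$, a truncated reverse isoperimetric inequality bounding the length of the boundary on $\epsilon\scurve$ away from the branch-point region (Lemma \ref{lengthestimate}), Ekholm-type $O(\epsilon)$ gradient estimates, the subdivision of the domain into the regions $D_0(\epsilon)$, $D_1(\epsilon)$, $W_0(\epsilon)$, and then the convergence statements for flowline, vertex and $0$-special components together with the rooted-tree combinatorics that forces the whole configuration to stay in a neighbourhood of the leaf through $z$. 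Moreover the limit is in general a Morse flow \emph{tree}, not a single trajectory; ruling out (or organising) vertices and the pieces mapped into $T^{\ast}U((2+\eta)\delta)$ is exactly what the tree argument in the paper does, and your sketch simply asserts the conclusion.

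There is also a concrete error that undermines the way you set up Step 2. You claim that $f(\tilde x_1(z))\neq f(\tilde x_2(z))$ characterises $z\notin\snetwork(0)$ and hence that $\kappa(z)=\lvert f(\tilde x_1(z))-f(\tilde x_2(z))\rvert$ is bounded below by a positive constant on $\overline{C(\delta;E)}$. This is false: $f$ is a primitive of the \emph{real} Liouville form, and by Lemma \ref{horizontallengthcontrol} its difference between the two lifts equals twice the horizontal distance to a zero, so it vanishes exactly on $\snetwork(\pi/2)$ (it is the \emph{imaginary} part of the holomorphic primitive that vanishes on $\snetwork(0)$). Since for a real-exact $\phi$ the saddle trajectories are vertical and pass through the middle of each horizontal chamber, $\snetwork(\pi/2)$ meets $C(\delta;E)$, so no uniform positive lower bound for $\kappa$ exists there. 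Where $\kappa(z)=0$ non-existence is immediate from positivity of area, but for sequences $z_n$ approaching $\snetwork(\pi/2)$ the rescaled area $\kappa(z_n)$ tends to $0$, so your mechanism for guaranteeing a \emph{non-constant} limiting trajectory (and finitely many breakings, each of area $\kappa(z_\infty)$) breaks down, and Step 3 has nothing to work with. The paper never needs a lower energy bound; it only uses the upper bound $2E\epsilon$ and handles all degenerate behaviour through the gradient estimates and the domain subdivision.
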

The main motivation behind the proof of \cref{maintheorem} comes from the following general expectation. Suppose we have a sequence $\epsilon=\epsilon_n$ convering to zero, points $z_{\epsilon}$ converging to some $z\in\tilde{C}$ and $\epsilon$-BPS discs $u_{\epsilon}$ ending at $z_{\epsilon}$. Regarding $\epsilon \scurve$ as an exact multi-graph, we expect the sequence $u_\epsilon$ to degenerate to sets of solutions of Morse-like local differential equations on $\tilde{C}$, after possibly passing to a subsequence. The resulting set of solutions on $\tilde{C}$ is called the \textit{adiabatic degeneration} of the sequence $u_\epsilon$. The quadratic differential gives rise to a \textit{singular foliation} called the horizontal foliation. This horizontal foliation is obtained from the kernel of the $1$-form $\Im(\sqrt{\phi}dz)$. One can check that the leaves are up to re-parametrization solutions of the $g^{\phi}$-geodesic equation. However, in our case, the solutions of the resulting Morse-like local differential equation lie on the leaves of the horizontal foliation. Now, we can choose the domain $C(\delta;E)$ to be such that the leaves of the horizontal foliation passing through a point in $C(\delta;E)$ never enter some neighbourhood of the branch points. 

Using these observations, in \cref{ProofSubsect}, we modify Ekholm's Morse flow tree techniques \cite[Section 2-5]{Morseflowtree} to show that after passing to a subsequence, $u_\epsilon$ maps arbitrarily close to the leaf $\gamma$ of the horizontal foliation  passing through some $z\in C(\delta;E)$ for small enough $\epsilon$. By construction, we can find a small neighbourhood of $\gamma$ contained in $C^{\circ}$ over which the metric $g^{\phi}_{\delta}$ agrees with $g^{\phi}$. We show that such discs cannot exist under the finite energy assumption and prove the the main analytic theorem. 
\subsection{Set-up} \label{Introsetup}

The set-up of the paper is as follows. In \cref{introquadnon-Ab}, we review the theory of quadratic differentials and non-abelianization. In  \cref{Floer Theory on Open Manifolds Sect 1}, we introduce and gather the necessary ingredients from pseudo-holomorphic curve theory (namely, monotonicity) to establish the Floer theoretic set-up that we will use throughout the paper. In  \cref{Spectral Curves}, we discuss the geometry of $\phi$-metrics and the wall-chamber decomposition induced by $\snetwork(0)$. We will then study conditions under which the spectral curve is real exact and find particular deformation retracts of $C-\snetwork(0)$ called $C(\delta;E)$ with the properties described in \cref{introduction}. In \cref{Adia Degen}, we will adapt the adiabatic degeneration techniques as in \cite[Section 5]{Morseflowtree} to prove \cref{maintheorem}. Finally, in \cref{Wall-Crossing Analysis Section}, we will use the Gromov compactness argument to show that the local system is a non-abelianization up to signs. We will then compute the signs and prove \cref{FullNon-Abelianization}. 
\subsection{Conventions}\label{sec:convention}
We use the following conventions:
\begin{itemize}
	\item The canonical symplectic form on the cotangent bundle is $\textbf{dp}\wedge \textbf{\text{dq}}$.
	\item The Hamiltonian vector field associated to a smooth function $H$ on $T^{\ast}M$ is defined by
	\[i_{X_H}\omega=-dH.\]
	\item All the holomorphic polygons are given anticlockwise boundary orientations regarded as the unit disc with punctures on the boundary in $\mathbb{C}$. 
	\item When we take the identification $\mathbb{C}^n\simeq T^{\ast}\mathbb{R}^n$ as a target symplectic manifold, we take the induced ``standard complex structure" on $\mathbb{C}^n$ to be given by $z_k=x_k-iy_k,k=1,...,n$.
	\item When we regard $\mathbb{C}$ as a Riemann surface or a conformal domain, we take the complex structure given by $z=x+iy$. 
	\item The contact form on the jet bundle is given by $dz-\textbf{p}\textbf{dq}$. 
	\item $W^{k,p}$ denotes the $(k,p)$-Sobolev Space. 
	\item Given a topological metric space $(X,d)$, a subset $N\subset X$, and $x\in X$, we set $d(N,x)$ to be the distance between $N$ and $x$. Given $l>0$, we set $B_{l}(N)$ to be the set of points $x$ in $X$ with $d(x,N)<l$. 
	\item Given a complete Riemannian manifold $(M,g)$ and $x,y\in M$, we consider the induced topological metric $d$ on $M$, and we define $d(N,x)$ and $B_l(N)$ accordingly.
	\item Given a complete Riemannian manifold $(M,g)$, we set
	\[r:T^{\ast}M\to \mathbb{R}, r(q,p)=\abs{p}.\]
	Here, the norm of the covector $p$ is taken with respect to $g$. Then we set
	\begin{align}
		D_l^{\ast}M&=\{(q,p)\in T^{\ast}M: r(q,p)<l\}\\
		S_l^{\ast}M&=\{(q,p)\in T^{\ast}M, r(q,p)=l\}.
	\end{align}
	\item For $l>0$, we set:
	\begin{align}
		A_l&=\{z\in \mathbb{C}:\abs{z}<l\}\\
		\partial A_l&=\{z\in \mathbb{C}:\abs{z}=l\}\\
		E_l&=\{z\in \mathbb{C}:\abs{z}<l, Im(z)\geq 0\}\\
		\partial E_l&= \{z\in \mathbb{C}:\abs{z}<l, Im(z)= 0\}.
	\end{align}
	\item Given a real positive function $a(x)$ defined on some subset $I$ of $\mathbb{R}_x$ and $\alpha\in \mathbb{R}_{>0}$, we say that $a(x)$ is of size  $O(x^{\alpha})$ if there exists some $C>0$ such that
	\[a(x)<Cx^{\alpha}\]
	for all $x\in I$. 
	\item We adopt the convention that the infinite strip $\mathcal{Z}=(-\infty,\infty)\times [0,1]$ is given the  conformal coordinate $z=s+it$. 
\end{itemize}
\section{Quadratic differentials}\label{introquadnon-Ab}

In this section, we review the theory of quadratic differentials and GMN non-abelianization. In \cref{introquadss}, we review the theory of quadratic differentials following \cite{bridgeland2014quadratic}. In \cref{introNonAb}, we review the GMN non-abelianization map. In \cref{Proofguidess}, we roughly explain how to arrive at \cref{maintheoremimprecise} using \cref{maintheorem}. 

\subsection{Quadratic differentials}\label{introquadss}

We describe the local structure of zeroes and poles, the spectral curve, and the induced singular flat metric on the base. We follow the expositions from \cite{quaddiff} and \cite{bridgeland2014quadratic}. Again, let $C$ be a closed Riemann surface, and let $\omega_{C}$ be the canonical line bundle. Then  

\begin{definition}
	A \textbf{quadratic differential} is a meromorphic section of $\omega_C^{\otimes 2}$. Equivalently, a quadratic differential is a collection of open conformal charts $(U_{\mu},z_{\mu})$ where $z_{\mu}:U_{\mu}\to \mathbb{C}$ is a biholomorphism onto its image, together with a collection of meromorphic functions $\phi_{\mu}$ on $U_{\mu}$ such that
	\begin{align}
		\phi_{\mu'}=\phi_{\mu} \left(\frac{dz_{\mu}}{dz_{\mu'}}\right)^2 \text{ on } U_{\mu}\cap U_{\mu'}. \label{transformationrule}
	\end{align}
	We then locally write $\phi=\phi_{\mu}dz_{\mu}^2$
	on $U_{\mu}$.
\end{definition}
A zero or a pole of $\phi$ is called a \textit{critical point}. 
We say that a critical point is \textit{finite} if it is either a simple pole or a zero of $\phi$. Otherwise, we say a critical point is an \textit{infinite} critical point. The critical points of $\phi$ have the following local structure. For details, see \cite[Section 6]{quaddiff}. The following is Theorems 6.1-6.4 in \cite{quaddiff} combined.

\begin{proposition}\label{zeroofphiprop}
	Let $b$ be either a finite critical point of $\phi$ or a pole of an odd order. Let $n$ be the exponent of $b$. Then there exists a neighbourhood $U_b$ of $b$, an open set $D$ of $\mathbb{C}$ containing zero, and a biholomorphism $\xi=\xi_b:(D,0)\to (U_b,b)$ such that
	\begin{align}\label{finoddform}
		\phi(\xi)d\xi^2=\Big(\frac{n+2}{2}\Big)^2 \xi^n d\xi^2.\end{align} 
	Furthermore, the germ of the biholomorphism is unique up to a factor of some $c=\exp(\frac{k}{n+2}(2\pi i))$ for $k=0,1,2,..,n+1$. 
	In particular, for $n=1$, we get
	\begin{align}\label{zeroofphilocalform}
		\phi(\xi)d\xi^2=\Big(\frac{3}{2}\Big)^2 \xi d\xi^2.
	\end{align} \\
	Let $b$ be a pole of order $2$. Then there exists a local conformal parameter $\xi$ which is unique up to a factor of a constant $a_{-2}\in \mathbb{C}$ such that
	\begin{align}\label{order2polelocalform}
		\phi(\xi)d\xi^2=a_{-2}\xi^{-2} d\xi^2.\\
	\end{align}
	Let $b$ be a pole of $\phi$ with even order $n=2m\geq 4$. Then there exists a local conformal parameter $\xi$ and a constant $r\in \mathbb{C}$ such that 
	\begin{align}\label{orderevenpolelocalform}
		\phi(\xi)d\xi^2=\Big({\frac{1}{2}(2-n)}\xi^{-m}+r\xi^{-1}\Big)^2 d\xi^2.
	\end{align}
\end{proposition}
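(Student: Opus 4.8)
The plan is to produce, in each case, a \emph{distinguished local coordinate} $\xi$ centred at $b$ in which a primitive of $\sqrt{\phi}$ takes an elementary form; the desired normal form then follows simply by squaring. Translating so that $b$ corresponds to $0$, write $\phi = \phi_\mu(z)\,dz^2$ with $\phi_\mu(z) = z^n k(z)$, $k(0)\neq 0$; since $k$ has a holomorphic square root near $0$ we may fix a branch $\sqrt{\phi_\mu(z)} = z^{n/2}\psi(z)$, $\psi(0)\neq 0$. The point is that $\phi = (dH)^2$ where $H(z) = \int_0^z \sqrt{\phi_\mu(\zeta)}\,d\zeta$ (suitably interpreted when the integrand is not integrable at $0$), so everything reduces to understanding $H$.

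First, the case of a finite critical point or odd-order pole, i.e. $n\neq -2$. Here one wants $\xi$ with $\xi^{(n+2)/2} = \pm H(z)$, because then $\phi = \big(d(\xi^{(n+2)/2})\big)^2 = \big(\tfrac{n+2}{2}\big)^2 \xi^n\,d\xi^2$. When $n$ is even this is immediate: $H(z) = c\,z^{(n+2)/2}(1 + O(z))$ with $c\neq 0$ and $(n+2)/2$ a positive integer, so $\xi := c^{2/(n+2)} z\,(1 + O(z))^{2/(n+2)}$ is holomorphic with nonvanishing derivative at $0$ and solves $\xi^{(n+2)/2} = H$. When $n$ is odd I would pass to the double cover $z = t^2$: the form $\sqrt{\phi_\mu(\zeta)}\,d\zeta$ pulls back to $2\tau^{n+1}\cdot(\text{holomorphic in }\tau^2)\,d\tau$, single-valued because $n+1$ is even, and integrating shows $H = z^{(n+2)/2} h_0(z)$ with $h_0$ holomorphic and $h_0(0)\neq 0$; then $\xi := z\,h_0(z)^{2/(n+2)}$ does the job. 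Uniqueness follows because two solutions satisfy $\xi_1^{(n+2)/2} = \pm\,\xi_2^{(n+2)/2}$ (the additive constant vanishes at $b$), forcing $\xi_1 = \zeta\,\xi_2$ with $\zeta^{n+2}=1$, which is precisely the stated ambiguity.

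For a pole of order $2$ ($n=-2$) and for poles of even order $n=2m\geq 4$, the primitive $H$ acquires a logarithm, and for $m\geq 2$ also a principal part: expanding $\sqrt{\phi_\mu(z)} = z^{-m}(b_0 + b_1 z + \cdots)$ with $b_0\neq 0$ and integrating gives $H(z) = z^{1-m}\tilde P(z) + \beta\log z$ with $\tilde P$ holomorphic, $\tilde P(0)\neq 0$, and $\beta$ the residue of $z^{-m}(b_0+\cdots)$ at $0$. One wants $\xi$ with $\xi^{1-m} + r\log\xi = H$; writing $\xi = z\,\eta(z)$ and setting $r=\beta$ this becomes the fixed-point equation $\eta^{1-m} = \tilde P(z) - \beta\,z^{m-1}\log\eta(z)$. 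The key observation is that the right-hand side depends on $\eta$ only through a term carrying the factor $z^{m-1}$ with $m-1\geq 1$, so the map $\eta\mapsto \big(\tilde P(z) - \beta z^{m-1}\log\eta(z)\big)^{1/(1-m)}$ is a contraction on a small ball of holomorphic functions with prescribed value at $0$, giving a unique holomorphic solution $\eta$ with $\eta(0)\neq 0$; squaring $dH$ then yields the normal form with $\tfrac12(2-n)=1-m$. For $m=1$ there is no principal part, the equation degenerates to matching the logarithmic coefficient, this forces the form $a_{-2}\xi^{-2}d\xi^2$ with $a_{-2}=a_0$, and the integration constant leaves $\xi$ free up to a multiplicative constant, as stated.

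The step I expect to be the main obstacle is the even-order-pole case: one must check carefully that the fixed-point map above is genuinely contracting on an appropriate space of holomorphic functions (equivalently, solve the recursion coefficient by coefficient and verify convergence on a small disc), and, separately, verify that $r$ and the leading/residue coefficients extracted from $\phi_\mu$ are honest invariants independent of the chart used to compute them, so that the normal form is well posed. The remaining cases are comparatively routine once the substitution $z=t^2$ is used to tame the branching.
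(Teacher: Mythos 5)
Your argument is essentially the classical one: the paper itself gives no proof of this proposition (it is quoted verbatim as Theorems 6.1--6.4 of Strebel's book, the reference \cite{quaddiff}), and Strebel's proof is exactly the "distinguished parameter" computation you describe --- integrate a branch of $\sqrt{\phi}$ and normalize the primitive, using the substitution $z=t^2$ to kill the branching at odd exponents and a logarithm plus principal part at even poles. So in substance your proposal reproduces the standard proof correctly: the even-exponent case is a direct root extraction, the odd case via the double cover is right, the fixed-point equation $\eta^{1-m}=\tilde P(z)-\beta z^{m-1}\log\eta$ for even poles of order $\geq 4$ is genuinely a contraction on a sup-norm ball of holomorphic functions on a small disc (the $\eta$-dependence carries the factor $z^{m-1}$, $m\geq 2$, so the Lipschitz constant is $O(\rho^{m-1})$), and your worry about coordinate-invariance of $r$ is not actually needed for the statement, which only asserts \emph{existence} of some $\xi$ and some constant $r$.

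Two small repairs. First, in the uniqueness step your parenthetical "the additive constant vanishes at $b$" is only literally valid when $(n+2)/2>0$, i.e.\ for zeroes and simple poles; for an odd-order pole of order $\geq 3$ both $\xi_1^{(n+2)/2}$ and $\xi_2^{(n+2)/2}$ blow up at $b$, and you must kill the constant by a growth comparison: dividing $\xi_1^{(n+2)/2}=\pm\xi_2^{(n+2)/2}+C$ by $\xi_2^{(n+2)/2}$ and letting $\xi_2\to 0$ forces $f'(0)^{(n+2)/2}=\pm1$ and then $C=0$, after which $\xi_1=\zeta\xi_2$ with $\zeta^{n+2}=1$ as you say. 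Second, the double-pole case is too terse to parse as written ("$a_{-2}=a_0$" refers to a coefficient you never introduced): what you mean is $\sqrt{\phi_\mu}=z^{-1}(b_0+b_1z+\cdots)$, $H=\beta\log z+(\text{holomorphic})$ with $\beta=b_0$, and $\xi:=\exp(H/\beta)$ is a coordinate with $\phi=\beta^2\,\xi^{-2}d\xi^2$, so $a_{-2}=\beta^2$ is the $z^{-2}$ Laurent coefficient; for the uniqueness up to a multiplicative constant you should also note that $d\xi_1/\xi_1=\pm d\xi_2/\xi_2$ gives $\xi_1=c\,\xi_2^{\pm1}$ and the exponent $-1$ is excluded because $\xi_1$ must vanish at $b$. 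With these touch-ups the proposal is a complete proof along the same lines as the cited source.
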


%	The pole structures of quadratic differentials are well-behaving as well. For $x\in pole(\phi)$, suppose $ord(x)=1$, then we have local conformal co-ordinates $z$ such that
%	\begin{align}\label{simplepole}
	%		\phi=(\frac{1}{2})^2 z^{-1}dz^2.
	%	\end{align}
%	If the order of $x$ is $2$, then we have
%	\begin{align}\label{doublepole}
	%		\phi=\frac{c}{z^2} dz^2
	%	\end{align}
%	for some $c\in \mathbb{C}$. 	
%	For $ord(x)>2$, let $n=ord(x)$, and $c=\frac{1}{2}(2-n)$. Suppose $ord(x)$ is odd, then we have local conformal coordinates $z$ such that
%	\begin{align}\label{oddpole}
	%		\phi=\frac{c^2}{z^n}dz^2,
	%	\end{align}
%	and in the case $ord(x)=2m$, we have
%	\begin{align}\label{evenpole}
	%		\phi=(\frac{c}{z^{m}}+\frac{b}{z})^2 dz^2
	%	\end{align} for some
\paragraph{Spectral curves}\label{introquadssspectralcurves}
Let $\phi$ be a complete GMN quadratic differential. Recall that we wrote $\tilde{C}$ for the complement of poles of $\phi$. Then $\phi$ gives rise to a holomorphic Lagrangian submanifold of $T^{\ast}\tilde{C}$ called the spectral curve $\scurve$.  To define this, let $(T^{\ast}_{\mathbb{C}})^{1,0}\tilde{C}$ denote the holomorphic cotangent bundle of $\tilde{C}$. There exists a \textit{canonical holomorphic Liouville 1-form} $\holliouvile$ on $(T^{\ast}_{\mathbb{C}})^{1,0}\tilde{C}$; for $(q,p)\in(T^{\ast}_{\mathbb{C}})^{1,0}\tilde{C}$ and $V\in T(T^{\ast}_{\mathbb{C}})^{1,0}\tilde{C}$, we define
\[\holliouvile(q,p)(V)=p(\pi_{\ast}(V))\]
where $\pi:(T^{\ast}_{\mathbb{C}})^{1,0}\tilde{C}\to \tilde{C}$ is the projection map. We evaluate $\pi_{\ast}V\in T_q \tilde{C}$ on $p\in (T_q \tilde{C})^{\ast}$ with respect to the canonical pairing
\[(T_q \tilde{C})^{\ast}\otimes (T_q\tilde{C})\to \mathbb{C}.\] 
In local coordinates, we can write $\holliouvile=p^z dz$ where $p^z$ is the complex fibre coordinate, and $z$ is the complex base coordinate. We see that $\holliouvile$ gives the canonical section of the line bundle $\pi^{\ast}\omega_{\tilde{C}}$. We obtain the canonical \textit{holomorphic symplectic form} on $(T^{\ast}_{\mathbb{C}})^{1,0}\tilde{C}$ by taking the exterior derivative $\Omega=d\holliouvile$. 

There exists a diffeomorphism of the total space of the real fibre bundles 
\[T^{\ast}\tilde{C}\to (T^{\ast}_{\mathbb{C}})^{1,0}\tilde{C}\]
between the real cotangent bundle and the holomorphic cotangent bundle, under which the real part of $\holliouvile$ is pull-backed to the canonical real Liouville form $\canliouvile$ on $T^{\ast}\tilde{C}$. The diffeomorphism is induced by the identification of $V_{\mathbb{C}}^{1,0}\simeq V$ with $V$ a real vector space with a complex structure $I:V\to V,\:I^2=-Id$ (see \cite[Section 5.2]{exactWKB}).\newline\\
The algebraic variety 
\begin{align}
	\scurve:=\{\holliouvile^2-\pi^{\ast}\phi=0\}\subset  (T^{\ast}_{\mathbb{C}})^{1,0}\tilde{C}
\end{align}
is called the \textit{spectral curve} associated to the quadratic differential $\phi$. It is smooth if the zeroes of $\phi$ are simple. In this case, the projection $\pi:\scurve\to \tilde{C}$ gives a simple branched double covering of $\tilde{C}$. To see this, note that by Proposition \ref{zeroofphiprop}, if $z_0\in \phi$ is a zero of $\phi$, then one can find some conformal coordinate charts near $z_0$ such that $\phi$ reads locally $zdz^2$. Then realizing $\mathbb{C}^2$ as the holomorphic cotangent bundle of $\mathbb{C}$, we see that the germ of the spectral curve near $z_0$ is equivalent to the germ of $\{(p^z)^2-z=0\}\subset \mathbb{C}^2$ at $(z,p^z)=(0,0)$, which is smooth. Now observe that the holomorphic symplectic form $\Omega$ vanishes on any smooth codimension one algebraic subvariety of $(T^{\ast}_{\mathbb{C}})^{1,0}\tilde{C}$. Under the identification of the holomorphic and the real cotangent bundle, we see that $\scurve$ becomes a real Lagrangian submanifold of $T^{\ast}\tilde{C}$. 
\paragraph{$\phi$- metric.}\label{introquadssflatmetric}

We need a further ingredient to describe non-abelianization, the natural flat singular metric structure on $C$. To define this, let $C^{\circ}$ denote the complement of the zeros and poles of $\phi$. On $C^{\circ}$, we have a corresponding Riemannian metric 
%that captures the symplectic topology of the spectral curve $\scurve$. This metric structure will be crucial in defining and understanding the family Floer cohomology local system.
\begin{align}
	g^{\phi}=\abs{\phi(z)}\abs{dz}^2\label{flatsingularmetric}
\end{align} 
which we regard as a singular metric on $\tilde{C}$. The metric $g^{\phi}$ is actually flat because in local conformal coordinate $W=\int \sqrt{\phi}$, $\phi\equiv dW^2$ and $g^{\phi}\equiv\abs{dW}^2$ by (\ref{transformationrule}). 
\\\newline 
We will be interested in the following class of quadratic differentials with nice $g^{\phi}$-metric properties. 
\begin{definition}{\cite[Definition 2.1]{bridgeland2014quadratic}} A meromorphic quadratic differential is GMN\footnote{For Gaiotto, Moore and Neitzke who first introduced the theory of spectral networks with which we are concerned.} if:
	\begin{itemize}
		\item all the zeroes of $\phi$ are simple,
		\item $\phi$ has at least one pole,
		\item $\phi$ has at least one finite critical point (either an order one pole or zero).
	\end{itemize}
	We say that a GMN quadratic differential $\phi$ is \textit{complete} if $\phi$ has no simple poles. 
\end{definition}
Note that $g^{\phi}$ induces a metric space structure on $\tilde{C}$ because in the metric on $C^{\circ}$, the zeroes and the simple poles are at finite distance. If $\phi$ is complete, then the metric space is also complete. To see this, note that the integral $\lim_{a\to 0^{+}}\int_a^{1} \frac{1}{x^b}dx$   
for $0<b<\infty$ converges for $b=1/2$, but not for $b\geq 1$. Now comparing with the local forms in Proposition \ref{zeroofphiprop}, we see that the integral of the line element $\abs{\sqrt{\phi}}\sim \frac{1}{\abs{z}^b}$ for $b\geq 1$ blows up as $z\to 0$. For Floer theoretic purposes, we will restrict to complete GMN quadratic differentials. 

Each $g^{\phi}$-geodesic, or $\phi$-geodesic for short, admits a unique phase in $\mathbb{R}/{\pi \mathbb{Z}}$ since $\phi$-geodesics are  just straight lines in the $W$-coordinate. We call geodesics with phase $\theta=0$ \textit{horizontal} 
and geodesics with phase $\theta= \frac{\pi}{2}$ \textit{vertical}. We call maximal solutions of the $\phi$-geodesic equation \textit{trajectories}.	
\begin{figure}[t]
	\includegraphics[width=0.4\textwidth]{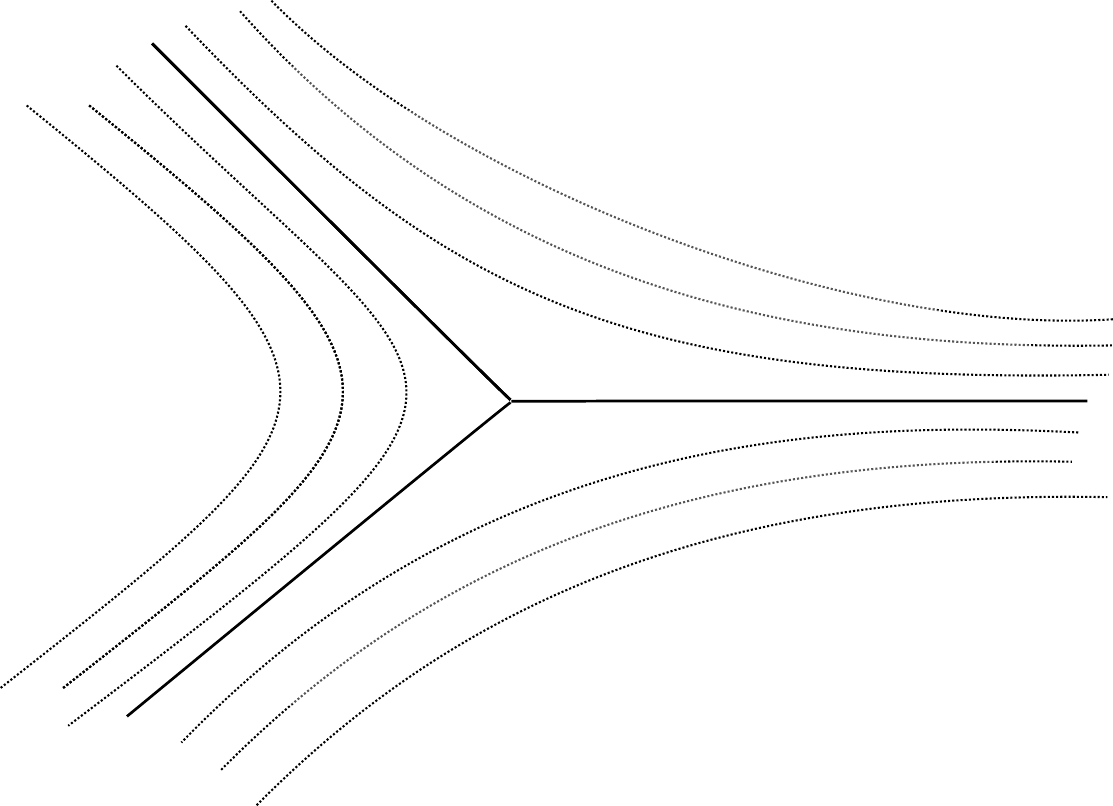}
	\centering
	\caption{Horizontal trajectories for $\phi=zdz^2$. The dotted lines are generic horizontal trajectories.}
	\label{Snetworksimpfig}
\end{figure}

There are several types of trajectories. If the trajectory $\gamma$ has its maximal interval of definition, a finite open interval, or equivalently, approaches finite critical points at both ends, we say that $\gamma$ is a \textit{saddle trajectory}. If it 
is defined over $(-\infty,\infty)$ or equivalently approaches infinite critical points at both ends, then we say that it is a \textit{generic trajectory}. If the trajectory approaches a finite critical point at a single end, we say it is a \textit{separating trajectory}. Note that horizontal generic trajectories do not intersect with each other. We say that $\phi$ is \textit{saddle-free} if there are no horizontal saddle trajectories on $C$. We can always rotate $\phi$ by $e^{2i\theta}$ for a generic $\theta$ to obtain a saddle-free quadratic differential \cite[Lemma  4.11]{bridgeland2014quadratic}.\\\newline 
The phase $\theta$ trajectories in $C^{\circ}$ give a singular foliation on $\tilde{C}$. The critical graph of this singular foliation is called the \textit{spectral network} $\snetwork(\theta)$. The spectral network $\snetwork(\theta)$ is stratified into a $0$-th dimensional stratum consisting of all the zeroes of $\phi$ and a 1-dimensional stratum consisting of the separating $\theta$-trajectories, also called \textit{walls}. %that only meet at the zeroes of $\phi$. These walls are $\theta$-trajectories with at least one finite end. 

The complement of the spectral network for a saddle-free GMN quadratic differential is a disjoint union of \textit{chambers}; chambers are connected contractible conformal subdomains of $\tilde{C}$. Given a chamber $\mathcal{Z}^h$, there exists a conformal equivalence of $(\mathcal{Z}^h,\phi)\simeq (\mathcal{Z}^h(a,b),dz^2)$ where $\mathcal{Z}^h(a,b)$ is either the upper half-plane or a finite horizontal strip subdomain of $\mathbb{C}$ (c.f. \cite[Section 3.4, Section 3.5 Lemma 3.1]{bridgeland2014quadratic}). These chambers are maximal horizontal domains, meaning that they are spanned by generic horizontal trajectories. Thus we have a cellular decomposition of $\tilde{C}$, where the 2-cells are the chambers, the 1-cells the walls, and the 0-cells the zeroes of $\phi$. The spectral curve $\scurve$ restricted to a chamber is sent under the conformal equivalence $(\mathcal{Z}^h,\phi)\simeq (\mathcal{Z}^h(a,b),dz^2)$, to the two disjoint affine hyperplanes $\{p^x=\pm 1, p^y=0\}$, where $p^z=p^x-ip^y$. In other words, they are covector coordinates.
\begin{figure}[t]
	\begin{subfigure}{0.5\textwidth}
		\includegraphics{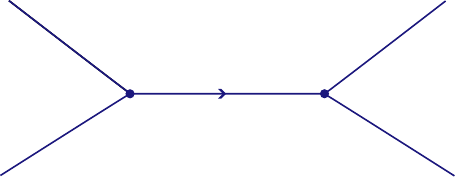} 
		\caption{$\snetwork(0)$ with a saddle trajectory.}
		\label{fig:saddlefig}
	\end{subfigure}
	\begin{subfigure}{0.5\textwidth}
		\includegraphics{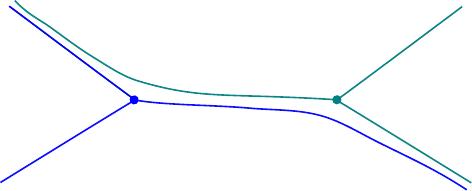}
		\caption{$\snetwork(\theta)$ for small $\theta$}
		\label{fig:perturbsaddlefig}
	\end{subfigure}
	\label{fig:saddlewallcrossing}
\end{figure}

\subsection{Non-abelianization}\label{introNonAb}

We now state what we mean by non-abelianization. Let $\mathcal{R}$ be either $\mathbb{Z}$ or $\mathbb{C}$. We define a $GL(k;\mathcal{R})$-local system to be a rank $k$ locally constant sheaf of free $\mathcal{R}$-modules.  Inspired by \cite[Section 6]{SpectralNetworkFenchelNielsen}, we look at the ``integrated version" of local systems or the \textit{path groupoid representation} of $\mathcal{R}$-local systems (Definition \ref{pathgropoidrepresentationdefinition}). 

\begin{definition}\label{pathgropoidrepresentationdefinition}
	Let $\mathcal{P}_M$ be a set of finite points in $M$ called the set of \textit{base points}. The \textit{path groupoid} $\mathcal{G}_M$ is a category whose objects are points in $\mathcal{P}_M$, and whose morphisms are path-homotopy classes (in $M$) between the points in $\mathcal{P}_M$. A collection of morphisms of $\mathcal{G}_M$ is said to be a path groupoid generating set if their concatenations generate $\mathcal{G}_M$. 
	
	A \textit{path groupoid representation} of a $GL(k;\mathcal{R})$-local system consists	of the following data.
	\begin{itemize}
		\item A free rank $k$ $\mathcal{R}$-module $E_b$ for each $b\in \mathcal{P}_M$ together with an isomorphism
		\begin{align}\label{basisfixing}
			\mathcal{R}^{\oplus k}\simeq E_b.
		\end{align}
		\item A morphism
		\begin{align}\label{groupoidparalleltransport}
			\Gamma(\alpha):E_{b}\to E_{b'}
		\end{align}
		given a path homotopy class $\alpha\in \pi_1(b,b')_M$, $b,b'\in \mathcal{P}_M$, such that $\Gamma(\alpha)$ is compatible with path concatenations.
	\end{itemize}
	Two path groupoid representations $(\mathcal{P}_M,E',P)$ and $(\mathcal{P}_M,E',P')$ are said to be equivalent if for each $b\in M$ there are isomorphisms
	\[g_b:E_b\to E_b'\]
	such that (i) the following diagram commutes for $\alpha\in \pi_1(b,b')$, $b,b'\in \mathcal{P}_M$:
	\[\begin{tikzcd}
		{E_b} & {E_{b'}} \\
		{E'_b} & {E'_{b'}}
		\arrow["{\Gamma'(\alpha)}", from=2-1, to=2-2]
		\arrow["{\Gamma(\alpha)}", from=1-1, to=1-2]
		\arrow["{g_b}"', from=1-1, to=2-1]
		\arrow["{g_b'}"', from=1-2, to=2-2]
	\end{tikzcd},\]
	and (ii) the isomorphisms $g_b$ are compatible with the isomorphisms \eqref{basisfixing} to $	\mathcal{R}^{\oplus k}$ above. 
\end{definition}
%We call the points in $\mathcal{P}_M$ the \textit{base points} for the path groupoid of $M$ with respect to $\mathcal{P}_M$. Furthermore, a finite collection of homotopy classes of paths in $\pi_1(b,b'),b,b'\in \mathcal{P}_M$, whose concatenations generate $\pi_1(M)$ is called the \textit{path groupoid generating set} with respect to $\mathcal{P}_M$. The elements are called \textit{path groupoid generators}. 

Given a path groupoid representation of a $GL(k;\mathcal{R})$-local system, we can build a genuine $\mathcal{R}$-local system on $M$. To see this, we borrow the argument in \cite[Section 6]{SpectralNetworkFenchelNielsen}. Consider the following space
\[\tilde{\mathcal{P}}^M:\{\gamma:I\to M: \gamma(0)\in \mathcal{P}_M\}/\{\sim\}\]
of path homotopy classes that begin at some $m\in \mathcal{P}_M$ and end at some other point $m'$. Let $\tilde{P}^M_b$ be the connected component of $\tilde{P}^M$ containing the constant path at $b\in \mathcal{P}^M$. Then we glue the constant sheaves $E_b\times \tilde{P}^M_b$ by $(v,b)\sim (\Gamma(\alpha)v,b')$ for $\alpha\in \pi_1(b,b')$. 

The spectral network $\snetwork(0)$ induces a cellular decomposition on $\tilde{C}$. Choose a point $b(w)$ away from the zeroes of $\phi$ over each wall $w$ in $\snetwork(0)$. The wall $w$ picks out a unique sheet of $\sqrt{\phi}$ in the following sense: choose any parametrization $w:[0,\infty)\to \tilde{C}$ in the outward orientation; there exists a unique sheet of $\sqrt{\phi}$ such that the function $s\to \int_0^s \sqrt{\phi}$ along $w$ takes values in $\mathbb{R}_{\geq 0}$ independent to the choice of an oriented parametrization of $w$. We can then similarly choose a pair of points $b^u(w)$ and $b^d(w)$ connected by an oriented vertical arc $\alpha$ called a \textit{short path} passing through $b(w)$ such that the integral $s\to \int_{\alpha(0)}^{\alpha(s)} Im\sqrt{\phi}$ is non-negative and increasing. Furthermore, we can give $\pm$ labels for the lifts of $b^{u}(w)$ (or $b^{d}(w)$) by letting $b(w)^{u,+}$ (or $b(w)^{d,+}$) to be the lift corresponding to the positive sheet of $\sqrt{\phi}$ along $w$. 

Let $\mathcal{P}_C$ be the resulting collection of points $b^{u}(w)$ and $b^d(w)$ for $w$ a wall in $\snetwork(0$). Let $\mathcal{P}_{\scurve^{\circ}}=\pi^{-1}(\mathcal{P}_{C^{\circ}})$, and lift the wall-chamber decomposition of $C$ to a wall-chamber decomposition of $\scurve$. Recall that we call a $GL(1;\mathcal{R})$-local system on $\scurve$ \textit{almost flat} if the monodromy around a ramification point is $-Id$. Similar to Definition \ref{pathgropoidrepresentationdefinition}, we introduce a path-groupoid representation analogue of an almost flat $GL(1;\mathcal{R})$-local system introduced in \cite[Section 4.2]{SpectralNetworkFenchelNielsen}. 
\begin{definition}\label{almostflatpathgroupoid}
	A path groupoid representation of an almost flat  $GL(1;\mathcal{R})$-local system $\mathcal{L}$ on ${\scurve^{\circ}}$ is a collection of the following data:
	\begin{itemize}
		\item A one-dimensional free $\mathcal{R}$-module $\mathcal{L}_{\tilde{b}}$ for each of the points $\tilde{b}\in \mathcal{P}_{\scurve^{\circ}}$ with a preferred choice of basis.
		\item 
		A morphism of vector spaces
		\[\Phi^\mathcal{L}(\alpha): \mathcal{L}_{\tilde{b}}\to \mathcal{L}_{\tilde{b}'}\] 
		given a morphism $\alpha\in Hom(\tilde{b},\tilde{b}')$ of the path groupoid $\mathcal{G}_{\scurve^{\circ}}=\mathcal{G}_{\scurve^{\circ}}(\mathcal{P}_{\scurve^{\circ}})$. %Here, the path groupoid $\mathcal{G}_{\scurve^{\circ}}$ consists of objects corresponding to the elements of $\mathcal{P}_{\scurve^{\circ}}$ and the morphisms corresponding to the $\Sigma^{\circ}_{\phi}$-homotopy classes of paths between $\tilde{b},\tilde{b}'\in \mathcal{P}_C$. 
	\end{itemize}
	This data is subject to the following conditions:
	\begin{itemize}
		\item The morphisms $\Phi^\mathcal{L}(\alpha)$ are compatible with composition of path homotopy classes.
		\item  The holonomy around a based loop encircling a ramification point of $\pi$ is $-Id$. 
	\end{itemize}
	
\end{definition}
We now define non-abelianization.
\begin{definition}\label{NonAb}
	Given a path groupoid representation $\mathcal{L}$ of an almost flat $GL(1;\mathbb{C})$-local system on $\scurve^{\circ}$ and a path groupoid representation $E$ of a $GL(2;\mathbb{C})$-local system on $\tilde{C}$, we say that $\mathcal{L}$ and $E$ form a \textit{$\mathcal{W}$-pair}, or equivalently that $E$ is a non-abelianization of $\mathcal{L}$, if:
	\begin{itemize}
		\item There is an isomorphism
		\[i_b:E_b\to \pi_{\ast}(\mathcal{L})_b \]
		for each $b\in \mathcal{P}_C$.
		\item If $\alpha$ does not cross walls of $\snetwork(0)$, then 
		\begin{align}\label{pushforwardoutside}
			\Gamma(\alpha)=i_{f(\alpha)}^{-1}(\pi_{\ast} \Phi^\mathcal{L}(\alpha)) i_{i(\alpha)}.
		\end{align}
		\item If $\alpha$ is a short path between $b(w)^{-}$ and $b(w)^{+}$, then
		\begin{align}\label{wallcrossingterm}
			\Gamma(\alpha)=i_{f(\alpha)}^{-1}(Id+\mu_w)(\Phi^\mathcal{L}(\alpha)) i_{i(\alpha)}
		\end{align}
		where $\mu_w$ is some $\mathbb{C}$-morphism
		\begin{align}\label{wallcrossingcount}
			\mu_w:\mathcal{L}_{b(w)^{d,-}}\to \mathcal{L}_{b(w)^{u,+}}.
		\end{align} 
	\end{itemize}
	Furthermore, we say that the induced local systems on $\tilde{C}$ and ${\Sigma}_{\phi}$ form a $\mathcal{W}$-pair if their path groupoid representations form a $\mathcal{W}$-pair. 
\end{definition}
One of the main insights of \cite{GNMSN} was that homotopy invariance and $\mathcal{L}$ uniquely determine the matrices $\mu_w$. We will revisit this idea in Section \ref{Wall-Crossing Analysis Section}. \newline \\
Consider $\scurve$ as a real Lagrangian submanifold in $T^{\ast}\tilde{C}$ with respect to the real canonical Liouville form $\canliouvile$ under the identification of the real cotangent bundle and the holomorphic cotangent bundle. We are interested in complete GMN quadratic differentials $\phi$ such that the corresponding spectral curve $\scurve$ is \textit{exact} with respect to the canonical \textit{real} Liouville form on $T^{\ast}\tilde{C}$. We call such quadratic differentials \textit{real exact}. The space of real exact quadratic differentials constitutes a totally real submanifold of the space of quadratic differentials (see the remark after Proposition \ref{realexactnesscrit}; the space of GMN quadratic differentials is a complex manifold by \cite[Theorem 4.12]{bridgeland2014quadratic}).

We show in Section \ref{continuationmapsss} that given a real exact quadratic differential $\phi$, the Floer cohomology local system  
\[	HF_\epsilon(\scurve,\mathcal{L},\mathfrak{s},\mathcal{B};\mathbb{C}): z\mapsto HF(\epsilon\scurve,F_{z},\mathfrak{\tilde{s}},\mathfrak{f}_z,\mathcal{L}\otimes \mathcal{B};\mathbb{C})\]
is well-defined. The construction of the precise Floer-theoretic set-up uses only standard techniques but is slightly involved. This is carried out in Section \ref{Floer Theory on Open Manifolds Sect 1}. 

We also show that the points $\mathcal{P}_{C}$ and the $\mathbb{C}$-vector spaces  $HF(\epsilon\scurve,F_{z},\mathfrak{\tilde{s}},\mathfrak{f}_z,\mathcal{L}\otimes \mathcal{B};\mathbb{C})$ for $z\in \mathcal{P}_{C}$, along with the Floer-theoretic continuation maps, define a path groupoid representation $HF_\epsilon(\scurve,\mathcal{L},\mathfrak{s},\mathcal{B},\mathcal{P}_C;\mathbb{C})$ of a $GL(2;\mathbb{C})$-local system over $\tilde{C}$. In addition, we show that compact Hamiltonian isotopy of $\epsilon\scurve$ which are supported away from the points in $\pi^{-1}(\mathcal{P}_C)$ define an equivalent path groupoid representation (Proposition \ref{pathgroupoidrepresentation}). 
\\
\newline
We can now restate our main theorem as follows. Note that there are some constants involved for technical reasons. 
\begin{theorem}\label{FullNon-Abelianization} Let $\scurve$ be the spectral curve associated to a real-exact GMN quadratic differential on a closed Riemann surface $C$. Given a small deformation parameter $\delta>0$ and a large energy cut-off $E\gg 1$, there exists an $\epsilon_0>0$ and a collection of points $\mathcal{P}_C=\mathcal{P}_C(\delta;E)$ (with lifts $P_{\scurve^{\circ}}$) such that the following holds for all $0<\epsilon<\epsilon_0$.
	
	Let $\mathcal{L}=\mathcal{L}(P_{\scurve^{\circ}})$ be a path groupoid representation of an almost flat $GL(1;\mathbb{C})$-local system, $\mathfrak{s}$ be a spin structure on $C$, and $\mathcal{B}$ be an almost flat $GL(1;\mathbb{Z})$-local system. Then
	$HF_\epsilon(\scurve,\mathcal{L},\mathfrak{s}, \mathcal{B}, \mathcal{P}_C;\mathbb{C})$ and $\mathcal{L}(P_{\scurve^{\circ}})$ form a $\mathcal{W}$-pair, or equivalently, $HF_\epsilon(\scurve,\mathcal{L},\mathfrak{s},\mathcal{B};\mathbb{C})$ is a non-abelianization of $\mathcal{L}$.
\end{theorem}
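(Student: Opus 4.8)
The plan is to read off the $\mathcal{W}$-pair structure of Definition~\ref{NonAb} from two ingredients already in place: the non-existence theorem \cref{maintheorem} for $\epsilon$-BPS discs over $C(\delta;E)$, together with a bifurcation analysis of the Floer continuation maps as the cotangent fibre is dragged across the walls of $\snetwork(0)$. First I would fix, for $0<\epsilon<\epsilon_0(\delta;E)$, the Floer data constructed in \cref{Floer Theory on Open Manifolds Sect 1} and \cref{continuationmapsss}: for $z\in\mathcal{P}_C\subset C(\delta;E)$ the complex $CF(\epsilon\scurve,F_z,\tilde{\mathfrak{s}},\mathfrak{f}_z,\mathcal{L}\otimes\mathcal{B})$ has exactly one generator for each of the two lifts $\tilde z^{\pm}\in\scurve$ of $z$, with coefficient module the fibre of $\mathcal{L}\otimes\mathcal{B}$ there. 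By \cref{maintheorem} there are no non-constant $J$-holomorphic strips between $F_z$ and $\epsilon\scurve$ for such $z$, so the Floer differential vanishes identically and $HF(\epsilon\scurve,F_z,\dots)=CF(\epsilon\scurve,F_z,\dots)$ is free of rank $2$, concentrated in degree zero, and canonically identified with $\pi_\ast(\mathcal{L})_z$ once a sheet labelling and the basis normalisation \eqref{basisfixing} are chosen (the $\mathcal{B}$-twist being absorbed into the normalisation). This produces the isomorphisms $i_b\colon E_b\to\pi_\ast(\mathcal{L})_b$ and simultaneously the rank/degree/freeness claims asserted after \eqref{localsystemflat}.

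Next I would verify the ``away from walls'' axiom \eqref{pushforwardoutside}. Since $\tilde C-\snetwork(0)$ is a disjoint union of (contractible) chambers and $C(\delta;E)$ deformation retracts onto it, a path $\alpha$ between points of $\mathcal{P}_C$ that does not cross $\snetwork(0)$ is homotopic, rel endpoints, into a single chamber; over a chamber the metric is the flat $g^{\phi}$ and $\epsilon\scurve$ is literally the pair of affine covector planes $\{p^x=\pm\epsilon\}$, so the Floer theory and its continuation maps are the explicit model ones --- no strips bubble, and the continuation map is the tautological identification of generators, i.e.\ parallel transport of $\mathcal{L}\otimes\mathcal{B}$ along the two lifts of $\alpha$ to $\scurve$. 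In the bases fixed by the $i_b$ this is precisely $i_{f(\alpha)}^{-1}\bigl(\pi_\ast\Phi^{\mathcal{L}}(\alpha)\bigr)i_{i(\alpha)}$. Combined with \cref{pathgroupoidrepresentation}, which gives that $z\mapsto HF_\epsilon(\dots)$ with its continuation maps is a well-defined path-groupoid representation of a $GL(2;\mathbb{C})$-local system independent up to equivalence of the compactly supported Hamiltonian perturbation, this settles everything except the short-path axiom.

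The core of the argument, and its main obstacle, is the continuation map along a short path $\alpha$ that crosses a single wall $w$, connecting the two basepoints $b^{d}(w),b^{u}(w)\in\mathcal{P}_C$ lying in the chambers on either side of $w$ and passing through a point $b(w)$ on $w$ itself. Factoring $\alpha$ into a wall-free segment up to just before $w$, the crossing, and a wall-free segment just after, the two outer pieces contribute only parallel transport by the previous paragraph, so it suffices to analyse the crossing. Here $b(w)\notin C(\delta;E)$, so \cref{maintheorem} does not apply: precisely on the wall $F_{b(w)}$ and $\epsilon\scurve$ bound genuine $\epsilon$-BPS strips, and as the basepoint moves across $w$ these strips are born or die, producing an off-diagonal correction to the continuation matrix. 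Using the Gromov-compactness and gluing analysis of \cref{Wall-Crossing Analysis Section} I would show: (i) the diagonal entries of the crossing map remain parallel transport, since a sheet-to-itself continuation strip would have vanishing symplectic area by exactness of $\epsilon\scurve$ and hence be constant; and (ii) exactly one off-diagonal entry can be non-zero --- the one from the $(d,-)$ sheet to the $(u,+)$ sheet --- because the direction of the half-plane/strip degeneration is forced by the sign of $s\mapsto\int\Im\sqrt{\phi}$ along the oriented short path together with the anticlockwise boundary orientation convention. Thus the crossing map has the form $(\mathrm{Id}+\mu_w)$ with $\mu_w$ exactly in the Hom-space \eqref{wallcrossingcount}, giving \eqref{wallcrossingterm}.

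The last step is the sign computation: matching the coherent orientations of the strip moduli spaces --- built from the spin structures $\tilde{\mathfrak{s}}$ on $\scurve$, $\mathfrak{f}_z$ on the fibres, and the auxiliary choice of $\mathcal{B}$ --- against the GMN conventions, so that the diagonal coefficient is genuinely $+1$ (not $-1$) and $\mu_w$ is a well-defined $\mathbb{C}$-morphism compatible with composition around loops; together with the earlier uniqueness remark (homotopy invariance plus $\mathcal{L}$ pin down the $\mu_w$) this also identifies $\mu_w$ with the GMN wall-crossing matrix. Assembling (i)--(ii), \eqref{pushforwardoutside}, and the sign computation shows that $\bigl(HF_\epsilon(\scurve,\mathcal{L},\mathfrak{s},\mathcal{B},\mathcal{P}_C;\mathbb{C}),\ \mathcal{L}(P_{\scurve^{\circ}})\bigr)$ is a $\mathcal{W}$-pair, and passing to the induced honest local system on $\tilde C$ gives that $HF_\epsilon(\scurve,\mathcal{L},\mathfrak{s},\mathcal{B};\mathbb{C})$ is a non-abelianization of $\mathcal{L}$, as claimed. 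I expect the hard part to be precisely Step 3--4: controlling the bubbling of the continuation strips as $\epsilon\to 0$ in the region near the wall where \cref{maintheorem} gives no help, and carrying out the orientation bookkeeping that ties together the three spin structures and produces exactly the triangular $\mathrm{Id}+\mu_w$ with the correct sign.
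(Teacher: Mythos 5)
Your outline reproduces the paper's architecture for steps 1, 3 and 4, but there is a genuine gap at the step you dismiss as automatic: the diagonality of the continuation maps along paths inside a chamber. The claim that over a chamber the Floer continuation maps are ``the explicit model ones'' and hence tautological parallel transport does not follow from the local form of $\epsilon\scurve$ over $\mathcal{Z}^h$: the boundary condition is the \emph{global} Lagrangian $\epsilon\scurve$ together with a moving fibre, and a continuation strip with asymptotics over $z,z'\in\mathcal{Z}^h(\delta;E)$ is not a priori confined to $T^{\ast}\mathcal{Z}^h$ --- it can leave the chamber, pass near a branch point, and return on the other sheet, producing exactly the off-diagonal entry you need to exclude. \cref{maintheorem} does not apply directly either, since it concerns strips with fixed boundary conditions, not continuation strips with moving Lagrangian boundary. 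This is where the paper's work is concentrated: \cref{No-Go moving} excludes non-diagonal continuation strips only for displacements below a threshold $\alp(z)>0$, and its proof needs (a) a uniform positive lower bound on the energy of non-diagonal strips, (b) the Hofer-type Gromov compactness statement (\cref{Hofertypegromovcompacness}) that a sequence of non-constant continuation strips with displacement tending to zero converges to a non-constant broken strip bounded by $F_z$ and $\epsilon\scurve$, and only then (c) the contradiction with \cref{maintheorem}; even the constancy of the diagonal strips is an argument (energy of size $O(\alp)$ plus a monotonicity/boundary estimate in a standard local configuration), not an observation. Moreover, because the exclusion is only infinitesimal, the transport along the full arcs $\alpha(w,w')$ and $\gamma(w,w'')$ has to be assembled by subdividing them (the Lebesgue-number argument of \cref{subsubsection:pathgroupoidgenerators}); your proposal has no substitute for either the degeneration argument or this subdivision, so as written it assumes the conclusion of \eqref{pushforwardoutside}.

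A secondary point: your assessment of where the difficulty sits is inverted. The wall-crossing triangularity is the soft part and requires no control of bubbling as $\epsilon\to 0$: at fixed $\epsilon<\epsilon_0$ the vertical Hamiltonian generating the short-path isotopy preserves $\epsilon\scurve$ (\cref{Invarianceunderverticaltransports}), so diagonal continuation strips have zero energy and are constant, while strips from $z^{+}$ to $z^{-}$ have negative energy and the moduli space is empty (\cref{proposition:shortpathspassive}, cf.\ \cref{uppertriangular}); no gluing analysis is involved. Your sign step (twisted local systems / spin structures $\tilde{\mathfrak{s}}$, $\mathfrak{f}_z$, $\mathcal{B}$, and the determination of $\mu_w$ by homotopy invariance around a zero) matches the paper's \cref{lemma:signcomputation} and \cref{uniquedetermination}, and indeed for the $\mathcal{W}$-pair statement any $\mathbb{C}$-morphism $\mu_w$ in \eqref{wallcrossingcount} suffices. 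So the proposal is correct in outline except for the chamber-diagonality step, which needs the full limiting argument rather than an appeal to the flat local model.
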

\subsection{Towards the proof of Theorem \ref{FullNon-Abelianization}}\label{Proofguidess}
%The fundamental ingredient behind the proof of Theorem \ref{FullNon-Abelianization} is the adiabatic degeneration method. Suppose we have a family $u_\epsilon$ of pseudo-holomorphic curves in $T^{\ast}M$ such that as $t\to 0$, the sequences $u_\epsilon$ degenerate to sets of solutions of flow-like local differential equations on $M$.  
Having reviewed the theory of quadratic differentials and non-abelianization, we now outline the strategy towards the proof of Theorem \ref{FullNon-Abelianization} from \cref{maintheorem}. Let $z\in \mathcal{P}_C$. By choosing a suitable grading, we show that the chain complex $CF(\scurve,F_z)$ is concentrated in degree $0$. Thus the intersection points in $F_z\pitchfork \scurve$ give rises to a natural decomposition of $HF(\scurve,F_z)$ for $z\in \mathcal{P}_C$. The key part is using Theorem \ref{maintheorem} to show that the parallel transport is \textit{diagonal} along (homotopy classes of) paths that are strictly contained in a connected component of $C(\delta;E)$. This is necessary to show that \eqref{pushforwardoutside} holds. To understand the heuristics, consider the holomorphic strips that contribute to the \textit{non-diagonal terms} in the Floer-theoretic parallel transport map along horizontal or vertical arcs in $C(\delta;E)$. We show that these strips, Gromov-converge to broken holomorphic strips bounded between $F_z$ and $\epsilon\scurve$ as the corresponding arcs converge to the point $z$. Since Theorem \ref{maintheorem} implies that such holomorphic strips cannot exist, we deduce that the parallel transport must be diagonal. 

We now explain why the parallel transport along a short arc is of the form \eqref{wallcrossingterm}. This is essentially due to the positivity of energy. From real exactness, we have $\primitive:\scurve\to \mathbb{R}$ such that $\canliouvile=d\primitive$. From Stokes' theorem,  we see that the energy of $\epsilon$-BPS discs ending at $z$ must be bounded above by $\pm \epsilon (\primitive(z^{+})-\primitive(z^{-}))$. We show that $\primitive(z^{+})=\primitive(z^{-})$ if and only if $z$ lies on the spectral network $\snetwork(\pi/2)$. Thus for $z\notin \snetwork(\pi/2)$, we can choose the ordering $z^{+}$ and $z^{-}$ in such a way that $\primitive(z^{+})>\primitive(z^{-})$. This means that there are no $\epsilon$-BPS discs ending at $z$ that travel from $z^{+}$ to $z^{-}$, regarding them as $J$-holomorphic strips.  A similar energy argument applies to show that the parallel transport \eqref{wallcrossingterm} should be strictly upper-triangular for short enough $\alpha$. We leave the discussions on holonomy contributions of $\mathcal{L}$ in \eqref{pushforwardoutside} and \eqref{wallcrossingterm} to Section \ref{gradings and spin structures}. 

It is worth remarking here that similar applications of Morse flow-tree techniques to study the degeneration of holomorphic discs for (certain generalizations of) spectral curves also appeared in \cite{SSquantummirror} and implicitly in \cite{CasalsDGAcubic}.

\section{Floer theory on cotangent bundles of open manifolds}\label{Floer Theory on Open Manifolds Sect 1}
The main aim of this section is to establish a Floer-theoretic set-up on  $T^{\ast}\tilde{C}$ such that the Floer cohomology local system $HF(\epsilon \scurve,F_z)$ is well-defined on $\tilde{C}$. To do this, we define Floer theory on cotangent bundles of the more general class of Riemannian open manifolds that are ``flat at infinity". 

In Section \ref{Flatness at infinity and finiteness conditions} we introduce the notion of flatness at infinity, define finiteness conditions for Lagrangians, Hamiltonians and almost complex structures. In particular, we will introduce the class of\textit{ vertically finite Lagrangians}, which includes spectral curves associated to GMN complete quadratic differentials $\phi$. In Section \ref{Geometric Boundedness}, we review the notion of geometric boundedness. In Section \ref{Monotonicity Techniques}, we discuss the basic monotonicity techniques. In Section \ref{Compactness and Transversality}, we show using the monotonicity techniques and the arguments in \cite[Section 3]{GPSCV} that the moduli space of Floer solutions satisfy the usual compactness and transversality properties. The key is showing that the relevant pseudo-holomorphic curves do not escape off to infinity. Here, the boundary conditions are given with respect to the classes of Lagrangians and almost complex structures defined in Section \ref{Flatness at infinity and finiteness conditions}. This allows us to define, for instance, $CF(\epsilon \scurve,F_z)$. In Section \ref{Floer Moduli SpaceS}, we show that the Floer chain complex satisfies certain invariance properties up to isomorphism in cohomology. This section is heavily based on the works of Sikorav\cite{Audin1994HolomorphicCI}, Groman\cite{groman2021floer,groman2019wrapped} and Ganatra-Pardon-Shende\cite{GPSCV}. 

\subsection{Flatness at infinity and finiteness conditions}\label{Flatness at infinity and finiteness conditions}

We start with the following definition.

\begin{definition}\label{flatatinfinitydefinition}
	A Riemannian manifold $(M,g)$ is \textit{flat at infinity} if $g$ is complete,  there exists a compact subset $K\subset M$ such that $g\vert_{M-K}$ is flat, and there exists an $r_g>0$ such that the injectivity radius of $g$ is bounded below by $r_g$.  
\end{definition}

\begin{exmp}
	The real line $\mathbb{R}$ equipped with the standard flat metric. We will also see in Section \ref{Flat at infinity} that $\tilde C$ equipped with the flat metric desingularized at the branch points is also flat at infinity. 
\end{exmp}	

Consider the cotangent bundle $T^{\ast}M$. Since $M$ is non-compact, it is not a Liouville manifold, but it is very close to being one. $T^{\ast}M$ admits the canonical Liouville form $\canliouvile=\textbf{p}\cdot\textbf{dq}$ and the canonical symplectic form $\omega=\textbf{dp}\wedge \textbf{dq}$. Furthermore, the standard Liouville vector field $Z:=p\partial_p$ satisfies $L_Z\omega=\omega$ and $\iota_Z \omega=\canliouvile$. In addition to this, given any metric $g$ on $M$, the unit sphere bundle $S^{\ast}M$ is a codimension 1 submanifold of $T^{\ast}M$ and the restriction of the Liouville form defines a contact form $\alpha$ on $S^{\ast}M$. \\
Consider the diffeomorphism of the positive cone of $(S^{\ast}M,\alpha)$ into $T^{\ast}M$ 
\[[1,\infty)\times S^{\ast}M\to T^{\ast}M\]
given by sending the point $(r,(\textbf{q},\textbf{p}))$ to its time-$\log(r)$ image under the Liouville flow. Since this is simply the map
\[(r,(\textbf{q},\textbf{p}))\to (\textbf{q},r\textbf{p}),\]
the pullback of the canonical Liouville form $\canliouvile=\textbf{p}\textbf{dq}$ is equal to $r\alpha$ and the canonical symplectic form reads $d(r\alpha)$ on the positive cone. The Liouville vector field then takes the form $r\frac{d}{dr}$ over $[1,\infty)\times S^{\ast}M$. We see that the positive flow of the Liouville vector field is complete and that the image of $[1,\infty)\times S^{\ast}M$ covers the neighbourhood of the vertical infinity. 

The Reeb field $R$ over $S^{\ast}M$ is the unique vector field defined by the condition $\alpha(R,-)=1$, $d\alpha(R,-)=0$. At $(\textbf{q},\textbf{p})\in S^{\ast}M$, in geodesic normal coordinates, the Reeb vector field reads:
\[R:=\sum_{i=1}^n \frac{p_i}{\abs{p}}\frac{\partial }{\partial q_i}.\]
The Reeb field over $[1,\infty)\times S^{\ast}M$ is defined as the field $0\oplus R\subset TS^{\ast}M$. This is the Hamiltonian vector field associated to the linear function $r$. \newline

We now introduce objects which are compatible with the Liouville-like structure. We say that an almost complex structure or a Lagrangian submanifold is \textit{cylindrical} if it is invariant under the positive Liouville flow. Furthermore,

\begin{definition}\label{generalcontact}
	An $\omega$-compatible almost complex structure $J$ is of \textit{general contact type} if there exists a positive smooth function $h:\mathbb{R}_{>0}\to \mathbb{R}_{>0}$ such that
	\[h(r)dr=\canliouvile \circ J.\]
	If this condition holds over $\{r>R\}$ for some $R\gg 1$, we say that it is of general contact type at vertical infinity.
	
	The almost complex structure $J$ is of \textit{contact type} if $h(r)=1$ and of \textit{rescaled contact type} if $h(r)=r$. 
\end{definition}
The notion of almost complex structures of rescaled contact type comes from \cite[Section 3.1 (3.18)]{Ganatrathesis}. 
Definition \ref{generalcontact} is equivalent to $J$ mapping the kernel of $\alpha$ to itself on the level sets of $r$ and swapping the Liouville flow $Z$ with $h(r)R$. \newline
\\
In this paper, we will utilize the "canonical" $\omega$-compatible almost complex structure on $T^{\ast}M$ induced from the metric $g$,  called the \textit{Sasaki almost complex structure} (See \cite[Section 4.4]{Morseflowtree}, or for the full exposition, \cite[Chapter 4, Chapter 8]{TangentBundleYano}). To define this, first we note that given the projection $\pi: T^{\ast}M\to M$, the  kernel $V$ of the derivative $d\pi:TT^{\ast}M\to M$ gives the canonical \textit{vertical distribution} on $T^{\ast}M$. Then the metric $g$ gives rises to a distribution $H$ on $T^{\ast}M$ called the \textit{horizontal distribution} for which the restriction $d\pi:H_p\to T_{\pi(p)}M$ for $p\in M$ gives a vector space isomorphism. We then identify $H$ with $(\pi^{\ast}TM,g)$ via $d\pi$; we have the following covariant decomposition 
\begin{align}
	TT^{\ast}M&=H\oplus V\\
	&=(\pi^{\ast}TM,g)\oplus (T^{\ast}M,g)\label{horverdecom},
\end{align}
of $TT^{\ast}M$. Regarding $g$ as a real vector bundle isomorphism $g:TM\to T^{\ast}M$, we get:
\begin{definition}\label{sasakialmostcomplexstructuredefini}
	The Sasaki almost complex structure $J_g$ is the almost complex structure on $T^{\ast}M$ defined by the following matrix
	\begin{align}J_g:=
		\begin{bmatrix}
			0 & +g^{-1} \\ 
			-g & 0
		\end{bmatrix},
	\end{align}
	with respect to the covariant decomposition \eqref{horverdecom}. We write $g^S$ for the metric on $T^{\ast}M$ induced from $\omega$ and $J_g$.   
\end{definition}

The Sasaki almost complex structure is not of contact type at infinity, so we deform the almost complex $J_g$ as in \cite[Section 8.1]{cieliebak2017knot} to find some \textit{conical deformations} of $J_g$. The same conical deformation also appeared in \cite[Section 5.1.3]{NadlerZaslowconstructiblesheaves}.

\begin{definition}
	Let $\rho:[1,\infty)\to [1,\infty)$ be a smooth increasing positive function such that $\rho(r)=1$ for $r<3/2$ and $\rho(r)=r$ for $r\gg 2$. The following deformation of the Sasaki almost complex structure
	\begin{align}
		J_{con}=\begin{bmatrix}
			0 & +\rho(r)^{-1} {g}^{-1} \\ 
			-\rho(r) g &0.
		\end{bmatrix},\label{deformSasaki}
	\end{align}
	is called the ($\rho$-)\emph{conical deformation} of $J_g$. We write $g_{con}$ for the Riemannian metric induced from $\omega$ and $J_{con}$. 
\end{definition}
Here the matrix is taken with respect to the decomposition (\ref{horverdecom}). Fixing a smooth $\rho$ once and for all, we obtain our \textit{background almost complex structure} $J_{con}$ and our \textit{reference metric} $g_{con}$ on $T^{\ast}M$. The following proposition is due to {\cite[Section 8.1]{cieliebak2017knot}}.

\begin{proposition}\label{proposition:contacttype}
	Let $\canliouvile$ be the canonical Liouville form on $T^{\ast}M$. The Sasaki almost complex structure is of rescaled contact type. The deformed almost complex structure is invariant under the Liouville flow and satisfies
	\[\canliouvile \circ J_{con}=dr\label{contact type prop}\]
	for $r\gg 1$. Hence, it is of contact type at infinity.
	
\end{proposition}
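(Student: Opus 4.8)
My plan is to verify the three assertions in the proposition — that $J_g$ is of rescaled contact type, that $\canliouvile\circ J_{con}=dr$ for $r\gg1$, and that $J_{con}$ is Liouville-invariant at infinity — by a pointwise computation in the horizontal--vertical splitting $TT^{\ast}M=H\oplus V\cong\pi^{\ast}TM\oplus T^{\ast}M$ of \eqref{horverdecom}, and then read off contact type at vertical infinity. Throughout I would fix $(q,p)\in T^{\ast}M$ and write a tangent vector as a pair $(\xi,\eta)$, where $\xi\in T_qM$ is the base projection of its horizontal component and $\eta\in T^{\ast}_qM$ is its vertical component; in this notation $\canliouvile(\xi,\eta)=p(\xi)$ (the vertical part is killed by $d\pi$), while $J_g(\xi,\eta)=(g^{-1}\eta,-g\xi)$ and $J_{con}(\xi,\eta)=(\rho(r)^{-1}g^{-1}\eta,-\rho(r)g\xi)$ directly from the matrix presentations.

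For the Sasaki structure I would compute $\canliouvile\circ J_g(\xi,\eta)=p(g^{-1}\eta)=g^{-1}(p,\eta)$. On the other hand $r=|p|_g=\sqrt{g^{-1}(p,p)}$ depends only on the fibre coordinate, so differentiating in the vertical directions gives $dr(0,\eta)=\tfrac1r\,g^{-1}(p,\eta)$, whereas $dr$ vanishes on horizontal vectors: a horizontal curve through $(q,p)$ is by construction the parallel transport of the covector $p$ along its base curve, and since the Levi-Civita connection is metric-compatible the dual norm $|p|_g$, hence $r$, is constant along it — equivalently, $H$ is tangent to the level sets $\{r=l\}$. Comparing the two formulas gives $\canliouvile\circ J_g=r\,dr$, which is exactly the condition $h(r)\,dr=\canliouvile\circ J$ with $h(r)=r$, i.e.\ rescaled contact type. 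This is the only step requiring genuine geometric input, namely the vanishing $dr|_H=0$.

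For the conical deformation on $\{r\gg1\}$, where $\rho(r)=r$, the only change relative to $J_g$ is that the horizontal block is rescaled by $\rho(r)^{-1}=r^{-1}$; since $\canliouvile$ only sees the horizontal component, $\canliouvile\circ J_{con}=r^{-1}(\canliouvile\circ J_g)=r^{-1}\cdot r\,dr=dr$ there, which is the asserted equality. For Liouville invariance, let $m_c\colon(q,p)\mapsto(q,cp)$, $c\ge1$, denote the time-$\log c$ Liouville flow. I would record that (i) the horizontal distribution of a linear connection is preserved by fibrewise scaling, with $(m_c)_{\ast}$ preserving the base projection of a horizontal vector; (ii) on each vertical fibre $V\cong T^{\ast}_qM$ the map $(m_c)_{\ast}$ is multiplication by $c$; and (iii) $r\circ m_c=cr$ and $\rho(cr)=c\,\rho(r)$ once $r$ is large. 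Then $J_{con}$ applied to $(m_c)_{\ast}(\xi,\eta)=(\xi,c\eta)$ at $(q,cp)$ equals $(\rho(cr)^{-1}g^{-1}(c\eta),-\rho(cr)g\xi)=(r^{-1}g^{-1}\eta,-cr\,g\xi)$, which is precisely $(m_c)_{\ast}$ applied to $J_{con}(\xi,\eta)=(r^{-1}g^{-1}\eta,-rg\xi)$; hence $J_{con}$ commutes with the positive Liouville flow on $\{r\gg1\}$. Together with $\canliouvile\circ J_{con}=dr$ this gives that $J_{con}$ is of contact type at vertical infinity.

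The argument is purely linear-algebraic; the only subtle points are the vanishing of $dr$ on $H$ (metric-compatibility of the Levi-Civita connection) and keeping the homogeneity degrees straight under $m_c$ — which is exactly why one deforms by $\rho$, since $J_g$ itself is \emph{not} Liouville-invariant (the off-diagonal blocks scale oppositely under $(m_c)_{\ast}$, and the factor $\rho$ restores the balance once $\rho(r)=r$). One could alternatively cite \cite[Section 8.1]{cieliebak2017knot}, where the same deformation is carried out, but the computation above is short enough to include in full.
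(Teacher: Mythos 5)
Your computation is correct, and it is essentially the standard argument: the paper itself offers no proof of this proposition, simply attributing it to \cite[Section 8.1]{cieliebak2017knot}, so your write-up supplies the verification that the paper leaves to the reference. The key points are all in place: $\canliouvile(\xi,\eta)=p(\xi)$ in the splitting \eqref{horverdecom}, the vanishing of $dr$ on $H$ via metric-compatibility of the Levi-Civita connection (equivalently, horizontal curves are parallel transports, which preserve $\abs{p}_g$), the fibre derivative $dr(0,\eta)=r^{-1}g^{-1}(p,\eta)$, giving $\canliouvile\circ J_g=r\,dr$, hence $h(r)=r$ in Definition \ref{generalcontact}; and for $J_{con}$ the extra factor $\rho(r)^{-1}=r^{-1}$ on the horizontal block yields $\canliouvile\circ J_{con}=dr$ for $r\gg1$. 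Your homogeneity bookkeeping for the Liouville scaling $m_c$ is also right: $(m_c)_\ast$ preserves $H$ (linearity of the connection), acts by $c$ on $V$, and $\rho(cr)=c\rho(r)$ once $r$ is large and $c\geq 1$, so $J_{con}$ commutes with the positive Liouville flow exactly on the region where $\rho$ is linear — which is the correct reading of the proposition, since $J_{con}$ cannot be invariant where $\rho$ is constant, and indeed you correctly flag that $J_g$ itself fails invariance. No gaps.
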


We introduce the class of \textit{horizontally finite} Hamiltonians and Lagrangians.\\
\begin{definition}\label{horizontalfinitenessdefini}
	Let $(M,g)$ be a Riemannian manifold which is flat at infinity. Equip the cotangent bundle $T^{\ast}M$ with its background almost complex structure $J_{con}$.
	\begin{itemize}
		\item Let $L$ be a Lagrangian submanifold in $T^{\ast}M$ which is cylindrical at infinity. We say that $L$ is \textit{horizontally finite} if $\pi(L)\subset K$ for some compact subset $K\subset M$.
		\item  Let $H$ be a Hamiltonian function on $T^{\ast}M$. We say that it is cylindrical if $ZH=H$ at infinity, or equivalently, if $H=hr$ for $r\geq R, R\gg 1$, where $h:S^{\ast}M\to \mathbb{R}$ is a contact Hamiltonian.  We say that $H$ is \textit{horizontally finite} if there exists a compact subset $K\subset M$ such that the support of $H$ lies inside $T^{\ast}K$.\\ %We say compact subset $\bar{K}$ is called the horizontal support of $H$. 
	\end{itemize}
\end{definition}

We restrict to the following class of almost complex structures on $T^{\ast}M$. 
\begin{definition}		
	Let $J$ be an $\omega$-compatible almost complex structure. We say that $J$ is an \textit{admissible} almost complex structure if $J$ is cylindrical at infinity and if there exists a compact subset $K\subset M$ such that $J=J_{con}$ outside of $T^{\ast}K$. We say that $K$ is the horizontal support of $J$. 
\end{definition}

Let $\mathcal{J}(T^{\ast}M)$ denote the space of $\omega$-compatible admissible almost complex structures. Let $S$ be a Riemann surface with boundary. A family of admissible almost complex structures parametrized by $S$ is a smooth map
\[J:S\to \mathcal{J}(T^{\ast}M).\]  Here $S$ is either compact with boundary, or with strip-like ends. We will be concerned with a family of almost complex structures that is uniform in the following sense.

\begin{definition}
	Let $J:S\to \mathcal{J}(T^{\ast}M)$ be a family of admissible almost complex structures, then $J$ is \emph{uniformly cylindrical}, if there exists a subset of $S\times T^{\ast}M$, which is proper over $S$, such that outside of this subset, the almost complex structures $J_{s}\vert_{s\in S}$ are invariant under the Liouville flow.  A family of admissible almost complex structures is called \emph{uniformly admissible} if there exists a uniform horizontal support, and if the family is uniformly cylindrical at infinity.
\end{definition}
\begin{figure}[t]
	\includegraphics[width=0.8\textwidth, height=7cm]{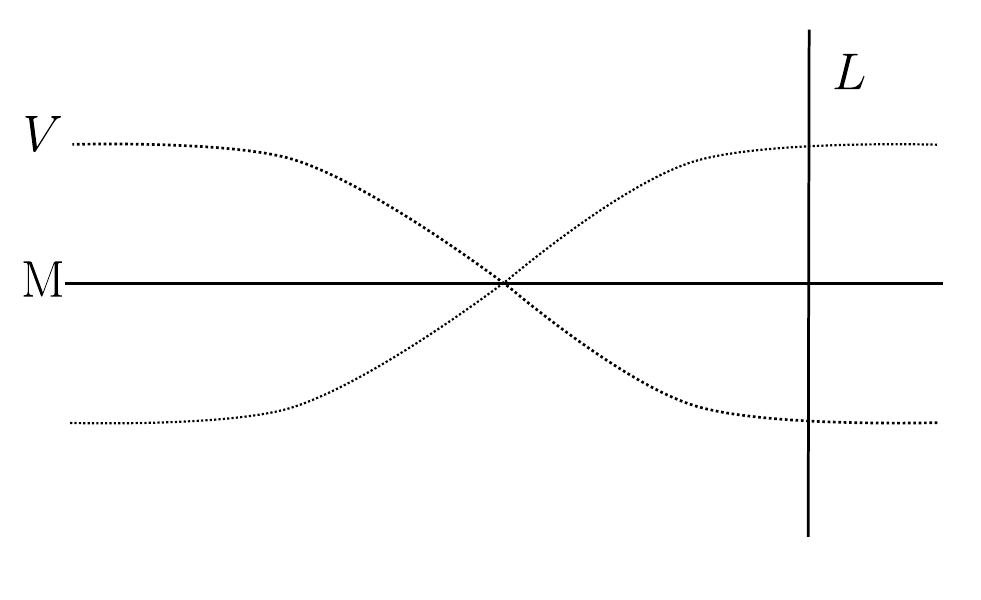}
	\centering
	\caption{A cross-section of $T^{\ast}M$: the zero section $M$; a vertically finite Lagrangian $V$; a horizontally finite Lagrangian $L$. }
	\label{finfig}
\end{figure}	
%To introduce the notion of vertically finite Lagrangians as in the introduction, we need a small lemma.

%\begin{lemma}
%Let $B_1$ be the open unit disc in $\mathbb{R}^n$ equipped with the standard Euclidean metric. Let $T^{\ast}B_1$ be equipped with the Sasaki metric induced from the flat Euclidean metric on $B_1$. Let $L$ be a totally geodesic Lagrangian submanifold of$T^{\ast}M$ which is closed in the subspace topology and suppose the projection $L\to M$ is proper. Suppose furthermore $L$ belongs in $\{r<1\}$. Then $L$ consists of disjoint unions of finite affine planes $L_1,...,L_n$.
%\end{lemma}  
%\begin{proof}
%First, note that $T^{\ast}B_1$ equipped with the Sasaki metric induced from the flat Euclidean metric on $B_1$ is just $B_1\times \mathbb{R}^{n}$. with the Euclidean metric. Any totally real geodesic submanifold of $B_1\times \mathbb{R}^{n}$ is a disjoint union of affine planes. Suppose there are infinitely many sheets of $L$. For $0\in B_1$, let $x_1,...$ denote the lifts of $0$ to $L$.  
%	\end{proof}
We now introduce the notion of vertically finite Lagrangians.	
\begin{definition}\label{vertifin}
A properly embedded Lagrangian submanifold $L$ in $T^{\ast}M$ is vertically finite if there exists an $R\gg 1$, $r_L>0$ and a compact subset $K_L\subset M$ such that the following holds.
\begin{itemize}
	\item (Finite height) $L$ is contained in $D_R^{\ast}M$.
	\item (Singularities of $d\pi$ are away from infinity) The complement $M-K_L$ is an open submanifold of $M$ and outside of $T^{\ast}K_L$, the projection $\pi:L\to M$ is a proper finite covering map
	%	\item (finitely many sheets at infinity) the space $\pi^{-1}(K_L)\cap L$ is a manifold with boundary and consists of finitely many connected components, 
	\item (Flatness) The submanifold $L\cap T^{\ast}(M-K_L)$ is totally $g_{con}$-geodesic and contained in the subset $D_1^{\ast}M$.
	\item (Finite vertical gap) For all $x\in M-K_L$, and $x'\in \pi^{-1}(x)$ , $B^{g_{con}}_{r_L}(x')\cap L{\vert}_{(M-K_L)}$ is connected. 
\end{itemize}
We say that a Lagrangian is \emph{finite at infinity} if it is either horizontally finite or vertically finite. 
%We say that a totally real submanifold $W$ which is Lagrangian at infinity is finite at infinity, if its Lagrangian part is. 
\end{definition}

%\begin{remark}
%	We may regard the constant $r''>0$ in Definition \ref{vertifin} as giving "gap bounds" for the sheets of $L$ over $M-K$. To understand the behaviours of $L$ at infinity, we make the following observations. First, note that we can assume that the compact subset $K$ is big enough so that $g_{M-K}$ is flat, and that $M-K$ is an open submanifold of $M$. For the sake of simplicity, assume that $M-K$ is also connected. Take the universal cover $\widetilde{M-K}$ of $M-K$, then $\widetilde{M-K}$ is a simply connected flat Riemannian manifold so it is isometric to an open unbounded, contractible subset of $\mathbb{R}^n$ equipped with the standard Euclidean metric. The induced submanifold $\tilde{L}$ of $T^{\ast}\mathbb{R}^n$ is totally geodesic and so it must be a linear Lagrangian plane with finite height. This happens if and only if the $\textbf{p}$-coordinates stay constant. It follows that $L$ on $M-K$ is a union of local translates of $M-K$. \end{remark}
We will show in Corollary \ref{spectralcurvevertifin} that spectral curves associated to complete GMN quadratic differentials are vertically finite. Note that on $D_1^{\ast}M$, $g_{con}$ coincides with $g^S$. This is why we had $\rho(r)=1$ in the neighbourhood of $ST^{\ast}M$.
%Let $\mathcal{J}(X)$ be the space of admissible almost complex structures. Let $S$ be a Riemann surface with corners. We consider a family of almost complex structures in $\mathcal{J}(X)$ parametrized by points on $s\in S$ that are uniformly admissible. Similarly, let $\mathcal{J}^{\tau}(X)$ be the space of $\tau$-admissible almost complex structures. 
\subsubsection{Geometric boundedness}\label{Geometric Boundedness}
We review the notion of geometric boundedness and tameness for almost complex manifolds $(V,J)$ and totally real submanifolds of $V$, following \cite{Audin1994HolomorphicCI,GPSCV,groman2019wrapped}. This will be necessary to  control the $C^0$ images of pseudo-holomorphic curves using monotonicity techniques. Recall that an almost complex manifold $(V,\omega,J)$ equipped with a symplectic form $\omega$ such that $J$ is $\omega$-compatible is called \textit{almost K\"{a}hler}, and a half-dimensional submanifold $W$ of $V$ such that $JTW\cap TW=0$ is called \textit{totally real}. In this paper, we will only consider the following class of totally real submanifolds.

\begin{definition}\label{def:Lagatinfinity}
A properly embedded totally real submanifold $W$ is \emph{Lagrangian at infinity} if $W$ is Lagrangian outside a compact subset. 
\end{definition}
The following definition of geometric boundedness is due to Ganatra-Pardon-Shende. 
\begin{definition}\cite[Definition 2.42]{GPSCV}\label{actualgeometricboundednesscondition}
Let $(V,\omega,J)$ be a $2n$-dimensional almost K\"{a}hler manifold equipped with a symplectic form. We say that $(V,\omega,J)$ is geometrically bounded if there is an open cover $\{V_{\alpha}\}$ of $V$ and charts $\phi_{\alpha}: B_1(0)\subset \mathbb{R}^{2n}\to U_{\alpha}$ such that:
\begin{itemize}
\item the collection $\{\phi_{\alpha}(B_{1/2}(0))\}$ also covers $V$,
\item with respect to the standard metric on $B_1(0)$, 
\begin{align}
	&\sup_{\alpha}\norm{\phi_{\alpha}^{\ast}J}_{C^r}<\infty,\\
	&\sup_{\alpha} \norm{\phi_{\alpha}^{\ast}\omega}_{C^r}<\infty,
\end{align}
\item there exists some $r_0>0$ such that 
\[\omega(v,(\phi_{\alpha})^{\ast}Jv)>r_0 g_{std}(v,v).\]
\end{itemize}
Furthermore, we say that a submanifold $W$ of $V$ which is Lagrangian at infinity is \emph{geometrically bounded} if the charts $\phi_{\alpha}$ can be chosen in a way that $\phi_{\alpha}^{-1}(W)$ is either empty or a linear subspace of $B_1(0)$.

Let $(S,\omega_S,j_S)$ be an almost K\"{a}hler manifold. Suppose we have a family $(V,\omega_s,J_s)$ of almost K\"{a}hler structures over $S$. Then we say that $(V,\omega_s,J_s)$ is \textit{uniformly geometrically bounded} if the almost K\"{a}hler manifold  $(V\times S,\omega_s\oplus \omega_S,J_s\oplus j_S)$ is geometrically bounded. We say that $W$ is uniformly geometrically bounded over $(V,\omega_s,J_s)$ if $W$ is $\omega_s$-Lagrangian outside a fixed compact subset for all $s$, and $(\partial S\times W,\omega_s\oplus \omega_S,J_s\oplus j_S)$ is geometrically bounded.

\end{definition}
From geometric boundedness, one can obtain the \textit{tameness} condition, which is originally due to Sikorav \cite[Definition 4.1.1]{Audin1994HolomorphicCI}.  
%\begin{definition}{\cite[Definition B.5]{groman2019wrapped}}. \label{isoperimetricitydefini}
%Let $(V,g)$ be a Riemannian manifold. We say that $g$ is $(\delta,c)$-isoperimetric at $p$ 
%Here $\ell(\gamma)$ is the length of $\gamma$.
%\end{definition}
\begin{definition}\label{GeometricalBoundednessgeneraldefini}
Let $(V,J,\omega,g=\omega(-,J))$ be an almost K\"{a}hler manifold. We say that $(V,J,\omega,g_J)$ is \emph{tame} if there exist constants $r_V,C_0,C_1,C_2>0$ such that the following holds.
\begin{itemize}
\item The metric is complete, $r_g=\inf_{x\in M} \text{inj}_x g>0$ and $r_V<r_g$.
\item There exists some $R_1,C_1>0$ such that the following holds. Given any $p\in V$, and a closed curve $\gamma:S^1\to B_{R_1}(p)$, there is a disc $D$ in $V$ such that $\partial D=\gamma$ and
\[Area(D)\leq C_1\ell (\gamma)^2.\]
%\item Over each ball $B(p,r_V)$, there exists a local symplectic form $\omega_p$ such that $\abs{\omega_p}_g\leq C_0$. Furthermore,  $\abs{X}_g^2\leq C_2 \omega_p(X,JX)$. 
\end{itemize}
Suppose $(S,\omega_S,j_S,g_S)$ is an almost K\"{a}hler surface. A family of quadruples $(V,J_s,\omega_s,g_s)$ on $S$ is said to be \emph{uniformly tame} if $(S\times V,\omega_S\oplus \omega,j\oplus J_s,g_s\oplus g)$ is tame. 
\end{definition}

%\begin{definition}\label{GeometricalBoundednessdefini}
%Let $(V,\omega,J)$ be an almost K\"{a}hler manifold. Let $g_J=\omega(-,J)$ be the induced metric on $V$. Then $(V,\omega,J)$ is said to be \emph{tame} if $(V,J,g_J)$ is tame with respect to symplectic forms $\omega_p=\omega\vert_{B(p,r_V)}$. 
%A family of almost K\"{a}hler structures $(V,\omega_s,J_s)$ parametrized on $S$ is said to be \emph{uniformly tame} if $(S\times V, \omega_S\oplus \omega,j\oplus J_s)$ is tame.
%\end{definition}
We can extend the notion of tameness to totally real submanifolds which are Lagrangian at infinity.
%\begin{definition}{\cite[Definition B.5]{groman2019wrapped}}
%\end{definition}
\begin{definition}\label{totallyrealgeombounddefini}({\cite[Definition 4.7.1]{Audin1994HolomorphicCI}})
Let $(V,J,g)$ be as in Definition \ref{GeometricalBoundednessgeneraldefini}.
Let $W\subset V$ be a properly embedded totally real submanifold of $V$ which is Lagrangian at infinity. Then $W$ is said to be \emph{tame} if there exists an $r_W>0, C_W>0$ such that the following holds. 
\begin{itemize}
\item For $x,y\in W$ with $d(x,y)_V<r_W$, we have
\[d(x,y)_W\leq C_W d(x,y)_V.\]
\item Each $B(r_W,p)\cap W$ is contractible. 
%\item For any chord $\gamma:[0,1]\to B_{r_W}(p)$ with endpoints on $W$, there is a half disc $D$ with $\partial D=\gamma\cup \tilde{\gamma}$ and $\tilde{\gamma}\subset W$, such that
%\[Area(D)\leq \ell (\gamma)^2.\]
\end{itemize}
Given a uniformly tame family $(V,\omega_s,J_s)$ over an almost K\"{a}hler surface $S$, we say that $W$ is \textit{uniformly tame} if $W$ is uniformly Lagrangian at infinity with respect to $\omega_s$ and $\partial S\times W$ is tame in $(S\times V,\omega_s\oplus \omega_S, J_s\oplus j_S,g_s\oplus g_S)$. 
\end{definition}

%\begin{lemma}[\cite{groman2019wrapped}]
%	Suppose $g,L$ are $(\delta,c)$-isoperimetric at some $p\in M$ and let $g'$ be such that for some $a_1,a_2>1$, we have
%	\[a_1^{-1}g(v,v)\leq g'(v,v)\leq a_2 g(v,v)\]
%	for all $x\in M,v\in T_x M$. Then $g',L$ are $(\delta/a_1,a_1^2a_2^2c)$-isoperimetric.
%\end{lemma}

The following well known proposition relates tameness with geometric boundedness.
\begin{proposition} 
Suppose $(V,\omega,J)$ is geometrically bounded, then it is tame. Furthermore, if $W$ is a geometrically bounded totally real submanifold of $V$ which is Lagrangian at infinity, then $W$ is also tame. 
\end{proposition}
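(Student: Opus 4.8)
The plan is to derive tameness of $(V,\omega,J)$ directly from the local uniform bounds provided by the geometrically bounded charts $\phi_\alpha$, and then to transfer the argument to the totally real submanifold $W$ using the extra hypothesis that $\phi_\alpha^{-1}(W)$ is either empty or a linear subspace. The key observation is that geometric boundedness is exactly a uniform-at-all-scales version of the data one needs to run the standard isoperimetric/monotonicity estimates, so ``geometrically bounded $\Rightarrow$ tame'' should be a matter of assembling pieces that are essentially local.

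\textbf{Step 1: completeness and a uniform injectivity radius.} First I would check that the metric $g=\omega(-,J-)$ is complete and has injectivity radius bounded below by some $r_g>0$. The $C^r$-bounds on $\phi_\alpha^\ast\omega$ and $\phi_\alpha^\ast J$ together with the uniform ellipticity constant $r_0$ give two-sided bounds $c_1 g_{\mathrm{std}} \le \phi_\alpha^\ast g \le c_2 g_{\mathrm{std}}$ on $B_1(0)$, uniformly in $\alpha$; since the $\phi_\alpha(B_{1/2}(0))$ cover $V$, this yields a uniform lower bound on the injectivity radius and completeness (no geodesic can leave $V$ in finite time because each chart has a definite size in $g$). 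Set $r_V$ to be any positive number below this bound.

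\textbf{Step 2: the quadratic isoperimetric inequality.} Next I would establish the local isoperimetric inequality: there exist $R_1,C_1>0$ so that every loop $\gamma:S^1\to B_{R_1}(p)$ bounds a disc $D$ with $\mathrm{Area}(D)\le C_1\,\ell(\gamma)^2$. Choose $R_1$ small enough (relative to $c_1,c_2$) that any ball $B_{R_1}(p)$ is contained in a single chart $\phi_\alpha(B_1(0))$; pull $\gamma$ back to $B_1(0)\subset\mathbb{R}^{2n}$, fill it by the Euclidean cone (or any standard filling) in $B_1(0)$, and push forward. Because $\phi_\alpha^\ast\omega$ is $C^0$-bounded and $\ell$, $\mathrm{Area}$ transform with bounded distortion under $\phi_\alpha$, the Euclidean quadratic isoperimetric inequality transports to the desired estimate with a constant $C_1$ independent of $p$. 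This proves $(V,\omega,J)$ is tame.

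\textbf{Step 3: tameness of $W$.} For the totally real submanifold, I would pick $r_W$ small enough that every ball $B_{r_W}(p)$ with $p$ near $W$ sits inside one chart $\phi_\alpha(B_1(0))$ in which $\phi_\alpha^{-1}(W)$ is a linear subspace $\Pi\subset B_1(0)$; then $B(r_W,p)\cap W$ corresponds to a convex (hence contractible) slice of $\Pi$, and the intrinsic-versus-ambient distance comparison $d_W(x,y)\le C_W\,d_V(x,y)$ follows from the bi-Lipschitz comparison between $\phi_\alpha^\ast g\vert_\Pi$ and the Euclidean metric on $\Pi$, since on a linear subspace the Euclidean intrinsic and ambient distances coincide. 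The uniform/family versions in Definition \ref{GeometricalBoundednessgeneraldefini} and Definition \ref{totallyrealgeombounddefini} follow by applying the same argument to $V\times S$ (resp. $\partial S\times W$), which is geometrically bounded by hypothesis.

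\textbf{Main obstacle.} I expect the only real subtlety is bookkeeping the scale at which ``local'' becomes legitimate: one must choose $R_1$ and $r_W$ small enough — uniformly in $\alpha$, using only the $C^r$-bounds and the ellipticity constant $r_0$ — that the relevant balls genuinely lie inside a single good chart, so that all filling and distance-comparison arguments can be done in $\mathbb{R}^{2n}$ and pushed forward with controlled distortion. Once the scales are fixed correctly, each estimate is the Euclidean statement transported through a map with two-sided bounded derivatives, so nothing deep remains; this is why the proposition is stated as ``well known''.
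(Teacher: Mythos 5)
Your overall strategy — transport Euclidean estimates through the uniformly bounded charts — is exactly what sits behind the paper's proof, which is essentially a citation: Groman's isoperimetric estimate for the filling inequality, Chassé's lemma for tameness of geometrically bounded Lagrangians, and (per Remark \ref{controllingtheconstants}) Cheeger--Gromov--Taylor for the injectivity radius. Your Step 2 is correct, and so is the Lipschitz comparison $d_W\le C_W\,d_V$ in Step 3: a path of length $<r_W$ starting at $x$ cannot leave the chart, so the preimages of $x,y$ are Euclidean-close, and the straight segment inside $\Pi\cap B_1(0)$ gives the intrinsic bound. However, two of your justifications fail as written.

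First, in Step 1 you deduce the injectivity radius bound from the two-sided bounds $c_1 g_{\mathrm{std}}\le\phi_\alpha^\ast g\le c_2 g_{\mathrm{std}}$. Pointwise $C^0$ comparability does \emph{not} bound the injectivity radius from below: a metric on $B_1(0)\subset\mathbb{R}^2$ equal near a small circle to $dr^2+h(r)^2\,d\theta^2$ with $h(r)\asymp r$ but $h'(r_0)=0$ at some tiny $r_0$ is uniformly comparable to the Euclidean metric yet carries a closed geodesic of length $2\pi h(r_0)$, arbitrarily short. What rescues the statement is the full $C^r$ ($r\ge 2$) bounds on $\phi_\alpha^\ast\omega$ and $\phi_\alpha^\ast J$: they bound the sectional curvature of $g=\omega(\cdot,J\cdot)$, the ellipticity constant $r_0$ gives volume non-collapsing of unit balls, and then the Cheeger--Gromov--Taylor theorem produces the uniform injectivity radius bound — exactly the additional input Remark \ref{controllingtheconstants} points to; it is not a consequence of bi-Lipschitz control alone. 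Second, in Step 3 the claim that $B(r_W,p)\cap W$ is a ``convex (hence contractible) slice'' of $\Pi$ is not right: the $\phi_\alpha$-preimage of a $g$-metric ball is not convex, and with only $C^0$ comparability the set $\{x\in\Pi : d_g(\phi_\alpha(x),p)<r_W\}$ need not even be connected (make the metric expensive on a thin slab meeting $\Pi$, so that $d_g(p,\cdot)\vert_\Pi$ fails to be monotone along a ray). The standard repair — and the content of Chassé's lemma that the paper invokes — is comparison-geometric: curvature and injectivity radius bounds make $d(p,\cdot)^2$ strictly convex on balls of small radius, linearity of $\Pi$ in a chart with $C^1$-bounded metric bounds the second fundamental form of $W$, and for $r_W$ small the restriction of $d(p,\cdot)^2$ to $W$ remains strictly convex, so its sublevel sets are contractible. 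So the architecture of your argument is the right one, but both of these steps genuinely require the higher-order chart bounds and a comparison-geometry argument (or the citations the paper uses), not the $C^0$ reasoning you give; the uniform/family versions then follow, as you say, by applying the corrected argument to $S\times V$ and $\partial S\times W$.
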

\begin{proof}
Groman's estimate \cite[Lemma 4.10]{groman2021floer} gives control over the isoperimetricity constants in terms of the sectional curvature and the injectivity radius. Jean-Philippe Chass\'{e}'s estimate \cite[Lemma 1]{chasse2021convergence} shows tameness for geometrically bounded Lagrangian submanifolds. 
\end{proof}
\begin{remark}\label{controllingtheconstants}
Controlling the injectivity radius requires control over the sectional curvature and the \textit{volume comparison} between the Euclidean volume and the volume induced from $g_J$. This requires theorem \cite[Theorem 4.3]{CheegerJeffGromovinjectivityradius}. In particular, a uniform lower bound on the $g$-volume of the unit ball and an upper bound on the sectional curvature gives a uniform lower bound on the injectivity radius. Controlling the injectivity radius and the cut locus distance (the supremum of the radial radius of the embedded tubular neighbourhood) for Lagrangians was done in \cite[Lemma 2.12]{JboundedGroman} and \cite[Theorem 3.9]{JboundedGroman} respectively.
\end{remark}
The following proposition verifies geometrical boundedness of almost complex manifolds $(T^{\ast}M,\omega,J)$ for $J$ an admissible almost complex structure. This is a modification of \cite[Lemma 2.43]{GPSCV} and we follow their proof closely. 

\begin{proposition}\label{geoboundnonfamily}
Let $J$ be an admissible complex structure on $T^{\ast}M$. Let $g_J$ be the metric induced from $J$ and $\omega$. Then the almost K\"{a}hler manifold $(T^{\ast}M,J,\omega,g_J)$ is geometrically bounded. Furthermore, Lagrangians which are finite at infinity are also geometrically bounded.
\end{proposition}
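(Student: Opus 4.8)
\textbf{Proof proposal for Proposition \ref{geoboundnonfamily}.}

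The plan is to follow the structure of \cite[Lemma 2.43]{GPSCV} but adapted to our cotangent bundle $T^{\ast}M$ with an admissible almost complex structure $J$. The key point is that the geometric boundedness conditions are local, so once we have a good atlas near infinity that trivializes both the symplectic form and (a normalized form of) the almost complex structure, the supremum bounds follow for free. I would split the argument into three regions: (i) the compact region $T^{\ast}K$ containing the horizontal support of $J$, where $J = J_{con}$ outside $T^{\ast}K$ and $J$ is an arbitrary admissible deformation inside, (ii) the region where $M$ is flat (i.e.\ over $M-K$, using that $(M,g)$ is flat at infinity) but $J$ is still the Sasaki part of $J_{con}$ rather than the conified one, and (iii) the region $\{r \gg 1\}$ near vertical infinity where $J_{con}$ is cylindrical and of contact type by Proposition \ref{proposition:contacttype}.

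First I would handle region (iii), which is the heart of the matter. Over $M - K$ the metric $g$ is flat, so we may choose normal coordinates $(q_1,\dots,q_n)$ on balls of a fixed radius $r_g$ (the injectivity radius lower bound from flatness at infinity), in which $g$ is the Euclidean metric; the induced Darboux coordinates $(q,p)$ on $T^{\ast}M$ then make $\omega = \mathbf{dp}\wedge\mathbf{dq}$ standard. In these coordinates the conified Sasaki structure $J_{con}$ depends only on $r = |p|$ and on $\rho(r)$, and since $\rho$ is fixed once and for all and smooth, $J_{con}$ has all its coordinate derivatives bounded uniformly provided we further rescale the $p$-directions by a factor comparable to $r$. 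Concretely, the Liouville flow $(q,p)\mapsto(q,e^t p)$ pulls $J_{con}$ back to itself for $r\gg 1$ because $J_{con}$ is cylindrical there, so one chart per "annular shell" $\{2^k \le r \le 2^{k+1}\}$ suffices, and after applying the dilation $p\mapsto 2^{-k}p$ all these charts look identical. This gives a uniform $C^r$ bound on $\phi_\alpha^\ast J_{con}$ and $\phi_\alpha^\ast\omega$; the uniform taming inequality $\omega(v,Jv) \ge r_0 g_{std}(v,v)$ follows from $\omega$-compatibility together with the fact that the rescaled metrics $g_{con}$ all become uniformly comparable to the Euclidean metric after the dilation. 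Region (ii) is the special case $\rho \equiv 1$ of the same computation (the Sasaki metric $g^S$ itself on $D_1^\ast M$), and region (i) is compact so all constants are automatically finite there; one just needs to ensure the charts from (i) and from (ii)/(iii) overlap consistently, which is a partition-of-unity / finite-cover matter.

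For the statement about Lagrangians finite at infinity, I would treat the two cases separately. If $L$ is horizontally finite, then $\pi(L) \subset K$ lies in the compact region, so only finitely many charts meet $L$ and geometric boundedness of $L$ is automatic (one can even choose charts adapted to $L$ directly by the ordinary tubular neighbourhood theorem). If $L$ is vertically finite in the sense of Definition \ref{vertifin}, then outside $T^\ast K_L$ the projection $\pi\colon L \to M$ is a proper finite covering, $L$ is totally $g_{con}$-geodesic and contained in $D_1^\ast M$, and the finite-vertical-gap condition guarantees that the sheets of $L$ are uniformly separated; hence in the normal-coordinate charts from region (ii), each sheet of $L$ is an affine (indeed linear, after translating) subspace, so $\phi_\alpha^{-1}(L)$ is a disjoint union of linear subspaces — and by shrinking the charts using the vertical gap $r_L$ we may arrange that each chart meets at most one sheet, making $\phi_\alpha^{-1}(L)$ a single linear subspace or empty, exactly as required by Definition \ref{actualgeometricboundednesscondition}.

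The main obstacle I anticipate is bookkeeping rather than conceptual: making the rescaling-by-shells argument near vertical infinity precise, i.e.\ verifying that after the Liouville dilation the transition maps between overlapping shell-charts have uniformly bounded derivatives and that the conical cutoff $\rho$ does not spoil the uniform lower bound in the taming inequality. This is exactly the content of \cite[Lemma 2.43]{GPSCV} and \cite[Section 8.1]{cieliebak2017knot}, so I would cite those for the estimates on $J_{con}$ and only spell out the (minor) modifications needed because $M$ here is non-compact and $J$ is allowed to differ from $J_{con}$ over a compact set.
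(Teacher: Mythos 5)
Your proposal follows essentially the same route as the paper: exploit Liouville-flow invariance of $J$ near vertical infinity to reduce to a fixed rescaled model (your dyadic-shell dilation is the hands-on version of the paper's appeal, via the reverse Liouville flow, to the Cheeger--Gromov convergence of $(T^{\ast}M,J,r\omega)$ from \cite[Lemma 2.43]{GPSCV}), use flatness at infinity to get uniform flat charts over $M-K$ through the exponential map, and absorb the compactly supported deformation of $J_{con}$ by compactness. Your treatment of the vertically finite case also matches the paper's proof: flat Darboux charts pulled back by the exponential map, totally geodesic plus flat implies each sheet is a linear Lagrangian plane, triviality of the covering over contractible balls gives finitely many disjoint planes, and the finite vertical gap (shrinking the chart radius to roughly $\min\{r_L,\tfrac14,\tfrac12 r_g\}$) ensures each chart meets a single plane.

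The one step that fails as written is the horizontally finite case. Such an $L$ is cylindrical at infinity and only its projection $\pi(L)$ lies in the compact set $K$; the Lagrangian itself is noncompact (a cotangent fibre is horizontally finite), so it meets infinitely many of your shell charts going up to vertical infinity, and the claim that ``only finitely many charts meet $L$'' is false; likewise a tubular neighbourhood of a noncompact submanifold does not by itself yield uniform constants. The repair is already contained in your own setup: conicality of $L$ means the Liouville dilation carries $L\cap\{2^k\le r\le 2^{k+1}\}$ to a fixed model independent of $k$, so a chart adapted to $L$ in one shell serves for all shells. This is exactly how the paper disposes of this case, by noting that a conical Lagrangian with compact horizontal support is precisely the situation covered by \cite[Lemma 2.43]{GPSCV}.
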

\begin{proof}
Since admissible almost complex structures are cylindrical at infinity, we may assume that $J$ is cylindrical without loss of generality, over the positive cone over some fixed sphere bundle of $J_g$-radius $R>0$ which depends only on $J$. This $R$ only depends on the auxiliary function $\rho$.

Let $p=(q,r)$ be a point near vertical infinity where $q$ is the corresponding point on the sphere bundle. Take the reverse Liouville flow to bring it down to the point $q$. Since $J$ is invariant under the Liouville flow, we see that the geometry of $(T^{\ast}M,J,\omega)$ near $p$ is the same as the geometry of $(T^{\ast}M,J,r\omega)$ near $q$. But as pointed out in \cite[Lemma 2.43]{GPSCV}, the geometry $(T^{\ast}M,J,r\omega)$ uniformly converges in the Cheeger-Gromov sense to the linear K\"{a}hler geometry at $T_q(T^{\ast}M)$ induced by the triple $(\omega,J,g(0))$ as $r\to \infty$. Hence, it suffices to bound the geometry of the sphere bundle. By the flatness at infinity, the geodesic exponential map is locally an isometry outside some compact subset, with the radius of injectivity uniformly bounded below. From this observation, the boundedness of the geometry of the sphere bundle automatically follows. For $L$ a horizontally finite Lagrangian, it is conical at infinity and the horizontal support is contained in a compact subset of $M$, and so we are back in the situation of  \cite[Lemma 2.43]{GPSCV}.

%Let $r_g$ be as in Definition \ref{flatatinfinitydefinition}. Outside some compact subset of $K$, we can cover $M$ with countably many balls $B_{r_i}(x_i)$ such that $0<R_1'<r_i<r_g$ for some $R_1'>0$. Since the curvature vanishes, the exponential map $\exp_{x_i}:B_{r_i}(0)\to B_{r_i}(x_i)$ is a local isometry. Taking the pullback via the exponential map and using the covariance of $J_{con}$, we see that the unit sphere bundle is trivial and we are simply bounding the geometry of $S^{n-1}\times B_{r_i}(0)$ equipped with the standard metric scaled by $R^{1/2}$ (The $R^{1/2}$ factor only appears because we had taken the sphere near infinity where the almost complex structure becomes conical). This is automatic. 
%Scale via the Liouville flow to bring $p$ very close to scaling via Liouville flow rescales the geometry of points $p$ close to infinity to the geometry of points $q$ over the unit sphere bundle. Under this rescaling limit, the geometry just converges to the tangent space at $q$ with the induced linear almost K\"{a}hler structure.
Suppose now $L$ is a vertically finite Lagrangian. Let $K_L$ and $r_L$ be as in Definition $\ref{vertifin}$. By definition, there exists some compact subset $K_1\subset M$ such that $J=J_{con}$ on $T^{\ast}(B_{r_g}(M-K_1))$ and $K_L\cap B_{r_g}(M-K_1)=\emptyset$. For $x\in M-K_1$, the restriction of the exponential map $\exp_x:B_{r_g}(0)\to B_{r_g}(x)$ is an isometry since the sectional curvature vanishes identically on the image, by flatness at infinity. Consider the induced map 
\[(\exp_x,d(\exp^{-1}_x)^{\ast}):T^{\ast}B_{r_g}(0)\to T^{\ast}B_{r_g}(x).\]
Then
\[J\vert_{D_{3/2}T^{\ast}B_{r_g}(x)}=J_{con}\vert_{D_{3/2}^{\ast}B_{r_g}(x)}=J_g\vert_{D_{3/2}T^{\ast}B_{r_g}(x)}\]
by definition, that by the covariance of $J_g$, 
\[(\exp_x,d(\exp^{-1}_x)^{\ast})^{\ast}J\vert_{D_{3/2}^{\ast}B_{r_g}(x)}=(\exp_x,d(\exp^{-1}_x)^{\ast})^{\ast}J_g\vert_{D_{3/2}^{\ast}B_{r_g}(x)}=J_{g_{std}}\vert_{D_{3/2}^{\ast}B_{r_g}(0)}.\]
Here $g_{std}$ is the standard metric on $\mathbb{R}^{n}$. Of course, the metric induced from $J_{g_{std}}$ is the standard metric on $\mathbb{R}^{2n}$.
%Furthermore,
%\[J_{g_{std}}\vert_{D_{3/2}^{\ast}B_{r_g}(0)}= {J_0}\vert_{{D_{3/2}^{\ast}B_{r_g}(0)}}\]
%regarding ${D_{3/2}^{\ast}B_{r_g}(0)}$ as a subset of $\mathbb{C}^n$, and the induced metric on $D_{3/2}^{\ast}B_{r_g}(0)$ coincides with the standard flat metric on $\mathbb{C}^n=\mathbb{R}^{2n}$. 

Now any totally geodesic submanifold of $\mathbb{R}^{2n}$ equipped with the standard flat metric is a linear subplane of $\mathbb{R}^{2n}$. Furthermore, since $L\to M$ on $B_{r_g}(x)$ is a proper covering, and any covering on a contractible open set is trivial, $(\exp_x,d(\exp^{-1}_x)^{\ast})^{-1}(L)$ consists of finitely many disjoint Lagrangian subplanes of $T^{\ast}B_{r_g}(0)\subset \mathbb{R}^{2n}$. By the final condition in Definition \ref{vertifin}, setting $r'_L=\min\{r_L,1/4,\frac{1}{2}(r_g)\}$, for any $x'\in (\exp_x,d(\exp^{-1}_x)^{\ast})^{-1}(L)$, $B_{r'_L}(x')\cap L$ is connected and consists of a single Lagrangian plane. Furthermore, $(\exp_x,d(\exp^{-1}_x)^{\ast})^{\ast}\omega=\omega_{std}$. This finishes the proof of geometric boundedness of $L$. Note that we have derived tameness of $L$ directly in the proof as well. 
%let $r_{T^{\ast}M}$ be the uniform minimal injectivity radius for the geometrically bounded almost complex manifold $(T^{\ast}M,J,\omega)$  (Definition \ref{GeometricalBoundednessdefini}).  By definition, there exists some $K_1\subset M$ containing $K_L$ such that outside of $T^{\ast}(B_{r_{T^{\ast}M}}K_1)$, $J=J_{con}$. 
%But then $B(x,\epsilon_L)\cap L{\vert}_{(M-K)}$ is connected hence by the covering property, it consists of an open ball of radius $\epsilon_L$ in the affine plane component containing $x$. Since the pullback of $J_{con}$ and $\omega$ by $\exp_x$ just gives the standard almost complex structure and symplectic structure on $\mathbb{C}^n$, we can take these open balls of radius $\epsilon_L$ as the coordinate chart as in Definition \ref{actualgeometricboundednesscondition}. 
\end{proof}

We also need the following ``family" version of the geometric boundedness statement. This is a modification of \cite[Lemma 2.44]{GPSCV}. Recall the conventions in \cref{sec:convention}. 
\begin{lemma}\label{geoboundfamily}
Let $J:\overline{A_1}\to \mathcal{J}(T^{\ast}M)$ be a uniformly admissible family of almost complex structures over $\overline{A_1}$, then $(\overline{A_1}\times T^{\ast}M, j_{\overline{A_1}}\oplus J,\omega_{\overline{A_1}}\oplus \omega_{T^{\ast}M})$ is geometrically bounded. Furthermore, if a totally real submanifold $W\subset T^{\ast}M$ is Lagrangian at infinity, and finite at infinity, then $\partial \overline{A_1}\times W$ is geometrically bounded. 
\end{lemma}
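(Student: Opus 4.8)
The plan is to reduce \cref{geoboundfamily} to the non-family statement \cref{geoboundnonfamily}, exactly as \cite[Lemma 2.44]{GPSCV} reduces to \cite[Lemma 2.43]{GPSCV}, by absorbing the base $\overline{A_1}$ into the cotangent-bundle picture and exploiting the uniformity hypotheses. The point is that a \emph{uniformly} admissible family is, outside a proper-over-$\overline{A_1}$ subset, invariant under the Liouville flow with a fixed horizontal support; so after one Liouville-flow normalization we only have to bound a compact family of geometries, where compactness of $\overline{A_1}$ does all the work.

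\textbf{Step 1: a good atlas away from infinity.} First I would fix the uniform horizontal support $K\subset M$ of the family $J$, and the proper-over-$\overline{A_1}$ subset outside of which every $J_s$ is Liouville-invariant. Over the region $\{r\le R\}\cap T^{\ast}K$ (enlarged to contain that proper subset's ``non-cylindrical'' part for all $s$, which is possible by properness and compactness of $\overline{A_1}$), the map $s\mapsto J_s$ is a continuous family of almost complex structures on a relatively compact region; choosing a finite atlas of Darboux-type charts on this region and taking the product with the standard charts on $\overline{A_1}$, one gets charts $\phi_\alpha\colon B_1(0)\subset\RR^{2n+2}\to \overline{A_1}\times T^{\ast}M$ on which $\phi_\alpha^{\ast}(j_{\overline{A_1}}\oplus J_s)$ and $\phi_\alpha^{\ast}(\omega_{\overline{A_1}}\oplus\omega)$ have all derivatives bounded, uniformly in $\alpha$ and $s$, by compactness of both $\overline{A_1}$ and the region. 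The uniform lower bound $\omega(v,Jv)>r_0\,g_{std}(v,v)$ on this piece follows likewise from continuity plus compactness.

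\textbf{Step 2: the neighbourhood of vertical infinity.} For a point $(s,p)=(s,(q,r))$ with $r$ large, I would run the reverse Liouville flow in the $T^{\ast}M$ factor only (the base $\overline{A_1}$ is untouched); by uniform cylindricity this carries $(\overline{A_1}\times T^{\ast}M, j\oplus J_s,\omega_{\overline{A_1}}\oplus\omega)$ near $(s,p)$ isometrically to $(\overline{A_1}\times T^{\ast}M, j\oplus J_s, \omega_{\overline{A_1}}\oplus r\omega)$ near $(s,q)$, and, exactly as in the proof of \cref{geoboundnonfamily}, this product rescaled geometry converges in the Cheeger--Gromov sense to a fixed linear K\"ahler model as $r\to\infty$, uniformly in $s\in\overline{A_1}$ because $\overline{A_1}$ is compact and $s\mapsto J_s$ is continuous. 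So it suffices to bound the geometry of $\overline{A_1}\times S^{\ast}M$, which reduces, via flatness at infinity of $(M,g)$ (so the geodesic exponential is a local isometry off a compact set with injectivity radius bounded below) together with compactness of $\overline{A_1}$, to the same sphere-bundle estimate used in \cref{geoboundnonfamily}.

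\textbf{Step 3: the totally real submanifold.} For the boundary condition I would apply the above to $\partial\overline{A_1}\times W$: over the region where $W$ is horizontally finite this is literally the horizontally finite case of \cref{geoboundnonfamily} crossed with the compact manifold $\partial\overline{A_1}$; over the region near vertical infinity, since $W$ is vertically finite, the reverse-Liouville-flow-plus-exponential-chart construction from the proof of \cref{geoboundnonfamily} expresses $(\exp_x,d(\exp_x^{-1})^{\ast})^{-1}(W)$ as a finite disjoint union of Lagrangian linear subplanes of a Euclidean ball, and the product with the linear factor $T(\partial\overline{A_1})$ is still a linear subspace; so the charts $\phi_\alpha$ can be chosen with $\phi_\alpha^{-1}(\partial\overline{A_1}\times W)$ empty or linear. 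The only mild care needed is that $W$ is $\omega_s$-Lagrangian outside a compact set for all $s$ simultaneously, which is part of the uniform-Lagrangian-at-infinity hypothesis.

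\textbf{Main obstacle.} The substantive point — and the only place the family structure is genuinely used — is Step 2: making the Cheeger--Gromov convergence to the linear model \emph{uniform in the base parameter} $s$, i.e.\ checking that all the curvature and injectivity-radius bounds produced by the rescaling argument of \cite[Lemma 2.43]{GPSCV} can be taken independent of $s\in\overline{A_1}$. This is exactly where compactness of $\overline{A_1}$, properness over $\overline{A_1}$ of the non-cylindrical locus, and continuity of $s\mapsto J_s$ combine; once that uniformity is in hand, everything else is the non-family argument with a harmless compact factor adjoined, and the verification of geometric boundedness for $\partial\overline{A_1}\times W$ is routine.

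\hfill$\square$
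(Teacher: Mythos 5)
Your proposal is correct and follows essentially the same route as the paper: use uniform admissibility to see that outside a uniform horizontal support (and a compact non-cylindrical locus, compact because $\overline{A_1}$ is compact) the family reduces to the fixed background geometry handled in Proposition \ref{geoboundnonfamily}, and absorb the remaining compact region and the compact factor $\overline{A_1}$ by compactness. The paper's own proof is just a terser version of this reduction, so no further comparison is needed.
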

\begin{proof}
Suppose $L$ is vertically finite. Since the family is uniformly admissible, there exists a compact subset $K\subset M$ such that over $T^{\ast}(M-K)$, $J$ agrees with the background almost complex structure $J_{con}^g$. Furthermore, $L$ is totally geodesic outside of some $T^{\ast}K'$ for some compact subset $K'\subset \tilde{C}$. Then the manifold $\partial A_1\times L\vert_{T^{\ast}((K\cup K')^c)}$ is geometrically bounded, and the statement for the compact part of $L$ also follows. For the case where $L$ is horizontally finite, repeat the argument in the proof of Proposition \ref{geoboundnonfamily}.
\end{proof}
\begin{remark}
Replacing a family of almost complex structures on $\overline{A_1}$ with an almost complex structure on $\overline{A_1}\times T^{\ast}M$ is called the \textit{Gromov Trick}. 
\end{remark}
We now focus our attention back to $\tilde{C}$. We first show flatness at infinity. 
\begin{proposition}{\label{Flat at infinity}}
Let $\phi$ be a complete GMN quadratic differential over $C$. 
Let $g$ be a Riemannian metric on $\tilde{C}$ that agrees with the singular metric $g^{\phi}$ outside a compact subset of $\tilde{C}$. Then $(\tilde{C},g)$ is flat at infinity.
\end{proposition}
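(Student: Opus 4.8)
The plan is to verify the three clauses of Definition~\ref{flatatinfinitydefinition} in turn: completeness of $g$, flatness of $g$ outside a compact set, and a uniform lower bound on the injectivity radius.

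\emph{Completeness.} First I would recall from the discussion after Definition~2.1.?? (the ``$\phi$-metric'' paragraph) that, since $\phi$ is a \emph{complete} GMN quadratic differential, the singular metric $g^\phi$ induces a complete metric space structure on $\tilde{C}$: the zeroes of $\phi$ are at finite $g^\phi$-distance (the line element $|\sqrt{\phi}|\sim|z|^{1/2}$ near a simple zero is integrable), while every pole of $\phi$ — which by completeness has order $\geq 2$ — is at infinite $g^\phi$-distance (comparing with the local forms in Proposition~\ref{zeroofphiprop}, the line element blows up like $|z|^{-b}$ with $b\geq 1$, whose integral diverges). Now $g$ differs from $g^\phi$ only on a compact subset $K_0\subset\tilde{C}$; on a compact set the two metrics are uniformly comparable, so any $g$-Cauchy sequence is eventually a $g^\phi$-Cauchy sequence outside $K_0$, hence converges in $\tilde{C}$. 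Thus $(\tilde{C},g)$ is complete.

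\emph{Flatness outside a compact set.} Enlarge $K_0$ to a compact set $K\subset\tilde{C}$ whose complement is a disjoint union of punctured-disc neighbourhoods of the (finitely many) poles, on each of which $g=g^\phi$. As noted in the excerpt, $g^\phi$ is flat: in the local conformal coordinate $W=\int\sqrt{\phi}$ one has $g^\phi\equiv|dW|^2$, the Euclidean metric, by the transformation rule \eqref{transformationrule}. Hence $g|_{\tilde{C}\setminus K}$ is flat.

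\emph{Injectivity radius bound.} This is the step I expect to be the main obstacle, since flatness alone does not bound the injectivity radius from below — near a pole the flat metric develops an infinite end whose geometry must be understood. The idea is to use the explicit local models of Proposition~\ref{zeroofphiprop} for poles of order $2$ and of even order $\geq 4$ (and the odd-order case via the normal form \eqref{finoddform}): in the $W$-coordinate each punctured neighbourhood of a pole is isometric to a standard flat model — for a double pole, to a half-infinite flat cylinder or cone of fixed circumference (controlled by the residue $a_{-2}$); for higher-order poles, to a flat region asymptotic to a union of half-planes (``ends''). In each such model the injectivity radius is bounded below by a positive constant determined by the circumference of the cylinder or the opening of the sector — quantities fixed by $\phi$. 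Since there are only finitely many poles, one gets a uniform lower bound $r_1>0$ on the injectivity radius of $g^\phi$ outside $K$. On the compact set $K$ the metric $g$ is smooth (we have chosen it so), so its injectivity radius is bounded below by some $r_2>0$ by compactness; one also needs the standard observation that the injectivity radius on $K$ is not destroyed by short geodesic loops escaping into the ends, which again follows from the uniform cylinder/cone geometry there. Taking $r_g=\min(r_1,r_2)$ (shrinking slightly to account for points in a collar around $\partial K$) gives the required global bound, and $(\tilde{C},g)$ is flat at infinity.

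The only genuinely technical point is the injectivity-radius estimate near the poles; everything else is either quoted from the $\phi$-metric discussion or a routine compactness argument. I would handle the pole neighbourhoods by passing to the $W$-coordinate and citing the explicit flat models above, so that the computation reduces to the elementary fact that a flat cylinder of circumference $\ell$ has injectivity radius $\ell/2$ and a flat sector/half-plane has infinite injectivity radius.
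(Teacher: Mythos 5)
Your proposal is correct, and its overall skeleton (completeness quoted from the $\phi$-metric discussion, flatness immediate since $g=g^{\phi}$ off a compact set) matches the paper; the difference lies in how the injectivity-radius bound — the only real content of the proposition — is obtained. The paper's proof is a three-line argument: it picks a neighbourhood $U$ of the poles whose points lie at $g^{\phi}$-distance $>1$ from the zeroes of $\phi$, observes that the flat coordinate $W=\int\sqrt{\phi}$ then extends (as a local isometry) over a disc of radius $\geq 1$ about any $p\in U$, and concludes the injectivity radius is positive, leaving the compact part to compactness. You instead invoke the explicit local models of Proposition \ref{zeroofphiprop} at the poles — a half-infinite flat cylinder of fixed circumference for a double pole, expanding flat ends for poles of order $\geq 3$ — bound the systole in each model, and then patch with the compact part via a collar argument. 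Your route costs a case analysis over pole orders but is more explicit about the one point the paper's phrasing glosses: extending $W$ over a unit disc only controls the exponential map as a local isometric immersion, and by itself does not rule out short geodesic loops (a thin flat cylinder has large flat charts but small injectivity radius), so the genuine lower bound near a double pole is half the cylinder circumference, a constant determined by $\phi$ rather than the radius $1$ suggested by the coordinate extension; since the definition only demands some $r_g>0$, both arguments land in the same place. Conversely, the paper's argument buys brevity and uniformity — it needs only the uniform separation of the pole neighbourhoods from the zeroes, with no reference to the classification of pole types — so it generalizes with less bookkeeping, provided one adds the short-loop remark you make explicit.
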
	

\begin{proof}
By definition, the metric $g$ is equal to $g^{\phi}$ in some neighbourhood of the poles. There exists a smaller neighbourhood $U$ of the poles such the points in $U$ are of distance $>1$ away from the zeros of $\phi$. Then for $p\in U$, the locally defined flat coordinate $W=\int \sqrt{\phi(z)}$ can be extended over to an open disc of some radius $\geq 1$. This shows that the minimal injectivity radius is positive. Hence $g$ is flat at infinity. 
\end{proof}	

\begin{corollary}\label{spectralcurvevertifin}
Let $\phi$ be a GMN complete quadratic differential. Let $(\tilde{C},g)$ be as above. Then the pair $T^{\ast}\tilde{C}$ and $\scurve$ equipped with an admissible almost complex structure is \textit{geometrically bounded}. Furthermore, the spectral curve $\scurve$ is vertically finite. 
\end{corollary}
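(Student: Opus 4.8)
The plan is to deduce Corollary~\ref{spectralcurvevertifin} from Proposition~\ref{geoboundnonfamily} together with Proposition~\ref{Flat at infinity}, so that the real content is verifying that $\scurve$ satisfies the four conditions in Definition~\ref{vertifin}; geometric boundedness is then immediate once vertical finiteness is in place. First I would fix the metric $g=g^{\phi}_{\delta}$ on $\tilde C$ produced by Proposition~\ref{Flat at infinity}, so $(\tilde C,g)$ is flat at infinity, and equip $T^{\ast}\tilde C$ with its background almost complex structure $J_{con}$ and reference metric $g_{con}$. By Proposition~\ref{geoboundnonfamily} it then suffices to show $\scurve$ is finite at infinity, and since $\scurve$ is not horizontally finite (its projection is all of $\tilde C$) I will prove it is vertically finite.

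Next I would check the four bullet points of Definition~\ref{vertifin} in turn. For \emph{finite height}: away from the zeros of $\phi$, in the local flat coordinate $W=\int\sqrt{\phi}$ the spectral curve reads $\{p^W=\pm 1\}$ (covector coordinates, as recalled in the chamber discussion), so $r\equiv |p|$ is bounded; near a zero one uses the local normal form $\phi(\xi)d\xi^2=(3/2)^2\xi\, d\xi^2$ of Proposition~\ref{zeroofphiprop}, where $\scurve=\{(p^\xi)^2=(9/4)\xi\}$, and $|p^\xi|=(3/2)|\xi|^{1/2}$ stays bounded on a neighbourhood of $\xi=0$; patching these over the compact region containing the zeros gives a uniform $R$ with $\scurve\subset D^{\ast}_R\tilde C$. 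For the \emph{singularities of $d\pi$} condition: the only ramification points of $\pi\colon\scurve\to\tilde C$ are the preimages of the zeros of $\phi$, which form a finite set, so taking $K_L$ to be a compact neighbourhood of the zeros (chosen large enough that $g=g^{\phi}$, i.e.\ $g_{con}=g^S$, holds outside $T^{\ast}K_L$ after the conical deformation, which is legitimate since the zeros are at finite $g^{\phi}$-distance and $g^{\phi}_{\delta}$ only modifies $g^{\phi}$ near them) makes $\pi\colon\scurve|_{M-K_L}\to M-K_L$ a genuine finite (degree $2$) covering, properness following from finite height together with completeness of $g$.

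For \emph{flatness}: outside $T^{\ast}K_L$ the metric $g_{con}$ restricted to $D^{\ast}_1\tilde C$ agrees with the Sasaki metric $g^S$ of the flat metric $g^{\phi}$, and in the local coordinate $W$ the two sheets of $\scurve$ are the flat affine Lagrangian planes $\{p^W=\pm1\}$, hence totally geodesic for $g^S$; since these planes lie in $\{|p|=1\}\subset D^{\ast}_1\tilde C$ the containment condition also holds (this is exactly why $\rho\equiv1$ was imposed near $S^{\ast}M$). For \emph{finite vertical gap}: the two sheets over a point $x\in M-K_L$ are the covectors $\pm\sqrt{\phi}$, which in the flat coordinate are $\{p^W=1\}$ and $\{p^W=-1\}$, a fixed $g^S$-distance $2$ apart, so any $g_{con}$-ball of radius $r_L<1$ about a point of $\scurve$ meets only one sheet and is therefore connected; uniformity of $r_L$ over $M-K_L$ follows from the flat-coordinate description being isometric on balls of radius $\leq r_g$. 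The main obstacle, and the step deserving the most care, is the interface analysis near the zeros: one must choose $\delta$, hence the support of the modification $g^{\phi}_{\delta}$ of $g^{\phi}$, and the compact set $K_L$ compatibly so that all of ``$g_{con}=g^S$'', ``$\scurve\subset D^{\ast}_1$'', the covering property, and the uniform vertical gap hold \emph{simultaneously} on $M-K_L$ — in other words, bookkeeping the constants $R$, $r_L$, $r_g$ and the size of $K_L$ against each other — rather than any single estimate. Everything else is a direct transcription of the local normal forms of Proposition~\ref{zeroofphiprop} and the chamber description of $\scurve$ in covector coordinates.
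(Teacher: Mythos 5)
Your proposal is correct and follows essentially the same route as the paper: geometric boundedness is reduced to Proposition \ref{geoboundnonfamily} once vertical finiteness is established, and the conditions of Definition \ref{vertifin} are checked via the flat coordinate $W=\int\sqrt{\phi}$, in which $\scurve$ reads $\{p^x=\pm 1,\,p^y=0\}$ outside a compact set containing the zeroes, giving flatness and a uniform vertical sheet gap. The only difference is that you spell out the finite-height and covering conditions (using the local normal form near a zero), which the paper dismisses as obvious.
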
	
\begin{proof}
Note that on $D_1^{\ast}M$, $J_{con}=J_{g}$. Since $\scurve$ lies in $D_1 ^{\ast}M$, it suffices to show that outside $T^{\ast}K$ for some compact subset $K$ to $\tilde{C}$ containing the branch points, $\scurve$ is totally geodesic with respect to $J_g$ induced by the flat singular metric $g^{\phi}$. We now show that the spectral curve is vertically finite.

We now check that $\scurve$ satisfies the conditions in \cref{vertifin}. The first two conditions are obvious and so we have to check the flatness and the finiteness of vertical gap. Let $p^{x}$ and $p^y$ denote the dual coordinates in the $W$-coordinate system. Outside of a compact set in $\tilde{C}$, the metric on the base equals $dW^2$ and the spectral curve reads $\{p^x=\pm 1, p^y=0\}$ in the $W=\int \sqrt{\phi}$ coordinate system. Hence we may take the vertical sheet gap to be $r_{\scurve}=\frac{1}{2}$. 
\end{proof}

\subsubsection{Monotonicity techniques}\label{Monotonicity Techniques}

Now we introduce monotonicity techniques and apply them to find a priori restriction on the diameter of the Floer trajectories. We first start with the statement of the monotonicity lemma from \cite[Proposition 4.31, ii)]{Audin1994HolomorphicCI},\cite[Lemma B.7]{groman2019wrapped}.

\begin{lemma}[\textit{Monotonicity Lemma}] 
Suppose $(V,J,g,W)$ is tame, $0<\delta<r_W$ and let $C=1/4(C_1+1+C_W)$ (See \cref{totallyrealgeombounddefini}). Then any $J$-holomorphic curve $u$ passing through $p$ with boundary in $M-B_{\delta}(p)$ satisfies
\[Area(u;u^{-1}(B_{\delta}(p)))=\int_{u^{-1}(B_{\delta}(p))}\frac{1}{2}\abs{du}^2\geq \frac{\delta^2}{2C}.\]
If $p\in W$, the same holds if $\partial u \cap B_{\delta}(p)\subset W$. 
\end{lemma}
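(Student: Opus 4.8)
The plan is to run the classical monotonicity argument of Sikorav, working with the function $\rho(t) = \Area\bigl(u; u^{-1}(B_t(p))\bigr)$ for $0 < t \leq \delta$, and deriving a differential inequality that forces quadratic lower growth. First I would establish the coarea-type identity: for almost every $t$, the derivative $\rho'(t)$ equals the length of the ``slice'' of $u$ over the geodesic sphere $\partial B_t(p)$, i.e.\ $\rho'(t) = \ell\bigl(u \cap \partial B_t(p)\bigr)$, using that $u$ is an immersion away from a discrete set and Sard's theorem for the radial distance function composed with $u$. Since $(V,J,g)$ is tame (by the preceding proposition and \cref{GeometricalBoundednessgeneraldefini}, applicable because our almost K\"{a}hler manifolds and totally real submanifolds here are geometrically bounded hence tame), the isoperimetric inequality $\Area(D) \leq C_1 \ell(\gamma)^2$ is available for curves contained in small balls.

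The core step is the isoperimetric bound on $\rho(t)$ itself. The slice $u^{-1}(B_t(p))$ is a surface whose boundary consists of two pieces: the part of $\partial u$ lying in $B_t(p)$ (which, in the case $p \in W$, lies in $W$ by hypothesis) and the interior slice $u \cap \partial B_t(p)$. I would cap off the interior slice using the tameness isoperimetric inequality, and cap off the boundary-on-$W$ portion using the tameness estimate for the totally real submanifold $W$ from \cref{totallyrealgeombounddefini}, which controls $d_W$ by $C_W d_V$ and gives contractibility of small balls intersected with $W$ — this lets one bound the area of a disc in $W$ filling that portion by $C_W$ times the square of its length. Combining, one gets
\[
\rho(t) \;\leq\; \tfrac14\,(C_1 + 1 + C_W)\,\rho'(t)^2 \;=\; C\,\rho'(t)^2
\]
for a.e.\ $t$ (the monotonicity of $u$ being a minimal surface — $J$-holomorphic curves are area-minimizing in their homology class — is what lets the area of $u^{-1}(B_t(p))$ be compared with the area of these caps). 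Rearranging gives $\bigl(\sqrt{\rho(t)}\bigr)' \geq \frac{1}{2\sqrt{C}}$, and since $\rho(t) > 0$ for all $t > 0$ (as $u$ passes through $p$ and is non-constant near $p$), integrating from $0$ to $\delta$ yields $\sqrt{\rho(\delta)} \geq \frac{\delta}{2\sqrt{C}}$, i.e.\ $\rho(\delta) \geq \frac{\delta^2}{4C}$. I would then note that the precise constant $\frac{\delta^2}{2C}$ in the statement follows with the stated normalization $C = \frac14(C_1 + 1 + C_W)$ by being slightly more careful with the constants, or simply cite \cite[Proposition 4.31]{Audin1994HolomorphicCI} and \cite[Lemma B.7]{groman2019wrapped} for the exact bookkeeping.

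The main obstacle I anticipate is the correct handling of the boundary term when $p \in W$: one must ensure that the portion of $\partial u$ inside $B_\delta(p)$ genuinely lies in $W$ (guaranteed by the hypothesis $\partial u \cap B_\delta(p) \subset W$) and that the filling disc in $W$ used to cap it can itself be chosen inside a controlled ball so that the tameness constants $r_W, C_W$ actually apply — this is where the contractibility of $B(r_W, p) \cap W$ and the Lipschitz comparison of intrinsic and ambient distance on $W$ are essential, and where a naive argument would fail. A secondary technical point is justifying the coarea formula and the a.e.-differentiability of $\rho$ in the presence of branch points of $u$; this is standard (the branch points form a discrete set and contribute nothing to area or length) but should be acknowledged. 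Given that this is a well-established lemma with the two cited references doing exactly this, I would keep the write-up brief and lean on those citations for the routine parts.
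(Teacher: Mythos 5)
The paper does not prove this lemma at all---it is quoted verbatim from Sikorav \cite[Proposition 4.3.1]{Audin1994HolomorphicCI} and Groman \cite[Lemma B.7]{groman2019wrapped}---and your sketch is precisely the standard argument those references carry out (coarea formula for $\rho(t)=\Area\bigl(u;u^{-1}(B_t(p))\bigr)$, isoperimetric capping of the sphere slice and of the boundary portion on $W$ using the tameness constants $C_1,C_W$ together with contractibility of $B(r_W,p)\cap W$, the calibration identity $\Area=\int u^{\ast}\omega$ for $\omega$-compatible $J$ to compare with the caps, and integration of $\rho\leq C\rho'^2$), so it is essentially the same approach. The only blemishes are cosmetic: the coarea formula yields $\rho'(t)\geq \ell\bigl(u\cap\partial B_t(p)\bigr)$ rather than equality (which is the direction you actually use), and the exact constant $\delta^2/2C$ requires the bookkeeping you already defer to the cited sources.
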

%\begin{rem}\label{tameisoperimetri}
%For tame manifolds, we can set $\delta=r_W$ and .
%\end{rem}

From the monotonicity lemma, one can derive the following $C^0$ \textit{estimates} on the image of the $J$-holomorphic curves. The first is the \textit{boundary estimate}, which is \cite[Proposition 4.7.2 iv)]{Audin1994HolomorphicCI}.

\begin{proposition}[Boundary Estimate] \label{totaldomainestimate}
Let $(V,J,\omega,g)$ be a tame manifold. Let $W$ be a totally real submanifold of $V$. Let $S$ be a connected Riemann surface with boundary and let $K$ be a compact subset of $V$, then there exists a constant $C_5(W,K,E)>0$ with the following property.

\begin{itemize}
\item []Let $u:S\to V$ be a $J$-holomorphic map with $Area(u)<E$ such that $u(S)\cap K\neq \emptyset$ and $u(\partial S)\subset K\cup W$, then $C\subset B_{C_5(W,K,E)}(K)$. Here $C_5(W,K,E)$ can be chosen to depend linearly on $E$ and $r_W^{-1}$.
\end{itemize}
\end{proposition}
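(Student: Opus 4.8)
The plan is to run the standard Sikorav-type escape argument: if a finite-area $J$-holomorphic curve were to wander far from $K$, then, being connected, its image would be forced to pass through a long chain of pairwise disjoint metric balls, each carrying a definite amount of area by the Monotonicity Lemma, which would contradict the bound $\Area(u)<E$.

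Concretely, I would fix once and for all a radius $\delta$ comparable to $r_W$, say $\delta=\tfrac14 \min\{r_W,r_V\}$ (so that $\delta<r_W$ and the Monotonicity Lemma applies), and set $C=\tfrac14(C_1+1+C_W)$ as in \cref{totallyrealgeombounddefini}. Put $D=\sup_{x\in S}d(u(x),K)$; the goal is to bound $D$. Fix any $R'<D$. Since $S$ is connected and $u$ is continuous, the function $x\mapsto d(u(x),K)$ takes the value $0$ (because $u(S)\cap K\neq\emptyset$) and a value exceeding $R'$, so by the intermediate value theorem it attains every value in $[0,R']$ on $u(S)$. I then choose points $p_1,\dots,p_N\in u(S)$ whose distances to $K$ form an arithmetic progression with common difference comparable to $\delta$ — for instance $d(p_j,K)=(8j+4)\delta$ — with $N$ the largest index for which $(8N+4)\delta\le R'$, so $N\sim R'/(8\delta)$.

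The progression is spaced so that the balls $B_{2\delta}(p_j)$ lie in disjoint spherical shells about $K$; in particular $B_{\delta}(p_j)\cap B_{\delta}(p_k)=\emptyset$ for $j\neq k$ and each $B_{2\delta}(p_j)$ is disjoint from $K$ (since $j\ge 1$). Consequently $u(\partial S)\cap B_{\delta}(p_j)\subset(K\cup W)\cap B_{\delta}(p_j)=W\cap B_{\delta}(p_j)$. If this set is empty, the first case of the Monotonicity Lemma applies at $p_j$; if it is non-empty, I recenter on a point $w_j\in W\cap B_{\delta}(p_j)\subset u(\partial S)\subset u(S)$ and apply the second ("$p\in W$") case at $w_j$ with radius $2\delta<r_W$, which is still contained in the $j$-th shell. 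In either case one gets $\Area(u;u^{-1}(B_{\delta}(p_j)))\ge \delta^2/(2C)$, and summing over the $N$ disjoint regions gives $E>\Area(u)\ge N\delta^2/(2C)$, hence $N\le 2CE/\delta^2$. Combining with $N\sim R'/(8\delta)$ yields $R'\le \mathrm{const}\cdot(CE/\delta+\delta)$, and letting $R'\uparrow D$ produces $D\le C_5(W,K,E):=\mathrm{const}\cdot(CE/\delta+\delta)$; since $\delta$ is comparable to $r_W$ this is linear in $E$ and in $r_W^{-1}$, and $u(S)\subset B_{C_5(W,K,E)}(K)$.

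The argument is routine granted the Monotonicity Lemma; the only points needing care are the radius bookkeeping that makes the (slightly enlarged) balls pairwise disjoint and clear of $K$, and the minor case split arising because the escaping points $p_j$ need not lie on $W$ — resolved by recentering on a nearby boundary point in $W$. I expect this bookkeeping, together with carrying the tameness constants $C_1,C_W,r_W$ through to the asserted linear dependence of $C_5$ on $E$ and $r_W^{-1}$, to be the only mildly delicate part.
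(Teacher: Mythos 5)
Your argument is correct and is essentially the standard monotonicity/chain-of-disjoint-balls escape argument; the paper does not reprove this statement but cites it from Sikorav (Audin--Lafontaine, Proposition 4.7.2), and your proof is the same in substance, with the same bookkeeping (balls centered at points whose distances to $K$ form an arithmetic progression in $\delta\sim r_W$, disjoint shells avoiding $K$, boundary case handled by recentering on $W$), yielding the asserted bound of order $E/r_W$. The only blemishes are cosmetic: the inclusion should read $w_j\in u(\partial S)\cap B_{\delta}(p_j)\subset W$ rather than $W\cap B_{\delta}(p_j)\subset u(\partial S)$, and in the recentered case the area is harvested on $u^{-1}(B_{2\delta}(w_j))$ rather than $u^{-1}(B_{\delta}(p_j))$, which your shell spacing keeps pairwise disjoint, so the summation is unaffected.
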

\begin{rem}
In general, sending $r_W\to 0$ will make $C_5\to +\infty$. However, suppose we have a $C^{\infty}$-convergent family $W_{\epsilon}$ such that it is Lagrangian outside a fixed compact subset, $C_{W_{\epsilon}}$ as in \cref{GeometricalBoundednessgeneraldefini} is uniformly finite, and satisfies $r_{W_{\epsilon}}=O(\epsilon)$. Suppose furthermore we have a family of holomorphic curves $u_{\epsilon}$ with boundary on $W_{\epsilon}$ satisfying $E(u_{\epsilon})=O(\epsilon)$. Then $C_5(W,K,E)$ for $u_{\epsilon}$ can be chosen to be uniformly \textit{finite}. This situation occurs, for instance, when we study the degeneration of pseudo-holomorphic discs with boundary on the scalings of a multi-graph outside its caustics. This point will later reappear in \cref{{Adia Degen}}. 
\end{rem}
%prove by contradiction. Suppose there doesn't exist $x_1\in C$ such that $x_1\in \partial B_{x_0}(K)$. Then since $C\cap K\neq \emptyset$ we see that $u(C)\subset B_{x_0}(K)$ by connectedness. So we are done. Hence suppose such an $x_1$ exists. Now, suppose we do not have a point $x_2$ such that $u(x_2)
%$x_1\in K\cap \text{int}(S)$, $x_i\in  \partial B(K,2ir_0)$, $B_{r_W}(u(x_i))\cap u(\partial C)\subset W$ for $i=1,...,N$ and $d(u(x_i),d(u(x_j))>2r_W$ whenever $i\neq j$. 

%Then $C$ is contained in $(N+2)r_0$-neighbourhood of $K$. To see this, suppose $x\in S$ maps to $\partial B_{(N+1)r_W}(K)^c$. Then:
%\begin{enumerate}
%	\item Suppose $x$ satisfies $B(u(x),r_W)\cap u(\partial C)\subset W$. Hence $u(x)$ must lie in a $2r_W$ neighbourhood of some $x_j$.  contradicts the fact that $x\in B_{(N+2)r_W}(K)^c$.
%	\item Suppose it does not satisfy $B(u(x),r_W)\cap u(\partial C)\subset W$. Then we must have $B(u(x),r_W)\cap  u(\partial S)\subset K$. This is again, a contradiction. 
%\end{enumerate}

The second is the \textit{interior} estimate {\cite[Theorem B.8]{groman2019wrapped}} by Groman. For the proof, see \cite[Theorem 4.11]{groman2021floer}. 

%	\begin{rem}
%		Note that a vertically finite Lagrangian $L$ is uniformly isoperimetric for $\delta<\min(K,r)$ where $r$ is the radius of the sphere bundle that the Lagrangian lies in at infinity and $K$ is some finite constant that is determined from the geometry of $L$ over the compact part. Otherwise, the argument breaks down since we may have two points which lie on different sheets of $L$. In other words, we need a finite \textit{lower bound} on the minimal distance between two points lying on different sheets. 
%	\end{rem}

\begin{proposition}[Interior Estimate]\label{interiorestimate1}
Let $(V,J,\omega,g)$ be a tame almost K\"{a}hler manifold. Let $L$ be a tame Lagrangian submanifold of $V$. Let $E>0$ and let $K$ be a compact set of $V$. Then there exists an $R=R(V,L,E,K)>0$ such that the following holds.
\begin{itemize}
\item For any $J$-holomorphic map
\[u:A_1\to V\]
satisfying $Area(u;A_1)\leq E$ and  $u(A_{1/2})\cap K\neq \emptyset$, we have \[u(A_{1/2})\subset B_R(K).\]
\item For any $J$-holomorphic map 
\[u:(E_1,\partial E_1)\to (V,L)\]
satisfying 
$Area(u;E_1)\leq E$ and $u(E_{1/2})\cap K\neq \emptyset$, we have
\[u(E_{1/2})\subset B_R(K).\]
%where $R(E,K,L,J,\mathcal{K})$. 
\end{itemize}
\end{proposition}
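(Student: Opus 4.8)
The plan is to reduce the statement to the boundary estimate (Proposition~\ref{totaldomainestimate}), after first passing to a slightly smaller domain on which the outer boundary circle carries little energy. I treat the interior case $u:A_1\to V$; the half-disc case is handled verbatim with the tame Lagrangian $L$ playing the role of the totally real submanifold $W$ in Proposition~\ref{totaldomainestimate}.

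\textbf{Step 1 (energy slicing).} Writing $z=\rho e^{i\theta}$ and using that $u$ is $J$-holomorphic, $\Area(u;A_1)=\int_{A_1}\tfrac12|du|^2$, so
\[\int_{1/2}^{3/4}\Big(\int_0^{2\pi}\tfrac12|du|^2\,\rho\,d\theta\Big)d\rho\le E.\]
Hence there is a radius $t_\ast\in(1/2,3/4)$ with $\int_0^{2\pi}\tfrac12|du|^2\,t_\ast\,d\theta\le 4E$, and Cauchy--Schwarz gives $\Length(u|_{\partial A_{t_\ast}})\le C_0\sqrt{E}=:\ell_\ast$ for a universal $C_0$. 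Thus $u(\partial A_{t_\ast})$ is connected of diameter $\le\ell_\ast$, so it lies in the compact metric ball $K':=\overline{B_{\ell_\ast}(q_0)}$ for any chosen $q_0\in u(\partial A_{t_\ast})$ (compactness by completeness of $g$). In the half-disc case one slices over the semicircular part $\{|z|=\rho,\ \Im z\ge 0\}$ instead, and observes that its endpoints $\pm t_\ast$ lie both on the real axis (so their $u$-images lie on $L$) and on the arc (so they lie in $K'$); hence $u(\partial E_{t_\ast})\subset K'\cup L$.

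\textbf{Step 2 (apply the boundary estimate).} The restriction $u|_{A_{t_\ast}}$ is $J$-holomorphic with area $\le E$, has $u(\partial A_{t_\ast})\subset K'$, and meets $K'$ (it contains $q_0$). Proposition~\ref{totaldomainestimate} (with $W=\emptyset$, resp.\ $W=L$ in the half-disc case) gives $u(A_{t_\ast})\subset B_{C_5}(K')$ with $C_5=C_5(\emptyset,K',E)$ (resp.\ $C_5(L,K',E)$). The crucial point is that, by tameness of $(V,J,\omega,g)$ and of $L$ --- i.e.\ uniformity of the monotonicity constant and of the tameness radii $r_V,r_W$ over all of $V$ (Proposition~\ref{geoboundnonfamily}) --- the constant $C_5$ depends only on $\mathrm{diam}(K')\le 2\ell_\ast$, on $E$, and on the uniform tameness data, but \emph{not} on the location of $q_0$. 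So $C_5=C_5(V,L,E)$ is a genuine function of $E$ alone. Since $A_{1/2}\subset A_{t_\ast}$ and $u(A_{1/2})$ meets $K$, we get $d(q_0,K)<C_5+\ell_\ast$, hence
\[u(A_{1/2})\subset u(A_{t_\ast})\subset B_{C_5+\ell_\ast}(q_0)\subset B_{2C_5+2\ell_\ast}(K),\]
and $R:=2C_5+2\ell_\ast$ works; the half-disc case is identical.

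\textbf{Main obstacle.} The only substantive issue is the uniformity asserted in Step~2: that $C_5$ (hence $R$) may be chosen independently of where in $V$ the outer boundary of the truncated domain happens to land. This is exactly the reason the hypothesis is \emph{tameness/uniform geometric boundedness} of $(V,J,\omega,g)$ and $L$, rather than mere boundedness on compact sets; without it $R$ would degenerate at infinity. A self-contained alternative to invoking Proposition~\ref{totaldomainestimate} is to run the monotonicity chaining directly: if some $q\in u(A_{t_\ast})$ had $d(q,q_0)=D$, connect a preimage of $q$ to $\partial A_{t_\ast}$ by a path in $A_{t_\ast}$, extract $\gtrsim D/\delta_0$ points $p_i$ along its image whose $\tfrac{\delta_0}{4}$-balls are pairwise disjoint and disjoint from $u(\partial A_{t_\ast})$, and apply the monotonicity lemma to the corresponding interior (or $L$-boundary) components of $u^{-1}(B_{\delta_0/4}(p_i))$ to obtain $E\ge(\mathrm{const})\,D/\delta_0$, i.e.\ $D=O(E)$ with a constant controlled by the uniform tameness data.
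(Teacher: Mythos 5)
Your proof is correct, and there is nothing in the paper to compare it against line by line: the paper does not prove Proposition~\ref{interiorestimate1} itself, but quotes it from Groman (\cite[Theorem B.8]{groman2019wrapped}, with the proof deferred to \cite[Theorem 4.11]{groman2021floer}). Your two ingredients are the standard ones and they fit together soundly: the mean-value/Cauchy--Schwarz slicing produces a radius $t_\ast\in(1/2,3/4)$ whose boundary circle (resp.\ semicircle) has image of length $O(\sqrt{E})$, hence contained in a closed metric ball $K'$ that is compact by completeness of the tame metric; then Proposition~\ref{totaldomainestimate} applied to $u$ restricted to the closed (half-)disc of radius $t_\ast$ (with $W=\emptyset$, resp.\ $W=L$, the corners at $\pm t_\ast$ being harmless) confines the image, and the triangle-inequality bookkeeping transfers the bound from $K'$ to $K$. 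You also correctly isolate the one genuinely delicate point: since $q_0$ is a priori far from $K$, the constant $C_5$ must be uniform in the location of $K'$; this is exactly what tameness provides (uniform isoperimetric constant, injectivity radius, and $r_L$, $C_L$ over all of $V$), it is consistent with the paper's remark that $C_5$ can be taken to depend linearly on $E$ and $r_W^{-1}$, and your direct monotonicity-chaining sketch closes it in a self-contained way, which is essentially how the cited source argues. So the proposal stands as a complete proof (in fact of a slightly stronger statement, with $R$ independent of $K$).
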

We also  have the following "family" version of the interior estimate:
\begin{proposition}\label{interiorestimate2}
Let $(J_s,\omega,g_s), s\in \overline{A_1}$ be a family of uniformly tame compatible triples on $V$. Let $E>0$ and let $K$ be a compact set of $V$. Then there exists a compact subset $R(K,E,J_s,\omega,g_s)$ such that the following holds.
\begin{itemize}
\item[]  If $u:A_1\to V$ satisfies $(du)^{0,1}_{J_s}=0$, $Area(u;A_1)\leq E$ and $u(A_{1/2}^2)\cap K\neq \emptyset$, then 
\[u(A_{1/2})\subset R(K,E,J_s,\omega,g_s).\]
\end{itemize}
Furthermore, suppose $L$ is a Lagrangian submanifold of $V$ which is uniformly tame with respect to $(J_s,\omega,g_s)$. Then there exists a compact subset $R'(K,E,J_s,\omega,g_s,L)$ of $V$ such that the following holds.
\begin{itemize}
\item[]  If $u:(E_1,\partial E_1)\to (V,L)$ satisfies
$(du)^{0,1}_{J_s}=0$, $Area(u;E_1)\leq E$, and $u(E_{1/2})\cap K\neq \emptyset$, then 
\[u(E_{1/2})\subset R'(K,E,J_s,\omega,g_s,L).\]
\end{itemize}
\end{proposition}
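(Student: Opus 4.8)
The plan is to reduce both assertions to the non-family interior estimate, Proposition \ref{interiorestimate1}, via the \emph{Gromov trick} (the same device used in the proof of Lemma \ref{geoboundfamily}): a domain-dependent $J_s$-holomorphic curve on $A_1$ (resp.\ $E_1$) becomes a genuine pseudo-holomorphic curve for a product almost complex structure on $\overline{A_1}\times V$, to which the already-proven non-family estimate applies.

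\emph{Disc case.} Given $u\colon A_1\to V$ with $(du)^{0,1}_{J_s}=0$, form its graph
\[
\tilde u=(\mathrm{id},u)\colon A_1\longrightarrow \overline{A_1}\times V,\qquad z\longmapsto (z,u(z)).
\]
Because $\overline{A_1}$ carries its standard complex structure, $\tilde u$ is holomorphic for the almost complex structure $j_{\overline{A_1}}\oplus J$ whose value at $(z,v)$ is $j_z\oplus J_z(v)$; it is compatible with $\omega_{\overline{A_1}}\oplus\omega$, and from $d\tilde u=(\mathrm{id},du)$ one gets $\mathrm{Area}(\tilde u)=\mathrm{Area}(\mathrm{id}_{A_1};g_{\overline{A_1}})+\mathrm{Area}(u)\le E_0+E=:E'$, with $E_0$ a fixed finite constant. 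By hypothesis the family $(J_s,\omega,g_s)$ is uniformly tame, which by Definition \ref{GeometricalBoundednessgeneraldefini} means precisely that $(\overline{A_1}\times V,\,j_{\overline{A_1}}\oplus J,\,\omega_{\overline{A_1}}\oplus\omega,\,g_{\overline{A_1}}\oplus g_s)$ is a tame almost K\"{a}hler manifold. Since $u(A_{1/2})\cap K\neq\emptyset$ we have $\tilde u(A_{1/2})\cap(\overline{A_1}\times K)\neq\emptyset$, and $\overline{A_1}\times K$ is compact. Applying the first bullet of Proposition \ref{interiorestimate1} to $\tilde u$, with ambient $\overline{A_1}\times V$, compact set $\overline{A_1}\times K$ and energy bound $E'$, yields a radius $R>0$ with $\tilde u(A_{1/2})\subset B_R(\overline{A_1}\times K)$. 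Tameness gives completeness of the product metric, so $\overline{B_R(\overline{A_1}\times K)}$ is compact; setting $R(K,E,J_s,\omega,g_s):=\mathrm{pr}_V\big(\overline{B_R(\overline{A_1}\times K)}\big)$ and applying $\mathrm{pr}_V$ to the inclusion above gives $u(A_{1/2})\subset R(K,E,J_s,\omega,g_s)$, as desired.

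\emph{Boundary case.} Run the same construction for $u\colon (E_1,\partial E_1)\to(V,L)$, obtaining $\tilde u\colon (E_1,\partial E_1)\to \overline{A_1}\times V$. Now $\tilde u(\partial E_1)$ lies in $\partial E_1\times L$, which is half-dimensional in $\overline{A_1}\times V$ and totally real for $j_{\overline{A_1}}\oplus J$ (since $\partial E_1$ is totally real in $A_1$ and each $L$ is totally real for $J_s$), and which is Lagrangian at infinity and tame because $L$ is uniformly tame with respect to $(J_s,\omega,g_s)$ — this provides tameness constants $r_L,C_L$ for $L$ uniform over the compact parameter domain, from which tameness of $\partial E_1\times L$ follows. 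Applying the second bullet of Proposition \ref{interiorestimate1} with this totally real submanifold gives a radius $R'$ with $\tilde u(E_{1/2})\subset B_{R'}(\overline{A_1}\times K)$, and projecting along $\mathrm{pr}_V$ as before produces the compact set $R'(K,E,J_s,\omega,g_s,L)$.

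\emph{Main obstacle.} The delicate point is the bookkeeping at the boundary in the second case: after the Gromov trick one must exhibit a totally real, uniformly tame submanifold of $\overline{A_1}\times V$ containing the graph of $u|_{\partial E_1}$, so that the constant $R'$ (equivalently the constant $C_5$ of Proposition \ref{totaldomainestimate}, which depends on $r_L^{-1}$) does not degenerate as $s$ varies over $\overline{A_1}$. This is exactly the role of the hypothesis that $L$ is \emph{uniformly} tame rather than merely tame for each fixed $s$. Everything else — the area identity for the graph, the compatibility of $j_{\overline{A_1}}\oplus J$ with $\omega_{\overline{A_1}}\oplus\omega$, and the passage from a metric ball around a compact set to a compact set via completeness — is routine.
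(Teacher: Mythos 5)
Your proposal is correct and is exactly the paper's argument: the paper proves this proposition in one line by the same Gromov trick, passing to $(\overline{A_1}\times V,\ j_{\overline{A_1}}\oplus J_s,\ \omega_{\overline{A_1}}\oplus\omega)$ and invoking Proposition \ref{interiorestimate1}, with uniform tameness of the family (and of $L$) supplying precisely the tameness of the product and of the product totally real boundary condition that you spell out. Your version simply makes explicit the graph construction, the area bookkeeping, and the projection back to $V$, which the paper leaves implicit.
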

\begin{proof}
Reduce to Proposition \ref{interiorestimate1} by taking $(\overline{A_1}\times V, j_{\overline{A_1}}\oplus J_s,\omega_{\overline{A_1}}\oplus \omega)$. 
\end{proof}
%\begin{itemize}
%	\item For any compact set $K\subset X$ and $E\in \mathbb{R}_+$, there exists an $R>0$ such that for any connected compact Riemann surface $(S,\partial S)$ with boundary, and any $J$-holomorphic map
%		\[u:(S,\partial S)\to (X,K)\]
%		satisfying $E(S;u)\leq E$, $u(S)\subset B_R(K)$. 
%	Now we show the main result of this subsection which gives $C^0$ control over the images in the \textit{thick} part of the domain:

%In \cite{groman2021floer}, Groman introduces the class of $i-$bounded almost complex structures and uniformly $i$-bounded familes of almost complex structures for open symplectic manifolds. In particular, geometrically bounded almost Kahler manifolds are $i$-bounded. However, transversality requires us to introduce domain-dependent almost complex structures which are $s$-independent in the strip-like ends. This requires $i$-boundedness in the Gromov metric on say, $S\times X$. Then Groman shows in \cite{groman2021floer}:
%\begin{theorem}[Groman]
%	$i$-bounded almost complex structures have domain-local confinement property. 
%\end{theorem}

\subsection{Floer operations}\label{Floer Moduli SpaceS}

We now utilize the estimates in Section \ref{Monotonicity Techniques}. In Section \ref{Compactness and Transversality}, we discuss the compactification and transversality of moduli space of pseudo-holomorphic strips. The key idea is to use geometric boundedness to $C^0$-bound their images. We use this to define the Floer chain complex. In Section \ref{continuationstripsss}, we define the notion of passive continuation strips. In Section \ref{energyformulasss}, we review the standard formulas for the geometric energy of the continuation strips. In Section \ref{confinementcontstripssss}, we show the $C^0$ confinement of the passive continuation strips.  In Section \ref{continuationmapsss}, we construct continuation chain maps and discuss their properties. Finally, in Section \ref{pathgroupoidrepsss}, we discuss the path groupoid representation of the Floer cohomology local system on the base $M$. From now on, we assume that all the Lagrangians are exact, with respect to $\canliouvile$. 

\subsubsection{Compactness and  transversality}\label{Compactness and Transversality}
\paragraph{Moduli Spaces}\label{Modulispaces}
We first start with the compactness and transversality properties of the Floer moduli spaces. We follow \cite[Section 3.2]{GPSCV} closely. Let $L_1$ and $L_2$ be a pair of transversely intersecting Lagrangians in $T^{\ast}M$ such that $L_1$ is finite at infinity, and $L_2$ is horizontally finite (see Definition \ref{vertifin}). Since $L_i$ is $\canliouvile$-exact, there are smooth functions $f_i:L_i\to \mathbb{R}$ on $L_i$ such that $df_i=\canliouvile\vert L_i$. Such functions $f_i$ are unique up to constants. We choose the primitives $f_1$ and $f_2$ once and for all for $L_1$ and $L_2$ respectively. We define the action of an intersection point $x\in L_1\pitchfork L_2$ by:
\begin{align}\label{actionofintersectionpoint}
a(x):=f_1(x)-f_2(x).
\end{align}
Given such a pair $(L_1,L_2)$, a \textit{Floer datum} is an admissible family $J=J(t)=J_{L_1,L_2}(t)$. For our purpose, it suffices to only consider the case where $J_{L_1,L_2}$ is given by a \textbf{compact} deformation of $J_{con}$, and we will assume so from now on. 
\begin{definition}\label{stripmodulidefinition}
For $x,y\in L_1\pitchfork L_2$, let $\mathcal{R}(L_0,L_1,J_{L_1,L_2})_{x\mapsto y}$ be the moduli space of unparametrized $J_{L_1,L_2}(t)$-holomorphic strips $u$ between $L_0$ and $L_1$ with $\lim_{s\to -\infty} u(s,t)=x$ and $\lim_{s\to +\infty} u(s,t)=y$. The space $\mathcal{R}(L_0,L_1,J_{L_1,L_2}(t))$ is the union of unparametrized moduli spaces $\mathcal{R}(L_0,L_1,J_{L_1,L_2}(t))_{x\mapsto y}$ for $x,y\in L_0\pitchfork L_1$. The space $\overline{\mathcal{R}}(L_0,L_1,J_{L_1,L_2}(t))$ is the union of $\overline{\mathcal{R}}(L_0,L_1,J_{L_1,L_2}(t))_{x\mapsto y}$ consisting of broken $J_{L_1,L_2}$-holomorphic strips. 
\end{definition}
More generally, we will consider a uniformly admissible family $J(s,t)$ of almost complex structures on the strip such that $J(s,t)$ becomes $s$-invariant outside $[-N,N]\times [0,1]$. We will call the set $[-N,N]\times [0,1]$ the \textit{thick part}, and its complement the \textit{thin part}. The moduli of such pseudo-holomorphic strips will be denoted $\mathcal{R}$. This moduli admits a partial Gromov-Floer compactification by adding broken strips. The resulting partially compactified moduli will be written $\overline{\mathcal{R}}$. 

\paragraph{Compactness}\label{CompactnessofFloermoduli}
We want to show that the moduli space $\bar{\mathcal{R}}$ is compact. First, we need the following lemma from Abouzaid-Seidel \cite[Lemma 7.2]{SeidelAbouzaid} and  Ganatra-Pardon-Shende \cite[Lemma 2.46]{GPSCV}. For related ideas, see \cite[Section 3.2]{AurouxAbouzaidHMS}.
\begin{lemma}[Vertical Confinement/Maximum Principle]\label{verticalconfinement} Let $(S,j)$ be a Riemann surface with boundary. Let $J$ be an $\omega$-compatible almost complex structure of general contact type (Definition \ref{generalcontact}) on $\{r>a\}$ for some $a>0$. Let $u:S\to T^{\ast}M$ be a $(j,J)$-holomorphic curve such that $u^{\ast}\canliouvile\vert_{\partial S}\leq 0$ on $u^{-1}(\{r>a\})$, then $u$ is locally constant over $u^{-1}(\{r>a\})$. By $u^{\ast}\canliouvile\vert_{\partial S}\leq 0$, we mean that the evaluation of $u^{\ast}\canliouvile$ on a positively oriented vector field along $T\partial S$ is negative. Here the orientation is given with respect to $j$.
\end{lemma}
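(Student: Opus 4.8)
The plan is to run the standard contact-type maximum principle of Abouzaid--Seidel in the region where $J$ is of general contact type. I would set $\Omega:=u^{-1}(\{r>a\})$, an open subset of $S$ whose topological boundary meets the interior of $S$ only along the level set $\{r\circ u=a\}$. By \cref{generalcontact} there is a smooth positive $h$ with $h(r)\,dr=\canliouvile\circ J$ on $\{r>a\}$; fix a primitive $\psi$ of $h$ (so $\psi'=h>0$) and work with $\rho:=\psi\circ r\circ u\colon\Omega\to\mathbb{R}$, a strictly increasing reparametrisation of $r\circ u$.

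The first step is to show $\rho$ is subharmonic on $\Omega$. Since $u$ is $(j,J)$-holomorphic, $u^{*}(\beta\circ J)=(u^{*}\beta)\circ j$ for every $1$-form $\beta$, so on $\Omega$
\[
d\rho=u^{*}\big(\canliouvile\circ J\big)=(u^{*}\canliouvile)\circ j ,\qquad\text{hence}\qquad d\rho\circ j=-\,u^{*}\canliouvile .
\]
Applying $d$ gives $d(d\rho\circ j)=-u^{*}\omega$, i.e. $\Delta\rho\,\mathrm{dvol}=u^{*}\omega\ge 0$, with $\Delta\rho\equiv 0$ exactly where $du=0$. So it is enough to prove that $\rho$ is locally constant on $\Omega$.

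The second step is the maximum principle. Suppose $\rho$ is non-constant on some connected component $\Omega_{0}$. Along $\partial\Omega_{0}$ inside the interior of $S$ one has $r\circ u=a$, so there $\rho\equiv\psi(a)=\inf_{\overline{\Omega_{0}}}\rho$; consequently $\sup_{\overline{\Omega_{0}}}\rho>\psi(a)$ and (being finite in every case we use --- $S$ compact, or with strip-like ends limiting to the finite-height intersection points) is attained either at an interior point of $\Omega_{0}$, which is impossible by the strong maximum principle for subharmonic functions, or at some $p\in\partial S\cap\Omega_{0}$. At such a $p$ the Hopf boundary point lemma would force $\partial_{\nu}\rho(p)>0$ for the outward normal $\nu$. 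But writing $\tau=j\nu$ for the positively oriented boundary tangent (consistent with the orientation conventions of \cref{sec:convention}), the identity $d\rho\circ j=-u^{*}\canliouvile$ gives $\partial_{\nu}\rho(p)=(u^{*}\canliouvile)(\tau)\le 0$ by the hypothesis $u^{*}\canliouvile|_{\partial S}\le 0$ on $u^{-1}(\{r>a\})$ --- a contradiction. Hence $\rho$, and therefore $u$, is locally constant on $\Omega$, and by the computation of the first step this forces $du\equiv 0$ on each component of $\Omega$.

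The only delicate point --- and the one I would spell out carefully --- is the reduction from "$\rho$ subharmonic" to "$\rho$ attains its supremum on $\partial S\cap\Omega_{0}$": this requires localising, noting that $\rho$ is bounded on the relevant piece of $\overline{\Omega_{0}}$ for the surfaces $S$ occurring in this paper, and then invoking the strong maximum principle and the Hopf lemma as the local statements they are. The accompanying sign bookkeeping in the Hopf step --- matching "positively oriented tangent with respect to $j$" with $j\nu=\tau$ so that the boundary hypothesis enters with the correct sign --- is routine once the conventions are pinned down, but is the place most prone to error.
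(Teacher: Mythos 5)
Your argument is correct, and the sign bookkeeping is right ($d\rho=(u^{*}\canliouvile)\circ j$, $\tau=j\nu$ for the outward normal $\nu$, hence $\partial_{\nu}\rho=(u^{*}\canliouvile)(\tau)\le 0$). Note, however, that the paper gives no proof of this lemma at all: it is quoted from Abouzaid--Seidel \cite[Lemma 7.2]{SeidelAbouzaid} and Ganatra--Pardon--Shende \cite[Lemma 2.46]{GPSCV}, where the standard argument is the \emph{integrated} maximum principle rather than your pointwise strong maximum principle plus Hopf lemma. In that route one picks a regular value $a'>a$ of $r\circ u$, applies Stokes' theorem to $0\le\int_{u^{-1}(\{r\ge a'\})}u^{*}\omega=\int_{\partial}u^{*}\canliouvile$, and observes that both boundary contributions are nonpositive: the $\partial S$ part by hypothesis, and the level-set part because there $u^{*}\canliouvile=-d\rho\circ j$ evaluates on the positively oriented boundary tangent to $d\rho(\nu)\le 0$, since $\rho$ attains its minimum $\psi(a')$ along that level set. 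Hence $u^{*}\omega\equiv 0$, so $du\equiv 0$ on $u^{-1}(\{r>a'\})$, and one lets $a'\downarrow a$. What the integrated version buys is that one needs neither Hopf's lemma nor attainment of the supremum, only compactness of the truncated piece $u^{-1}(\{r\ge a'\})$ (plus a generic choice of $a'$); what your version buys is a direct local statement without choosing regular values. Either way, the caveat you flag is genuine and not cosmetic: as literally stated, with no compactness or asymptotic hypothesis on $S$, the conclusion fails (take $S$ an open disc, so the boundary hypothesis is vacuous, and a small nonconstant local $J$-holomorphic disc through a point of $\{r>a\}$); in every application in the paper $S$ is compact or a strip with ends converging to intersection points, where both proofs go through.
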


In particular, the condition $u^{\ast}\canliouvile\leq 0$ in Lemma \ref{verticalconfinement} holds when connected components of $\partial S$ belong to Lagrangians that are cylindrical over $\{r>a\}$ since on the cylindrical part $u^{\ast}\canliouvile=0$. We now show that the moduli space is compact.
\begin{proposition}
The moduli space $\overline{\mathcal{R}}$ is compact.\label{compactnomoving}
\end{proposition}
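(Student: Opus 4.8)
The plan is to combine the vertical confinement statement (Lemma \ref{verticalconfinement}) with the interior and boundary estimates of Section \ref{Monotonicity Techniques} and the usual Gromov--Floer bubbling/breaking analysis. The only real content beyond the standard compactness package is controlling the $C^0$-behaviour of a sequence of strips so that no energy escapes to vertical infinity; once that is in place the argument is the textbook one.

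First I would establish a uniform energy bound on elements of $\overline{\mathcal R}$: since $L_1$ and $L_2$ are $\canliouvile$-exact with chosen primitives $f_1,f_2$, Stokes' theorem gives $E(u)=a(x)-a(y)$ for a strip asymptotic to $x$ at $-\infty$ and $y$ at $+\infty$, so the geometric energy is determined by the endpoints and in particular bounded by $\max_{x,y} (a(x)-a(y))$ over the finite set $L_1\pitchfork L_2$. (For the $s$-dependent family one gets the same conclusion with an extra term controlled by the compactly supported variation of the Floer datum, using the energy identities to be recorded in Section \ref{energyformulasss}.) Next, $C^0$-confinement: because $J=J_{L_1,L_2}$ is a compact deformation of $J_{con}$, it is of general contact type near vertical infinity, and near vertical infinity both Lagrangian boundary conditions are cylindrical, so $u^{\ast}\canliouvile\le 0$ there; Lemma \ref{verticalconfinement} then forces any $J$-holomorphic strip to be locally constant on $u^{-1}(\{r>a\})$, hence — being a strip with fixed asymptotics at finite intersection points — to be entirely contained in $\{r\le a\}$. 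Thus every strip in $\mathcal R$ lands in a fixed compact subset $K\subset T^{\ast}M$ (one may also invoke the boundary estimate, Proposition \ref{totaldomainestimate}, together with the interior estimate, Proposition \ref{interiorestimate1}, to get the confinement uniformly over the family, using geometric boundedness of $(T^{\ast}M,\omega,J)$ and of the Lagrangians finite at infinity from Proposition \ref{geoboundnonfamily} and Lemma \ref{geoboundfamily}).

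With a uniform energy bound and a uniform $C^0$ bound, the images of all strips lie in a geometrically bounded — hence tame — region, so the monotonicity lemma gives uniform local area lower bounds and the standard Gromov compactness machinery applies: given a sequence $u_n\in\mathcal R$, after passing to a subsequence and reparametrising, $u_n$ converges modulo bubbling and breaking to a broken $J$-holomorphic strip. Sphere and disc bubbling are excluded because $\pi_2(T^{\ast}M)=0$ and, for discs, exactness of the Lagrangians kills nonconstant disc bubbles (a nonconstant disc with boundary on an exact Lagrangian would have zero area). The only limiting phenomenon is breaking into a chain of strips, each asymptotic to intersection points, i.e. an element of $\overline{\mathcal R}\setminus\mathcal R$ when breaking actually occurs; hence $\overline{\mathcal R}$ is sequentially compact, and being metrisable, compact.

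\emph{Main obstacle.} The delicate point is the $C^0$-confinement at vertical infinity in the presence of a \emph{vertically finite} (not horizontally finite) Lagrangian $L_1$: such $L_1$ is only cylindrical at infinity and extends out to vertical infinity, so one must check carefully that the maximum-principle hypothesis $u^{\ast}\canliouvile|_{\partial S}\le 0$ genuinely holds on $u^{-1}(\{r>a\})$ for \emph{both} boundary components (trivially true on the cylindrical locus since $u^{\ast}\canliouvile=0$ there) and that the general-contact-type region of $J$ can be taken past the height $R$ where $L_1$ becomes cylindrical — this is exactly where admissibility of the Floer datum and Proposition \ref{proposition:contacttype} are used. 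Everything else (uniform energy, tameness, exclusion of bubbles, Gromov limit) is routine given the results already assembled in this section.
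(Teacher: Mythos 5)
Your outline gets the standard ingredients right (exactness gives the uniform energy bound, exactness plus $\pi_2(T^{\ast}M)=0$ kills bubbling, and the maximum principle/Lemma \ref{verticalconfinement} does handle the vertical direction, since $L_1$ has finite height and $L_2$ is cylindrical, so $u^{\ast}\canliouvile\vert_{\partial S}=0$ on $u^{-1}(\{r>a\})$). But there is a genuine gap at the step where you jump from vertical confinement to ``every strip lands in a fixed compact subset $K\subset T^{\ast}M$.'' The set $\{r\le a\}=\overline{D_a^{\ast}M}$ is \emph{not} compact here, because the base $M$ (e.g.\ $\tilde C$) is an open manifold: the real danger is that strips, or long thin pieces of strips carrying little energy, drift off \emph{horizontally} towards the ends of $M$, where the vertically finite Lagrangian $L_1$ still lives. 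Your diagnosis of the ``delicate point'' is therefore aimed at the wrong infinity: vertically finite Lagrangians are contained in $D_R^{\ast}M$, so they never see vertical infinity; what they do is extend to horizontal infinity, and nothing in your argument prevents escape in that direction.

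This horizontal confinement is exactly where the paper's proof does its work, and it is not recovered by your parenthetical appeal to Propositions \ref{totaldomainestimate} and \ref{interiorestimate1}. Two ingredients are needed that you never establish. First, a uniform lower bound on the distance between $L_1$ and $L_2$ outside a fixed $D_R^{\ast}K_0$ (for horizontally finite Lagrangians this is the content of the argument of \cite[Proposition 3.19]{GPSCV}; for the vertically finite $L_1$ versus $L_2$ it follows from horizontal finiteness of $L_2$): without this separation, a thin part of the strip could wander arbitrarily far while contributing arbitrarily little energy, and the boundary-estimate constant (which degenerates as $r_W\to 0$ for the union of the two boundary conditions) gives you nothing. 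Second, a thick/thin decomposition of the domain: on the thick part $[-N,N]\times[0,1]$ one covers by uniformly finitely many discs and half-discs and applies the family interior estimate (Lemma \ref{geoboundfamily} plus Proposition \ref{interiorestimate2}); on the thin, $s$-invariant parts one uses the separation constant $C$ to get $E\ge C(b-a)$ for any interval $[a,b]\times[0,1]$ mapped outside $D_{R_2}^{\ast}K_{base}$, which bounds the length of any excursion and then lets the interior estimate confine it. Only after this does Gromov compactness apply. So the proposal as written does not prove the proposition; it proves confinement in the fibre direction and asserts, rather than establishes, confinement in the base direction.
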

\begin{proof}
In the case $L_1$ is vertically finite, let $K_{L_1}$ be as in Definition \ref{vertifin}. In the case $L_1$ is horizontally finite, let $K_{L_1}$ be a compact subset of $M$ such that $\pi(L_1)\subset K_{L_1}$. Let $K_{L_2}$ be a compact subset of $M$ such that $\pi(L_2)\subset K_{L_2}$. Let $R>0$ be such that: (i) the Lagrangians $L_i$s are either cylindrical or empty outside $D_R^{\ast}M$, and (ii) the almost complex structure $J(s,t)$ is cylindrical outside $D_R^{\ast}M$. We furthermore demand that the Legendrian submanifolds $\Lambda_i=L_i\cap S_R^{\ast}M$ are either compact or empty, and that they are disjoint. Let $K_0$ be a compact codimension $0$ submanifold with boundary of $M$ such that on $T^{\ast}K_0^c$, $J(s,t)=J_{con}$. We assume that $K_0$ is large enough so that it contains all the compact subsets $K_{L_i}$ of $M$. Here the radius of the codisc bundle is taken with respect to the metric $g$ on the base $M$.  

Since the Lagrangians $L_{1},L_{2}$ are all exact and the almost complex structures in the family are $\omega$-compatible, we have 
\[\frac{1}{2}\norm{du}^2_{L^2,J}=\int u^{\ast}\omega=a(y)-a(x),\]
and so the geometric energy admits a uniform finite upper bound since there are only a finite number of intersections.  Proposition \ref{compactnomoving} will be true by Gromov compactness if we can find a fixed compact subset $K$ of $M$ and $R_3>0$ such that if $u\in \overline{\mathcal{R}}$ then the image of $u$ lies in $D_{R_3}^{\ast}K$. To do this, we show that outside of some compact subset the Lagrangians are uniformly separated near infinity, and argue via monotonicity to control the images of the thick parts and the thin parts.  

We first need to show that the Lagrangians in question are uniformly separated at infinity outside $D_R^{\ast}K_0$, that is, we have a lower bound $C>0$ on the distance with respect to metric induced from $J(s,t)$ between the Lagrangians $L_i$s outside of $D_{R}^{\ast}K_0$. 
When $L_1$ is vertically finite, such a lower bound between $L_1$ and $L_2$ is obvious. Now, by the proof of \cite[Proposition 3.19]{GPSCV}, we see that  horizontally finite Lagrangians are also uniformly separated outside of $D_{R} (T^{\ast}K_0)$. Modifying $C$ in the case $L_1$ is vertically finite, we get our uniform lower bound $C$. 

Having separated the Lagrangians at infinity, we deal with the thick part. Lemma \ref{geoboundfamily} tells us that 
$(T^{\ast}M,L_i,J)$ is uniformly geometrically bounded over some disc $A_{l}$ (or half-disc $E_{l}$)  in the thick part of $S$, and the geometric boundedness constants only depend on $l$, the family $J$, and the Lagrangians $L_i$s. Lemma \ref{interiorestimate2} then tell us that if the image of $u$ restricted to $A_{l}$ (or  $E_{l}$) intersects a large enough compact subset $A\subset T^{\ast}M$ that separates the Lagrangians near infinity in $A_{l/2}$ (or $E_{l/2}$) then the image of $u$ restricted to $A_{l/2}$ (or $E_{l/2}$) is contained in some $R(A,E,J_s,\omega,g_s,l)$ (or $R'(A,E,J_s,\omega,g_s,L_s,l)$). 
% (or equivalently, find a priori control over the metric space \textit{diameter} of image $u$). 
%Since the family $\overline{S}_{k,1}\to \mathcal{J}(T^{\ast}M)$ is uniformly admissible, and $\mathcal{J}(T^{\ast}M)$ consists of almost complex structures which are of contact type at infinity, by Lemma \ref{verticalconfinement} we have a priori \textit{vertical} control over the images of $J$-holomorphic curves. So all stable $J$-holomorphic curves are contained in $D_{R} T^{\ast}M$. 

The thick part ($[-N,N]\times [0,1]$) can be covered by uniformly finite number of half-discs and discs of some uniform radius $l_1>0$ such that the shrinked discs of radius $l_1/2$ also cover the thick part. The boundary conditions corresponding to horizontally finite Lagrangians lie in a compact set, since their intersections with $D_{R}^{\ast}M$ are compact. So by repeatedly applying Lemma \ref{interiorestimate2}, we can use monotonicity to show that over the thick part, the $J$-holomorphic curves are a priori contained in $D_{R_1}^{\ast} K'$ for some compact subset $K'\subset M$ and $R_1>R>0$.

%Suppose $L_n$ is vertically finite. The argument does not change if $L_n$ is horizontally finite. We apply the monotonicity argument  for the thick part. This implies that we can a priori restrict the diameters on the thick parts of $u$. %Since $L_n$ is vertically finite, finding the Fraudenfelder metric (see \cite[Lemma 4.34]{McduffSalamon}) such that $L_n$ is totally geodesic everywhere only requires modification over a \textit{compact} subset since $L_n$ is totally geodesic at (horizontal) infinity. This can be done exactly as in \cite[Lemma 4.34]{McduffSalamon}. Note that flatness at infinity and the vertical finiteness give uniform isoperimetricity constants near the horizontal infinity. 
%We may assume that $K'$ is contained in some $D_{R_1}(T^{\ast}M)$ for $R_1>R$. 

So now it remains to show that we can compactly enlarge $K'$ and $R_1$ so that the whole image of $u$ is contained in the compact enlargement. We follow the strategy in the proof of \cite[Proposition 3.19]{GPSCV}.
On the thin-parts, we have $s$-invariant almost complex structures $J(t)$. We take some constants $R_2>R_1>0$ so that the image of $u$ restricted to the thick part is contained in $D_{R_2}^{\ast}M$ and outside of $D_{R_2}^{\ast}M$, $J(t)$ is cylindrical. Let $K_{thin}$ be a compact subset of $M$ such that outside of $T^{\ast}K_{thin}$, $J(t)=J_{con}$. Then we take a codimension $0$ submanifold-with-boundary $K_{base}$ of $M$ containing $K_{thin}$, $K'$ and $K_0$, such that the $g$-distance $d_{base}$ between $\partial K_{base}$ and $K_{L_i}$ is positive.

%From monotonicity, we have seen that there exists a compact subset $K'$ of $T^{\ast}M$ such that the images of the thick parts of $u$ are contained in $K'$. We can enlarge this $K'$ so that outside of $K'$, all the Lagrangians $L_{i_0},...,L_{i_k}$ are uniformly separated and outside of $\pi(K')$, $J=J_{con}$. Indeed, the horizontally finite Lagrangians are uniformly separated near infinity, and we can take $\pi(K')$ to include the horizontal supports of the $L_i$s, for $i\neq n$ and the $s$-invaraint almost complex structures $J(t)$ on the thin parts. Then 

Suppose now that the interval $[a,b]\times [0,1]$ in the thin part is mapped outside of the disc bundle $D_{R_2}^{\ast}K_{base}$. Then we have 
\begin{align}\label{strip-endcontrol}
E\geq \int_{[a,b]}\left(\int_0^1 \abs{\partial_t u}\right)^2 \geq C (b-a),
\end{align}
Therefore, taking $L=E/C$, we see that if $(a-b)>L$ then $u$ cannot map $[a,b]\times [0,1]$ outside of $D_{R_2}^{\ast}K_{base}$. So there exists some $\epsilon>0$ such that $[a,b]\times [0,1]$ is covered by uniformly finite half-discs and discs of radius $\epsilon$. Hence applying the interior estimate (Lemma \ref{interiorestimate2}) again, we can enlarge $D_{R_2}^{\ast}K_{base}$ to $D_{R_3}^{\ast}K$ so that the image of $u$ is wholly contained in $D_{R_3}^{\ast}K$. This finishes the proof.%Now suppose we have $X\neq 0$. However, we know that outside of some compact subset $K'$, the family of almost complex structures are all equal to the background a.c. Thus we reduce to the $X=0$ case and proceed on by exactly the same argument.  So we are done. 
\end{proof}
\paragraph{Transversality}
We now proceed on with the construction of the Floer chain complex. For details, see \cite[Part 2, Sections 9-11]{SeidelZurich}. We now assume that the Lagrangians are graded, and that they are spin. Given each pair $L_1, L_2$, choose an initial $s$-invariant uniformly admissible family of almost complex structure $J^{in}_{L_1,L_2}(t)$\footnote{For our purpose, it suffices to set $J^{in}_{L_1,L_2}(t)=J_{con}$}. By Proposition \ref{compactnomoving}, we can find some $H>0$ and a compact subset $K\subset M$ such that: (i)  $D_H^{\ast}K$ contains all the intersection points of $L_1$ and $L_2$, (ii) $L_i$ is horizontally finite then $K$ contains the horizontal support of $L_i$, (iii) $J^{in}_{L_1,L_2}$ is cylindrical outside $D_H^{\ast}M$,  and if $L_i$ is vertically finite, then $K$ contains the compact subsets $K_{L_i}$ in the sense of Definition \ref{vertifin}. and (iv) $J^{in}=J_{con}$ outside $T^{\ast}K$ and the image of $u\in \mathcal{R}(L_1,L_2,J_{L_1,L_2}^{in})$ are contained in the \textit{interior} of $D_H^{\ast}K$.  

Consider the following space of $\omega$-compatible almost complex structures 
\begin{align}\label{compactdeformationfamily}
\mathcal{J}(K,H):=\{J: J=J^{in}_{L_1,L_2}(t) \text{ outside }
{D_{H}^{\ast}(K)}.\} \end{align}
This space $\mathcal{J}(K,H)$, equipped with the $C^{k}$ topology (which is equivalent to the uniform $C^k$-norm topology induced from $g_{con}$) is a Banach manifold modelled on the space of $C^{k-1}$-infinitesimal deformations $\mathcal{Y}(U)$ that satisfies the conditions
\begin{align}
&YJ+JY=0 && \omega(Y\cdot,\cdot)+\omega(\cdot,Y\cdot)=0 && \supp(Y)\subset \overline{D_H^{\ast}K}.
\end{align} 
Note then under $J\to J\exp(-JY)$, the class of horizontally finite almost complex structures stays invariant. Similarly, we can equip the space $\mathcal{J}(K,H)$ with the $C^{\infty}$-topology which makes it a Fr\'echet manifold. Furthermore, applying the proof of Proposition \ref{compactnomoving}, we see that there exists a compact set $P$ containing every $u\in \mathcal{R}(L_1,L_2,J)$ for $J\in \mathcal{J}(K,H)$. Indeed, we can run the argument of Proposition \ref{compactnomoving} outside of ${D_H^{\ast}K}$ where $J=J_{con}$ where the uniform separation of the Lagrangians and tameness constants coincide for $J\in \mathcal{J}(K,H)$. 
\footnote{Given a general symplectic manifold $M$, the space $\mathcal{J}(M)$ of (tame, or compatible) almost complex structures is given the \textit{weak} $C^{\infty}$-topology. When the base $M$ is \textit{compact}, the space $\mathcal{J}^k(M)$ and the space $\mathcal{J}^{\infty}(M)$ of $C^k$ and smooth compatible almost complex structures are Banach and Fr\'echet manifold. However, when $M$ is not compact, endowing Banach/Fr\'echet structures on such spaces become much more involved, unless one specifies appropriate decay condition at infinity for maps in $W^{k,p}_{loc}$.}

%Then proposition \ref{compactnomoving} implies that there exists a compact subset $K$ of $M$, and some $R>0$ and $H>0$ such that $J^{in}_{L_1,L_2}(t)$ is cylindrical on $\{r\geq \frac{H}{2}\}$ and $U=D_{H}(T^{\ast}B_R(K))$ contains $\mathcal{R}(L_0,L_1,J_{L_1,L_2})_{x\mapsto y}$ for all $x,y\in L_1\pitchfork L_2$--- note that there are only finitely many intersection points. Then we 
%\begin{align}\label{compactdeformationfamily}
%\mathcal{J}(K,R,H):=\{J: J \text{ is cylindrical outside } \{r\geq H\} \text{ and } J=J_{con} \text{ on }
%{T^{\ast}(M-B_R(K))}.\} \end{align}
%From Remark \ref{controllingtheconstants}, we see that sufficiently small neighbourhood of $J_{L_0,L_1}^{in}$ inside $\mathcal{J}(K,R,H)$ in the $C^{\infty}$-topology allow us to control the compact set that the disks and their Gromov degenerations are confined in. 
%, monotonicity implies that we can confine the $C^0$ images of pseudo-holomorphic discs in the \textit{same} compact set. 

Since we are now in the situation covered in \cite[Section 8(i)]{SeidelZurich}, we can perturb the family $J_{L_0,L_1}(t)$ generically so that moduli spaces of holomorphic strips $\mathcal{R}(L_0,L_1,J_{L_1,L_2})_{x\mapsto y}$ are transversely cut out for all $x,y\in L_0\pitchfork L_1$. Indeed, note that given a $J^{in}_{L_1,L_2}$-holomorphic curve $u$, the set of injective points is dense, and the images of such points must necessarily lie in the interior of $D_H^{\ast}K$. 

To ensure smoothness of the $J$-holomorphic strips via elliptic regularity, we need to find a Baire dense subset in the $C^{\infty}$-topology whose associated moduli spaces of strips are transversely cut-out. This is done either using the Floer $C^{\infty}_{\epsilon}$-space or Taubes' trick. For details, see \cite[Chapter 3]{McduffSalamon}. Note that since there are only finitely many intersections, and since finite intersections of Baire dense subsets are Baire dense, we can find a generic $J(t)$ such that all the moduli spaces $\mathcal{R}(L_0,L_1,J_{L_1,L_2})_{x\mapsto y},x,y\in L_0\pitchfork L_1$ are transversely cut out.

%Then we compactify, obtaining the compactified moduli space $\overline{\mathcal{R}}(L_0,L_1,J_{L_1,L_2})_{x\mapsto y}$. 

\begin{definition}
A uniformly admissible family $J(t)$ of almost complex structures such that the moduli space of holomorphic strips $\mathcal{R}(L_0,L_1,J(t))$ is transversely cut out is called a \emph{regular Floer-datum} for the pair $L_0,L_1$. 
\end{definition}

Choose a regular Floer datum for each pair once and for all. We can regard each intersection point $x\in L_0\pitchfork L_1$ as a \textit{constant} holomorphic half-strip. The boundary conditions are given as follows. If $A$ is the induced Maslov grading on $LGr(T_x (T^{\ast}M))$ and $A_0$,$A_1$ are chosen grading functions on $L_0$ and $L_1$ respectively, we choose a path $L_s$ of Lagrangian subspaces of $T_x (T^{\ast}M)$ that begin at $T_x L_0$ and end at $T_x L_1$ such that $A(L_0)=A_0(x)$ and $A(L_1)=A_1(x)$.  We then choose a path of spin structures $P_s$ over $L_s$ and isomorphisms of $Spin$ torsors $(P_0)_x\simeq P_{L_0}(x)$ and $(P_1)_x\simeq P_{L_1}(x)$. Such choices define an orientation on the determinant line of the linearized Cauchy-Riemann operator associated to $x$. Any other choice of paths that satisfy the grading constraints give a canonically isomorphic real line, but different homotopy classes of path of spin structures may give different orientations. 

Writing $\mathfrak{o}_x$ for the corresponding abstract real line, we can regard the orientation as a choice of an element in  $(\mathfrak{o}_x-\{0\})/\mathbb{R}^{\ast}$. Then we write $\abs{\mathfrak{o}}_x=\mathbb{Z}((\mathfrak{o}_x-\{0\})/\mathbb{R}^{\ast})$ with $+1$ identified with the orientation of $\mathfrak{o}_x$. We then form the $\mathbb{Z}$-group as a direct sum:
\begin{align}\label{floercomplex}
CF(L_0,L_1;\mathbb{Z})=\oplus_{x\in L_0\pitchfork L_1} \abs{\mathfrak{o}}_x
\end{align}
Since this is standard in literature, we conclude that we can define a chain complex structure on $CF(L_0,L_1)$ by counting regular $J$-holomorphic strips. As usual, we call the cohomology $HF(L_0,L_1)$ of this chain complex the \textit{Floer cohomology}. 
For details, see \cite[Section 11]{SeidelZurich}.

\subsubsection{Continuation strips} \label{continuationstripsss}
We now pose the continuation strip moduli problem. Let $L_s$ be an exact Lagrangian isotopy of horizontally finite Lagrangians. We say that $L_s$ is \textit{uniformly horizontally supported} if there exists a compact subset $K$ of $M$ such that $\pi(L_s)\subset K$. Given such an exact Lagrangian isotopy, we can find some time-dependent horizontally finite Hamiltonian $H_s$ such that $L_s=\psi_s(L)$ with uniform horizontal support (Definition \ref{horizontalfinitenessdefini}). Here $\psi_s$ is the Hamiltonian flow associated to $H_s$. Let $V$ be a vertically finite Lagrangian submanifold. We say that an exact Lagrangian isotopy $V_s:V\times [0,1]\to T^{\ast}M$ of vertically finite Lagrangians is \textit{compactly supported} if there exists a compact subset $K$ of $V$ such that $V_s(v,s)=v$ for $v$ outside of $K$. Note that given a uniformly horizontally finite isotopy $\psi_s$, the isotopy $V_s=\psi_s^{-1}(V)$ is compactly supported.% vertically finite and the Hamiltonian deformation takes place inside a \textbf{compact} subset of $V$. 

Let $L_s$ be a uniformly horizontally supported isotopy of horizontally finite Lagrangians. Suppose $V$ is transverse to $L_0$ and $L_1$. As we explained in Section \ref{Compactness and Transversality}, we can compactly perturb the constant family $J_{con}=J_{con}(t)$ to find a regular Floer datum $J_0(t),J_1(t)$ for the pair $(V,L_0)$ and $(V,L_1)$, respectively, such that the moduli spaces $\mathcal{R}(V,L_0,J_0(t))$ and $\mathcal{R}(V,L_1,J_1(t))$ are transversely cut out. 

Choose a uniformly horizontally supported Hamiltonian isotopy $\psi_s$ generating $L_s=\psi_s(L_0)$. Given a pair $((L_0,V,J_0),(L_1,V,J_1))$, fix a uniformly admissible family $\tilde{J}(s,t)$ of almost complex structures on $\mathcal{Z}$  
such that $\tilde{J}(s,t)=J_0(t)$ for $s\leq -N$, $\tilde{J}(s,t)=J_1$ for $s\geq N$, and $\tilde{J}$ is given by a compact perturbation of the constant family $J_{con}$ on $[-N,N]$. \footnote{By this, we mean that $J(s,t)=J^{in}(s,t)$ outside some compact subset of $T^{\ast}M$.} Choose a smooth increasing elongation function $l:[0,\infty)\to [0,1]$ such that $l(s)=0$ for $s<-N$ and $l(s)=1$ for $s>N$.

We will introduce a slightly non-standard model for continuation strips. As illustrated in \cref{continuationstripfig}, we will move $V$ instead of $L$ by the inverse of $\psi_s$. The reason is that the movie of the family $V_{l(s)}=\psi_{l(s)}^{-1}(V)$, which is normally only totally real, will be Lagrangian outside a compact subset, and simplify the confinement problem. For instance, see \cref{semiadmissiblegeoboundfamily}. To make this idea precise, let 
\[J(s,t)=(\psi_{l(s)}^{\ast})\tilde{J}.\]
Note that $J$ satisfies $J(s,t)=(D\psi_1^{-1})_{\ast} J_1(t)(D\psi_1)_\ast=(\psi_1)^{\ast}J_1$ for $s\geq N$. 
\begin{figure}[t]
\centering
\includegraphics[width=0.6\textwidth, height=5cm]{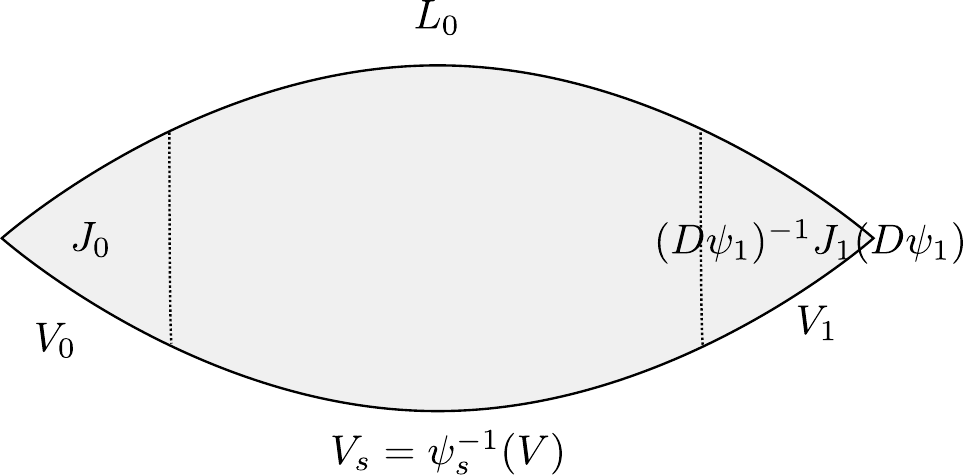}
\caption{Boundary Conditions For Passive Continuation Strips}
\label{continuationstripfig}
\end{figure}
We say that a map $u:\mathcal{Z}\to T^{\ast}M$ is a $J$-holomorphic strip with a \textit{passive moving Lagrangian boundary condition} if the following equation is satisfied.
\begin{align}\label{passivemovingboundaryeq}
\begin{cases}
\bar{\partial}_J u=0 &  \\ 
u(s,0)\subset V_{l(s)}&\\
u(s,1)\subset L_0&\\
\lim_{s\to \infty} u(s,t)\in L_0\cap V\\
\lim_{s\to -\infty} u(s,t)\in L_0\cap \psi_1^{-1}(V).
\end{cases}
\end{align}
We call the solutions \textit{passive continuation strips}. See Figure \ref{continuationstripfig}.

To convert \eqref{passivemovingboundaryeq} into something more familiar, we can simply take $\tilde{u}=\psi_{l(s)}\circ u$ and arrive at an equivalent family of equations
\begin{align}\label{perturbedpassive}
\begin{cases}
\frac{\partial \tilde{u}}{\partial s}+\tilde{J}(s,t)\frac{\partial \tilde{u}}{\partial t}-l'(s)X_{l(s)}(\tilde{u})=0 &  \\ 
\tilde{u}(s,0)\subset V&\\
\tilde{u}(s,1)\subset L_{l(s)}&\\
\lim_{s\to \infty} \tilde{u}(s,t)\in L_0\cap V\\
\lim_{s\to -\infty} {\tilde{u}}(s,t)\in L_1\cap V.
\end{cases}
\end{align}
The point is that a priori $C^0$-diameter estimate (Proposition \ref{compactyesmoving}) for the solutions of \eqref{passivemovingboundaryeq} is much simpler compared to the equivalent equation \eqref{perturbedpassive}. 

We now introduce the classes of \textit{homotopies} of families that we will use to show certain invariance properties of $HF$. Suppose we are given two uniformly admissible families of almost complex structures $\tilde{J}^0$ and $\tilde{J}^1$ on $\mathcal{Z}$ such that $\tilde{J}^i(s,t)=J_0$ for $s\leq -N$ and $\tilde{J}^i(s,t)=J_1$ for $s\geq N$ for some $N>0$. Then we say that a path $\tilde{J}^{\tau}$ of family of almost complex structures between $\tilde{J}^0$ and $\tilde{J}^1$ over $\mathcal{Z}$ is a \textit{uniformly admissible homotopy} if (i) there exists some $N'>0$ such that $\tilde{J}^{\tau}(s,t)=J_0$ for $s\leq -N'$ and $\tilde{J}^{\tau}(s,t)=J_1$ for $s\geq N'$, and (ii) there exists a compact set $K\subset M$ and some $R>0$ such that outside $T^{\ast}K$, $\tilde{J}^{\tau}=J_{con}$ and $\tilde{J}^{\tau}(s,t)$ is cylindrical for all $s,t\in \mathcal{Z}$ and $\tau\in [0,1]$ outside $D_R^{\ast}M$. The Hamiltonian counterpart is as follows; suppose we are given a family of time-dependent Hamiltonians $H^{\tau}_s$ and suppose there exists an $R>0$ and a compact subset $K\subset M$ such that $H_s^{\tau}$ is cylindrical outside $D_R^{\ast}M$ and $\pi(\supp H_s^{\tau})\subset K$ for all $s$ and $t$. Then we say that such a family is a \textit{uniformly cylindrical and horizontal}. 

We now state the result, whose proof we postpone to Section  \ref{continuationmapsss}.
\begin{proposition}\label{propertiesofactivepassivecontinatuinmaps}
Let $V$ be a vertically finite Lagrangian and let $L$ be a horizontally finite Lagrangian. For uniformly horizontally supported isotopies $L_s$ such that $L_s\pitchfork V,s=0,1$ there exists a chain map called \emph{passive continuation map}
\begin{align}\label{passivecontinuationmap}
c^{passive}&=c_{(L_0,J_0)\to(L_1,J_1)}:CF(V,L_0,J_0)\to CF(V,L_1,J_1).
\end{align}
The passive continuation map has the following properties.
\begin{itemize}
\item A uniformly horizontally finite generic homotopy $(L_{s}^{\tau},\tilde{J}^{\tau})$ relative to end points generated by a uniformly cylindrical and horizontally supported family of Hamiltonians $H_s^{\tau}$ induces a chain homotopy map 
\[H:CF^{\ast}(V,L_0)\to CF^{\ast+1}(V,L_1)\]
for the passive continuation maps. 
\item There is a chain homotopy between $c_{L_0\to L_1}\circ c_{L_1\to L_2}$ and the continuation map $c_{L_0\to L_2}$ associated to concatenation of isotopies. Hence the continuation maps are well-defined up to isomorphisms in cohomology. 
\item For constant maps $L_s=L$, the induced continuation map is the identity. 
\item For any uniformly horizontally finite isotopy, the passive continuation maps \eqref{passivecontinuationmap} are quasi-isomorphisms. 
\end{itemize}
\end{proposition}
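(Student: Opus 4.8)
The plan is to establish \cref{propertiesofactivepassivecontinatuinmaps} by reducing each assertion to the standard Floer-theoretic continuation-map package, the only genuine new content being the a priori $C^0$-confinement of passive continuation strips. First I would fix the data: given $L_s = \psi_s(L_0)$ with $\psi_s$ generated by a uniformly cylindrical and horizontally supported Hamiltonian $H_s$, and a uniformly admissible family $\tilde J(s,t)$ interpolating the chosen regular Floer data $J_0(t), J_1(t)$, I would set $J(s,t) = (\psi_{l(s)})^{\ast}\tilde J$ and consider the moduli space $\mathcal R^{passive}$ of solutions of \eqref{passivemovingboundaryeq}. The key geometric observation is that the movie $s\mapsto V_{l(s)} = \psi_{l(s)}^{-1}(V)$ is Lagrangian outside a fixed compact subset of $T^{\ast}M$ (since $\psi_s$ is horizontally supported, it is the identity near vertical infinity), while $L_0$ is a genuine horizontally finite Lagrangian. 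Hence on $\mathcal Z \times T^{\ast}M$ with the Gromov-trick almost complex structure $j_{\mathcal Z}\oplus J(s,t)$, the product boundary condition $\{t=0\}\times V_{l(s)} \sqcup \{t=1\}\times L_0$ is Lagrangian at infinity and geometrically bounded, by the same arguments as \cref{geoboundfamily} together with the uniform separation of $V$ and $L_0$ near infinity established in the proof of \cref{compactnomoving}. Exactness of $V$ and of each $L_{l(s)}$ gives a uniform geometric-energy bound via the usual action/Stokes computation (this is what I would record as the $C^0$-diameter estimate, Proposition~\ref{compactyesmoving}), so the monotonicity estimates \cref{totaldomainestimate,interiorestimate2} confine the image of every passive continuation strip to a fixed $D_{R_3}^{\ast}K$. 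With compactness and standard Fredholm/transversality theory (perturbing $\tilde J$ on the thick part inside the class $\mathcal J(K,H)$ of compact deformations, exactly as in \cref{Compactness and Transversality}) the zero-dimensional moduli spaces are finite, oriented via the spin data, and the count defines the chain map $c^{passive}$; the broken-strip boundary of the one-dimensional moduli spaces shows $\partial c^{passive} = c^{passive}\partial$.

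Next I would address the four listed properties in turn, each by the analogous parametrized moduli space. For the chain-homotopy property, given a uniformly horizontally finite homotopy $(L_s^{\tau}, \tilde J^{\tau})$ rel endpoints generated by a uniformly cylindrical and horizontally supported family $H_s^{\tau}$, I would form the $\tau$-parametrized moduli space over $[0,1]$; the same confinement argument applies uniformly in $\tau$ (the relevant compact sets and separation constants depend only on the compact supports and on $R$, which are uniform by hypothesis), so the parametrized moduli is compact, and counting its boundary strata gives the desired $H$ with $c_0^{passive} - c_1^{passive} = \partial H + H\partial$. For the composition property, one interpolates between the concatenated homotopy $\psi_s^{(2)}\circ\psi_s^{(1)}$ and the gluing of the two separate homotopies via a standard gluing/one-parameter-family argument, again with uniform confinement; this yields a chain homotopy between $c_{L_1\to L_2}\circ c_{L_0\to L_1}$ and $c_{L_0\to L_2}$. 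The constant-isotopy case is immediate: when $L_s\equiv L$ and $H_s\equiv 0$ one may take $\tilde J$ $s$-independent, and then the continuation equation \eqref{perturbedpassive} has no $X_{l(s)}$ term, so the only rigid solutions are the constant strips at intersection points and $c^{passive} = \mathrm{id}$. Finally, that the passive continuation maps are quasi-isomorphisms follows formally from the previous three items: running the isotopy backwards gives $c_{L_1\to L_0}$, and the composition and constancy properties give $c_{L_1\to L_0}\circ c_{L_0\to L_1} \simeq c_{L_0\to L_0} = \mathrm{id}$ and symmetrically, so both maps are homotopy inverses on cohomology.

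The main obstacle — and the only place where something beyond bookkeeping is needed — is the uniform $C^0$-confinement of the passive continuation strips and of their parametrized versions. Unlike the non-moving case (\cref{compactnomoving}), here the boundary condition on the $t=0$ side is the moving family $V_{l(s)}$, which is only totally real (not Lagrangian) on a compact region; the whole point of the ``passive'' model is that it \emph{is} Lagrangian near infinity, so one must check carefully that (i) the vertical confinement / maximum principle of \cref{verticalconfinement} still applies near vertical infinity — it does, because there $V_{l(s)} = V$ is cylindrical and $u^{\ast}\canliouvile\le 0$ on that part of the boundary — and (ii) the monotonicity constants for the product boundary condition $\partial\mathcal Z\times (\text{moving family})$ on $\mathcal Z\times T^{\ast}M$ are genuinely uniform, which requires the family $V_{l(s)}$ to be uniformly tame and uniformly separated from $L_0$ at infinity; both reduce to the fact that $\psi_{l(s)}$ equals the identity outside a fixed $T^{\ast}K$. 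Once these two points are in place the geometric-energy bound from exactness closes the argument, and everything else is the standard continuation-map formalism applied verbatim. I would therefore organize the section so that Proposition~\ref{compactyesmoving} (the $C^0$-diameter estimate) carries all the analytic weight, and the proof of \cref{propertiesofactivepassivecontinatuinmaps} then cites it together with the transversality discussion of \cref{Compactness and Transversality}.
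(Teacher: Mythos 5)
Your overall architecture matches the paper's: the passive model \eqref{passivemovingboundaryeq} is used precisely because the movie of $V_{l(s)}=\psi_{l(s)}^{-1}(V)$ is Lagrangian outside a compact set, so \cref{semiadmissiblegeoboundfamily}, \cref{geometricallyboundedtotallyrealfiniteLag} and \cref{compactyesmoving} carry the analytic weight, and the chain map, composition, constant-isotopy and quasi-isomorphism statements are then standard bookkeeping (the paper likewise gets the inverse by running the isotopy backwards, noting the inverse movie is still tame). However, there is a genuine gap in your treatment of the first bullet. You propose to ``form the $\tau$-parametrized moduli space over $[0,1]$'' in the passive picture and count its boundary strata as in the standard homotopy argument. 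But in the passive picture the boundary condition at the $s\to-\infty$ end is $L_0\cap(\psi_1^{\tau})^{-1}(V)$, and although the homotopy is rel endpoints as a homotopy of the Lagrangians $L_s^{\tau}$, the diffeomorphisms $\psi_1^{\tau}$ themselves are not fixed, so $(\psi_1^{\tau})^{-1}(V)$ --- hence the totally real boundary condition and the asymptotic generators --- move with $\tau$. The standard ``ends of the one-dimensional parametrized moduli space'' argument producing a chain homotopy between two maps of \emph{fixed} complexes does not apply verbatim to a family with $\tau$-dependent Lagrangian boundary conditions and $\tau$-dependent asymptotics; this is exactly the difficulty the paper flags after \eqref{passivemovingboundaryeqfamily}.

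The paper resolves it by gauge-transforming back to the active equation \eqref{perturbedpassive}, where the boundary conditions are the fixed $V$ and the moving family $L_{l(s)}$ with fixed endpoints $L_0$, $L_1$ and fixed asymptotics $L_0\cap V$, $L_1\cap V$; transversality is then achieved by adding compactly supported Hamiltonian-valued one-form perturbations as in \eqref{additionalperturbation}, using the Banach manifolds $\mathcal{J}(K,R_1,J^0,J^1)$ and $\mathcal{H}(R_1,K)$, while the $C^0$-confinement is imported from the passive picture via the gauge transformation (and one must also check, as the paper does, that the two linearizations are intertwined by $D\psi_s$, so Fredholm theory and orientations agree). Your write-up needs either this gauge-transformation step, or an argument trivializing the $\tau$-family of boundary conditions and asymptotics (e.g.\ identifying $L_0\cap(\psi_1^{\tau})^{-1}(V)$ with $L_1\cap V$ via $\psi_1^{\tau}$ and redoing the parametrized Fredholm/compactness/gluing analysis for moving boundary data); as written, the claim that ``everything else is the standard continuation-map formalism applied verbatim'' skips the one step that is not standard.
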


Then an argument as in the end of Section \ref{Compactness and Transversality} tells us that for generic $J$, the moduli spaces of solutions of \eqref{passivemovingboundaryeq} are transversely cut out. This will show the existence of a chain map
\[\hat{c}:CF(V_0,L_0,J_0)\to CF(V_1,L_0,(D{\psi_1}^{-1})_{\ast}J_1 (D\psi_1)_{\ast}).\]

Making use of the trivial isomorphism 
\[CF(V_1,L_0,(D\psi^{-1})_{\ast}J_1 (D\psi)_{\ast})\simeq CF(V,L_1,J_1)\]
induced by the \textbf{global} Hamiltonian isotopy $\psi^{-1}_1:T^{\ast}M\to T^{\ast}M$,
we get the passive continuation map
\[\tilde{c}:CF(V,L_0,J_0)\to CF(V,L_1,J_1)\]
defined by the following commutative diagram. 
\[\begin{tikzcd}
{CF(V,L_0,J_0)} & {CF(V,L_0,J_0)} \\
{CF(V,L_1,J_1)} & {CF((\psi_1)^{-1}V,L_0,(D\psi_1^{-1})_{\ast}J_1(D\psi_1)_{\ast})}.
\arrow["Id", from=1-1, to=1-2]
\arrow["{\hat{c}}", from=1-2, to=2-2]
\arrow["Id"', from=2-1, to=2-2]
\arrow["{\tilde{c}}"', from=1-1, to=2-1]
\end{tikzcd}\]

\subsubsection{Energy Formula}\label{energyformulasss}
We now state some standard energy formulas for the solutions of (\ref{passivemovingboundaryeq}). We first start with the following formula from Oh \cite[Proposition 3.4.8]{Ohsymplectictopology}.
\begin{lemma}\label{exactisotopyprimitive}
Let $(X,d\alpha)$ be an exact symplectic manifold and let $L\subset X$ be an exact Lagrangian. Let $\psi_s$ be a Hamiltonian isotopy on $X$ and let $F$ be the primitive of $\alpha$ on $L$. Let $L_s=\psi_s(L)$, then
\[F_s= F+\int_0^s (-H_t\circ i_t+\alpha(X_{H_t})\circ i_t)dt\]
satisfies $dF_s=i_s^{\ast}\alpha$. Here $i_s=\psi_s\circ i$ where $i:L\to X$ is the inclusion map. 
\end{lemma}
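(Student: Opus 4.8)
The plan is to compute directly by differentiating in the flow parameter $s$ and checking that the asserted function $F_s$ has the right differential. First I would set notation: write $i_s = \psi_s \circ i : L \to X$ for the time-$s$ inclusion, so that $L_s = i_s(L)$, and $X_{H_t}$ for the Hamiltonian vector field of $H_t$ with the sign convention $\iota_{X_{H_t}}\omega = -dH_t$ (as fixed in \cref{sec:convention}), where $\omega = d\alpha$. The claim is that $F_s := F + \int_0^s \big({-}H_t\circ i_t + \alpha(X_{H_t})\circ i_t\big)\,dt$ satisfies $dF_s = i_s^{\ast}\alpha$. Since $F_0 = F$ and $dF = i^{\ast}\alpha = i_0^{\ast}\alpha$ by exactness of $L$, it suffices to show that $\frac{d}{ds}\big(dF_s\big) = \frac{d}{ds}\big(i_s^{\ast}\alpha\big)$ as one-forms on $L$, i.e. that $d\big(\frac{\partial F_s}{\partial s}\big) = \frac{\partial}{\partial s}\big(i_s^{\ast}\alpha\big)$.

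The right-hand side is a Lie-derivative computation: $\frac{\partial}{\partial s} i_s^{\ast}\alpha = i_s^{\ast}\big(\mathcal{L}_{X_{H_s}}\alpha\big) = i_s^{\ast}\big(\iota_{X_{H_s}} d\alpha + d\,\iota_{X_{H_s}}\alpha\big) = i_s^{\ast}\big(\iota_{X_{H_s}}\omega\big) + i_s^{\ast}\,d\big(\alpha(X_{H_s})\big)$, using Cartan's magic formula. By the sign convention, $\iota_{X_{H_s}}\omega = -dH_s$, so this equals $-i_s^{\ast}\,dH_s + d\big(i_s^{\ast}(\alpha(X_{H_s}))\big) = d\big({-}H_s\circ i_s + \alpha(X_{H_s})\circ i_s\big)$, where in the last step I pull $d$ past $i_s^{\ast}$ (both commute with pullback). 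On the other hand, $\frac{\partial F_s}{\partial s} = -H_s\circ i_s + \alpha(X_{H_s})\circ i_s$ directly by the fundamental theorem of calculus applied to the integral defining $F_s$, and since $d$ and $\frac{\partial}{\partial s}$ commute (differentiation under the integral sign, everything being smooth in $(s,x)$), $d\big(\frac{\partial F_s}{\partial s}\big) = d\big({-}H_s\circ i_s + \alpha(X_{H_s})\circ i_s\big)$. The two expressions agree, which establishes the identity of $s$-derivatives; integrating from $0$ to $s$ and using the base case gives $dF_s = i_s^{\ast}\alpha$ for all $s$.

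I do not expect a serious obstacle here — the proof is a one-line Cartan-formula manipulation dressed up with the bookkeeping of the moving inclusion $i_s$. The only points requiring mild care are (i) getting the Hamiltonian sign convention consistent with the paper's $\iota_{X_H}\omega = -dH$, so that the $-H_t$ term appears with the correct sign, and (ii) justifying the interchange of $d$ (exterior derivative on $L$) with $\frac{\partial}{\partial s}$ and with the time-integral, which is routine given smoothness and, in the non-compact setting, the fact that the isotopy is generated by a compactly/uniformly supported Hamiltonian so there are no convergence issues. Since this is quoted verbatim from Oh, I would simply cite \cite[Proposition 3.4.8]{Ohsymplectictopology} and include the short computation above for the reader's convenience.
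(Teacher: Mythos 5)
Your computation is correct and is exactly the standard Cartan-formula argument behind this statement; the paper itself gives no proof, simply citing Oh, so there is nothing to diverge from. The one point that matters — matching the $-H_t$ term to the paper's convention $\iota_{X_H}\omega=-dH$ — you handle correctly, and the interchange of $d$, $\partial_s$, and the time integral is indeed routine here.
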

The following formula follows from Lemma \ref{exactisotopyprimitive} by direct computation. 
\begin{lemma}
Suppose we have $(X,d\alpha,L,\psi_s)$ as above. Suppose $\gamma:[0,1]\to X$ is a curve such that $\gamma(s)\in L_s$. Then we have
\[\int \gamma^{\ast}\alpha= F_1(\gamma(1))-F_0(\gamma(0))+\int_0^1 H_s(\gamma(s)) ds.\] 
\label{movingboundarycomputation}
\end{lemma}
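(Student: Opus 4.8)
The plan is to reduce the statement to Lemma \ref{exactisotopyprimitive} by a direct computation with the pullback $\gamma^{\ast}\alpha$. First I would write $\gamma(s)=\psi_s(\delta(s))$ where $\delta:[0,1]\to L$ is the curve in the fixed Lagrangian $L$ obtained by pushing $\gamma$ back along the isotopy, so that $\gamma = i_s \circ \delta$ with $i_s = \psi_s \circ i$ as in Lemma \ref{exactisotopyprimitive}. Differentiating, the velocity $\dot\gamma(s)$ splits into two contributions: the ``horizontal'' part coming from the motion of $\delta$ inside $L$, namely $(D i_s)(\dot\delta(s))$, and the ``vertical'' part coming from the flow direction, namely $X_{H_s}(\gamma(s))$. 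Hence
\[
(\gamma^{\ast}\alpha)(s) \;=\; \alpha_{\gamma(s)}\big((Di_s)\dot\delta(s)\big) \;+\; \alpha_{\gamma(s)}\big(X_{H_s}(\gamma(s))\big)
\;=\; \big(i_s^{\ast}\alpha\big)\!\big(\dot\delta(s)\big) \;+\; \big(\alpha(X_{H_s})\big)\!\big(\gamma(s)\big).
\]

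Next I would use Lemma \ref{exactisotopyprimitive}, which gives a primitive $F_s$ of $i_s^{\ast}\alpha$ on $L$, to rewrite the first term as an exact derivative. Along the curve $\delta$ we have $(i_s^{\ast}\alpha)(\dot\delta(s)) = (dF_s)_{\delta(s)}(\dot\delta(s))$; but since $F_s$ itself depends on $s$, the total $s$-derivative of the function $s\mapsto F_s(\delta(s))$ picks up the extra term $\partial_s F_s$ evaluated at $\delta(s)$. By the explicit formula for $F_s$ in Lemma \ref{exactisotopyprimitive}, that extra term is exactly $-H_s\circ i_s + \alpha(X_{H_s})\circ i_s$ evaluated at $\delta(s)$, i.e. $-H_s(\gamma(s)) + \alpha(X_{H_s})(\gamma(s))$. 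Therefore
\[
\frac{d}{ds}\Big(F_s(\delta(s))\Big) \;=\; (i_s^{\ast}\alpha)(\dot\delta(s)) \;-\; H_s(\gamma(s)) \;+\; \alpha(X_{H_s})(\gamma(s)).
\]
Combining this with the pullback formula above, the $\alpha(X_{H_s})$ contributions match and one is left with $(\gamma^{\ast}\alpha)(s) = \frac{d}{ds}\big(F_s(\delta(s))\big) + H_s(\gamma(s))$.

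Finally I would integrate over $s\in[0,1]$: the first term telescopes to $F_1(\delta(1)) - F_0(\delta(0)) = F_1(\gamma(1)) - F_0(\gamma(0))$ (using $\psi_0=\mathrm{id}$ so $\delta(0)=\gamma(0)$, and $F_1$ being the primitive on $L_1$ transported to $\gamma(1)$), and the second integrates to $\int_0^1 H_s(\gamma(s))\,ds$, which is precisely the claimed identity. I do not expect a serious obstacle here; the only point requiring a little care is bookkeeping the difference between the $s$-derivative of $F_s$ as a family of functions on the fixed $L$ versus its derivative along the moving curve — getting the chain rule right is what makes the $\alpha(X_{H_s})$ terms cancel and leaves the clean Hamiltonian integral. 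One should also note that $H_s$ may be taken time-independent within each slice or genuinely $s$-dependent without changing the argument, since everything is done slice-by-slice in $s$.
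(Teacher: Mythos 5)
Your computation is correct and is exactly the route the paper intends: the paper simply asserts that Lemma \ref{movingboundarycomputation} follows from Lemma \ref{exactisotopyprimitive} ``by direct computation,'' and your decomposition $\gamma(s)=\psi_s(\delta(s))$, the chain-rule bookkeeping for $\frac{d}{ds}F_s(\delta(s))$, and the resulting cancellation of the $\alpha(X_{H_s})$ terms supply precisely that omitted computation. The only point worth keeping explicit (which you do note) is that $F_1(\gamma(1))$ means the primitive transported to $L_1$, i.e.\ $F_1\circ\psi_1^{-1}$ evaluated at $\gamma(1)$.
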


Combining the two lemmas, we arrive at the following expression for the energy of discs with moving boundary conditions

\begin{lemma}
Suppose $S$ is a disc with $k+1$ marked points $x_0,...,x_{k+1}$. Identify each of the anticlockwise ordered boundary segments $\partial S_1,...,\partial S_{k+1},\partial S_0$ with $[0,1]$.  Suppose we have moving Lagrangian labels $L_0,...,L_{k+1}$, $L^s_i=\psi_s(L_i)$ as above, with $L_i^0=L_i, L_i^1=L_{i+1}$. Suppose the Lagrangians $L_j,j=0,...,k$s are mutually transverse. Let $u:S\to T^{\ast}M$ be a continuation disc with moving Lagrangian labels with respect to Hamiltonians $H_s:S\to C^{\infty}(T^{\ast} M,\mathbb{R})$. Choose the primitives of $L_i^s$ as in Proposition \ref{exactisotopyprimitive}. Then the geometric energy of the solutions of \eqref{passivemovingboundaryeq} satisfy:
\begin{align}\label{geometricenergymovingboundary}
\int_S \frac{1}{2}\abs{du}^2_J=\int_S u^{\ast}\omega=\sum_i a^{+}(x_i)-\sum_i a(x_i^{-})+ \int_{\partial S} H_s(u)ds.
\end{align} 
%Furthermore, the formula is independent of the choices of the primitives.
In particular, if the isotopies $H_s$ are compactly supported on $L_i$s, then the geometric energy is bounded by a constant which depends only on the original action of the intersection points and $H_s$. %Similarly, if the isotopies $H_s$ are all non-negative, then the energy is a priori bounded by a constant which depends only on the action of the intersection points and $H_s$.
\end{lemma}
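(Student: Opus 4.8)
The plan is to derive the formula by Stokes' theorem together with \cref{exactisotopyprimitive} and \cref{movingboundarycomputation}, generalising the strip case. First, the identity $\frac{1}{2}\abs{du}_J^2=u^{\ast}\omega$ is pointwise and purely local: the moving data in \eqref{passivemovingboundaryeq} enters only through the Lagrangian boundary labels and not through the equation, which remains $\bar{\partial}_J u=0$, so since $J$ is $\omega$-compatible one has $u^{\ast}\omega(\partial_s,\partial_t)=\abs{\partial_s u}^2=\tfrac12\abs{du}^2$ in any conformal coordinate. Hence the content of the lemma is the topological evaluation of $\int_S u^{\ast}\omega$.

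Since $\omega=d\canliouvile$ on $T^{\ast}M$, Stokes' theorem gives $\int_S u^{\ast}\omega=\int_{\partial S}u^{\ast}\canliouvile$, where the integral over the punctured boundary is read as the limit over truncations removing small half-discs about the marked points. I would first record that this improper integral converges: solutions of \eqref{passivemovingboundaryeq} decay exponentially to their asymptotic intersection points by the standard asymptotic analysis for Floer strips, so the truncated contributions converge to the values attached to the limiting intersection data. Next I would split $\partial S=\bigcup_i\partial S_i$ into its boundary arcs, identify each $\partial S_i$ with $[0,1]$ carrying the moving label $L_i^s=\psi_s(L_i)$, and set $\gamma_i:=u\vert_{\partial S_i}$, so that $\gamma_i(s)\in L_i^s$. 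Applying \cref{movingboundarycomputation} to each $\gamma_i$, with the family of primitives $F_s^i$ produced by \cref{exactisotopyprimitive} from the chosen primitive $f_i=F_0^i$, gives
\[\int_{\partial S_i}\gamma_i^{\ast}\canliouvile=F_1^i(\gamma_i(1))-F_0^i(\gamma_i(0))+\int_0^1 H_s(\gamma_i(s))\,ds.\]
Summing over all arcs, the Hamiltonian integrals reassemble into $\int_{\partial S}H_s(u)\,ds$; grouping the primitive terms by the shared marked point $x_i$ produces $F_1^{i-1}(x_i)-F_0^i(x_i)$, which by the definition \eqref{actionofintersectionpoint} of the action of an intersection point of the two Lagrangians meeting at $x_i$ equals $\pm a(x_i)$, and collecting these with the correct signs yields $\sum_i a^{+}(x_i)-\sum_i a(x_i^{-})$.

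For the final assertion, if the generating Hamiltonians $H_s$ are compactly supported along the $L_i$, then $\abs{H_s(\gamma_i(s))}\leq\sup_s\norm{H_s}_{C^0}$ on each arc, so $\bigl|\int_{\partial S}H_s(u)\,ds\bigr|$ is bounded above by a constant multiple of $\sup_s\norm{H_s}_{C^0}$; since $L_0\pitchfork V$ and $L_1\pitchfork V$ are finite sets, the action contributions $a^{\pm}(x_i)$ range over a finite set, and the total energy is bounded by a constant depending only on these data.

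The main obstacle --- essentially the only delicate point --- is the bookkeeping in the telescoping step: one must track the orientations of the arcs $\partial S_i$ against the anticlockwise orientation of $S$, and, more substantively, reconcile the primitive $F_1^{i-1}$ obtained by transporting $f_{i-1}$ along the isotopy with the primitive $F_0^i=f_i$ chosen independently for the same Lagrangian $L_i$; these differ by a constant in general, which must be absorbed into the distinction between $a^{+}$ and $a^{-}$ (or normalised away by a compatible choice of primitives), and keeping all these conventions mutually consistent is where care is required. The exponential-decay input used to make the punctured-boundary integral converge is standard.
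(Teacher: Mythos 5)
Your argument is correct and follows essentially the same route as the paper: Stokes' theorem combined with Lemma \ref{exactisotopyprimitive} and Lemma \ref{movingboundarycomputation}, applied arc by arc with the transported primitives (the paper handles the strip-like ends by restricting to a compact interval $[-N,N]$ where the elongation is non-constant, while you invoke exponential decay, but this is the same idea). Your closing caveat about reconciling transported primitives with the chosen ones and tracking signs is exactly the bookkeeping the paper's terse proof leaves implicit, and it is handled by the convention stated in the lemma that the primitives of $L_i^s$ are those produced by Lemma \ref{exactisotopyprimitive}.
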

\begin{proof}
There exists some $N\gg 1$ such that $l(s)$ is locally constant outside of $[-N,N]$. Fix such an $N$. Then we can regard $L_{l(s)}$ as a family on $[-N,N]$. Define the primitives of $\canliouvile$ of $L_{l(s)}$ with respect to this family on $[-N,N]$ using Lemma \ref{exactisotopyprimitive}. Then the proof follows from Lemmas \ref{exactisotopyprimitive} and \ref{movingboundarycomputation}. 
\end{proof}
\subsubsection{Confinement of continuation Strips}\label{confinementcontstripssss}
We now show the $C^0$ confinement of passive continuation strips and construct continuation chain homomorphisms. Firstly, given a moving Lagrangian boundary condition $L_s$, let $\mathcal{L}:=\{(s,p):s\in [0,1], p\in L_s\}$. Then $\mathcal{L}$ is a totally real submanifold of $\overline{A_1}\times T^{\ast}M$.

We have the following analogue of Lemma \ref{geoboundfamily}.
\begin{lemma}\label{semiadmissiblegeoboundfamily}
Let $J:\overline{A_1}\to \mathcal{J}(T^{\ast}M)$ be a uniformly admissible family of almost complex structures over $\overline{A_1}$, then $(\overline{A_1}\times T^{\ast}M, j_{\overline{A_1}}\oplus J,\omega_{\overline{A_1}}\oplus \omega_{T^{\ast}M})$ is geometrically bounded. Furthermore, if $L\subset T^{\ast}M$ is finite at infinity, then $\partial \overline{A_1}\times L$ is geometrically bounded. If the submanifold $W\subset (\overline{A_1}\times T^{\ast}M,j_{\overline{A_1}}\oplus J,\omega_{\overline{A_1}}\oplus \omega_{T^{\ast}M})$ is totally real, and coincides with some $\partial \overline{A_1}\times L$ outside a compact subset for $L$ a Lagrangian finite at infinity, then $W$ must be tame.
\end{lemma}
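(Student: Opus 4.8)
The first two assertions are exactly Lemma~\ref{geoboundfamily}: once $J$ is uniformly admissible the product $(\overline{A_1}\times T^{\ast}M,\,j_{\overline{A_1}}\oplus J,\,\omega_{\overline{A_1}}\oplus\omega_{T^{\ast}M})$ is geometrically bounded, hence tame in the sense of Definition~\ref{GeometricalBoundednessgeneraldefini}, and since any Lagrangian $L$ that is finite at infinity is in particular a properly embedded totally real submanifold which is Lagrangian at infinity, $\partial\overline{A_1}\times L$ falls under the second clause of Lemma~\ref{geoboundfamily}. So the only genuinely new point is the last sentence, and the plan is to verify the two conditions of Definition~\ref{totallyrealgeombounddefini} directly for $W$, after cutting $W$ into a compact piece and a cylindrical-at-infinity piece.

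First I would fix a compact subset $Q\subset\overline{A_1}\times T^{\ast}M$ outside of which $W$ coincides with $\partial\overline{A_1}\times L$. Since $\partial\overline{A_1}\times L$ is geometrically bounded, it is tame; let $r_1,C_1>0$ be tameness constants for it. On the compact side, for any $\rho>0$ the set $W\cap\overline{B_\rho(Q)}$ is a compact totally real submanifold with boundary of the tame ambient manifold, so by a standard tubular-neighbourhood argument (bounded ambient geometry and positive injectivity radius near this compact piece make the nearest-point retraction onto it Lipschitz on a uniform neighbourhood) there are $r_2,C_2>0$ with $d_W(x,y)\le C_2\,d_V(x,y)$ whenever $x,y\in W\cap\overline{B_{\rho/2}(Q)}$ and $d_V(x,y)<r_2$, and with $B_{r_2}(p)\cap W$ contractible for every $p\in W\cap\overline{B_{\rho/2}(Q)}$. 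Then I would take $\rho=2$ and choose $r_W<\min\{r_1,r_2,1/2\}$ small enough that, in addition, a path in $\partial\overline{A_1}\times L$ of length at most $C_1 r_W$ starting at a point at $d_V$-distance $\ge 1/2$ from $Q$ cannot meet $Q$ (possible since such a path then has length $<1/2$). For $x,y\in W$ with $d_V(x,y)<r_W$ there are two cases. If $d_V(x,Q)\ge 1/2$, then $y$ also lies where $W=\partial\overline{A_1}\times L$, and tameness of $\partial\overline{A_1}\times L$ yields an arc of length $\le C_1 d_V(x,y)$ joining them inside $\partial\overline{A_1}\times L$; by the choice of $r_W$ this arc stays in $W\setminus Q\subset W$, so $d_W(x,y)\le C_1 d_V(x,y)$. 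If $d_V(x,Q)<1/2$, then $x,y\in W\cap\overline{B_1(Q)}$ and the compact-piece estimate applies. The identical dichotomy handles contractibility of $B_{r_W}(p)\cap W$, and setting $C_W=\max\{C_1,C_2\}$ proves that $W$ is tame.

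The step I expect to need the most care is the interface bookkeeping: one must make sure the short intrinsic arc realizing the distance in $\partial\overline{A_1}\times L$ genuinely lies inside $W$, and that the two sets of constants are assembled without circularity — hence the order ``fix $Q$, then extract $r_1,C_1$ from $\partial\overline{A_1}\times L$, then extract $r_2,C_2$ from $W\cap\overline{B_2(Q)}$, then shrink $r_W$''. The remaining ingredients — geometric boundedness of the product, the passage from geometric boundedness to tameness, and the elementary fact that a compact totally real submanifold with boundary is tame — are all already available in the excerpt.
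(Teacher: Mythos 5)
Your proposal is correct, and it rests on the same underlying decomposition as the paper: the first two claims are Lemma \ref{geoboundfamily}, and the new content is handled by splitting $W$ into a compact piece and the region where it coincides with $\partial\overline{A_1}\times L$. The difference is in implementation: the paper disposes of the last sentence in one line, asserting that since $W$ agrees with the geometrically bounded $\partial\overline{A_1}\times L$ outside a compact subset, $W$ is itself geometrically bounded (in the chart sense of Definition \ref{actualgeometricboundednesscondition}), and tameness then follows from the general "geometrically bounded implies tame" proposition; the compact part is left entirely implicit. You instead verify the two clauses of Definition \ref{totallyrealgeombounddefini} directly, with explicit bookkeeping of the constants $r_1,C_1$ (from the model at infinity), $r_2,C_2$ (from the compact piece), and the final shrinking of $r_W$ so that the short intrinsic arc furnished by tameness of $\partial\overline{A_1}\times L$ cannot re-enter the compact set $Q$ where $W$ and $\partial\overline{A_1}\times L$ may differ. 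That interface argument is exactly the point the paper glosses over, and your ordering of choices avoids circularity; the only items you still lean on are standard — that a properly embedded submanifold is tame near a compact piece (uniform local graphicality/reach over a compact set), and that the infimum defining $d_{\partial\overline{A_1}\times L}$ is realized up to $\varepsilon$ by an arc, which costs only a harmless factor. So your route buys a self-contained verification of the stated conclusion (tameness), while the paper's buys brevity and the formally stronger chart-based geometric boundedness within its framework; both are adequate for the confinement arguments that use this lemma.
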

\begin{proof}
The proof is as before; the Lagrangian submanifold $\partial \overline{A_1}\times L$ is geometrically bounded by Lemma \ref{geoboundfamily}. So since $W$ agrees with $\partial \overline{A_1}\times L$ outside a compact subset, $W$ must be geometrically bounded as well. 
\end{proof}

From now on, we will not distinguish between horizontally finite Hamiltonian isotopies with uniform horizontal support, and exact Lagrangian isotopies with uniform horizontal support. From Lemma \ref{semiadmissiblegeoboundfamily}, we arrive at the following corollary:
\begin{corollary}\label{geometricallyboundedtotallyrealfiniteLag}
Suppose $L_s$ is an exact Lagrangian isotopy of horizontally finite Lagrangians with uniform horizontal support. Given an $R>0$, let
\[\mathcal{L}_{\abs{p}\leq R}:=\{(s,p):s \in [0,1], p\in L_s, \abs{p}\leq R \}.\]
Then $\mathcal{L}_{\abs{p}\leq R}$ is tame. 

Alternatively, suppose $K_s$ is an exact Lagrangian isotopy of vertically finite Lagrangians with uniform horizontal support. The totally real submanifold
\[\mathcal{K}=\{(s,p):s\in [0,1], p\in K_s\}\]
is tame.
\end{corollary}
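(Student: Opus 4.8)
The plan is to obtain both assertions from Lemma~\ref{semiadmissiblegeoboundfamily}, with the parameter interval $[0,1]$ of the isotopy realized as (a subarc of) the boundary $\partial\overline{A_1}$ of the model disc used in the Gromov trick; the only content not already in that lemma is a compactness observation in the first case and the identification of the relevant totally real submanifold with a product outside a compact set in the second. For the first assertion I would begin by invoking uniform horizontal support: there is a compact $K\subset M$ with $\pi(L_s)\subset K$ for every $s\in[0,1]$, so $\mathcal{L}_{\abs{p}\leq R}\subset [0,1]\times D_R^{\ast}K$, a \emph{compact} subset of $\overline{A_1}\times T^{\ast}M$. The graph of the smooth family $\{L_s\}$ over the boundary arc is a compact totally real submanifold, and the ambient almost K\"ahler manifold is geometrically bounded, hence tame, by Lemma~\ref{geoboundfamily}. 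It then remains only to observe that a compact properly embedded totally real submanifold $W$ of a tame almost K\"ahler manifold is automatically tame: the comparison $d_W\leq C_W\, d_V$ near the diagonal and the contractibility of the slices $B(r_W,p)\cap W$ both follow from a tubular neighbourhood construction combined with a Lebesgue-number argument over the compact $W$, which produce the uniform constants $r_W, C_W$. (Equivalently, this is the degenerate $L=\emptyset$ case of the last sentence of Lemma~\ref{semiadmissiblegeoboundfamily}.) Choosing $R$ generic so that $S_R^{\ast}M\pitchfork L_s$ for all $s$ makes $\mathcal{L}_{\abs{p}\leq R}$ an honest manifold with corners, but this is irrelevant to the metric-space statement of tameness.

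For the second assertion I would use the fact, recorded before Proposition~\ref{propertiesofactivepassivecontinatuinmaps}, that an exact isotopy $K_s$ of vertically finite Lagrangians coming from a uniformly horizontally supported Hamiltonian isotopy is \emph{compactly supported}: there is a compact subset $Q\subset T^{\ast}M$ outside of which $K_s=K_0$ for all $s$, and $K_0$ is vertically finite, hence finite at infinity in the sense of Definition~\ref{vertifin}. Therefore $\mathcal{K}=\{(s,p): s\in[0,1],\, p\in K_s\}$, realized over a boundary arc inside $\overline{A_1}\times T^{\ast}M$, coincides with $[0,1]\times K_0$ (a restriction of $\partial\overline{A_1}\times K_0$) outside a compact subset, and is totally real there because the graph of a smooth family of Lagrangians is totally real in the product. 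The last sentence of Lemma~\ref{semiadmissiblegeoboundfamily} then applies directly and gives that $\mathcal{K}$ is tame.

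The one point that genuinely needs care is the bookkeeping between the abstract parameter of the isotopy and the boundary of the Gromov-trick domain, and, in the second case, verifying that the moving boundary condition becomes an actual \emph{product} rather than merely totally real outside a compact set: this is precisely where compact support of $K_s$ is used, since otherwise $\mathcal{K}$ would only be totally real near vertical infinity and tameness could fail. In the first case this issue is avoided outright by the truncation $\abs{p}\leq R$, which trades ``Lagrangian at infinity'' for plain compactness, after which tameness is a soft consequence of compactness and ambient geometric boundedness.
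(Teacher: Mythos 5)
Your proposal is correct and follows essentially the same route as the paper: the first case is disposed of by noting that $\mathcal{L}_{\abs{p}\leq R}$ is compact (tameness of a compact totally real submanifold in a tame ambient space being soft), and the second case is exactly an application of the last sentence of Lemma \ref{semiadmissiblegeoboundfamily}, using that the compactly supported isotopy makes $\mathcal{K}$ agree with a product $\partial\overline{A_1}\times K_0$ with $K_0$ finite at infinity outside a compact set. The extra detail you supply (tubular neighbourhood/Lebesgue-number argument, the parameter-versus-boundary bookkeeping) is a faithful expansion of what the paper leaves implicit.
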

\begin{proof}
The first case follows immediately since $\mathcal{L}_{\abs{p}\leq R}$ is compact. The second case satisfies the hypothesis of Lemma \ref{semiadmissiblegeoboundfamily} so we are done. 
\end{proof}

We have the following analogue of Proposition \ref{compactnomoving} for $J$-holomorphic curves with moving boundary conditions:
\begin{proposition}\label{compactyesmoving}
Let $L_s,s\in [0,1]$, be an exact Lagrangian isotopy of horizontally finite Lagrangians with uniform horizontal support and let $\psi_s$ be the horizontally finite Hamiltonian isotopy generating $L_s$. Let $V$ be a vertically finite Lagrangian. Then the following holds.
\begin{itemize}
\item[] There exists a compact set $K=K(J(s,t),L_s,V,l)\subset T^{\ast}M$ such that the solutions of (\ref{passivemovingboundaryeq}) are contained in $K$. 
\end{itemize}
\end{proposition}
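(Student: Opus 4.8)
The plan is to reproduce the argument of Proposition~\ref{compactnomoving} almost verbatim, with the energy bound and the uniform separation at infinity now supplied by the passive model. First I would fix $N\gg 0$ so that the elongation function $l$ is $\equiv 0$ on $\{s\le -N\}$ and $\equiv 1$ on $\{s\ge N\}$. Then on the \emph{thin part} $\{|s|\ge N\}$ equation \eqref{passivemovingboundaryeq} is translation invariant, with boundary data the pair $(V,L_0)$ on the right end and $(\psi_1^{-1}(V),L_0)$ on the left end. Since $\psi_s$ is a horizontally finite Hamiltonian isotopy, $\psi_1=\id$ outside $T^{\ast}K$ for a compact $K\subset M$ and $\psi_1$ preserves the radial coordinate near vertical infinity, so $\psi_1^{-1}(V)$ is again vertically finite (vertical finiteness concerns only the structure over the base-complement of a compact set, where $\psi_1$ acts trivially, together with a finite-height bound that persists by compactness of the support region). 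Hence on each thin end we are exactly in the setting of Proposition~\ref{compactnomoving}: one vertically finite Lagrangian together with one horizontally finite Lagrangian, both cylindrical at infinity.

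For the energy, since the isotopy $V_{l(s)}=\psi_{l(s)}^{-1}(V)$ is compactly supported, every boundary datum in \eqref{passivemovingboundaryeq} is a genuine exact Lagrangian, and the energy identity \eqref{geometricenergymovingboundary} bounds the geometric energy $\int_{\mathcal{Z}}u^{\ast}\omega$ of any solution by a constant $E$ depending only on the finitely many intersection points of $V$ with $L_0$ and on the compactly supported generating Hamiltonian $H_s$. Next I would invoke the Gromov trick, passing to $\mathcal{Z}\times T^{\ast}M$ with the product structure $(j_{\mathcal{Z}}\oplus J(s,t),\,\omega_{\mathcal{Z}}\oplus\omega)$. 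The family $J(s,t)=\psi_{l(s)}^{\ast}\tilde{J}$ is uniformly admissible: horizontal finiteness of $\psi_s$ forces $J(s,t)=\tilde{J}$, hence $J(s,t)=J_{con}$, outside a fixed $T^{\ast}K$, and it remains cylindrical at vertical infinity. The full moving boundary submanifold $\mathcal{V}=\{(s,0,p):p\in V_{l(s)}\}$ is tame by the second assertion of Corollary~\ref{geometricallyboundedtotallyrealfiniteLag} — this is the whole point of the passive model, since $V_{l(s)}$ is Lagrangian outside a compact set — while $\{(s,1,p):p\in L_0\}$ is geometrically bounded by Lemma~\ref{geoboundfamily}, both with constants depending only on $J$, $V$, $L_0$ and $l$.

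The confinement then proceeds in two pieces. On the thick part $[-N,N]\times[0,1]$ I would cover by finitely many discs and half-discs of a fixed radius whose half-radius shrinkings still cover, and apply the family interior estimate (Proposition~\ref{interiorestimate2}) repeatedly using the energy bound $E$; this confines $u|_{[-N,N]\times[0,1]}$ to $D^{\ast}_{R_1}K'$ for some compact $K'\subset M$ and $R_1>0$. On the thin part the two Lagrangian pairs are uniformly separated outside a compact codisc bundle $D^{\ast}_{R_2}K_{base}$ — precisely the separation established for a vertically finite and a horizontally finite Lagrangian in Proposition~\ref{compactnomoving}, uniform in $s$ because the data are $s$-independent there — so the strip-end estimate \eqref{strip-endcontrol} bounds the length of any sub-interval $[a,b]\times[0,1]$ whose image leaves $D^{\ast}_{R_2}K_{base}$ by $E/C$. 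Hence any such interval is covered by a uniformly finite number of discs and half-discs of a fixed radius, and one more application of Proposition~\ref{interiorestimate2} enlarges $D^{\ast}_{R_2}K_{base}$ to a compact set $K=D^{\ast}_{R_3}K''\subset T^{\ast}M$ containing the whole image of $u$, which is the assertion. (The maximum principle of Lemma~\ref{verticalconfinement}, applicable since all boundary data are cylindrical near vertical infinity, gives an alternative route near vertical infinity but is not needed.)

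The main obstacle I anticipate is not conceptual but bookkeeping: showing that the tameness constants of the moving totally real submanifold $\mathcal{V}$ and the geometric-boundedness constants of the deformed family $J(s,t)=\psi_{l(s)}^{\ast}\tilde{J}$ can be chosen independently of the particular solution $u$ and uniformly over the strip, so that Propositions~\ref{interiorestimate2} and the strip-end estimate may be applied with fixed constants. This is exactly where moving $V$ rather than $L$ pays off: the trace $\mathcal{V}$ of $V_{l(s)}$ is Lagrangian outside a compact subset, so Corollary~\ref{geometricallyboundedtotallyrealfiniteLag} applies directly, and we never have to control a genuinely-only-totally-real moving boundary condition near infinity.
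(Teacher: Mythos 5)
Your proposal is correct and follows essentially the same route as the paper's proof: split the strip into thick and thin parts, apply the Gromov trick to the movie $\mathcal{V}$ of the moving boundary and invoke its tameness via Lemma \ref{semiadmissiblegeoboundfamily} and Corollary \ref{geometricallyboundedtotallyrealfiniteLag}, confine the thick part with the energy bound and Proposition \ref{interiorestimate2}, and treat the thin part exactly as in Proposition \ref{compactnomoving}. The extra details you supply (the explicit thin-end separation and strip-end estimate) are just what the paper summarizes as ``the analysis for the thin-part is unchanged.''
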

\begin{proof}
We modify the proof of \cite[Proposition 3.23]{GPSCV}. The boundary conditions are fixed for $s>>0$ and $s<<0$, so the moving boundary conditions appear only on the compact part $S_N:=[-(N+1),(N+1)]\times [0,1]$ for some $N\gg0$. We can split the strip $\mathcal{Z}$ into the thin part $(-\infty,-N-1)\times [0,1]\cup (N+1,\infty)\times [0,1]$ and the thick part $S_N$.  

%We deal with the thin part first. First, note that both $L_i\cap\{r\leq R\},i=0,1$ are compact. We can enlarge this subset so that the Lagrangians $L_0,L_1$ and $V$ are uniformly separated. So as in the proof of \ref{compactnomoving}, we see that the thin parts must map into a compact subset of $T^{\ast}M$. In particular, we see that the arcs $ \{\pm N\}\times [0,1]$ map into a compact subset $K'$ of $T^{\ast}M$ which only depends on $L_0,L_1,V$ and $J$.

%But then we can regard the thick part $[-(N+1),N+1]\times [0,1]$ as the half-disk $E_1$ where the boundary component $\partial E_1$ is mapped to $[-N+1,N+1]\times \{0\}$ and the rest of the boundary is mapped to $\{\pm (N+1)\}\times [0,1]\cup [-(N+1),N+1]\times \{1\}$. Composing this with the map $u$, we get a map $v:(E_1,\partial E_1)\to (X,V\cup K')$ with the image coinciding with $u(S_{N})$. By the boundary estimate (Proposition $\ref{totaldomainestimate}$), we see that the diameters of such discs are also a priori bounded. Note that here we are utilising Corollary \ref{geometricallyboundedtotallyrealfiniteLag}. 
We control the thick part using tameness for totally real submanifolds. Consider the compatible triple $(S_N\times T^{\ast}M,j\oplus J,\omega_{\mathbb{C}}\oplus \omega_{T^{\ast}M})$. Note that the manifold 
\[\mathcal{V}:=\{(s,p):s\in[0,1],p\in V_s\}\] is totally real with respect to $j\oplus J$. Furthermore, it is $(\omega_{S_N}\oplus \omega_{T^{\ast}M})$-Lagrangian outside some compact subset of $(V_{s(l)})$. So by Lemma \ref{semiadmissiblegeoboundfamily} and Corollary \ref{geometricallyboundedtotallyrealfiniteLag}, the manifold $\mathcal{V}$ must be actually tame with respect to $(j\oplus J)$. Then from the a priori bound on the geometric energy and tameness, we see that the image of the thick part $S_N$ must be a priori confined by Proposition \ref{interiorestimate2}. The analysis for the thin-part is unchanged. This finishes the proof.   
\end{proof}

Note that the proof does not extend to the case where $L$ and $K$ are both vertically finite. 
\subsubsection{Continuation maps}\label{continuationmapsss}
Recall the set-up in \ref{continuationstripsss}. From the discussion in Section \ref{Compactness and Transversality}, we had chosen a generic compact perturbation $J_i(t),i=0,1$ of the constant family $J_{con}$ as regular Floer datum for pairs $(V,L_0)$ and $(V,L_1)$. Then we further chose an initial family of uniformly admissible almost complex structures $\tilde{J}^{in}$ on $(-\infty,\infty)\times [0,1]$ for (\ref{passivemovingboundaryeq}) such that $\tilde{J}^{in}(s,t)=J_0(t)$ for $s\ll 0$ and $\tilde{J}(s,t)=J_1(t)$ for $s\gg 0$. Then we set $J^{in}(s,t)=(\psi_{l(s)})^{\ast}\tilde{J}^{in}$.

We see from Proposition \ref{compactyesmoving} that for such a family $J^{in}$, the solutions of (\ref{passivemovingboundaryeq}) are compactly confined. Just as we did in the end of Section \ref{Compactness and Transversality}, (See expression (\ref{compactdeformationfamily})), we perturb the family $J^{in}$ to $J$ over this compact set such that the solutions of (\ref{passivemovingboundaryeq}) are transversely cut out. Then for this perturbed $J$, $\tilde{J}=(\psi_s^{-1})^{\ast}J$ is called a regular perturbation datum for $((V,L_0),(V,L_1),\psi_s,\tilde{J}^{in})$. Furthermore, the 1 dimensional part of this moduli space is compactified as usual. 

We briefly explain how to orient the moduli space of passive continuation strips. We adopt the notions from \cite{SeidelZurich}. Suppose we have Lagrangian branes $(V,A_V,P_V)$ and $(L_{s},A_{s},P_{L_s})$. For $x\in V\pitchfork L_0$ and $y\in V\pitchfork L_1$, choose a path of Lagrangian subspaces $L_T(x), T\in [0,1]$ from $T_x V$ to $T_x L_0$, and a path $L_T(y)$ from $T_y V$ to $T_y L_1$, that satisfy the grading constraint. Choose a path 
of spin structures $(P_T)_x$ over $L_T(x)$ and isomorphisms of $Spin$ torsors $(P_0)_x\simeq P_V(x)$ and $(P_1)_x\simeq P_{L_0}(x)$. Define the half-plane operators and their orientation lines as before. Then given a regular $u$ satisfying \eqref{passivemovingboundaryeq} we glue the constant half-strips $x$ and $\psi_1^{-1}(y)$ to the strip-like ends of $u$. Then given the glued disc $x\sharp u\sharp \psi_1^{-1}(y)$, the induced spin structure on the boundary is given by glueing $(P_T)_x$, $\psi_s^{\ast}P_V(u)$, $\psi_1^{\ast}(P_T)_y$ and $\psi_s^{\ast}P_{L_s}(u)$ along the boundary of $x\sharp (\psi_s\circ u)\sharp y$, and then pulling back by $\psi_s$. We then orient the orientation line of $u$ using the induced spin structure--- for details, see \cite[Chapter II, Section 11]{SeidelZurich}. 

Observe that we would have obtained the same answer if we used the solutions of $\eqref{perturbedpassive}$ instead. Indeed, observe that the linearization of \eqref{passivemovingboundaryeq} at $u$ is $D_1=\nabla_{s}(-)+J\nabla_t (-)+(\nabla J)(-)(\frac{\partial u}{\partial t})$ for some torsion-free connection $\nabla$, and the linearization of $\eqref{perturbedpassive}$ is $D_2=\nabla_{s}(-)+\tilde{J}\nabla_t (-)+(\nabla {\tilde J})(-)(\frac{\partial \tilde{u}}{\partial t})-\nabla_{(-)}(l'(s)X_{l(s)})$. For such $D_1$ and $D_2$, we have $D{\psi_s}_{\ast}\circ D_1=D_2 \circ D{\psi_s}_{\ast}$, so the resulting Fredholm theories are equivalent. This means we can orient the moduli space of passive continuation strips using \eqref{perturbedpassive}. One may say that Floer theoretically, the actual equation we look at is \eqref{perturbedpassive}, though in terms of confinement and controlling the geometry of the continuation strips, \eqref{passivemovingboundaryeq} is better suited. Either way, just as we suggested, by counting the $0$th dimensional parts, we get an induced chain map
\begin{align}\label{continuationmap}
c^{passive}&=c_{(L_0,J_0)\to(L_1,J_1)}:CF(V,L_0,J_0)\to CF(V,L_1,J_0)
\end{align}
which we call the passive \textit{continuation map}. 
Two passive continuation maps are concatenated as indicated by the commutative diagram \eqref{E: concatenationdiagram}. We are now ready to show Proposition  \ref{propertiesofactivepassivecontinatuinmaps}.

\begin{figure}[t]
\centering
\begin{tikzcd}
{CF(V,L_0,J_0)} & {CF(V,L_0,J_0)} \\
{CF(V,L_1,J_1)} & {CF(\psi_1^{-1}(V),L_0,{\psi_1}^{\ast}J_1)} \\
{CF(\psi_2^{-1}(V),L_1,(\psi_2^{-1})^{\ast}J_2)} & {CF((\psi_1^{-1}\circ \psi_2^{-1})(V),L_0,(\psi_2\circ \psi_1)^{\ast}J_2)} & {} \\
{CF(V,L_2,J_2)} & {CF(V,L_2,J_2)} & {}
\arrow["Id", from=3-1, to=4-1]
\arrow["Id"', tail reversed, from=2-1, to=2-2]
\arrow["{\hat{c_{12}}}", from=2-1, to=3-1]
\arrow["{\tilde{c_{01}}}", from=1-1, to=2-1]
\arrow["{\hat{c_{01}}}", from=1-2, to=2-2]
\arrow["Id", tail reversed, from=1-1, to=1-2]
\arrow["Id", tail reversed, from=4-1, to=4-2]
\arrow["Id", tail reversed, from=3-1, to=3-2]
\arrow["{\hat{c_{12}}'}", from=2-2, to=3-2]
\arrow["Id"', from=3-2, to=4-2]
\end{tikzcd}
\caption{Composition Diagram}
\label{E: concatenationdiagram}
\end{figure}	

\begin{proof}
We only sketch the proof. See \cite[Section (8k)]{SeidelZurich} and the construction in \cite[Section 3.6]{Zapolsky} for details. We first show the first assertion. Suppose we are given a homotopy of Lagrangian isotopies $L^{\tau}_s=\psi_s^{\tau}(L)$, that is fixed at the endpoints $s=0,s=1$ generated by a homotopy $\psi_s^{\tau}$ of uniformly cylindrical and horizontally supported Hamiltonian isotopies. Set $V^{\tau}_{s}=(\psi_s^{\tau})^{-1}(V)$.  

Recall that we had chosen a uniformly admissible family $\tilde{J}^{in}$ for the pair of triples $((V,L_0,J_0),(V,L_1,J_1))$ such that $\tilde{J}^{in}(s,t)=J_0$ for $s\ll 0$, $\tilde{J}^{in}(s,t)=J_1$ for $s\gg 0$. Suppose $\tilde{J}^0$ is a regular perturbation datum for $((V,L_0),(V,L_1),\psi^{0}_s,\tilde{J}^{in})$ and $\tilde{J}^1$ is a regular perturbation datum for $((V,L_0),(V,L_1),\psi^{1}_s,\tilde{J}^{in})$. Suppose furthermore there exists an initial uniformly admissible homotopy of $\omega$-compatible almost complex structures $\tilde{J}^{\tau},\tau\in [0,1]$ extending $\tilde{J}^0$ and $\tilde{J}^1$ such that each $\tilde{J}^{\tau}$ is given by compactly perturbing $\tilde{J}^{in}(s,t)$ for $s\in [-2,2]$. Set $J^{\tau}(s,t)=(\psi^{\tau}_{l(s)})^{\ast}\tilde{J}^{\tau}$. The corresponding family of passive continuation strip equations is given by:
\begin{align}\label{passivemovingboundaryeqfamily}
\begin{cases}
\bar{\partial}_{J^{\tau}} u=0 &  \\ 
u(s,0)\subset V^{\tau}_{l(s)}&\\
u(s,1)\subset L_0&\\
\lim_{s\to \infty} u(s,t)\in L_0\cap V\\
\lim_{s\to -\infty} u(s,t)\in L_0\cap {(\psi^{\tau}_1)}^{-1}(V).
\end{cases}
\end{align}
%for almost complex structure $J^{\tau}(s,t),\tau=0,1$ such that $J^{\tau}(s,t)=(\psi^{\tau})^{\ast}\tilde{J}^{in}(s,t),\tau=0,1$ outside a compact subset $K_1$. 
Observe that in general, ${(\psi_1^{\tau})}^{-1}(V)$ will depend on $\tau$ and so we are looking at a family of continuation strips with different boundary conditions. By the properness of the map $(\psi^{\tau}_s)^{-1}:[0,1]\times [0,1]\times  T^{\ast}M\to T^{\ast}M$, and application of the arguments in the proof of Proposition \ref{compactyesmoving}, we may enlarge $K$, and $R>0$ such that: (i) the horizontal support of $\psi^{\tau}_s$ is contained in $K$, (ii) $\psi^{\tau}_s$ is cylindrical outside $D_R^{\ast}$, (iii) $J^{\tau}=J_{con}$ outside $T^{\ast}K$, and (iv) solutions of \eqref{passivemovingboundaryeqfamily} are contained in $D_R^{\ast}K$ for $t\in[0,1]$. In particular, condition (i) implies that the set $T^{\ast}K$ is \textit{invariant} under $\psi^{\tau}_{s}$. Let $R_1> R$ be such that $\psi^{\tau}_s(D_R^{\ast}K)\subset D_{R_1}^{\ast}K$. 

Take the gauge transformation as in \eqref{perturbedpassive}. The Hamiltonian perturbation datum in the sense of Seidel \cite[Section (8f)]{SeidelZurich} is given by the Hamiltonian valued 1-form $B(s,t)=l'(s)H^{\tau}_{l(s)}ds$. Indeed, the corresponding Hamiltonian vector field valued 1-form is $Y^{\tau}=l'(s)X^{\tau}_{l(s)}ds$ and \eqref{perturbedpassive} just reads $(d\tilde{u}-Y^{\tau})^{0,1}=0$ as usual. Let $\mathcal{J}(K,R_1,J^0,J^1)$ be the space of homotopy of uniformly admissible almost complex structures $\hat{J}^{\tau}$ rel endpoints such that $\hat{J}^{\tau}=\tilde{J}^{\tau}$ outside $D_{R_1} ^{\ast}K$. Let $\mathcal{H}(R_1,K)$ be the space of Hamiltonians supported inside $D_{R_1}^{\ast}K$. 

Now further perturb the equation \eqref{perturbedpassive} by replacing $\tilde{J}^{\tau}$ with $\hat{J}^{\tau}$ in $\mathcal{J}(K,R_1,J^0,J^1)$ and $B^{\tau}(s,t)$ with $\hat{B}^{\tau}(s,t)=B^{\tau}(s,t)+Q^{\tau}(s,t)$ where $Q^{\tau}(s,t)$ is a family of Hamiltonian valued 1-forms taking values in $\mathcal{H}(R_1,K)$. We may assume that the $1$-form vanishes on the boundary. Let $\hat{Q}^{\tau}$ be the vector field valued 1-form obtained from $Q(s,t)^{\tau}$ and $\hat{Y}^{\tau}=Y^{\tau}+\hat{Q}^{\tau}$. Consider the following equation
\begin{align}\label{additionalperturbation}
\begin{cases}
(d\tilde{u}-\hat{Y}^{\tau})_{\hat{J}^{\tau}}^{0,1}=0 &  \\ 
\tilde{u}(s,0)\subset V&\\
\tilde{u}(s,1)\subset L_{l(s)}&\\
\lim_{s\to \infty} \tilde{u}(s,t)\in L_0\cap V\\
\lim_{s\to -\infty} {\tilde{u}}(s,t)\in L_1\cap V.
\end{cases}
\end{align}
Let $R_2>R_1$ such that the image of $(\psi_s^{\tau})^{-1}(D_{R_1}^{\ast}K)$ is contained in $D_{R_2}^{\ast}K$. The most important feature of \eqref{additionalperturbation} is that unlike \cref{{passivemovingboundaryeqfamily}}, we are now looking at pseudo-holomorphic strips with the same Lagrangian moving boundary conditions, that we may apply the standard homotopy method. Abusing notation, let $J^{\tau}$ be the pullback of $\hat{J}^{\tau}$ via $\psi^{\tau}_{l(s)}$. The pullback equation $(\psi_s^{\tau})^{-1}(\tilde{u})$ solves $(du-Z^{\tau})^{0,1}_{J^{\tau}}=0$ for  Hamiltonian vector field valued 1-forms $Z^{\tau}$ coming from the Hamiltonian-valued 1-form $Q(s,t)\circ \psi^{\tau}_s$.  Indeed, as before, \[(d\psi_s^{\tau})^{-1}(d\tilde{u}-Y^{\tau})_{\hat{J}^{\tau}}^{0,1}=(du)^{0,1}_{J^{\tau}}\] and so \[(d\psi_s^{\tau})^{-1}(d\tilde{u}-\hat{Y}^{\tau})^{0,1}_{\hat{J}^{\tau}}=(du-Z^{\tau})^{0,1}_{J^{\tau}}.\]
Note that $Z^{\tau}$ is supported on $D_{R_2}^{\ast}K$. In particular, the geometric energy $\int \abs{du-Z^{\tau}}^2$ is bounded above in terms of $u^{\ast}\omega$ and the curvature integrand (See \cite[(8(g))]{SeidelZurich}). \footnote{Actually, the curvature integrand vanishes in this situation since $Z^{\tau}$ vanishes on the boundary and \[\omega(\partial_s u-Z,J(\partial_s u-Z))=\omega(\partial_s u-Z,\partial_t u)=u^{\ast}\omega-dH(\partial_t u).\]
} The boundary conditions are the same as in \eqref{perturbedpassive} and outside $D_{R_2}^{\ast}K$, solutions of $(du-Z^{\tau})^{0,1}_{J^{\tau}}$ solves \eqref{passivecontinuationmap} that the solutions of \eqref{additionalperturbation} are still compactly confined on, say, $D_{R_3}^{\ast} K_1$, for any $\hat{J}^{\tau}$ and with respect to the bound on $\sup_{\tau}\abs{\nabla Q^{\tau}}$. Note that the bound on $\sup_{\tau}\abs{\nabla Q^{\tau}}$ will however, depend on $\hat{J}^{\tau}$. 

Then we may use the solutions of (\ref{perturbedpassive}) to construct the desired chain homotopy $H$. To achieve transversality, we use the Banach manifold $\mathcal{J}(K,R_1,J^0,J^1)$ and $\mathcal{H}(R_1,K)$ and run the standard transversality argument, say as in the proof of \cite[Lemma 8.7]{SeidelAbouzaid}). This is essentially the same strategy as in \cite[Section 4.5]{chriswendlholomorphiccurve}. Then we count the zero dimensional component of the moduli space of solutions of \eqref{additionalperturbation} for generic $\hat{J}^{\tau}$ and $\hat{Y}$; the $1$-dimensional component has boundary either that induced from strip breaking or the solutions of the equation \eqref{additionalperturbation} for $\tau=0,1$ so we get the desired chain homotopy relation. %The linearised equation is then given by \cite[Section (9k), 9.26]{SeidelZurich}
%\[(\delta K,\delta J,X)\to (\delta Y)^{0,1}+\delta J\circ \frac{1}{2}(du-Y)\circ j_{\mathcal{R}}+D\bar{\partial}_u(X)
%which can be shown to be surjective, by choosing $(\partial K,\partial J)$ such that $\partial Y$ sufficiently approximates the Dirac-delta at some injective point. 

We now discuss the second bullet point. Note that showing that the following commutative diagram holds up to chain homotopy, 
\[\begin{tikzcd}
& {{CF(\psi_1^{-1}(V),L_0,{\psi_1}^{\ast}J_1)}} \\
{{CF(V,L_0,J_0)}} && {{CF((\psi_1^{-1}\circ \psi_2^{-1})(V),L_0,(\psi_2\circ \psi_1)^{\ast}J_2)}}
\arrow["{\hat{c_{01}}}", from=2-1, to=1-2]
\arrow["{\hat{c_{12}}'}", from=1-2, to=2-3]
\arrow["{\hat{c}_{\psi_2 \circ \psi_1}}"', from=2-1, to=2-3]
\end{tikzcd}\]
reduces to the standard case discussed in \cite[Section 3.6]{Zapolsky} since the endpoint conditions match.  

The last point on passive continuation maps being quasi-isomorphisms follows because uniformly horizontally finite isotopies are compactly supported on $V$, and so are their inverses. Therefore, the "inverse movie" $\mathcal{V}^{-}:=\{(s,v):p\in V_{-s}\}$ is still tame. Hence the same argument applies and we can explicitly construct the chain inverse map. 
\end{proof}

Suppose now that $V$ is a vertically finite Lagrangian in $T^{\ast}M$ and suppose that the set 
\[V(F):=\{m\in M: T_m^{\ast}M \text{ is transverse to } V\}\]
is dense. Given two points $m,m'\in V(F)$ and a path homotopy class $\alpha$ between $m$ and $m'$, we can find a piecewise smooth representative of $\alpha$ such that i) each of the smooth components $\alpha_i$ are embedded curves in $M$, and ii) the endpoints are contained in the set $V(F)$. We call the induced passive continuation map the \textit{parallel transport map} associated to $\alpha$. From Proposition  \ref{propertiesofactivepassivecontinatuinmaps}, we readily obtain:

\begin{proposition}\label{propertiesofparalleltransportmaps}
A relative path homotopy class $\alpha$ between $m,m\in V(F)$ as above induces a \textit{parallel transport map}
\[\Gamma(\alpha): HF(V,F_{\alpha(0)},J_0)\to HF(V,F_{\alpha(1)},J_1)\]
with the following properties:
\begin{itemize}
\item parallel transport maps are isomorphisms,
\item parallel transport maps are compatible with respect to concatenation of paths,
\item parallel transport maps only depend on the path homotopy classes.
\end{itemize}
In particular, the assignment
\[z\mapsto HF(V,F_z)\]
equipped with the parallel transport maps defines a \textit{local system} on $M$.  
\end{proposition}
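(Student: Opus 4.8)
The plan is to realise each parallel transport map as a passive continuation map and then to read off all three asserted properties directly from Proposition~\ref{propertiesofactivepassivecontinatuinmaps}.

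First I would fix a relative path homotopy class $\alpha$ between $m,m'\in V(F)$ and, as noted in the paragraph preceding the statement, choose a piecewise smooth representative $\alpha=\alpha_n\ast\cdots\ast\alpha_1$ in which every smooth piece $\alpha_i\colon[0,1]\to M$ is an embedded curve and every subdivision point lies in the dense set $V(F)$. For a single piece $\alpha_i$, the family of cotangent fibres $F_{\alpha_i(s)}$ is an exact Lagrangian isotopy of horizontally finite Lagrangians with uniform horizontal support: each fibre $F_z$ projects to the point $z$, so $\pi(F_{\alpha_i(s)})\subseteq\alpha_i([0,1])$ lies in a fixed compact set, and $\canliouvile$ restricts to zero on any cotangent fibre, so the isotopy is exact with locally constant primitive. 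By the discussion in Section~\ref{continuationstripsss}, such an isotopy is generated by a horizontally finite Hamiltonian with uniform horizontal support (Definition~\ref{horizontalfinitenessdefini}). Since the endpoints $F_{\alpha_i(0)}$ and $F_{\alpha_i(1)}$ are transverse to $V$ by the choice of subdivision points, Proposition~\ref{propertiesofactivepassivecontinatuinmaps} supplies a passive continuation map $c^{passive}_{\alpha_i}$ on Floer complexes; I set $\Gamma(\alpha_i):=H^\ast(c^{passive}_{\alpha_i})$ and $\Gamma(\alpha):=\Gamma(\alpha_n)\circ\cdots\circ\Gamma(\alpha_1)$. A priori this depends on the chosen representative and subdivision; the content of the proof is that it depends only on the relative homotopy class of $\alpha$.

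Next I would verify the three properties. That parallel transport maps are isomorphisms is the last bullet of Proposition~\ref{propertiesofactivepassivecontinatuinmaps}: uniformly horizontally supported isotopies pull back to compactly supported isotopies of $V$, so each $c^{passive}_{\alpha_i}$ is a quasi-isomorphism and $\Gamma(\alpha)$ is a composite of isomorphisms. Compatibility with concatenation is built into the definition of $\Gamma(\alpha)$ as an ordered composite, once one knows the composite is independent of how the path is subdivided; this follows from the second bullet (the continuation map of a concatenated isotopy is chain homotopic to the composite of the continuation maps) together with the third bullet (a constant isotopy induces the identity), which also handles the constant path. For homotopy invariance, given a relative homotopy $\{\alpha^\tau\}_{\tau\in[0,1]}$ rel endpoints, I would cover the image of the homotopy by a single compact $K\subseteq M$ and build a $\tau$-family of Hamiltonian isotopies $\psi^\tau_s$, generated by a uniformly cylindrical and horizontally supported family of Hamiltonians $H^\tau_s$ with $\pi(\supp{H^\tau_s})\subseteq K$, such that $\psi^\tau_s(F_m)=F_{\alpha^\tau(s)}$ and the boundary values at $s=0,1$ are independent of $\tau$; the first bullet of Proposition~\ref{propertiesofactivepassivecontinatuinmaps} then produces a chain homotopy between the passive continuation maps for $\tau=0$ and $\tau=1$, so $\Gamma$ descends to relative homotopy classes (and in particular the freedom in choosing embedded representatives and subdivision points in $V(F)$ is harmless). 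Assembling these, $z\mapsto HF(V,F_z)$ together with the maps $\Gamma$ is a representation of the fundamental groupoid of $M$ with objects in $V(F)$, which is dense and hence yields a groupoid equivalent to the full fundamental groupoid, valued in $\mathbb{C}$-modules, i.e.\ a local system; the grading and spin data fixed in Section~\ref{Compactness and Transversality} are carried along the continuation strips, so it is a local system of graded modules.

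The main obstacle is the homotopy invariance step, specifically the passage from a homotopy of paths in the base $M$ to a homotopy of Hamiltonian isotopies of precisely the form required by the first bullet of Proposition~\ref{propertiesofactivepassivecontinatuinmaps}: one must choose the base cut-off functions and the fibrewise truncations uniformly in $\tau$ so that the resulting family stays uniformly cylindrical and horizontally supported and remains fixed at $s=0,1$. This is essentially bookkeeping---the ``uniform'' adjectives in Section~\ref{Flatness at infinity and finiteness conditions} are designed to accommodate exactly this situation---but it is the only place that requires genuine care. By contrast, the fact that the intermediate fibres $F_{\alpha(s)}$ need not be transverse to $V$ causes no difficulty, since the continuation-map machinery only requires transversality at the two endpoints, and density of $V(F)$ lets us place every subdivision point there.
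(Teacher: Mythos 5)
Your proposal is correct and follows essentially the same route as the paper: the paper defines $\Gamma(\alpha)$ by choosing a piecewise smooth embedded representative with subdivision points in the dense set $V(F)$ and taking the induced passive continuation maps, then deduces all three properties (and homotopy invariance via the uniformly cylindrical and horizontally supported family of Hamiltonians) directly from Proposition~\ref{propertiesofactivepassivecontinatuinmaps}, exactly as you do. Your extra bookkeeping on the homotopy-invariance step is just an expansion of what the paper leaves as "readily obtained."
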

%\begin{rem}
%We expect that the active continuation maps should allow us to build a "flat at infinity" wrapped Fukaya category of the cotangent bundle $T^{\ast}M$, using the construction in \cite[Section 3]{GPSCV}. Perhaps it is worthwhile to point out the main conceptual difference with \cite[Proposition 3.23]{GPSCV}; GPS uses almost complex structures that are of contact type for sufficiently big enough compact subsets of $\partial_{\infty}X$ to show confinement properties for active continuation strips (which they simply call the continuation strips). For a Liouville sector $X$, and the  projection map $Nbh^Z \partial X$
%\end{rem}
\subsubsection{Path groupoid representation}\label{pathgroupoidrepsss}

We now relate everything we discussed to path groupoid representations of the Floer cohomology local system. Suppose $V$ is a vertically finite Lagrangian simple branched cover of $M$. Choose a finite set of points $\mathcal{P}_M(V)$ on $M$ such that  $F_b$ and $V$ are transverse for $b\in \mathcal{P}_M(V)$. Choose a grading on $T^{\ast}M$, and suppose furthermore that $V$ is spin and Maslov $0$ graded. We will assume that near the intersections $F_b\pitchfork V, b\in \mathcal{P}_M(V)$, the local configurations are trivial and isomorphic to 
\[\mathbb{C}^2_{(u=x+ip^x,v=y+ip^y)}, V:=\mathbb{R}_{(x,y)}^2+c_i, F_z:=i\mathbb{R}_{(p^x,p^y)}^2, J=J_{std}, \omega=\omega_{std}.\]
where $c_1,...,c_n$ are some distinct constant covectors. 

In order to simplify some sign computations, we will describe an alternative but equivalent approach for defining signs in Lagrangian Floer theory. For this, we need to pass to local systems on the sphere bundle rather than local systems on the actual surface. We'll explain this non-standard approach as follows. Given a relative homotopy class $w:(D,\partial D)\to (T^{\ast}M,V)$, the choice of the trivialization of ${w}^{\ast}(TV)$ along the boundary defines an orientation on the determinant line $\det D_{w}\bar{\partial}$ of the induced Cauchy-Riemann operator on the disk. This depends on the homotopy class of the (stable) trivializations. 

By \cite[Theorem 1A]{spinstructuresquadraticforms}, a smooth, simple closed curve lifts to a class in the sphere bundle $P_V$ modulo the class $H$ where $H$ is the distinguished class that winds around the fibre once. Since $V$ is a Riemann surface, there is a one-to-one correspondence between homotopy classes of trivializations ${w}^{\ast}TV$ and lifts of homology classes $w\vert_{\partial D}$ to the sphere bundle $P_{V}$. Now, according to \cite[Proposition 8.17]{FOOO2}, twisting by the class $H$ reverses the orientation. Thus, given a sphere bundle relative homology class modulo $2H$, one gets a distinguished orientation on the determinant line bundle. In other words, spin structures give distinguished lifts to sphere bundle homology classes modulo $2H$. As we will see in \cref{Wall-Crossing Analysis Section}, using sphere bundle homology classes greatly simplify some computations.

One can easily extend this discussion to the case of half-planes. Given a path $V_s$ of Lagrangians over $s\in [0,1]$, we get a Cauchy-Riemann operator on the \textit{upper half plane} with the boundary conditions given by $V_s$. This again depends on the homotopy class of the (stable) trivializations over $w(s,0)^{\ast}(TV_s)$. Write $q_{\theta}=\cos \theta \frac{d}{dx}+\sin \theta \frac{d}{dy}$, and $p_{\theta}=\cos \theta \frac{d}{dp^x}+\sin \theta \frac{d}{dp^y}$. Given the above local model, $\theta\in [0,2\pi)$, and a lift $\bar{m}$ of $m \in \mathcal{P}(V)$ to $V$, we choose the basis $\left \langle q_{\theta},q_{\theta+\frac{\pi}{2}} \right \rangle$ for $T_{\bar{m}}V$, and $\left \langle p_{\theta},p_{\theta+\frac{\pi}{2}} \right \rangle$ for $T_{\bar{m}}T^{\ast}_m M$. We will use the path of Lagrangian subspaces connecting $T_{\bar{m}}V$ and $T_{\bar{m}}V$, given by the path of their basis
\begin{align}\label{eq:Lagrangianpath} 
&\cos (-\frac{1}{2}\pi T)q_{\theta}+\sin (-\frac{1}{2}\pi T)p_{\theta} \nonumber \\
&\cos (-\frac{1}{2}\pi T)q_{\theta+\frac{\pi}{2}}+
\sin (-\frac{1}{2}\pi T)p_{\theta+\frac{\pi}{2}}
%&\cos (-\frac{1}{2}\pi T)\left(\cos \theta \frac{d}{dx}+\sin \theta\frac{d}{dy}\right)+\sin (-\frac{1}{2}\pi T)\left(\cos \theta \frac{d}{dp^x}+\sin\theta \frac{d}{dp^y}\right) \nonumber \\
%&\cos (-\frac{1}{2}\pi T)\left(-\sin \theta \frac{d}{dx}+\cos \theta \frac{d}{dy}\right)+
%\sin (-\frac{1}{2}\pi T)\left(-\sin \theta \frac{d}{dp^x}+\cos \theta \frac{d}{dp^y}\right)
\end{align}
for $T\in [0,1]$ which has grading $e^{-2\pi T}$. 

Following \cite[Section 10.1]{GNMSN}, we will say that a rank $1$ local system $\tloc$ on the sphere bundle of a Riemann surface is a \textit{twisted local system} if it has holonomy equal to $-1$ around each fibre. Observe that the choice of a spin structure $\mathfrak{s}$ is the same as a fibre-wise double covering of $P_M$. So after choosing a spin structure on $M$, we get a one-to-one correspondence between twisted local systems and genuine rank $1$ local systems on $M$, since pulling back by the spin structure, we get a local system $\loc$ on the sphere bundle with a trivial monodromy along the fibres. %In such a case, we will say that the twisted local system $\tloc$ and the induced genuine local system $\widetilde{\tloc}$ on $V$ are spin-equivalent.

It turns out that Floer-theroetic twisted local systems are easier to compute compared to computing the induced local systems on the base. So we will now sketch a method to obtain a Floer-theoretic twisted local system on $P_M$ given $V$ and $\tloc$. Given two base points $b, c$ for the path groupoid $\mathcal{P}_M$, let $\tilde{b}$ and $\tilde{c}$ be lifts of $b$ and $c$ to the sphere bundle $P_M$, $\tilde{\alpha}$ a smooth path connecting $\tilde{b}$ and $\tilde{c}$, and $\hat{b}$ and $\hat{c}$ intersection points in $T_b M\pitchfork L$ and $T_c M\pitchfork L$, respectively. We take a small perturbation without changing the path homotopy class, and break $\tilde{\alpha}$ into shorter segments so that each of these segments projects down to a smooth embedded path. So working over each segment at a time, we will assume that the projection is indeed embedded.

As observed in equation \eqref{perturbedpassive}, our continuation strip equation is gauge equivalent to a continuation strip equation with $V$ fixed and the fibres varying along the projection $\pi(\alpha(s))$. We can regard the path $\tilde{\alpha}$ as a choice of a trivialization of $T^{\ast}M_{\pi(\alpha(s))}$, where $\alpha=\pi(\tilde{\alpha})$. Now, equip our Lagrangian $V$ with a twisted local system. Let $\psi_{l(s)}\circ u$ be the (gauge-transformed) continuation strip. Given a sphere bundle lift $\tilde{\gamma}$ of the relative path homotopy class $\gamma$ given by $(\psi_{l(s)}\circ u)(s,0)$,
we can concatenate at both ends by the shortest paths in the circle fibre to obtain a path homotopy class in $P_V$ connecting the lifts of $\tilde{b}$ and $\tilde{c}$, modulo $2H$. As a result, we get a sphere bundle lift of the boundary of the disk obtained by glueing the half-strip operators at the strip-like ends. Twisting the count by the twisted flat connection $\tloc$, we see that our twisted count is independent of the choice of the sphere bundle lift because the $H$-contribution (which reverses the orientation) is cancelled out by the monodromy of $\tloc$. Thus, we obtain our Floer-theoretic parallel transport maps and a twisted family Floer local system $\widetilde{HF}=\widetilde{HF}(V,\tloc)$ on $M$.

Now choosing a spin structure $\mathfrak{s}_M$ on $M$, we can pullback the induced twisted local system on $M$ to a genuine local system on $M$. However, after choosing a spin structure $\mathfrak{s}_V$ on $V$,  we can construct family Floer cohomology as usual, using $\mathfrak{s}_M$, $\mathfrak{s}_V$, and the induced spin structures $\mathfrak{f}_b$ on the cotangent fibres. The only additional choice left is the path of Lagrangians and spin structures at each intersection point; for this, we use the path and the basis as indicated in \eqref{eq:Lagrangianpath} to trivialize the path $TV_T$, and use the trivial $Spin(2)$ bundle $P_T$ over it. Notice then that $(P_T)_0$ is identified with ${\mathfrak{s}_V}_{\hat{b}}$ and  $(P_T)_1$ is identified with $\mathfrak{f}_{\hat{b}}$.

We now proceed as usual. As before, such a choice defines an orientation on the determinant line of the linearized Cauchy-Riemann operator. We now twist $CF(V,F_b)$ with $\loc$, the induced rank $1$ local system on $V$ by
\begin{align}\label{eq:holonomymap}
\Phi^{\loc}(\partial (\psi_s\circ u)\vert_{(-\infty,\infty)\times \{1\}}): \loc_{\hat{b}}\to\loc_{\hat{c}}.
\end{align}
The previous discussion gives a (path groupoid representation of a) local system on $M$. One can then check directly that the family Floer cohomology local system $HF$ is equivalent to the pullback of the induced twisted local system  $\widetilde{HF}$ on $M$.

Suppose, furthermore, we have a compactly supported exact Lagrangian isotopy $V\sim V'$ such that the support lies outside $\pi^{-1}(\mathcal{P}_M(V))$. Then $CF(V',F_b)$ can also be made a graded chain complex over $\mathbb{Z}$ in a compatible manner. In particular, the quasi-isomorphisms $CF(V,F_b)\to CF(V',F_b)$ for $b\in P_M(V)$ commute with parallel transport maps.

The following proposition then summarizes our discussion.
\begin{proposition}\label{pathgroupoidrepresentation} Let $M,V,\mathcal{P}_M(V)$ as above, and fix a twisted rank $1$ local system $\tloc$ on $V$. The following data forms a path groupoid representation of a $GL(\mathbb{C};k)$-local system.
\begin{itemize}
\item The free $\mathbb{C}$-module $\dim HF(V,F_b, \loc)$. 
\item Parallel transport maps
\[\Gamma(\alpha):HF(V,F_b, \loc)\to HF(V,F_{c}, \loc)\]
defined as in \ref{propertiesofparalleltransportmaps}
\end{itemize}
Furthermore, let $\Gamma'(\alpha)$ denote the parallel transport maps associated to the $\mathbb{Z}$-modules $HF(V',F_b,\loc)$ for ${b\in\mathcal{P}_M(V)}$. Then the two path groupoid representations\\ $(\mathcal{P}_M(V),HF(V,F_b,\loc),\Gamma(\alpha))$ and  $(\mathcal{P}_M(V),HF(V',F_b,\loc),\Gamma'(\alpha))$ are equivalent. Finally, there exists an induced Floer theoretic twisted local system  $\widetilde{HF}(V,\tloc)$ on $M$, and its spin pullback is equivalent to the family Floer cohomology local system  $HF(V,F_b, \loc)$. 
\end{proposition}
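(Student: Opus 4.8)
The plan is to read this proposition as the assembly of the constructions of the preceding subsections into the language of \cref{pathgropoidrepresentationdefinition}, checking that they survive the twist by the rank $1$ local system $\loc$ on $V$. First I would note that $HF(V,F_b,\loc)$ is a free $\mathbb{C}$-module: after choosing orientations of the lines $\mathfrak{o}_x$ for $x\in F_b\pitchfork V$ it is a finite-dimensional vector space generated by those intersection points, and twisting by $\loc$ via \eqref{eq:holonomymap} changes only the coefficients of the differential, not the underlying generating set; since parallel transport maps are isomorphisms the rank is constant, equal to the degree $k$ of the cover. Using the trivial local models near the points of $\mathcal{P}_M(V)$ together with the canonical basis \eqref{eq:Lagrangianpath} at each intersection point, I would fix the isomorphisms $\mathbb{C}^{\oplus k}\simeq HF(V,F_b,\loc)$ required by \eqref{basisfixing}. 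The parallel transport maps $\Gamma(\alpha)$ are then supplied directly by \cref{propertiesofparalleltransportmaps}, with the $\loc$-twist incorporated exactly as in the sketch preceding the statement: they are isomorphisms, compatible with concatenation of paths, and depend only on the relative homotopy class. This is precisely the data of a path groupoid representation, which gives the first assertion.

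For the invariance statement, let $V_s$ be the compactly supported exact isotopy with $V_0=V$, $V_1=V'$ and support disjoint from $\pi^{-1}(\mathcal{P}_M(V))$, realised as a passive moving boundary condition, and let $g_b\colon CF(V,F_b,\loc)\to CF(V',F_b,\loc)$ be the associated passive continuation map. It is a quasi-isomorphism by the same ``inverse movie is tame'' argument as in the last bullet of \cref{propertiesofactivepassivecontinatuinmaps}, and since $V=V'$ near the finite set $F_b\pitchfork V$ the two complexes share the same generators and local data, so one can arrange (by a suitable choice of Floer data) that $g_b$ is compatible with the chosen isomorphisms \eqref{basisfixing}. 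The substantive point is that $g_b$ commutes with parallel transport, i.e. that for $\alpha\in\pi_1(b,c)$ the square built from $\Gamma(\alpha),\Gamma'(\alpha),g_b,g_c$ commutes up to chain homotopy. I would prove this by a two-parameter interpolation: both composites are continuation maps for a family of boundary conditions in which one simultaneously drags the cotangent fibre along $\alpha$ and deforms $V$ to $V'$; because these two deformations act in independent directions---a horizontally finite isotopy of the fibre versus a compactly supported isotopy of $V$---the homotopy-of-homotopies argument already used in the proof of \cref{propertiesofactivepassivecontinatuinmaps} produces the chain homotopy, the a priori $C^0$-confinement over the square of parameters being furnished by \cref{compactyesmoving} and \cref{semiadmissiblegeoboundfamily}. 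This yields the equivalence of the two path groupoid representations in the sense of \cref{pathgropoidrepresentationdefinition}.

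For the last assertion I would run the count of \eqref{passivemovingboundaryeq} using sphere-bundle lifts of the moving boundary arcs, as in the discussion preceding the statement. The orientation of $\det D_u\bar{\partial}$ obtained this way is well-defined only modulo the distinguished class $2H$, but twisting the count by the twisted local system $\tloc$---whose monodromy around each fibre is $-1$---cancels the orientation reversal that $H$ induces, by \cite[Proposition 8.17]{FOOO2} together with the identification of homotopy classes of trivializations of $w^{\ast}TV$ with sphere-bundle lifts of $w\vert_{\partial D}$ modulo $2H$ from \cite[Theorem 1A]{spinstructuresquadraticforms}. This produces a well-defined twisted family Floer local system $\widetilde{HF}(V,\tloc)$ on $M$ in the sense of \cite[Section 10.1]{GNMSN}. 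To identify its spin pullback with $HF(V,F_b,\loc)$ I would compare the two orientation prescriptions for $\det D_u\bar{\partial}$: pulling $\widetilde{HF}(V,\tloc)$ back along the fibrewise double cover of $P_M$ determined by $\mathfrak{s}_M$ trivializes the fibre monodromy, while a spin structure $\mathfrak{s}_V$ on $V$, the induced fibre spin structures $\mathfrak{f}_b$, and the path of spin structures coming from \eqref{eq:Lagrangianpath} give the standard orientation used to build $CF(V,F_b,\loc)$; the two differ precisely by the $H$-twist absorbed by $\tloc$ versus $\mathfrak{s}_V$, so they agree on generators, and compatibility with continuation maps follows from the same bookkeeping.

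I expect the main obstacle to be the sign bookkeeping of the last step---reconciling the sphere-bundle-lift orientation convention with the standard spin-structure convention---closely followed by the two-parameter commutation in the invariance step; the remaining verifications are a direct transcription of \cref{propertiesofparalleltransportmaps} and \cref{propertiesofactivepassivecontinatuinmaps} into the language of \cref{pathgropoidrepresentationdefinition}.
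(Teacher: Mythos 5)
Your proposal is correct and follows essentially the same route as the paper: the paper offers no separate proof but states that the proposition "summarizes the discussion" of the preceding subsection, and your three steps (path-groupoid data supplied by Proposition \ref{propertiesofparalleltransportmaps} with the $\loc$-twist of \eqref{eq:holonomymap}, invariance under compactly supported isotopies away from $\pi^{-1}(\mathcal{P}_M(V))$ via passive continuation maps confined by Lemma \ref{semiadmissiblegeoboundfamily} and Proposition \ref{compactyesmoving}, and the sphere-bundle-lift/$2H$ sign cancellation against $\tloc$ followed by the spin-pullback comparison) are exactly that discussion. Your two-parameter homotopy argument for the commutation of the quasi-isomorphisms with parallel transport is merely a more explicit spelling-out of a compatibility the paper asserts without detail, not a different approach.
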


We may instead think of the globally defined local system $z\mapsto HF(V,F_z)$ as the local system induced from the path groupoid representation $(\mathcal{P}_M(V),HF(V,F_b),\Gamma(\alpha))$. We will switch between these two conceptual pictures depending on whichever is more convenient. 	

\section{Desingularization and real-exact spectral curves}\label{Spectral Curves}
In this section we discuss the geometry and topology of real-exact spectral curves. In Section \ref{Desingularization of metric}, given a small deformation parameter $\delta>0$, we deform the singular metric $g^{\phi}$ on $C^{\circ}$ to a K\"{a}hler metric $g^{\phi}_{\delta}$ on $\tilde{C}$ as we discussed briefly in Section \ref{Proofguidess}. In Section \ref{moduliproblemss}, we discuss Ekholm's conformal models on disks with a single positive puncture, and define BPS discs. In Section \ref{toycase subsec}, we look at the toy case of $\phi=zdz^2$ on $\mathbb{C}=\mathbb{C}P^1-\{\infty\}$, whose spectral curve is isomorphic to $\scurve=\{(p^z)^2-z=0\}$ on $\mathbb{C}^2=T^{\ast}\mathbb{C}$. Then we discuss how the associated spectral network is related to BPS discs.

In Section \ref{Domain Decomposition of C},  we discuss a domain decomposition of $C$ induced from a \textit{complete saddle-free GMN quadratic differential} $\phi$. In Section \ref{Real-Exact Spectral Curves ss}, we discuss the geometry of real-exact spectral curves. Like we said in Section \ref{Proofguidess}, we show that given an energy cut-off $E\gg 1$ we can deform $C-\snetwork(0)$ to a bounded open subdomain $C(\delta;E)$ of $C$ such that horizontal trajectories passing through $z\in C(\delta;E)$ never enter sufficiently small neighbourhoods of the zeroes of $\phi$. Furthermore, we show in Proposition \ref{horizontaldomaindevision} that outside of $\snetwork(\pi/2)$, we have a canonical $\pm$ ordering on the lifts of the points in $z\in \tilde{C}$ with respect to the projection $\pi:\scurve\to \tilde{C}$. 

%Then like we said in \ref{introFloerbifur}, we will then find an "initial" deformation retract $C(\delta;\infty)$ of $C-S(0)$, which is traced out by some family of horizontal trajectories. In particular, $C(\delta;\infty)$ will be such that small vertical thickenings of $C(\delta;\infty)$ will completely avoid $U((2+\eta)\delta)$ for some $0<\eta\ll 1$. 

\subsection{Desingularization}\label{Desingularization of metric} 

We provide a way of deforming the singular $\phi$-metric to a smooth metric on $\tilde{C}$. We first start with the case of $\phi=zdz^2$. This deformation depends on some auxiliary choices but all the deformed metrics are conformally equivalent and they agree near infinity. The singular flat metric $g^{\phi}=\abs{z}\abs{dz}^2$ in polar coordinates reads 
\begin{align}
	g^{\phi}=r\big(dr^2+r^2 d\theta^2\big).
\end{align}
Choose a $\delta>0$ and a smooth strictly increasing positive function $\psi_\delta:[0,\infty)\to [1,\infty)$ such that $\psi_{\delta}(r)=r$ for $r<\delta$ and $\psi_{\delta}(r)=1$ for $r>\frac{3}{2}\delta$. The metric
\begin{align}\label{deformedradialmetric}
	g_{\delta}^{\phi}=\frac{r}{\psi_{\delta}(r)}\big(dr^2+r^2 d\theta^2\big)
\end{align}
is now globally defined on $\mathbb{C}$, and conformal hence invariant with respect to the standard complex structure on $\mathbb{C}$. So $g_{\delta}^{\phi}$ is actually a K\"{a}hler metric since we are in complex dimension $1$, though it is not real analytic.

Recall that we call a quadratic differential \textit{complete} if it does not admit poles of order one. Let $\phi$ be a complete GMN quadratic differential and let $b_1,...,b_n$ be the zeroes of $\phi$. Recall from Proposition \ref{zeroofphiprop} that near a simple zero of $\phi$, there exists a neighbourhood $U_b$ of $b$, an open set $D$ of $\mathbb{C}$ containing zero, and a biholomorphism $\xi=\xi_b:(D,0)\to (U_b,b)$ such that $\phi(\xi)d\xi^2=\xi d\xi^2$.  Let $U_i=U_{b_i}$ and $\xi_i=\xi_{b_i}$. By shrinking if necessary, we may assume that the open sets $U_i$ are disjoint and that $\xi_i^{-1}(U_i)=D(r_i)$ for some $r_i>0$. Having made these choices, we define:
\begin{definition}\label{zeroneighbourhood}
	Let $0<r<\min\{r_1,...,r_n\}$. Let $b_i$, $U_i$, $\xi_i$ and $D(r_i)$ be as above.   Let $U_i(r)=\xi_i(D(r))$. We define 
	\begin{align}
		U(r)=\bigcup_{i=1}^n U_i(r). 
	\end{align}
\end{definition}

By choosing $\delta<\frac{1}{2}\min\{r_1,...,r_n\}$, we use the local form \eqref{deformedradialmetric} near each branch point to conformally deform the flat metric $g^{\phi}$ to obtain a global smooth metric on $\tilde{C}$ which we still denote as $g_{\delta}^{\phi}$. Note that for any other choice of $\delta$ and $\psi_{\delta'}$ gives a metric which is conformally equivalent to $g_{\delta}^{\phi}$. Furthermore, the conformal factor is a smooth positive function which is equal to $1$ except on some small annular regions near each of the zeroes of $\phi$.  We call the metrics obtained by this general method \textit{(conformally) desingularized metrics}.

%Let $U(r)$ denote the set of points $z$ such that $\abs{z}<\delta$. 
%which interpolates between the standard flat metric near the origin and the flat $\phi$-metric outside of  $2\delta>r$. 
\subsection{Conformal structures}\label{moduliproblemss}
We discuss the conformal model $\triangle_m$ of the closed unit disc with $m-1$ out-going punctures and one in-going puncture on the boundary, which was constructed by Ekholm in \cite[Section 2.1]{Morseflowtree}. Given points $c=(c_1,...,c_{m-2})\in \mathbb{R}^{m-2}$, we consider the subdomain of $(-\infty,\infty)\times[0,m]$ given by removing $m-2$ horizontal slits in the direction of $+\infty$, of width $0<\slit\ll 1$, starting from the points $(c_j,j)$ for $j=1,...,m-2$. A boundary component $I$ with both of its ends at $+\infty$ is called a \textit{slit boundary component}. Given a slit boundary component $I$, the \textit{boundary minimum} of $I$ is the unique point with the smallest real part along $I$. We can regard each of these subdomains as giving conformal structures on $\triangle_m$ induced by $z=s+it$. 
\begin{figure}
	\centering 
	\includegraphics[]{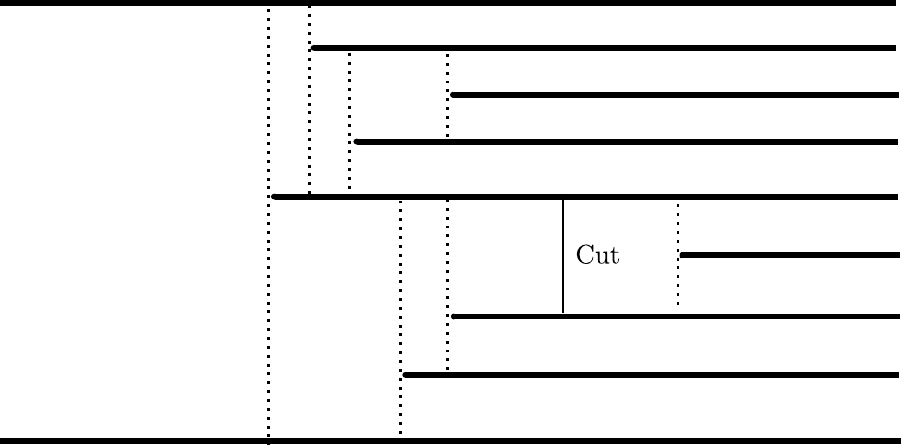}
	\caption{An example of $\triangle_m$ with the vertical rays passing through the boundary minima, together with a possible vertical cut. }
\end{figure}
\begin{figure}
	\usetikzlibrary{decorations.pathmorphing}
\usetikzlibrary{decorations.markings}
% Set the overall layout of the tree
\tikzstyle{level 1}=[level distance=3.5cm, sibling distance=3.5cm]
\tikzstyle{level 2}=[level distance=3.5cm, sibling distance=2cm]

% Define styles for bags and leafs
%\tikzstyle{bag} = [text width=4em, text centered]
%\tikzstyle{end} = [circle, minimum width=3pt,fill, inner sep=0pt]
\tikzset{
   red/.style={draw=red,
   , postaction={decorate},
        decoration={markings}},
    blue/.style={draw=blue, 
    postaction={decorate},
        decoration={markings}}, 
            null/.style={draw=blue, 
    postaction={decorate},
        decoration={markings}}, 
}

% The sloped option gives rotated edge labels. Personally
% I find sloped labels a bit difficult to read. Remove the sloped options
% to get horizontal labels. 
\begin{tikzpicture}[
        thick,
        % Set the overall layout of the tree
        level/.style={level distance=2cm},
        level 2/.style={sibling distance=2.6cm},
        level 3/.style={sibling distance=1cm}
    ]
    \coordinate
        child[grow=left]{
           edge from parent
        }
        % I have to insert a dummy child to get the tree to grow
        % correctly to the right.
        child[grow=right, level distance=0pt] {
       child{
       child{
       node {}
       edge from parent 
       }
       child{
       child{
       node {}
       edge from parent 
       }
       child{
       child{
       node {}
       edge from parent 
       }
       child{
       node {}
       edge from parent 
       }
       edge from parent
       node (n1) [above] {}
        node (n2) [below] {}
        }
       edge from parent 
       }
       edge from parent 
       }
       child{
       child{
       child{
       node {}
       edge from parent 
       }
       child{
       child{
       node {}
       edge from parent 
       }
       child{
       node {}
       edge from parent 
       }
       edge from parent 
       }
       edge from parent 
       }
        child{
       node {}
       edge from parent 
       }
       edge from parent 
       }
    };
\draw[dashed, shorten <=-0.5mm, shorten >=-0.5mm] 
    ([xshift=5mm,yshift=0mm] n1.north)       -- ([xshift=5mm, yshift=-0.5mm] n2.south);
\end{tikzpicture}
	\caption{The tree $T$ obtained from $\triangle_m$ together with the vertical cut.}
	\label{fig:tree0}
\end{figure}

Note that translating by $(a,...,a)$ on $\mathbb{R}^{m-2}$ for $a\in \mathbb{R}$ gives a biholomorphism of this subdomain and hence a conformal equivalence between two different conformal structures on $\triangle_m$. Quotienting $\mathbb{R}^{m-2}$ by the $\mathbb{R}-$action gives $\mathbb{R}^{m-3}$. In \cite[Section 2.1]{Morseflowtree}, Ekholm shows that there is a diffeomorphism between $\mathbb{R}^{m-2}/\mathbb{R}$ and the space of conformal structures on $\triangle_m$. In particular, we recover the unique conformal structure on $\triangle_3$. 

Given such a conformal model, we obtain a stable $m-1$-leaved tree $T$ as follows. At each of the boundary minima of $\triangle_m$, introduce a vertical ray in $\triangle_m$ passing through the boundary minimum, connecting a boundary point to a boundary point, and consider the resulting subdivison of $\triangle_r$.
Then we let $T$ be the tree with edges the connected components of the complement of the vertical rays, which share a common vertex if the corresponding components are adjacent along a vertical ray. For instance, when $m=3$, we obtain the unique $2$-leaved stable tree. The tree is directed \textit{away} from the root. See Figure \ref{fig:tree0} for an example.

Now suppose we are given a $\triangle_m$ and we want to cut it open along a vertical segment $l$ whose boundary end-points strictly lie on the horizontal boundaries of $\triangle_m$. In this case, we get a single component in the complement that lies on the right-hand side of $l$. This right-hand side component is of the form $\triangle_{m'}\cap (a,+\infty)\times [0,m']$ for some $a,-\infty<a<\infty$ lying strictly left to the boundary minima of $\triangle_{m'}$. From now on, we will call such a component the \textit{right-adjacent component.} 

We now define $\epsilon$-BPS discs ending at $z$ as follows. Let $\mathcal{Z}=(-\infty,\infty)\times [0,1]$ be the infinite strip. As before, let $\phi$ be a complete GMN quadratic differential, $g^{\phi}$ the induced singular flat metric on $C$, and $g^{\phi}_{\delta}$ a desingularization of $g^{\phi}$ that we constructed in Section \ref{Desingularization of metric}. We do not require $\phi$ to be saddle-free. Here $J=J_{\phi}$ is the induced almost complex structure on $T^{\ast}\tilde{C}$ with respect to $g_{\delta}^{\phi}$, and $J_{con}$ its conical deformation. Then
\begin{definition}\label{BPSdiskmodels}
	%A map $u:\mathcal{R}\to T^{\ast}\tilde{C}$  in the half-strip model if it satisfies the following equation:
	%\begin{align}\label{moduliproblemeq}
	%	\begin{cases}
		%		\bar{\partial}_{J}u=0\\
		%		u(s,0)\subset \scurve\\ 
		%		u(s,1)\subset \scurve\\
		%		\lim_{s\to -\infty}u(s,t)\in \scurve\\
		%		u(0,t)\subset F_z\\
		%		\lim_{(s,t)\to (0,1)} u(s,t)\neq \lim_{(s,t)\to (0,-1)} u(s,t).
		%	\end{cases}
	%\end{align}
	Let $0\leq \epsilon\leq 1$. A map $u:\mathcal{Z}\to T^{\ast}\tilde{C}$ is an $\epsilon$-\textit{BPS disc} ending at $z$ in the infinite strip model if it satisfies the following equation:
	\begin{align}\label{moduliproblemeq3}
		\begin{cases} 
			\bar{\partial_{J}}u=0\\
			u((-\infty,\infty)\times \{0\})\subset \epsilon\scurve\\
			u((-\infty,\infty)\times \{1\})\subset F_z\\
			\lim_{s\to \pm \infty}u(s,\tau)\in F_z\cap \epsilon\scurve\\
			\lim_{s\to -\infty}u(s,\tau)\neq \lim_{s\to \infty}u(s,t).
		\end{cases}
	\end{align}
\end{definition}
\subsection{\texorpdfstring{The toy case $\phi(z)=zdz^2$}{The toy case}}\label{toycase subsec}

Now we discuss the case of $\phi=z dz^2$ to illustrate how the spectral network relates to the existence of BPS discs. We remark that for general complete GMN quadratic differentials, one needs the adiabatic degeneration argument in Section \ref{Adia Degen}. 

We can identify $\mathbb{C}^2\simeq T^{\ast}\mathbb{C}$ and $\scurve$ with $\{{(p^z)}^2-z=0\}$ in $\mathbb{C}^2$. Recall that $g^S$ is the metric induced on $\mathbb{C}^2$ and $\Omega$ is the canonical holomorphic symplectic form on $\mathbb{C}^2$ (Defined in \ref{introquadss}). Let $\tilde{I}$ be the horizontal lift of $I$.  In conformal normal K\"{a}hler coordinates, we have:
\begin{align}\label{eq:hpkrotation}
	&g^S= \abs{dz}^2+\abs{d(p^z)}^2 
	&\Omega=dp^z\wedge dz 
	&&J=\begin{bmatrix}
		0 & Id\\ 
		-Id & 0
	\end{bmatrix}
	&&\tilde{I}=\begin{bmatrix}i & 0\\ 
		0 & -i \end{bmatrix}.\end{align}
Note that $g^S$ is $\tilde{I}$ and $J$ invariant. Let $K=\tilde{I}J$ and  $\omega_I=g^S(\tilde{I}-,-)$. Then the imaginary part $\omega_{\pi/2}=\frac{1}{2i}(\Omega-\bar{\Omega})$ of $\Omega$ is given by $g^S(\tilde{K}-,-)$. Furthermore, since
\[\omega_K(v,Jv)=g^S(Kv,Jv)=-g^S(v,KJv)=-g^S(Iv,v)=\omega_I(v,v)=0,\]
the imaginary part of $\Omega$ vanishes on the interior of a $J$-holomorphic disc. 

The spectral curve $\scurve$ is exact with respect to the holomorphic Liouville form $\lambda$. We choose the primitive ${\primitive}=\frac{2{(p^z)}^3}{3}$ for $\lambda$ on $\scurve$. Given a complex number $z\in \mathbb{C}$, write 
\[z_{\theta}= \frac{e^{-i\theta}z+e^{i\theta}\bar{z}}{2}.\]
For the quadratic differential $\phi=zdz^2$, the spectral network $\snetwork(\theta)$ consists of three positive rays of phases $e^{i\frac{2\theta+2\pi k}{3}},k=0,1,2$ emanating from the origin. So we see that we have the following alternative characterization of the spectral network $\snetwork(\theta)$ in terms of the holomorphic primitive $W$:

\begin{proposition}\label{snetworkchar1}
	The spectral network $\snetwork(\theta)$ is the locus of points $z$ on $\mathbb{C}$ such that
	\begin{align}\label{holomorphiccharacterizationsnetwork}
		W(\pi^{-1}(z))=W(\pm \sqrt{z},z)_{\theta+\frac{\pi}{2}}=(\pm z\sqrt{z})_{\theta+\frac{\pi}{2}}=0.
	\end{align}
\end{proposition}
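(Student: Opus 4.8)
The strategy is to express both sides of \eqref{holomorphiccharacterizationsnetwork} through the single function $h_\theta=\Im(e^{-i\theta}W):\scurve\to\mathbb{R}$, where $W=\tfrac{2}{3}(p^z)^3$ is the chosen primitive of $\lambda$ on $\scurve$ — which for $\phi=zdz^2$ is a genuine single-valued holomorphic function on $\scurve\subset\mathbb{C}^2$ — and then compare the two. First I would recall from \cref{introquadss} that a $\phi$-trajectory has phase $\theta$ precisely when $\Im(e^{-i\theta}W)$ is locally constant along it; equivalently, the leaves of the phase-$\theta$ horizontal foliation, and in particular the separating $\theta$-trajectories, lie in level sets of $h_\theta$. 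Since $W$ extends continuously to the ramification point of $\pi$ over the unique zero $z=0$ of $\phi$ with value $0$, and every separating $\theta$-trajectory has $z=0$ in its closure, we get $h_\theta\equiv 0$ on $\pi^{-1}$ of each such trajectory; adding the $0$-cell $\{0\}$ yields $\snetwork(\theta)\subset\{z\in\mathbb{C}: h_\theta(\pi^{-1}(z))=0\}$ (the condition is unambiguous because the two lifts of $z$ carry opposite values of $W$, hence of $h_\theta$).

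For the reverse inclusion I would compute $\{h_\theta=0\}$ directly: on the lifts of $z$ one has $W(\pm\sqrt{z},z)=\tfrac{2}{3}(\pm\sqrt z)^3=\pm\tfrac{2}{3}z^{3/2}$, so with $z=re^{i\varphi}$ the equation $h_\theta=0$ becomes $\sin\!\big(\tfrac{3}{2}\varphi-\theta\big)=0$, i.e. $\varphi=\tfrac{2\theta+2\pi k}{3}$ for $k=0,1,2$. Thus $\{h_\theta=0\}\cap\mathbb{C}^\ast$ is exactly the union of the three rays of phases $e^{i(2\theta+2\pi k)/3}$, which are precisely the separating $\theta$-trajectories of the simple zero at the origin — consistent with the description of $\snetwork(\theta)$ recorded just before the statement and with the three-pronged local model of \cref{zeroofphiprop}. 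Hence $\snetwork(\theta)=\{z: h_\theta(\pi^{-1}(z))=0\}$.

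It then remains to identify $h_\theta$ with the quantity in \eqref{holomorphiccharacterizationsnetwork}. Using the notation $z_\alpha=\tfrac{1}{2}(e^{-i\alpha}z+e^{i\alpha}\bar z)=\Re(e^{-i\alpha}z)$ introduced above, one has $w_{\theta+\frac{\pi}{2}}=\Re(e^{-i(\theta+\frac{\pi}{2})}w)=\Im(e^{-i\theta}w)$ for all $w\in\mathbb{C}$, so $W(\pi^{-1}(z))_{\theta+\frac{\pi}{2}}=h_\theta(\pi^{-1}(z))$. Moreover $(p^z)^3=p^z\cdot(p^z)^2=p^z z$ on $\scurve$, so $W(\pm\sqrt z,z)=\tfrac{2}{3}(\pm z\sqrt z)$, and since $\tfrac{2}{3}$ is a positive real constant the vanishing of $W(\pm\sqrt z,z)_{\theta+\frac{\pi}{2}}$ is equivalent to that of $(\pm z\sqrt z)_{\theta+\frac{\pi}{2}}$. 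Chaining these equivalences with the previous paragraph proves \eqref{holomorphiccharacterizationsnetwork}.

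The only point that genuinely needs care — the ``main obstacle'', modest as it is here — is the claim that $\{h_\theta=0\}$ contains \emph{no} leaves other than the separating trajectories, i.e. that the critical graph exhausts the zero level set. For $\phi=zdz^2$ this holds because that level set is literally three rays through the origin, reflecting the fact that $W$ is ``injective up to sign'' on this particular $\scurve$; this special feature is exactly what fails for a general complete GMN quadratic differential, which is why the general statements later in \cref{Adia Degen} require the adiabatic degeneration argument rather than a computation of this kind.
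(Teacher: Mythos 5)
Your proof is correct and takes essentially the same route as the paper, which treats the proposition as an immediate consequence of the explicit description of $\snetwork(\theta)$ as the three rays of phases $e^{i(2\theta+2\pi k)/3}$ together with $W=\pm\tfrac{2}{3}z^{3/2}$ on $\scurve$ — exactly the computation you carry out. Your extra foliation-theoretic inclusion via the level sets of $\Im(e^{-i\theta}W)$ is just a more elaborate packaging of the same observation.
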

For $a\in \mathbb{C}-\{0\}$, let $\{a^0,a^1\}$ be the set of the lifts of $a$ on $\scurve$. Since $W(w,z)=\pm\frac{2}{3}  z\sqrt{z}$ on $(z,w)\in \scurve$, we see that $\snetwork(0)$ is the locus of points $a\in \mathbb{C}$ such that the imaginary value of $W(a^i)$ is equal to zero for $i=0,1$. 
We now give an ordering to the pair provided that $\text{Re}(W(a^0))\neq \text{Re}({\primitive}(a^1))$. The equality happens if and only if the real part vanishes, which then implies that $a$ is on $\snetwork(\pi/2)$. Based on this fact, for $a\notin \snetwork(\pi/2)$, we order the two lifts of $a$ by $a^{\pm}$ with respect to the relation
\[\text{Re}({\primitive}(a^+))>\text{Re}({\primitive}(a^{-})).\]
We will construct a similar ordering in Section \ref{Real-Exact Spectral Curves ss}. 

We now provide a Floer theoretic reformulation of the characterisation of the spectral network for $\{{(p^z)}^2-z=0\}$. From now on we fix the phase $\theta=0$.

\begin{proposition}\label{S-Networkw2z}
	Let $\phi=zdz^2$ on $\mathbb{C}$. The spectral network $\snetwork(0)$ is locus of the points $z$ on $\mathbb{C}$ such that there exists a BPS disc ending at $z$.
\end{proposition}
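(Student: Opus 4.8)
The plan is to extract both inclusions from Stokes' theorem applied to the holomorphic symplectic form $\Omega = d\lambda$, together with one explicit disc. I would first record the scale invariance of the toy case: rescaling the cotangent fibre identifies $\epsilon\scurve$ with the spectral curve of $\epsilon^2 z\,dz^2$, whose spectral network is again $\snetwork(0)$, so it suffices to treat $\epsilon = 1$ with $\scurve = \{(p^z)^2 = z\}$; and the rotation $z\mapsto e^{2\pi i/3}z$ fixes $z\,dz^2$ and permutes the three rays of $\snetwork(0)$, so it is enough to work over the ray $\RR_{>0}$. Over $\{r < \delta\}$ the desingularized metric $g^\phi_\delta$ is the flat euclidean metric $|dz|^2$, so there the Sasaki structure $J$, the complex structure $\tilde I$ of the holomorphic cotangent bundle, and $K = \tilde I J$ form a flat hyperkähler triple exactly as in the discussion above, and $\scurve$ and $F_z$ are both $\Omega$-holomorphic Lagrangians.

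For necessity, suppose $u$ is a non-constant BPS disc ending at $z$. Since $\lambda|_{F_z}\equiv 0$ while $\lambda|_\scurve = d\primitive$ with $\primitive = \tfrac23(p^z)^3$, Stokes gives $\int_u\Omega = \int_{\partial u}\lambda = \pm(\primitive(z^+) - \primitive(z^-)) = \pm\tfrac43 z\sqrt z$. Taking real parts, $\int_u\omega = \mathrm{Area}(u) > 0$, so $\Re(z\sqrt z)\neq 0$ — equivalently $z\notin\snetwork(\pi/2)$, which fixes the ordering of $z^\pm$; taking imaginary parts, $\int_u\Im\Omega = 0$ since $\Im\Omega$ is $J$-anti-invariant ($\omega_K(v,Jv)=0$), so $\Im(z\sqrt z) = 0$, which by Proposition \ref{snetworkchar1} is precisely the condition $z\in\snetwork(0)$. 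The $J$-anti-invariance is exact only where $g^\phi_\delta$ is flat; for $z$ in the euclidean region the disc stays there and the argument is complete, while in general one must also bound the contribution to $\int_u\Im\Omega$ of the curved annulus $\{\delta\le r\le\tfrac32\delta\}$, which I would do by splitting $\Im\Omega$ into its $J$-invariant part (supported in that annulus) and its $J$-anti-invariant part (which integrates to zero on $u$).

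For sufficiency, let $z = R > 0$. Work in $J$-holomorphic coordinates $(a,b)$ on $T^\ast\CC$ induced by $|dz|^2$, chosen so that the $J$-complex slice $\{b = 0\}$ meets $F_R$ in the line $\{\Re a = R\}$ and meets $\scurve$ (the part over $\RR_{>0}$) in the parabola $\{\Re a = (\Im a)^2\}$; their two intersection points are $a = R\pm i\sqrt R$. Let $\Delta_R = \{\,(\Im a)^2 \le \Re a \le R\,\}\subset\CC_a$, a Jordan bigon with vertices $R\pm i\sqrt R$. A Riemann map $a:\mathcal Z\xrightarrow{\ \sim\ }\Delta_R$ sending $s\to\pm\infty$ to the two vertices, $t = 0$ to the parabola and $t = 1$ to the line then gives $u := (a,0):\mathcal Z\to T^\ast\CC$, which is $J$-holomorphic, has boundary on $\scurve$ and $F_R$, limits to the distinct lifts $R^\pm$, and is non-constant; this is a BPS disc ending at $R$. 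Rotating by $e^{2\pi i/3}$ handles the other two rays, and for $R$ outside the euclidean region I would glue this thimble to the evident flat continuation-strip along the ray in the $\primitive$-coordinate (where $\scurve$ is a pair of horizontal planes).

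I expect the main difficulty to be exactly this matching across the curved annulus $\{\delta\le r\le\tfrac32\delta\}$ — in the necessity direction the estimate on $\int_u\Im\Omega$ there, and in the sufficiency direction the pregluing and Newton step producing the disc over the whole ray — rather than the Stokes computation or the Riemann-mapping construction, both of which are essentially formal. This annulus analysis is a small-scale model of the adiabatic degeneration argument of Section \ref{Adia Degen}; if one only wants to illustrate the mechanism it can be avoided by restricting attention to $z$ in the euclidean region $\{r < \delta\}$.
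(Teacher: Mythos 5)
Your core argument is the same as the paper's: necessity via Stokes' theorem for $\Omega$ together with the pointwise identity $\Im\Omega(v,Jv)=0$, and sufficiency via the explicit bigon cut out of the $J$-complex plane $\{y=p^y=0\}$ (and its $\mathbb{Z}/3$-rotations) by the parabola $\scurve\cap\{y=p^y=0\}$ and the fibre line, parametrized by a Riemann map. Had you stopped there, the proof would coincide with the paper's, which works throughout the toy case with the flat structure of \eqref{eq:hpkrotation} --- the Sasaki data of the euclidean metric $\abs{dz}^2$ on all of $\mathbb{C}$, as the paper's proof states at the outset --- so in the intended setting your ``euclidean region'' is everything and neither of your deferred steps arises. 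Your opening reduction by fibre rescaling is also unnecessary (the proposition involves a single spectral curve), and as stated it is not quite right, since fibre rescaling does not intertwine the Sasaki almost complex structures; nothing downstream uses it.

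The two steps you flag as the main difficulty are artifacts of reading the proposition with $J=J_{g^{\phi}_{\delta}}$ as in \cref{BPSdiskmodels}; under that reading they are, as written, gaps --- the claim that a BPS disc ending at a point of the euclidean region stays in that region is asserted with no confinement argument, and ``splitting $\Im\Omega$'' does not by itself bound anything --- but both can be closed much more cheaply than by an annulus estimate or a pregluing/Newton step. The anti-invariance is a pointwise algebraic identity valid for the Sasaki structure of \emph{any} K\"ahler base metric, not only flat ones: writing $\delta p_i=dp_i-\Gamma^j_{ki}p_j\,dx^k$ for the connection coframe, one has $\Im\Omega=\delta p_x\wedge dy-\delta p_y\wedge dx+(\Gamma^j_{xx}+\Gamma^j_{yy})p_j\,dx\wedge dy$; the first two terms annihilate $(v,Jv)$ exactly as in the flat model, and for a conformal metric $e^{2\varphi}\abs{dz}^2$ the correction coefficient vanishes since $\Gamma^x_{xx}+\Gamma^x_{yy}=\Gamma^y_{xx}+\Gamma^y_{yy}=0$. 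As $g^{\phi}_{\delta}$ is conformal (hence K\"ahler) everywhere, $\int u^{\ast}\Im\Omega=0$ with no contribution from the curved annulus, and the necessity direction goes through verbatim for all $z$, with no need to confine the disc. Likewise no gluing is needed for sufficiency: $g^{\phi}_{\delta}$ is rotationally symmetric, so along $\mathbb{R}_{>0}$ the plane $\{y=p^y=0\}$ is $J_{g^{\phi}_{\delta}}$-invariant as well (the Christoffel terms in the horizontal lifts are tangent to it because $\varphi_y=0$ on the axis), and the very same bigon, reparametrized by the Riemann map for the induced conformal structure, is already a $J_{g^{\phi}_{\delta}}$-holomorphic BPS disc ending at $R$. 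So the weaker statement you propose as a fallback is not needed under either reading of $J$.
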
 
\begin{proof}
	Recall here that we are using $J$ as in \eqref{eq:hpkrotation}. We utilise the exactness of the holomorphic Liouville form. Since
	\[\int u^{\ast}\Omega={\primitive}(z^+)-{\primitive}(z^{-})=\frac{4z^{3/2}}{3},\] and $\omega_{\pi/2}$ vanishes in the interior of any $J$-holomorphic disc, there can be no BPS disc ending at $z$ for $z\notin \snetwork(0)$. To construct explicitly some BPS disc ending at $z\in \snetwork(0)$, observe that the intersection of the $J$-holomorphic plane $\{(e^{\frac{2\pi i k}{3}}s, e^{\frac{-2\pi i k}{3}}t), s,t\in \mathbb{R}\}$ in $\mathbb{C}^2=\{(x+ip^x,y+ip^y)\}$ intersects $\scurve$ along the curve $y^2=x$ and $F_{z}$ for $z=te^{\frac{-2\pi i k}{3}}$ along the line $x=t$, for $t>0$. These two curves bound a disc. %But since these $J$-discs lie in the unit disc bundle, we see that they are in fact $J_{con}$-discs. 
\end{proof}

Notice that for the case $\phi=zdz^2$, Proposition \ref{S-Networkw2z} is much stronger than Theorem \ref{maintheorem}. However, our argument rests on exactness with respect to the holomorphic Liouville form $\holliouvile$, so it cannot be extended for general spectral curves $\scurve$. \\
\subsection{Domain decomposition}\label{Domain Decomposition of C}

We now discuss the domain decomposition that comes from the spectral network $\snetwork(0)$ associated to a saddle-free GMN quadratic differential $\phi$. We assume that $\phi$ is GMN and complete. We use the conventions introduced in the $\phi$-metric part in Section \ref{introquadss}.

\begin{definition}
	Given a class $[\gamma]\in H_1(\scurve;\mathbb{Z})$, its \emph{charge} $Z(\gamma)$ is defined by the integral
	\[Z(\gamma)=\int_{\gamma}\lambda\]
	where $\gamma$ is a smooth representative of $[\gamma]$. The induced $\mathbb{Z}$-additive homomorphism 
	\[Z:H_1(\scurve;\mathbb{Z})\to \mathbb{C}\]
	is called the \emph{charge homomorphism}.
\end{definition}

Given a saddle trajectory $\gamma$ of phase $\theta$ we can join the two lifts of $\gamma$ so that the charge of the corresponding class in $H_1(\scurve,\mathbb{Z})$ is of phase $e^{-i\theta}$. Furthermore, by rotating the quadratic differential $\phi$ to $e^{i2\theta}\phi$ for generic $\theta$, we can make the image of $Z$ avoid $\mathbb{R}_{>0}\cup \mathbb{R}_{<0}$. This means that by rotating the quadratic differential by a generic phase, we can always obtain a saddle-free quadratic differential (See \cite[Lemma  4.11]{bridgeland2014quadratic}).

We have the following result on the conformal equivalence classes of the connected components (which we called the chambers) of $C-\snetwork(0)$ for saddle-free, complete quadratic differentials $\phi$. For the proof, see Chapters 6 and 9-11 of \cite{quaddiff}, and Sections 3.4-3.5 and Lemma 3.1 of \cite{bridgeland2014quadratic}. 

\begin{proposition} \label{domaindecomposition}
	Let $\phi$ be a complete, saddle-free quadratic differential. Then the connected components of $\tilde{C}-\snetwork(0)$) are conformally equivalent to one of the following.
	\begin{itemize}
		\item Vertically finite horizontal strips 
		\[\mathcal{Z}(a,b)=\{z\in \mathbb{C}:a<Im(z)<b\}\] for some $-\infty<a,b<\infty$. The boundary of $\mathcal{Z}(a,b)$ consist of separating horizontal trajectories given by extending the biholomorphism to the lines $\{Im(z)=a, Re(z)> a_0\}, \{Im(z)=a, Re(z)< a_0\}, \{Im(z)=b, Re(z)< b_0\}, \{Im(z)=b, Re(z)> b_0\}$ for some $a_0,b_0\in \mathbb{R}$. In other words, the biholomorphism extends to a continuous map $\overline{\mathcal{Z}(a,b)}\to \mathbb{C}$ which is a surjection onto the closure of the corresponding horizontal chamber component, such that the points $a_0+ia$ and $b_0+ib$ are mapped to zeroes of $\phi$. 
		\item The open upper half-plane \[\mathcal{H}:=\{z\in \mathbb{C}:im(z)>0\}.\] Again, there exists some $x_0\in \mathbb{R}$ such that the biholomorphism extends to a continuous map $\overline{\mathcal{H}}\to \tilde{C}$ which is a surjection onto the closure of the corresponding horizontal chamber component, where the point $x_0+i\cdot 0$ is mapped to a zero of $\phi$, and the lines $\{im(z)=0, re(z)>x_0\}$ and $\{im(z)=0, re(z)<x_0\}$ are mapped to separating horizontal trajectories. 
	\end{itemize}
	In both cases, the pullback of $\phi$ under the conformal equivalence is equal to $dz^2$. In fact, these domains are given by maximal analytic continuations of $\int \sqrt{\phi(z)}$ along open neighbourhoods of generic horizontal trajectories. Both of these domains are traced out by generic horizontal trajectories. 
\end{proposition}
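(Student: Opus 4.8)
The plan is to reduce the global statement to a local analysis of the horizontal foliation inside a single chamber, with the natural (distinguished) parameter $w=\int\sqrt{\phi}$ as the candidate conformal coordinate. First I would recall the trichotomy of horizontal trajectories (saddle, separating, generic) and invoke Strebel's structure theory for this class of differentials: saddle-freeness removes every horizontal saddle connection, and the GMN/completeness hypotheses rule out the two remaining domain types allowed by the general trajectory-structure theorem, namely ring domains and domains containing recurrent (minimal) trajectories. Concretely, a minimal component of the horizontal foliation has boundary a union of saddle connections, so it cannot occur once $\phi$ is saddle-free; and a ring domain is an annulus whose two boundary cycles are each either a single order-two pole or a closed chain of saddle connections through zeros, so saddle-freeness forces both boundary cycles to be order-two poles, which by a cut-and-paste argument forces $C$ to be a sphere carrying a quadratic differential with exactly two double poles and no other critical points, contradicting the GMN requirement of a finite critical point. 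Hence every connected component $\mathcal{C}$ of $\tilde{C}-\snetwork(0)$ is swept out by generic horizontal trajectories, each of which is a properly embedded, non-self-intersecting, bi-infinite curve contained in $\mathcal{C}$.

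Next I would build the conformal equivalence by hand. Fix a generic horizontal trajectory $\gamma\subset\mathcal{C}$ and a natural coordinate near a point $p_{0}\in\gamma$ in which $\phi=dw^{2}$ and $\gamma$ is a segment of the real axis parametrised by $\phi$-arclength. Define a map $\Phi$ on a subset $\Omega\subset\{w=s+it\}$ by letting $\Phi(s+it)$ be the point reached by flowing from $\gamma(s)$ a signed $\phi$-distance $t$ along the vertical trajectory through $\gamma(s)$, declared only for those $(s,t)$ for which this flow remains in $\mathcal{C}$; then $\Phi$ is a local biholomorphism with $\Phi^{*}\phi=dw^{2}$, carrying horizontal lines to horizontal trajectories. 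The heart of this step is to show $\Omega=\mathbb{R}\times(a,b)$ for some $-\infty\le a<0<b\le+\infty$: the set of heights $t$ with $\mathbb{R}\times\{t\}\subseteq\Omega$ contains $0$ and is order-convex (a vertical trajectory realising height $t$ realises every intermediate height), every nonempty horizontal slice of $\Omega$ is a full horizontal line because the horizontal trajectory through any of its points is generic hence bi-infinite in $\mathcal{C}$, and completeness of $\tilde C$ prevents the "defect" from escaping to $s=\pm\infty$ so this height-set is open; a connectedness argument (openness and closedness of $\Phi(\Omega)$ in $\mathcal{C}$, together with non-self-intersection of generic trajectories) then upgrades $\Phi$ to a biholomorphism $\Omega\xrightarrow{\sim}\mathcal{C}$. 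If $(a,b)$ is bounded we are in the horizontal-strip case $\mathcal{Z}(a,b)$; if exactly one endpoint is finite, a vertical translation puts us in the half-plane case $\mathcal{H}$; the case $\Omega=\mathbb{C}$ is excluded since it would make $\phi$ a nowhere-vanishing, translation-invariant differential on $\tilde{C}$, contradicting the presence of a zero. This construction also realises $\mathcal{C}$ as a maximal analytic continuation of $\int\sqrt{\phi}$ along a regular trajectory neighbourhood.

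Finally I would pin down the boundary behaviour. Using the local normal form $\phi=\zeta\,d\zeta^{2}$ at a simple zero (Proposition \ref{zeroofphiprop}), the natural parameter satisfies $w\sim\zeta^{3/2}$ there, so $w$ is three-valued at the zero and the three separating trajectories leave it at mutual angles $2\pi/3$; matching this picture against a strip or half-plane shows that a finite boundary line $\{\Im w=a\}$ of $\Omega$ pushes forward to a concatenation of two separating trajectories (walls) joined at the image of a single zero $a_{0}+ia$, and similarly for $\{\Im w=b\}$, while for the half-plane the single boundary line $\{\Im w=0\}$ pushes forward to a pair of walls joined at one zero. This yields the asserted continuous surjection $\overline{\Omega}\to\overline{\mathcal{C}}\subset\tilde{C}$ sending the marked boundary points to zeros of $\phi$ and the boundary rays to separating horizontal trajectories, and $\Phi^{*}\phi=dw^{2}$ is built in. The main obstacle is not this last bookkeeping but the inputs in the first two paragraphs: the honest exclusion of ring domains and recurrent trajectories, and the maximality/saturation statement that a chamber is foliated by exactly a one-parameter family of generic trajectories, both rely on the global compactness of $C$ and on Strebel's structure theorem, so in the write-up I would cite \cite{quaddiff} and \cite{bridgeland2014quadratic} for those and concentrate the argument on the explicit construction above.
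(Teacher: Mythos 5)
Your proposal is correct and is essentially the route the paper takes: the paper offers no independent argument for this proposition, simply citing Strebel (Chapters 6, 9--11 of \cite{quaddiff}) and Bridgeland--Smith (Sections 3.4--3.5 and Lemma 3.1 of \cite{bridgeland2014quadratic}), which is exactly the structure theory (exclusion of ring and minimal domains under saddle-freeness plus the GMN/completeness hypotheses, and the natural-parameter description of strip and half-plane domains) that you sketch. Your explicit construction of the biholomorphism via the distinguished coordinate $\int\sqrt{\phi}$ and the boundary analysis at simple zeroes is a reasonable expansion of those citations, with the heavy inputs (the trajectory structure theorem and the saturation of chambers by generic trajectories) correctly attributed to the same sources.
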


From now on, we will not distinguish between the horizontal chambers of $\phi$ (which are open conformal subdomains of $\tilde{C}$) and their conformally equivalent counterparts $\mathcal{Z}(a,b)$ and $\mathcal{H}$ (which are open conformal subdomains of $\mathbb{C}$). From the proposition, we see that given a $\delta>0$ we have an $\epsilon(\delta)>0,\:h(\delta)>0$ and $\eta(\delta)>0$ such that the $h(\delta)$-neighborhoods of horizontal trajectories which trace out the horizontal subdomains
\begin{align}
	\mathcal{Z}(\delta;a,b)&=\mathcal{Z}(a+\epsilon(\delta),b-\epsilon(\delta))\subset \mathcal{Z}(a,b)\\
	\mathcal{H}(\delta)&=\mathcal{H}\cap \{\im{y}>\epsilon(\delta)\}
\end{align}
\textit{never} enter the (slightly thickened) neighbourhood $U((2+\eta)\delta)$. For later purposes, we demand that $\eta>0$ is small enough so that outside $U((2-\eta)\delta)$, $g_{\delta}^{\phi}=g^{\phi}$. We sometimes call the latter neighbourhood the \textit{desingularization region}.

Note that $\mathcal{Z}(\delta;a,b)$ and $\mathcal{H}(\delta)$ are naturally deformation retracts of the horizontal chambers of $\phi$. Taking the union of the horizontal subdomains $\mathcal{Z}(\delta;a,b)$ and $\mathcal{H}(\delta)$ inside each of the horizontal chambers, we obtain our domain $C(\delta;\infty)$.

\begin{proposition}
	There exists a conformal subdomain $C(\delta;\infty)\subset \tilde{C}$ which is a disjoint union of deformation retracts of connected components of $\tilde{C}-\snetwork(0)$ which satisfies the following.
	\begin{itemize}
		\item[] There exists an $h=h(\delta;E)>0$ and an $\eta(\delta)>0$ such that if $\gamma$ is a horizontal trajectory passing through $z\in C(\delta;\infty)$, then the $h(\delta)$-neighborhood of $\gamma$ lies strictly outside the desingularization region $U((2+\eta)\delta)$. 
	\end{itemize}
\end{proposition}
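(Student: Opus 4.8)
The plan is to build $C(\delta;\infty)$ chamber-by-chamber out of the conformal models of Proposition~\ref{domaindecomposition}, and then verify the trajectory-avoidance property by a Euclidean distance estimate carried out inside each model, using that $\snetwork(0)$ has only finitely many chambers to keep all constants uniform in the chamber.

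First, since $C$ is compact and $\phi$ has finitely many critical points, $\snetwork(0)$ has finitely many walls and hence $\tilde C-\snetwork(0)$ has finitely many connected components $\mathcal{Z}_1,\dots,\mathcal{Z}_N$. By Proposition~\ref{domaindecomposition}, each $\mathcal{Z}_k$ is conformally equivalent either to a horizontal strip $\mathcal{Z}(a_k,b_k)$ with $-\infty<a_k<b_k<\infty$, or to the upper half-plane $\mathcal{H}$; the conformal equivalence pulls $\phi$ back to $dz^2$ (so it pulls the singular $\phi$-metric back to $|dz|^2$), and it extends continuously to the boundary, carrying the finitely many distinguished boundary points $a_k^0+ia_k$, $b_k^0+ib_k$ (resp. $x_k^0$) onto zeroes of $\phi$, all other boundary points mapping onto separating horizontal trajectories. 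In each model the horizontal trajectories are exactly the horizontal lines $\{\Im z=\mathrm{const}\}$. Fix a parameter $\epsilon=\epsilon(\delta)>0$, to be pinned down below, and set $\mathcal{Z}(\delta;a_k,b_k):=\{a_k+\epsilon<\Im z<b_k-\epsilon\}$ and $\mathcal{H}(\delta):=\{\Im z>\epsilon\}$; each is a deformation retract of the corresponding chamber via the obvious linear retraction in the $z$-coordinate, and we let $C(\delta;\infty)\subset\tilde C$ be the union of the corresponding subdomains transported back to $\tilde C$. This is a disjoint union of deformation retracts of the components of $\tilde C-\snetwork(0)$.

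Next, take $z\in C(\delta;\infty)$, lying in the $k$-th chamber. Since $z\in C^\circ$, the horizontal trajectory through $z$ is generic, hence of the form given by Proposition~\ref{domaindecomposition}: in the model it is the full line $\gamma=\{\Im w=\Im z\}$, and for $h<\epsilon$ its $h$-neighborhood is contained in the substrip $\{|\Im w-\Im z|<h\}$. Every zero of $\phi$ on $\overline{\mathcal{Z}_k}$ is one of the distinguished boundary points, lying on $\{\Im w=a_k\}$ or $\{\Im w=b_k\}$ (resp. $\{\Im w=0\}$), so the $\phi$-length of any path inside $\overline{\mathcal{Z}_k}$ from the $h$-neighborhood of $\gamma$ to such a zero is at least $\epsilon-h$. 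On the other hand, near a zero $b_i$, in the coordinate $\xi_i$ of equation~\eqref{zeroofphilocalform} the flat coordinate is $W=\int\sqrt\phi=\xi_i^{3/2}$ (up to branch and additive constant), so $U_i(r)=\{|\xi_i|<r\}$ has $\phi$-diameter $O(r^{3/2})$; hence there is a uniform $C_0>0$ with $U((2+\eta)\delta)\subset\bigcup_i B^{g^\phi}_{C_0\delta^{3/2}}(b_i)$ for all $0<\eta\le 1$ and all small $\delta$. Choosing $\eta=\eta(\delta)$ small enough that $g^\phi_\delta=g^\phi$ off $U((2-\eta)\delta)$, and setting $\epsilon(\delta):=2C_0\delta^{3/2}$ and $h(\delta):=C_0\delta^{3/2}$, we conclude that the $h(\delta)$-neighborhood of $\gamma$ stays at $\phi$-distance $\ge\epsilon(\delta)-h(\delta)=C_0\delta^{3/2}>0$ from every zero, hence is disjoint from $U((2+\eta)\delta)$; and since this neighborhood lies off $U((2-\eta)\delta)$, where the ambient metric is literally $g^\phi$, the distances computed in $g^\phi_\delta$ and in $g^\phi$ agree there. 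Finiteness of $N$ lets us take $\epsilon,h,\eta$ uniform, finishing the construction.

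The one point requiring care is the passage between the intrinsic Euclidean geometry of a chamber's conformal model and the ambient $g^\phi_\delta$-geometry on $\tilde C$: a path realizing the ambient distance from $\gamma$ to $U((2+\eta)\delta)$ could a priori leave $\mathcal{Z}_k$. This is handled by the observation that $U((2+\eta)\delta)$ is a small neighborhood of the zeroes sitting in the interior of $\bigcup_k\overline{\mathcal{Z}_k}$, so any path from a point of the $h$-neighborhood of $\gamma$ into $U((2+\eta)\delta)$ must, within $\overline{\mathcal{Z}_k}$, travel from $\{|\Im w-\Im z|<h\}$ to a $C_0\delta^{3/2}$-ball around a boundary zero and so already accrues $\phi$-length $\ge\epsilon(\delta)-h(\delta)$. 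The remaining items — that the linear maps are genuine deformation retractions and that the constants are uniform in $\delta$ — are routine bookkeeping.
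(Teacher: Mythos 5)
Your construction is correct and is essentially the paper's own: the paper likewise shrinks each chamber model $\mathcal{Z}(a,b)$, $\mathcal{H}$ vertically by an $\epsilon(\delta)$ and takes the union over chambers, merely asserting the existence of suitable $\epsilon(\delta)$, $h(\delta)$, $\eta(\delta)$ where you supply the explicit $O(\delta^{3/2})$ bound on the $\phi$-size of $U((2+\eta)\delta)$ coming from $W\sim\xi^{3/2}$. The only slight imprecision is in your last paragraph: a path from $\gamma$ into $U((2+\eta)\delta)$ that exits the chamber through a wall need not approach a ball around a zero while inside $\overline{\mathcal{Z}_k}$, but it must first reach the horizontal boundary heights of the model and so already accrues length at least $\epsilon(\delta)>h(\delta)$, so your estimate covers this case as well.
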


\subsection{Real-exact spectral curves}\label{Real-Exact Spectral Curves ss}

We now look at \textit{real-exact} quadratic differentials $\phi$, which, as stated in the introduction, is the main object of our interest. Recall that (Section \ref{introquadss}) we have the identification of the real cotangent bundle and the holomorphic cotangent bundle via
\[dx\to dz, dy\to -idz.\]
Recall that a complete GMN quadratic differential $\phi$ is called \textit{real-exact} if the spectral curve $\scurve$ associated to $\phi$ is sent to a $\lambda_{re}$-exact Lagrangian. Equivalently, this means that $\scurve$ is exact with respect to the real part of the holomorphic Liouville form:
\[\lambda_{\theta=0}:=\frac{\lambda+\bar{\lambda}}{2}.\] 

We discuss when saddle-free GMN quadratic differentials give real exact spectral curves. Then for $\phi$ real-exact, we find an open subdomain $C(\delta;E)\subset \tilde{C}$ which is a deformation retract of $C(\delta;\infty)$, such that the energy of an $\epsilon$-BPS disc ending at $z\in C(\delta;E)$ is a priori bounded above by $2\epsilon E$. Furthermore,  we also construct a vertical neighbourhood $\mathcal{V}$ of the ``truncated" spectral network (see Definition \ref{truncatedspectralnetworkdef}) such that we have a preferred ordering $z^+,z^{-}$ of the lifts $\pi^{-1}(z)$, for which the geometric energy of an $\epsilon$-BPS disc ending at $z$, that travels from $\epsilon z^{+}$ to $\epsilon z^{-}$, is strictly negative; hence we show the \textit{non-existence} of such $J$-discs. 
\subsubsection{Criterion for real exactness}\label{criterionforrealexactnessss}

Given a horizontal strip $(\mathcal{Z}(a,b),\phi=dz^2)$, consider the saddle trajectory given by connecting the two zeroes of $\phi$ on the horizontal boundary segments of $\mathcal{Z}(a,b)$. Such saddle trajectories are called \textit{standard saddle trajectories}. The corresponding homology classes in $H_1(\scurve;\mathbb{Z})$ given by joining the two lifts of the straight line are called \textit{standard saddle  classes}. From standard saddle classes, we obtain the following criterion for real-exactness.
\begin{proposition}
	The Lagrangian $\scurve$ with respect to the canonical symplectic form $\omega$ of the real cotangent bundle is real-exact if and only if the standard saddle trajectories all have purely imaginary charge.
\end{proposition}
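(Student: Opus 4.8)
The plan is to characterize real-exactness in terms of periods of $\lambda_{re}$ and then reduce the computation of these periods to standard saddle classes using the structure of the domain decomposition. First, recall that $\scurve$ is $\lambda_{re}$-exact precisely when the restriction $\lambda_{re}|_{\scurve}$ is exact as a $1$-form on $\scurve$, i.e. when $\int_{\gamma}\lambda_{re}=0$ for every $\gamma\in H_1(\scurve;\mathbb{Z})$; equivalently $\Re\big(Z(\gamma)\big)=0$ for all $\gamma$, since $\lambda_{re}=\tfrac12(\lambda+\bar\lambda)$ pulls back to the real part of the holomorphic Liouville form and $Z(\gamma)=\int_\gamma\lambda$. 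So the proposition amounts to showing that the real parts of \emph{all} charges vanish if and only if the real parts of the charges of standard saddle classes vanish. One direction is immediate: standard saddle classes are particular elements of $H_1(\scurve;\mathbb{Z})$, so $\lambda_{re}$-exactness forces their charges to be purely imaginary.

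For the converse, the key step is to show that the standard saddle classes, together with classes that automatically have purely imaginary charge, generate $H_1(\scurve;\mathbb{Z})$. Here I would use the cellular decomposition of $\tilde C$ induced by $\snetwork(0)$ (available since, by \cite[Lemma 4.11]{bridgeland2014quadratic}, we may assume $\phi$ is saddle-free): the $2$-cells are the horizontal chambers, conformally $\mathcal Z(a,b)$ or $\mathcal H$ by Proposition \ref{domaindecomposition}, the $1$-cells are the walls, and the $0$-cells the zeroes of $\phi$. Lifting this to $\scurve$ via the double cover $\pi$ gives a CW structure on $\scurve$, and $H_1(\scurve;\mathbb{Z})$ is generated by cycles supported on the $1$-skeleton. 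On each wall $w$, the wall-adapted sheet of $\sqrt\phi$ (from the discussion around $\snetwork(0)$ in Section \ref{introNonAb}) makes $s\mapsto\int_0^s\sqrt\phi$ real along $w$, so the contribution of any segment of a lifted wall to $\int\lambda$ is real; meanwhile a cycle that stays within a single lifted chamber contributes a charge computed in the local coordinate $W=\int\sqrt\phi$, where $\scurve$ reads $\{p^x=\pm1,p^y=0\}$, giving a \emph{real} period (the holonomy is along $dW$). The genuinely nontrivial periods come from cycles that pass between the two sheets, and such sheet-changes happen exactly at the zeroes of $\phi$, i.e. along saddle-type configurations; one then expresses any such cycle, modulo the real-period classes above, as a $\mathbb{Z}$-combination of standard saddle classes. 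Once this is established, $\Re Z$ vanishing on standard saddle classes propagates to all of $H_1(\scurve;\mathbb{Z})$ by linearity of the charge homomorphism.

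The main obstacle I anticipate is the homological bookkeeping in this last step: making precise that $H_1(\scurve;\mathbb{Z})$ is spanned (modulo classes with manifestly real charge) by standard saddle classes requires care about the topology of the branched double cover — in particular tracking how the two sheets get glued across walls and around zeroes, and handling chambers of type $\mathcal H$ versus $\mathcal Z(a,b)$, and cycles that wind around poles (where one must invoke completeness of $\phi$ and the local normal forms of Proposition \ref{zeroofphiprop} to see the pole-encircling contributions are controlled). A cleaner alternative would be to argue directly with $\lambda_{re}$-exactness: since $\lambda_{re}$ is closed, exactness of $\lambda_{re}|_{\scurve}$ is equivalent to vanishing of the class $[\lambda_{re}|_{\scurve}]\in H^1(\scurve;\mathbb{R})$, and by Poincaré–Lefschetz duality / the intersection pairing one can test this against an explicit generating set of $1$-cycles adapted to the horizontal foliation; the standard saddle classes then appear as exactly the dual basis elements whose pairing is not forced to vanish by the reality of $\sqrt\phi$ along horizontal leaves. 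Either way, the crux is identifying standard saddle classes as a detecting set for $\Re Z$, and the rest is the two-line linearity argument above.
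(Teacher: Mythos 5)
Your opening reduction is fine: real-exactness is equivalent to $\Re\big(Z(\gamma)\big)=0$ for all $\gamma\in H_1(\scurve;\mathbb{Z})$, and the forward direction is immediate. But the converse, which is the whole content, has a genuine gap in your plan. You propose to write an arbitrary class, modulo ``classes with manifestly real charge'' (wall segments and chamber cycles in the $W=\int\sqrt\phi$ coordinate), as a $\mathbb{Z}$-combination of standard saddle classes. This is backwards as a detecting strategy: if $\gamma=\sum n_i s_i+\rho$ with $Z(s_i)\in i\mathbb{R}$ by hypothesis and $Z(\rho)\in\mathbb{R}$, then $\Re Z(\gamma)=Z(\rho)$, which is exactly what you would still need to kill. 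A class with a genuinely nonzero \emph{real} period is precisely an obstruction to real-exactness, not a harmless correction term, so your decomposition proves nothing unless you show those residual classes have \emph{zero} charge. (Also note a chamber is contractible, so a cycle inside one lifted chamber has zero period, not merely a real one; and the ``CW structure'' with non-compact walls attached at one end, together with cycles winding around poles, is exactly the bookkeeping you flag but never resolve.)

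The missing mechanism, and the route the paper takes, is the covering involution $\sigma$ of $\scurve\to\tilde C$. Since $\lambda$ is $\sigma$-anti-invariant, $Z$ vanishes identically on the $\sigma$-invariant part of $H_1(\scurve;\mathbb{R})$; over $\mathbb{R}$ the homology splits into invariant and anti-invariant parts, so $Z$ factors through the anti-invariant (hat) homology $\widehat{H_1(\phi)}\otimes\mathbb{R}$, which by \cite[Lemma 3.2]{bridgeland2014quadratic} is spanned by the standard saddle classes. This is exactly the statement you were trying to manufacture by hand: your ``residual'' classes (e.g.\ pole-encircling loops) decompose into an invariant part, which has zero charge for the symmetry reason above, and an anti-invariant part, which is a combination of standard saddle classes. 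Without either invoking this involution argument or proving an equivalent generation statement for the anti-invariant homology, your sketch does not close the converse direction; with it, the remaining argument is the two-line linearity you already have.
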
\label{realexactnesscrit}
\begin{proof}
	The natural involution on the spectral curve induces a $\mathbb{Z}_2$-action on the homology group $H_1(\scurve;\mathbb{Z})$. Define the hat-homology group $\widehat{H_1({\phi})}$ to be the $\mathbb{Z}_2$ anti-invariant part of $H_1(\scurve;\mathbb{Z})$. Then \cite[Lemma 3.2]{bridgeland2014quadratic} shows that the hat-homology group $\widehat{H_1({\phi})}$ is generated by the standard saddle classes of $\phi$.
	
	Since $\lambda$ is $\mathbb{Z}_2$-anti-invariant, the $\mathbb{Z}_2$-invariant part of $H_1(\scurve;\mathbb{R})$ lies in $\ker Z$. Hence the charge homomorphism on $H_1(\scurve;\mathbb{R})$ factors through $\widehat{H_1({\phi})}\otimes \mathbb{R}$. But $\widehat{H_1({\phi})}\otimes \mathbb{R}$ is spanned by the standard saddle classes of $\phi$. So it follows that $\scurve$ is real-exact if and only if the image of $Z$ over $\widehat{H_1({\phi})}\otimes \mathbb{R}$ lies on the imaginary axis which is if and only if the image of $Z$ over the standard saddle classes are all imaginary. %Now, the image of a standard saddle class under $Z$ is equal to its $\phi$-phase $\pm e^{i\theta}$ times its $\phi$-length.% So we see that $\scurve$ is real-exact if and only if the phases associated to the standard saddle classes are all imaginary. 
\end{proof}

\begin{rem}
	The standard saddle classes give a $\mathbb{Z}$-basis in the lattice $\widehat{H_1({\phi})}$. Hence we can identify it with $\mathbb{Z}^{\oplus n}$. Following Bridgeland and Smith \cite[Section 2.5]{bridgeland2014quadratic}, let $Quad_{free}(g,m=(\{m_1,p_1\},...,\{m_k,p_k\})$ be the space of pairs $(C,\phi)$, where $C$ is a genus $g$ closed Riemann surface and $\phi$ is a quadratic differential over $C$, such that the poles of $\phi$ are the points $p_i$ with the order $m_i$. We identify the pairs $(C,\phi)$ and $(C',\phi')$ up to conformal equivalence. Then in \cite[Proposition 4.9]{bridgeland2014quadratic}, Bridgeland and Smith show that $Quad_{free}(g,m)$ is locally isomorphic to the space of $\mathbb{Z}$-homomorphisms from $\mathbb{Z}^n$ to $\mathbb{C}$, which implies that it is a complex manifold of dimension $\dim \widehat{H_1(\phi)}$. Restricting to the homomorphisms which map entirely to $i\mathbb{R}\subset \mathbb{C}$, we see that the real-exact quadratic differentials form a totally real submanifold of $\text{Quad}_{free}(g,m)$. \\
\end{rem}
\subsubsection{Energy and horizontal distance}\label{energyandhorizontaldistancessss}
Let $\primitive$ be the primitive of $\lambda_{Re}$ over $\scurve$. We now relate $\primitive$ to the $\phi$-length. Given an arc-length parametrized $\phi$-geodesic $\alpha:[0,l]\to C^{\circ}$ of phase $\theta$, let $\tilde{\alpha}$ be a lift of $\alpha$ onto $\scurve$. Then
\begin{lemma}\label{alphalength}
	We have 
	\begin{align}
		\int_{\tilde{\alpha}} \lambda=\pm e^{-i\theta}l.
	\end{align}
\end{lemma}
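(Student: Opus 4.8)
The plan is to compute $\int_{\tilde\alpha}\lambda$ by working in the distinguished flat coordinate $W=\int\sqrt{\phi}$, in which everything becomes elementary. First I would recall that, by the transformation rule \eqref{transformationrule}, away from the zeroes and poles of $\phi$ the local coordinate $W$ satisfies $\phi\equiv dW^2$, so that $g^\phi\equiv |dW|^2$; hence an arc-length parametrized $\phi$-geodesic $\alpha:[0,l]\to C^\circ$ of phase $\theta$ is, in the $W$-coordinate, simply the straight segment $s\mapsto W_0+e^{i\theta}s$ for $s\in[0,l]$ (this is exactly the characterization of trajectories of phase $\theta$ recalled in Section \ref{introquadss}). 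The two lifts of $\alpha$ to $\scurve=\{\lambda^2=\pi^\ast\phi\}$ are obtained by choosing one of the two branches of $\sqrt{\phi}$, i.e. by setting $p^z\,dz = \pm\,dW$ along the lift; so on $\tilde\alpha$ we have $\holliouvile = \pm\,dW$.

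The key step is then the direct computation
\begin{align}
\int_{\tilde\alpha}\lambda \;=\; \pm\int_{\tilde\alpha} dW \;=\; \pm\int_0^l \frac{dW}{ds}\,ds \;=\; \pm\int_0^l e^{i\theta}\,ds \;=\; \pm\, e^{i\theta} l.
\end{align}
This gives the stated formula up to the sign convention; depending on the orientation convention for $\alpha$ versus the one fixed for the phase, one records this as $\pm e^{-i\theta} l$ (the two lifts differing in the sign, and replacing $\theta$ by $-\theta$ amounts to reversing the orientation of $\alpha$, which only flips the overall sign). I would be slightly careful to state which of the two signs corresponds to which lift, but since the lemma is stated with an ambiguous $\pm$ this is not a real issue.

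The only genuine point requiring a word of justification is that $W$ is a legitimate global coordinate along the whole geodesic $\alpha$: since $\alpha$ lies in $C^\circ$ (the complement of zeroes and poles), $\sqrt{\phi}$ has a single-valued branch on a simply connected neighbourhood of the embedded arc $\alpha([0,l])$, so $W$ is defined there and the computation above makes sense globally along $\tilde\alpha$; the integral $\int_{\tilde\alpha}\lambda$ depends only on the endpoints within a fixed homotopy class, so no monodromy ambiguity arises. I do not anticipate any real obstacle here — this lemma is a bookkeeping identity relating the real-exactness primitive $\primitive$ to $\phi$-length, and the substance of the argument is entirely in the passage to the $W$-coordinate, which has already been set up in Section \ref{introquadss}.
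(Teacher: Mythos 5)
Your proof is correct and follows essentially the same route as the paper: pass to the flat coordinate $W=\int\sqrt{\phi}$, observe that along a lift $\tilde\alpha$ the Liouville form restricts to $\pm\,dW$, and integrate along the straight segment of length $l$. The one inaccurate remark is your reconciliation of $e^{i\theta}$ versus $e^{-i\theta}$: replacing $\theta$ by $-\theta$ is \emph{not} reversing the orientation of $\alpha$ (that shifts $\theta$ by $\pi$ and is already absorbed in the $\pm$). The discrepancy is purely the paper's phase convention — there a phase-$\theta$ geodesic reads $e^{-i\theta}t$ in the $W$-coordinate (i.e.\ it is a horizontal trajectory of the rotated differential $e^{2i\theta}\phi$), whereas you parametrized it as $e^{i\theta}s$; with the paper's convention your computation gives exactly $\pm e^{-i\theta}l$, and in any case the lemma is only invoked for $\theta=0,\pi/2$, where the two conventions agree up to the stated sign.
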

\begin{proof}
	Take a flat local conformal coordinate $\int \sqrt{\phi(z)}dz$, sending $\alpha(0)$ to $0$, over which $\alpha$ reads $e^{-i\theta}t$, and $\scurve:=\{p^x=\pm 1, p^y=0\}$. So the value of the holomorphic Liouville form is just $\pm 1\cdot e^{-i\theta}$. Integrating this from $t=0$ to $t=l$ gives (\ref{alphalength}). 
\end{proof}

\begin{definition}\label{horizontaldistancedef}
	Let $\mathcal{Z}^h$ be a horizontal chamber. Suppose $z,z'$ are two points on the closure of $\mathcal{Z}^h$ in $\tilde{C}$. Choose the shortest straight line segment $l$ in $\overline{\mathcal{Z}}^h$ connecting $z$ and $z'$ and a lift $\tilde{l}$ of $l$ to $\scurve$. Then the horizontal distance $d_{hor}(z,z')$ is defined by:
	\[d_{hor}(z,z'):=\abs{\int_{\tilde{l}}\lambda_{re}}.\]
\end{definition}
Since any other choice of $\tilde{l}$ reverses the sign of $\int_{\tilde{l}}\lambda_{re}$ by $-1$, the horizontal distance is well-defined. Since $\phi$ is real-exact, all the standard saddle trajectories are vertical. By translating if necessary, we may assume that the standard saddle trajectory on $\mathcal{Z}^h$ lies on $x=0$. The following lemma is a direct computation  (See Figure \ref{horidistance}).

\begin{figure}[t]
	\includegraphics[width=0.8\textwidth]{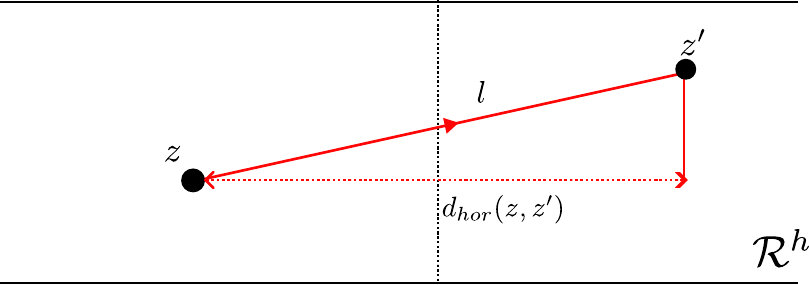}
	\centering
	\caption{A horizontal chamber $\mathcal{Z}^h$. The dotted line segment denotes the vertical saddle trajectory in $\mathcal{Z}^h$. $l$ is the shortest line segment between $z$ and $z'$ in $\mathcal{Z}^h$. }
	\label{horidistance}
\end{figure}
\begin{lemma}\label{horizontallengthcontrol}
	Let $\mathcal{Z}^h$ be a horizontal chamber and let $z,z'$ be two points on the closure of $\mathcal{Z}^h$. Suppose $z=x+iy, z'=x'+iy'$ under some $\phi$-flat conformal equivalence $(\mathcal{Z}^h,\phi)\simeq (\mathcal{H},dz^2)$ or $(\mathcal{Z}^h,\phi)\simeq (\mathcal{Z}(a,b),dz^2)$. Then 
	\[d_{hor}(z,z')=\abs{x-x'}.\]
	Furthermore, let $d=d_{hor}(z,b)$ for some zero $b$ of $\phi$ on the boundary of $\overline{\mathcal{Z}^h}$. Then $d$ only depends on $z$ and not on $b$. Finally, if $z^0$ and $z^1$ are the two lifts of $z$, then
	\begin{align}
		\abs{\primitive(z^0)-\primitive(z^1)}=2d.\label{superpotentialexact}
	\end{align}
\end{lemma}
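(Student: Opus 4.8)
The plan is to reduce everything to the local model $(\mathcal{Z}^h,\phi)\simeq(\mathcal{H},dz^2)$ or $(\mathcal{Z}(a,b),dz^2)$ furnished by \cref{domaindecomposition}, under which $\phi$ pulls back to $dz^2$ and $\scurve$ restricted to the chamber becomes the pair of affine planes $\{p^x=\pm 1,\ p^y=0\}$ in the $w=\int\sqrt{\phi}$ coordinates. First I would pick the $\phi$-flat conformal equivalence sending the standard saddle trajectory of $\mathcal{Z}^h$ to the line $\{x=0\}$, as set up just before the statement. Then, since any straight segment $l$ from $z=x+iy$ to $z'=x'+iy'$ inside $\overline{\mathcal{Z}^h}$ is a $\phi$-geodesic of some phase, I would apply \cref{alphalength}: writing $l$ in arc-length as $\alpha(t)=z+e^{-i\vartheta}t$ with length $\ell$ and phase $\vartheta$, a lift $\tilde l$ to $\scurve$ satisfies $\int_{\tilde l}\lambda=\pm e^{-i\vartheta}\ell$. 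Taking real parts, $\int_{\tilde l}\lambda_{re}=\pm\Re(e^{-i\vartheta}\ell)$, and $\Re(e^{-i\vartheta}\ell)$ is exactly the horizontal displacement $x'-x$ of the segment (up to sign), since $e^{-i\vartheta}\ell$ is the complex displacement $z'-z$. Hence $d_{hor}(z,z')=\abs{\Re(z'-z)}=\abs{x-x'}$, which is the first assertion.

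For the second assertion — that $d:=d_{hor}(z,b)$ is independent of which zero $b$ of $\phi$ on $\partial\overline{\mathcal{Z}^h}$ we choose — I would use the explicit boundary description in \cref{domaindecomposition}: in the strip case the two zeroes on $\partial\overline{\mathcal{Z}(a,b)}$ are the images of $a_0+ia$ and $b_0+ib$, and in the half-plane case there is a single boundary zero. The point is that both boundary zeroes of a strip chamber lie on the \emph{same} vertical line after the normalization $\{x=0\}$: this is precisely the real-exactness hypothesis, because \cref{realexactnesscrit} says all standard saddle classes have purely imaginary charge, i.e. the standard saddle trajectory joining the two zeroes is vertical, so $a_0$ and $b_0$ have the same real part (which we normalized to $0$). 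Therefore $d_{hor}(z,b)=\abs{\Re(z)-0}=\abs{x}$ regardless of the choice of $b$, giving the independence claim. (In the half-plane case there is nothing to check.)

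For the last identity \eqref{superpotentialexact}, I would compute $\primitive(z^0)-\primitive(z^1)$ by integrating $\lambda_{re}$ along a path on $\scurve$ connecting the two lifts $z^0,z^1$ of $z$. Concretely, take a vertical segment $\beta$ in $\mathcal{Z}^h$ from the boundary zero $b$ (where the two sheets of $\scurve$ come together) up to $z$, lift it to a path $\tilde\beta^i$ on the sheet ending at $z^i$; then $\primitive(z^i)-\primitive(b^{\mathrm{ram}})=\int_{\tilde\beta^i}\lambda_{re}$, where $b^{\mathrm{ram}}$ is the ramification point over $b$ and $\primitive$ is continuous there. Concatenating $\tilde\beta^0$ with the reverse of $\tilde\beta^1$ gives a lift of the closed loop $\beta\cdot\bar\beta$ around $b$, and on $\scurve$ this lift is exactly one of the standard-saddle-type arcs; by \cref{alphalength} applied to the two sheets (which differ by the deck involution, i.e. by $\lambda\mapsto-\lambda$), we get $\primitive(z^0)-\primitive(z^1)=\int_{\tilde\beta^0}\lambda_{re}-\int_{\tilde\beta^1}\lambda_{re}=2\int_{\tilde\beta^0}\lambda_{re}=\pm 2\,d_{hor}(z,b)=\pm 2d$. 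Taking absolute values yields \eqref{superpotentialexact}.

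The routine parts are the sign bookkeeping and checking that $\primitive$ really extends continuously over the ramification point (which follows since $\scurve$ is smooth and $\lambda_{re}$ is a smooth $1$-form on it, with $\primitive$ defined up to a constant, so the constants on the two sheets can be matched at $b^{\mathrm{ram}}$). The main obstacle I anticipate is making the choice of lifts $z^0,z^1$ and the reference path $\beta$ consistent across the whole chamber so that $\primitive$ is single-valued on each sheet — i.e. confirming that the chamber is simply connected (true by \cref{domaindecomposition}, since it is a strip or a half-plane) so that $\primitive$ genuinely separates into two well-defined branches $\primitive(z^0),\primitive(z^1)$ and the computation above is independent of the chosen path from $b$ to $z$. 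Once that is in place, the three claims follow from \cref{alphalength} and the real-exactness normalization with only elementary manipulations.
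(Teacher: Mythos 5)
Your proposal is correct and follows essentially the same route as the paper, which treats the lemma as a direct computation in the flat $\int\sqrt{\phi}$ coordinates (via \cref{alphalength}, where $\lambda_{re}$ restricts to $\pm dx$ on the two sheets $\{p^x=\pm1,p^y=0\}$) and derives the independence on $b$ exactly as you do, from real-exactness forcing the boundary zeroes onto a common vertical line. One slip to fix in the last step: a \emph{vertical} segment from $b$ to $z$ exists only when $\Re(z)=0$, and along a genuinely vertical lift $\int\lambda_{re}=0$, not $\pm d_{hor}(z,b)$; you should instead take any path (e.g.\ the straight segment) from $b$ to $z$ in the simply connected chamber, for which the lifted integrals are $\pm(x-x_b)$, and then your continuity-at-the-ramification-point and deck anti-invariance argument gives \eqref{superpotentialexact} as stated.
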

Independence on $b$ comes from the fact that the standard saddle trajectory in $\mathcal{Z}^h$ has imaginary phase, and that there are no saddle trajectories on the boundary. Therefore, the two zeroes on the boundary lie over a vertical line segment. Given a point $z\in \tilde{C}-\snetwork(\pi/2)$, we can now order the two lifts of $z$ to $\scurve$ by the condition
\[W(z^+)>W(z^{-}).\]
Furthermore, we have the following corollary:
\begin{corollary}
	Let $z$ be a point in $\tilde{C}-\snetwork(\pi/2)$. Connect $z$ to a point $\tilde{z}$ on the wall $\gamma$ of $\snetwork(0)$ by a vertical trajectory. Then
	\[\primitive(z^{+})-\primitive(z^{-})=\primitive(\tilde{z}^{+})-\primitive({\tilde{z}}^{-})=2l>0,\]
	where $l$ is the $\phi$ distance of $\tilde{z}$ from the branch point end of the wall $\gamma$. $l$ does not depend on the choice of $\tilde{z}$.  
\end{corollary}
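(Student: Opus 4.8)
The plan is to reduce everything to the single-chamber computation in Lemma \ref{horizontallengthcontrol} together with the fact that $\primitive$ is a genuine (continuous, single-valued) function on $\scurve$. First I would fix a vertical trajectory $\beta:[0,s_0]\to \tilde{C}$ with $\beta(0)=\tilde{z}\in\gamma$ and $\beta(s_0)=z$, chosen so that its interior misses $\snetwork(\pi/2)$ — this is possible because $z\notin\snetwork(\pi/2)$ and, by the domain decomposition in Proposition \ref{domaindecomposition}, the vertical trajectory through $z$ stays inside a single horizontal chamber $\mathcal{Z}^h$ until it hits the wall $\gamma$ on $\partial\overline{\mathcal{Z}^h}$. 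Lift $\beta$ to the two sheets $\beta^{\pm}$ of $\scurve$; since along a vertical trajectory $\theta=\pi/2$, Lemma \ref{alphalength} gives $\int_{\beta^{\pm}}\holliouvile=\pm e^{-i\pi/2}s_0=\mp i s_0$, so $\int_{\beta^{\pm}}\holliouvile$ is purely imaginary and hence $\int_{\beta^{\pm}}\canliouvile=\Re\int_{\beta^{\pm}}\holliouvile=0$. Therefore $\primitive(z^{\pm})-\primitive(\tilde{z}^{\pm})=\int_{\beta^{\pm}}\canliouvile=0$, which already yields $\primitive(z^{+})-\primitive(z^{-})=\primitive(\tilde{z}^{+})-\primitive(\tilde{z}^{-})$ once I check that the sheet labels are consistent, i.e. that the continuous lift $\beta^{+}$ starting at $z^{+}$ ends at $\tilde{z}^{+}$ in the ordering of Lemma \ref{horizontallengthcontrol}; this is true because $\primitive$ varies continuously and cannot cross, as $z^{+}\ne z^{-}$ all along $\beta$ (the interior of $\beta$ avoids $\snetwork(\pi/2)$, which by Proposition \ref{snetworkchar1}/the preceding discussion is exactly the locus where the two values of $\primitive$ agree).

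Next I would identify the common value with $2l$. Apply Lemma \ref{horizontallengthcontrol} at the point $\tilde z$: since $\tilde z$ lies on the wall $\gamma$, which is a separating horizontal trajectory emanating from a zero $b$ of $\phi$ on $\partial\overline{\mathcal{Z}^h}$, the horizontal distance $d_{hor}(\tilde z,b)$ equals the $\phi$-length $l$ of the segment of $\gamma$ from $b$ to $\tilde z$ (the wall is horizontal, so the shortest connecting segment is the wall segment itself, and in the flat $W=\int\sqrt\phi$ coordinate it is a horizontal segment of Euclidean length $l$). Lemma \ref{horizontallengthcontrol}, equation \eqref{superpotentialexact}, then gives $|\primitive(\tilde z^{+})-\primitive(\tilde z^{-})|=2d_{hor}(\tilde z,b)=2l$, and the ordering $\primitive(\tilde z^{+})>\primitive(\tilde z^{-})$ makes this $\primitive(\tilde z^{+})-\primitive(\tilde z^{-})=2l>0$ (strict because $\tilde z\notin\snetwork(\pi/2)$, as it lies in the interior of a wall of $\snetwork(0)$ which meets $\snetwork(\pi/2)$ only at the zero $b$). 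Combining with the previous paragraph gives $\primitive(z^{+})-\primitive(z^{-})=\primitive(\tilde z^{+})-\primitive(\tilde z^{-})=2l>0$.

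Finally, independence of $l$ from the choice of $\tilde z$: this is where I would lean on the last sentence of Lemma \ref{horizontallengthcontrol} and the real-exactness hypothesis. Any two choices $\tilde z_1,\tilde z_2\in\gamma$ are joined along $\gamma$; in the flat coordinate $\gamma$ is horizontal and $z$ is obtained from $\tilde z_i$ by flowing vertically, so the horizontal coordinate of $z$ relative to the zero $b$ is $d_{hor}(z,b)$, which by Lemma \ref{horizontallengthcontrol} depends only on $z$ (not on $b$, nor on which wall we used). But $d_{hor}(z,b)=d_{hor}(\tilde z_i,b)=l$ because $z$ and $\tilde z_i$ differ by a vertical displacement, which does not change the horizontal coordinate. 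Hence $l=d_{hor}(z,b)$ is determined by $z$ alone. I expect the main obstacle to be purely bookkeeping rather than conceptual: namely verifying that the sheet-labelling ($z^{+}$ versus $z^{-}$) is preserved under the vertical lift $\beta^{\pm}$ and matches the labelling at $\tilde z$, i.e. that no ``sheet swap'' occurs along $\beta$; this follows from continuity of $\primitive$ together with $z^{+}\ne z^{-}$ on the (open) vertical trajectory, but one must be slightly careful about the endpoint $\tilde z\in\gamma$ itself, where one should take the labels as limits along $\beta$ — and then check this agrees with the intrinsic ordering $W(\tilde z^{+})>W(\tilde z^{-})$ on $\gamma\setminus\{b\}$, which is exactly the content of the sign in \eqref{superpotentialexact}.
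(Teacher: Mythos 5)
Your proposal is correct and follows essentially the route the paper intends: the corollary is meant to drop out of Lemma \ref{horizontallengthcontrol} plus the flat-coordinate observation that a vertical displacement does not change the quantity $\primitive(z^+)-\primitive(z^-)$, and your use of Lemma \ref{alphalength} (the lift of a vertical arc has purely imaginary $\int\holliouvile$, so $\primitive$ is constant along each lift) is the same computation, with the sheet-matching and the independence of $\tilde z$ handled as the paper implicitly does.

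One parenthetical justification is wrong, though the conclusion survives: a wall of $\snetwork(0)$ can perfectly well meet $\snetwork(\pi/2)$ at interior points (a vertical separating trajectory emanating from a different zero may cross it), so "the wall meets $\snetwork(\pi/2)$ only at $b$" is not a valid reason for $\tilde z\notin\snetwork(\pi/2)$. The correct argument is that $\tilde z$ lies on the vertical trajectory through $z$; if that trajectory met $\snetwork(\pi/2)$ at any point it would coincide with a separating vertical trajectory (or pass through a zero) and hence be contained in $\snetwork(\pi/2)$, forcing $z\in\snetwork(\pi/2)$, a contradiction. This also shows $\tilde z\neq b$, which is what you actually need for $l>0$, and it removes any need to restrict $\beta$ to a single chamber or to the first wall crossing.
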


\subsubsection{Chamber deformations}\label{theregionCdeltaEsss}
We now construct the region $C(\delta;E)$ and the bridge region $\mathcal{V}(\delta;E)$.
\paragraph{Constructing $C(\delta;E)$.}

The conformal subdomain $C(\delta;E)$ is a deformation retract of $C(\delta;\infty)$ such that we have a bound on $W(z^+)-W(z^{-})$ for $z\in C(\delta;E)$. Again, since all the standard saddle trajectories are vertical, we can translate the vertically finite horizontal strip domains and half-plane domains as in Proposition \ref{domaindecomposition} so that all the branch points lie over $x=0$. Recall
\begin{align*}\mathcal{Z}(\delta;a,b)&=\mathcal{Z}(a+\epsilon(\delta),b-\epsilon(\delta))\subset \mathcal{Z}(a,b)\\
	\mathcal{H}(\delta)&=\mathcal{H}\cap \{\im{y}>\epsilon(\delta).
\end{align*}
Then for $E>0$, set
\begin{align}
	\mathcal{Z}(a,b;E)&:=\{z\in \mathcal{Z}(a,b):\ \abs{Re(z)}<E\}\\
	\mathcal{Z}(\delta;a,b;E)&:=\mathcal{Z}(a,b;E)\cap \mathcal{Z}(\delta;a,b)\\
	\mathcal{H}(E)&:=\{z\in \mathcal{H}:\ \abs{Re(z)},\abs{Im(z)}<E.\}\\
	\mathcal{H}(\delta;E)&:=\mathcal{H}(\delta)\cap \mathcal{H}(E).
\end{align}

\begin{definition}
	We define
	\[C(\delta;E):=\bigcup \mathcal{Z}(a,b;E) \cup \bigcup \mathcal{H}(\delta;E) \]
	where we take the union over all the horizontal chambers of $C$. \\
\end{definition}
Note that $\tilde{C}-\snetwork(0)$ deformation retracts to $C(\delta;E)$ and $\abs{\primitive(z^+)-\primitive(z^{-})}<2E$ by Proposition \ref{horizontallengthcontrol}.
\paragraph{Constructing $\mathcal{V}(\delta;E)$.}

We now construct the bridge region $\mathcal{V}(\delta;E)$. We start with a definition.
\begin{definition}\label{truncatedspectralnetworkdef}
	Suppose $\gamma:[0,\infty)\to \tilde{C}$ is a wall on the spectral network $\snetwork(0)$, arc-length parametrized with respect to $\phi$. Then for $T>0$, the $T$-truncated wall $\gamma$ is the restriction of $\gamma$ to the interval $[T,\infty)$. The $T$-truncated spectral network (or the truncated spectral network for short)  $\snetwork(0)_{T}$ is the union of the images of the $T$-truncated walls.
\end{definition}

The following definition will be useful:
\begin{definition}
	Let $\gamma$ be an open geodesic arc in $C^{\circ}$. Then we say that a neighbourhood $V$ of $\gamma$ is a \emph{vertical neighbourhood} if $V$ is traced out by open vertical segments that passes through $\gamma$. 
\end{definition}

Let $h_v\ll\min_{\mathcal{Z}(a,b)\subset C-\snetwork(0)} {\frac{\abs{b-a}}{2}}$ and let $\mathcal{V}(h_v)$ be the set of points in $C$ that are connected to points on $\snetwork(0)_{T}$ by a vertical geodesic of length less than $h_v$. Each component $\mathcal{V}$ of $\mathcal{V}(h_v)$ is a vertical neighbourhood of a unique truncated wall $\gamma\vert_{[T,\infty]}$ which we call the \textit{core} of $\mathcal{V}$. By taking $\delta\ll 1$, we can ensure that $\mathcal{V}$ intersects all the horizontal chambers that are adjacent to the core wall $\gamma$. 
%have that the component of $\mathcal{V}$ containing $\gamma$ as its core intersect all the $C(\delta;E)$-part of the chambers that contain $\gamma$ on their boundaries
%This is the union of small disjoint vertical thickening of the walls $\gamma\vert_{(T,\infty)}$ of $S(0)_{T}$ which we call the \textit{core} of $\mathcal{V}$.

If $z$ is a point on $\mathcal{V}$, then $\primitive(z^{+})-\primitive(z^{-})$ only depends on the core horizontal geodesic. For small enough $\delta>0$, $\mathcal{V}$ serves as a ``connecting bridge" between the connected components of $C(\delta;\infty)$ for $T=D((2+\eta)\delta)$. Here $D$ is some continuous function that only depends on $\phi$, and the choice of identifications $U_i\simeq D(r_i)\subset \mathbb{C}$ made in Section \ref{Desingularization of metric}. Note that $\mathcal{V}$ now lies outside $\snetwork(\pi/2)$ and $U(2+\eta)\delta$. 

\begin{figure}[t]
	\includegraphics[width=0.8\textwidth]{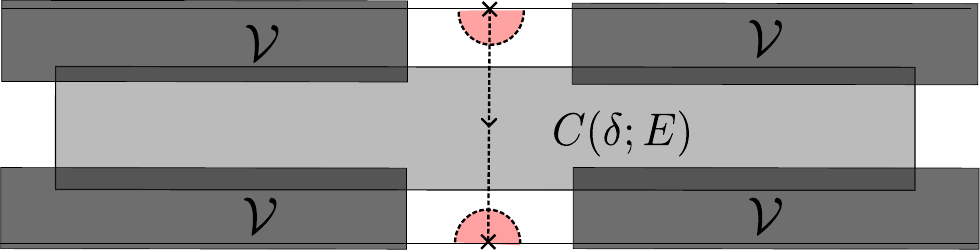}
	\centering
	\caption{A horizontal chamber $\mathcal{Z}^h$. The rectangular region is $C(\delta;E)\cap \mathcal{Z}^h$. $E$ is the width of the region $C(\delta;E)$. The half-disc region is $U((2+\eta)\delta)\cap \mathcal{Z}^h$. The dotted line segment denotes the vertical saddle trajectory in $\mathcal{Z}^h$. The dark gray regions indicate the components of $\mathcal{V}$.}
	\label{inideformfig}
\end{figure}
We summarize the discussion. 

\begin{proposition}\label{horizontaldomaindevision}
	Let $\delta\ll1$ be a small deformation parameter and let $E\gg1$ be an energy cut-off. Then there are precompact open conformal subdomains $C(\delta;E)$ contained outside $U(2+\eta)\delta$ and $\mathcal{V}(\delta;E)$ contained outside both $\snetwork(\pi/2)$ and $U(2+\eta)\delta$ with the following properties.
	\begin{itemize}
		\item There exists an $h(\delta)>0$ and an $\eta(\delta)>0$ such that if $\gamma$ is a generic horizontal trajectory passing through a point $z$ contained in $C(\delta;E)$ then $\gamma$ never enters $U((2+\eta)\delta)$. Furthermore, for $z\in C(\delta;E)$, we have \[\abs{\primitive(z^+)-\primitive(z^{-})}<2E.\] 
		\item Given a connected component $\mathcal{V}$ of $\mathcal{V}(\delta;E)$, there exists a unique wall $\gamma:(0,\infty)\to C$ called the \emph{core} of $\mathcal{V}$ and a truncated portion $\gamma\vert_{(\delta;\infty)}$ lying in $\mathcal{V}$, such that the component of $\mathcal{V}$ is given by some vertical thickening of $\gamma\vert_{(\delta,\infty)}$.  Furthermore, for $z\in \mathcal{V}$, we can order the lifts $z^{+},z^{-}$ of $z$ on $\scurve$ such that
		\[\primitive(z^+)-\primitive(z^-)>0.\]
		Finally, the connected component $\mathcal{V}$ overlaps with all the components of $\mathcal{C}(\delta;E)$ adjacent to its core wall $\gamma$. %In addition to this, the component of $\mathcal{V}$ containing $\gamma$ as its core intersect all the $C(\delta;E)$-part of the chambers that contain $\gamma$ on their boundaries.
	\end{itemize}
\end{proposition}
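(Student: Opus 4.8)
The plan is to assemble the statement from the constructions of the three preceding subsections: the domain decomposition of Section \ref{Domain Decomposition of C}, the chamber deformations $C(\delta;E)$, $\mathcal{Z}(a,b;E)$, $\mathcal{H}(\delta;E)$ just introduced, and the bridge construction $\mathcal{V}(\delta;E)$. First I would address precompactness: each $\mathcal{Z}(a,b;E)$ is a bounded horizontal strip truncated to $|\mathrm{Re}(z)|<E$, hence has compact closure in $\mathbb{C}$ under the $\phi$-flat coordinate; similarly $\mathcal{H}(\delta;E)$ is cut off by both $|\mathrm{Re}|,|\mathrm{Im}|<E$ and bounded away from the boundary line by $\mathrm{Im}>\epsilon(\delta)$. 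Since there are only finitely many horizontal chambers (a consequence of $\phi$ being a complete saddle-free GMN differential on a closed surface, via Proposition \ref{domaindecomposition}), the finite union $C(\delta;E)$ is precompact in $\tilde{C}$; the same argument applies to $\mathcal{V}(\delta;E)$ once $h_v$ and the truncation parameter $T=D((2+\eta)\delta)$ are fixed. The containment of $C(\delta;E)$ outside $U((2+\eta)\delta)$ is built into the choice of $\epsilon(\delta),h(\delta),\eta(\delta)$ made right after Proposition \ref{domaindecomposition}, and the containment of $\mathcal{V}(\delta;E)$ outside $U((2+\eta)\delta)$ follows from the definition of the truncated spectral network with $T=D((2+\eta)\delta)$; that $\mathcal{V}$ lies outside $\snetwork(\pi/2)$ is noted at the end of the construction and follows because $\mathcal{V}$ is a small vertical thickening of a wall of $\snetwork(0)$, and walls of $\snetwork(0)$ and $\snetwork(\pi/2)$ are disjoint away from the branch points by saddle-freeness.

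For the first bullet: the statement that a generic horizontal trajectory through $z\in C(\delta;E)$ never enters $U((2+\eta)\delta)$ is exactly the defining property of $C(\delta;\infty)$ from Section \ref{Domain Decomposition of C}, restricted to the sub-domain $C(\delta;E)\subset C(\delta;\infty)$ --- a horizontal trajectory through a point of $C(\delta;E)$ is a horizontal trajectory through a point of $C(\delta;\infty)$. The energy bound $|\primitive(z^{+})-\primitive(z^{-})|<2E$ comes from Lemma \ref{horizontallengthcontrol}, specifically equation \eqref{superpotentialexact}: we have $|\primitive(z^{0})-\primitive(z^{1})|=2d_{hor}(z,b)$ for $b$ a zero of $\phi$ on the boundary of the ambient horizontal chamber, and after translating so all branch points lie over $\{x=0\}$ (using that standard saddle trajectories are vertical, hence real exactness), $d_{hor}(z,b)=|\mathrm{Re}(z)|<E$ by the explicit formula $d_{hor}(z,z')=|x-x'|$ in Lemma \ref{horizontallengthcontrol}. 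This is a direct substitution, so I would not belabour it.

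For the second bullet: the existence of the core wall $\gamma$ and the truncated portion $\gamma|_{(\delta,\infty)}$ lying in $\mathcal{V}$ is immediate from the construction of $\mathcal{V}(h_v)$ as the union of vertical $h_v$-neighbourhoods of truncated walls, with $h_v$ chosen smaller than half the minimal strip width so that distinct walls give disjoint neighbourhoods --- this gives the bijection between components $\mathcal{V}$ and walls, and the description of $\mathcal{V}$ as a vertical thickening. For the ordering of the lifts: since $\mathcal{V}\subset \tilde{C}-\snetwork(\pi/2)$, the ordering $z^{+},z^{-}$ defined by $W(z^{+})>W(z^{-})$ from Lemma \ref{horizontallengthcontrol} and the subsequent corollary is available, and that same corollary gives $\primitive(z^{+})-\primitive(z^{-})=2l>0$ with $l$ the $\phi$-distance of the vertical projection of $z$ from the branch-point end of $\gamma$, and $l$ independent of the choice of the projection point $\tilde{z}$ on $\gamma$. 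Finally, the overlap of $\mathcal{V}$ with all components of $C(\delta;E)$ adjacent to its core is arranged by the remark "by taking $\delta\ll1$, we can ensure that $\mathcal{V}$ intersects all the horizontal chambers that are adjacent to the core wall" --- one needs $h_v$ large enough relative to $\epsilon(\delta)$ that the vertical thickening reaches into each adjacent strip past the $\epsilon(\delta)$-buffer, which holds for $\delta$ small since $\epsilon(\delta)\to 0$. The main obstacle, such as it is, is bookkeeping: carefully threading the chain of parameter dependencies ($\eta$ depending on $\delta$ through the desingularization region, $T$ depending on $(2+\eta)\delta$ through the function $D$, $h_v$ depending on the minimal strip width and on $\epsilon(\delta)$) so that all the "for $\delta\ll1$" claims can be met simultaneously; but each individual inequality is elementary, so I would organize the proof as: (i) fix $\delta$ small enough for the desingularization and strip-width constraints, (ii) fix $E$, (iii) read off each bullet from the cited lemma, checking the parameter compatibility in passing.
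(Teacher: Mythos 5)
Your overall route is the same as the paper's: the proposition is stated there as a summary of the constructions of $C(\delta;E)$ and $\mathcal{V}(\delta;E)$, with the energy bound read off from Lemma \ref{horizontallengthcontrol} and the ordering from the corollary following it, exactly as you do, and your parameter bookkeeping matches the paper's chain $\eta(\delta)$, $T=D((2+\eta)\delta)$, $h_v$.

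One step of your argument is not correct as stated: the claim that $\mathcal{V}(\delta;E)$ avoids $\snetwork(\pi/2)$ "because walls of $\snetwork(0)$ and $\snetwork(\pi/2)$ are disjoint away from the branch points by saddle-freeness." Saddle-freeness of the horizontal foliation does not prevent a vertical separating trajectory (a wall of $\snetwork(\pi/2)$, possibly emanating from a distant zero) from crossing a horizontal wall transversally away from the branch points, and even if the walls themselves were disjoint this would not control the open vertical thickening $\mathcal{V}$. The correct justification is the one you already have in hand for the ordering: for $z\in\mathcal{V}$ one has $\primitive(z^{+})-\primitive(z^{-})=2l\geq 2T>0$ by the corollary to Lemma \ref{horizontallengthcontrol}, whereas every point $z$ of $\snetwork(\pi/2)$ satisfies $\primitive(z^{+})=\primitive(z^{-})$: joining $z^{-}$ to $z^{+}$ on $\scurve$ by the two lifts of the vertical separating trajectory through $z$ down to the ramification point, the integral of $\lambda$ along each lift is purely imaginary by Lemma \ref{alphalength} (phase $\pi/2$), so its real part, hence the difference of the primitive of $\canliouvile$, vanishes. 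Thus $\mathcal{V}\cap\snetwork(\pi/2)=\emptyset$; with this substitution the rest of your proposal goes through as written.
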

\begin{corollary}\label{uppertriangular}
	Let $J$ be a compatible almost complex structure on $T^{\ast}\tilde{C}$. Let $z\in \mathcal{V}(\delta;E)$. Then there are no non-constant BPS disks ending at $z$ going from $z^{+}$ to $z^{-}$. 
	
	%Let $\psi:X\to T^{\ast}\tilde{C}$ be a fibre-preserving diffeomorphism such that $J_{\psi}=\psi^{\ast}J$ is uniformly tame with respect to $\omega$, meaning, with respect to the $g_J$-metric, there exists a $C>0$ such that $g(X,X)\leq C\omega(X,J_{\psi}X)$. We also require that $\psi$ is the identity over $C(\delta;E)\cup \mathcal{V}$. Then there are no $J$-discs bound between $\psi^{-1}(\scurve)$ and $F_z$ going from $z^{-}$ to $z^{+}$. The same holds replacing $\scurve$ by a Hamiltonian equivalent exact Lagrangian. 
\end{corollary}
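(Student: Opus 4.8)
The plan is to read the statement off directly from real-exactness together with the ordering of lifts produced in Proposition \ref{horizontaldomaindevision}, the only mechanism being positivity of energy; the choice of compatible $J$ plays no role. Fix $z\in\mathcal{V}(\delta;E)$ and suppose, for contradiction, that there is an ($\epsilon$-)BPS disc $u:\mathcal{Z}\to T^{\ast}\tilde{C}$ going from $z^{+}$ to $z^{-}$, i.e.\ a $J$-holomorphic map solving \eqref{moduliproblemeq3} with $u((-\infty,\infty)\times\{0\})\subset\epsilon\scurve$, $u((-\infty,\infty)\times\{1\})\subset F_z$, $\lim_{s\to-\infty}u=\epsilon z^{+}$ and $\lim_{s\to+\infty}u=\epsilon z^{-}$. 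Since $\mathcal{V}(\delta;E)$ lies outside $U((2+\eta)\delta)$ and outside $\snetwork(\pi/2)$ by Proposition \ref{horizontaldomaindevision}, the point $z$ is not a branch point, $F_z$ meets $\epsilon\scurve$ transversally in exactly $\{\epsilon z^{+},\epsilon z^{-}\}$, and the ordering $z^{+},z^{-}$ is the one supplied there; in particular $u$ has finite energy.

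Next I would compute the energy by Stokes' theorem. Real-exactness of $\phi$ makes $\epsilon\scurve$ a $\canliouvile$-exact Lagrangian with primitive $\epsilon\primitive$, where $\primitive$ is the primitive of $\canliouvile|_{\scurve}$ from Section \ref{energyandhorizontaldistancessss}, and $F_z$ is $\canliouvile$-exact with constant primitive because $\canliouvile$ restricts to zero on a cotangent fibre. Tracking the boundary orientation of $\mathcal{Z}=(-\infty,\infty)\times[0,1]$ (the edge $\{t=0\}$ carrying the $\epsilon\scurve$-condition is traversed with increasing $s$, and the $\{t=1\}$-edge contributes nothing) gives
\[
0\;\le\;\tfrac12\norm{du}^2_{L^2}\;=\;\int_{\mathcal{Z}}u^{\ast}\omega\;=\;\int_{\partial\mathcal{Z}}u^{\ast}\canliouvile\;=\;\epsilon\bigl(\primitive(z^{-})-\primitive(z^{+})\bigr).
\]
Equivalently this is the action identity $\tfrac12\norm{du}^2=a\bigl(\lim_{s\to+\infty}u\bigr)-a\bigl(\lim_{s\to-\infty}u\bigr)$ recorded in Section \ref{Compactness and Transversality}, with $a(\epsilon z^{\pm})=\epsilon\primitive(z^{\pm})$ up to a common additive constant. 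But Proposition \ref{horizontaldomaindevision} asserts $\primitive(z^{+})-\primitive(z^{-})>0$ for $z\in\mathcal{V}(\delta;E)$, so the right-hand side is strictly negative, a contradiction. A constant map cannot meet the BPS requirement $\lim_{s\to-\infty}u\neq\lim_{s\to+\infty}u$, so in fact no BPS disc from $z^{+}$ to $z^{-}$ exists at all, which is the claim.

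I expect the only delicate step to be the sign bookkeeping: one must check that ``going from $z^{+}$ to $z^{-}$'' is exactly the direction for which the boundary term $\int_{\partial\mathcal{Z}}u^{\ast}\canliouvile$ comes out as $\epsilon(\primitive(z^{-})-\primitive(z^{+}))$ and not its negative --- equivalently, that in this setup the larger-$\primitive$ lift $z^{+}$ is the asymptote at $s\to-\infty$. This is purely a matter of reconciling the orientation conventions for $\partial\mathcal{Z}$ with the convention of Definition \ref{BPSdiskmodels} that the $\epsilon\scurve$-boundary sits at $t=0$, together with the vanishing of the $F_z$-boundary term; once fixed, the corollary is immediate. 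It is also precisely the input used later to force the wall-crossing parallel-transport map \eqref{wallcrossingterm} to be strictly upper-triangular along short paths.
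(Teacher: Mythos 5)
Your argument is correct and is essentially the paper's own proof: Stokes' theorem plus $\omega$-compatibility gives $\mathrm{Area}(u)=\epsilon\bigl(\primitive(z^{-})-\primitive(z^{+})\bigr)<0$ by the ordering from Proposition \ref{horizontaldomaindevision}, contradicting positivity of energy. Your extra care with the $\epsilon$-scaling, the vanishing of the fibre boundary term, and the orientation/sign bookkeeping only makes explicit what the paper leaves implicit.
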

\begin{proof}
	%	The $J$-discs $u$ bound between $F_z$ and $\psi(\scurve)$ are the same as $J_{\psi}$-discs $\psi^{-1}\circ u$ bound between $(\scurve)$ and $F_z$. 
	From Stokes' theorem, and $\omega$-compatibility, 
	\[Area(u)= \int u^{\ast}\omega=\int u^{\ast}\omega=\primitive(z^{-})-\primitive(z^{+})<0.\]
	This is a contradiction since $u$ must have a positive $L^2$ norm. This finishes the proof.
\end{proof}

\begin{rem}
	Corollary \ref{uppertriangular} says nothing about the discs going from $z^{-}$ and $z^{+}$. However, it is important because otherwise we do not know a priori that the parallel transport across the connected components of $\mathcal{V}$ is \textit{upper}-triangular. 
\end{rem}
\section{Adiabatic degeneration}\label{Adia Degen}

We now study the adiabatic degeneration of $\epsilon$-BPS discs ending at $z$ as $\epsilon\to 0$. Recall that we had constructed a domain decomposition of $C$ and a deformation retract $C(\delta;E)$ of its horizontal chambers, with respect to a parameter $\delta>0$ and an energy cut-off $E\gg 1$. The region $C(\delta;E)$ has the property that the maximal horizontal trajectory passing through $z\in C(\delta;E)$ \textit{never} enters the region $U((2+\eta)\delta)$. In Section \ref{Flowlines}, we define the notion of holomorphic flow lines for (slight generalizations of) spectral curves and describe how they relate to $\phi$-trajectories. In Section \ref{the energy estimate and the diameter control}, we find an a priori energy and boundary length estimate for $\epsilon$-BPS discs ending at $z$ for $z\in C(\delta;E)$. In Section \ref{subsection:gradientestimate}, we establish some gradient estimates. 

In Section \ref{subsection domain subdivision}, we follow \cite[Section 5.2]{Morseflowtree} closely and introduce an $\epsilon$-uniformly finite number of punctures on the boundary of $\mathcal{Z}$ with the $\epsilon \scurve$-labelling for each $\epsilon$ to obtain a conformal domain $\stdm$ with the conformal structure defined as in Section \ref{moduliproblemss}. On this new conformal domain $\stdm$ of the map $u_\epsilon$, we construct a domain subdivision $D_{0}(\epsilon)\cup D_{1}(\epsilon)$ with a uniformly finite number of components with the following two properties.
\begin{itemize}
	\item The discs $u_\epsilon$ map $D_{0}(\epsilon)$ outside of $T^{\ast}U(2\delta)$ and map $D_{1}(\epsilon)$ into $T^{\ast}(U(2+\eta)\delta)$.
	\item The size of the derivatives of $u_\epsilon$ over $D_{0}(\epsilon)$ is $O(\epsilon)$. We show this by utilizing the gradient estimates in Section \ref{subsection:gradientestimate}.
\end{itemize}
In Section \ref{subsect convergence to gradient flows} and \ref{conv flowline subsec}, we study the limiting behaviour of $u_\epsilon$ restricted to $D_0(\epsilon)$ as $\epsilon\to 0$. We introduce uniformly finite number of auxiliary subdomains $W_{0}(\epsilon)$ of $D_0(\epsilon)$ so that $D_{0}(\epsilon)-W_{0}(\epsilon)$ consist of uniformly finitely many strip-like domains, and $u_\epsilon\vert_{W_{0}(\epsilon)}$ converges to points on $\tilde{C}$ (Lemma \ref{Wjttopoints}). The components of $D_{0}(\epsilon)-W_{0}(\epsilon)$ are classified as follows.
\begin{itemize}
	\item A \textit{$0$-special} domain is a strip domain that contains a horizontal boundary component with an $F_z$ label. By Lemma \ref{zerospecialdomainmappoint}, a $0$-special domain uniformly converges to $z$. 
	\item A non-$0$-special strip domain is either a vertex region or a flowline component. By Proposition \ref{vertex}, a vertex region is mapped very close to a point in $\tilde{C}$ after taking a subsequence.
	\item By Proposition \ref{non-vertex} a flowline component is mapped very close to a gradient flowline after taking a subsequence. Since $D_0(\epsilon)$ is mapped on the region where $g=g^{\phi}$, this flowline will turn out to be a unit-speed horizontal geodesic. 
\end{itemize}
We now give a "high-level" proof of \cref{maintheorem}. This is an argument that only applies to the rank $2$ situation. We will later give a different proof that provides a far stronger understanding of the adiabatic degeneration phenomenon with holomorphic cusp singularities, which will be needed for the higher-rank situation. Still, the argument below is sufficient for the main result of this paper.

\begin{proof}(Theorem \ref{maintheorem})
	
	We argue by contradiction. Let $u_\epsilon:\mathcal{Z}\to T^{\ast}\tilde{C}$ be a sequence of $\epsilon$-BPS discs ending at $z_\epsilon$ with $z_\epsilon\in C(\delta;E)$ such that $z_\epsilon\to z$ and $\epsilon\to 0$. Let $D_0(\epsilon), W_0(\epsilon)$ and $D_1(\epsilon)$  be as in Sections \ref{subsection domain subdivision}, \ref{subsect convergence to gradient flows} and \ref{conv flowline subsec}. 
	
	From the domain subdivision $\stdm=D_0(\epsilon)\cup D_1(\epsilon)$ and $D_0(\epsilon)-W_0(\epsilon)$, we obtain a graph $\Gamma$ as follows (we will later show that it is a tree). The vertices are the components of $W_0(\epsilon)$; the edges are the components of $D_0(\epsilon)-W_0(\epsilon)$ and $D_1(\epsilon)$, and two edges are connected if the corresponding components overlap or the components are adjacent over some common $W_0(\epsilon)$-components. Since there is a uniformly finite number of components, after taking a subsequence, we may assume that the graph $\Gamma$ is constant. 
	
	We claim that given any path $\mathcal{P}=e_1...e_n$ in $\Gamma$ such that $e_1,...,e_n$ correspond to the components in $D_0(\epsilon)-W_0(\epsilon)$, with $e_1$ corresponding to one of the $0$-special components, the image of the union must get mapped inside some arbitrarily small neighbourhood of the horizontal trajectory $\gamma$ passing through $z$. This is a consequence of Lemmas \ref{Wjttopoints}-\ref{non-vertex} which will be proven below. Indeed, all the flowline components must degenerate to a geodesic contained in $\gamma$, and the vertex and the $W_0(\epsilon)$-components must degenerate to points. So, the claim follows.
	
	Now, suppose for small enough $\epsilon$, there were some edges that got mapped into $T^{\ast}U((2+\eta)\delta)$. Consider a path $\mathcal{P}=e_1...e_n$ contained in $\Gamma$ such that the edges $e_1,...,e_{n-1}$ correspond to $\Theta_1,...,\Theta_{n-1}$ flowline components in $D_0(\epsilon)-W_0(\epsilon)$, and the edge $e_n$ corresponds to a component in $D_1(\epsilon)$. One such path must exist because $\tilde{T}$ is connected. Now, since $C(\delta;E)$ was chosen such that the image of the trajectory passing through $z$ lies entirely outside $T^{\ast}U((2+\eta)\delta)$, the component $\Theta_{n-1}$ cannot possibly overlap with $D_1(\epsilon)$. So the union of this component with $W_0(\epsilon)$ will map outside $T^{\ast}U((2+\eta)\delta)$, a contradiction. Therefore, the image must be contained in a neighbourhood of $\gamma$. In particular, such a neighbourhood can be chosen to be strictly contained in one of the horizontal chambers over which the covering is trivial. We conclude that no such $u_{\epsilon}$ could have existed for small enough $\epsilon$ since its boundary would not have closed up. This shows the non-existence of non-constant strips for $z\in C(\delta;E)$.
\end{proof}

%In Section \ref{ProofSubsect}, we will prove the main analytic Theorem \ref{maintheorem} by combining the results in Sections \ref{subsection:gradientestimate}-\ref{conv flowline subsec}. 
\subsection{Flow lines}\label{Flowlines}

We adapt the notion of flow lines introduced in \cite[Section 2]{Morseflowtree} to the holomorphic setting. Let $\tilde{C}$ be a Riemann surface and let $(T^{\ast}_{\mathbb{C}})^{1,0}\tilde{C}$ be the holomorphic cotangent bundle. Let $Y$ be a codimension $1$ holomorphic submanifold of $(T^{\ast}_{\mathbb{C}})^{1,0}\tilde{C}$ such that the holomorphic projection $\pi:Y\to \tilde{C}$ is a simple branched covering of $\tilde{C}$. Let $n$ be the degree of the branched covering $Y\to \tilde{C}$. Suppose $z\in \tilde{C}$ is a regular value of $\pi$. Then there exist an open neighbourhood $U$ of $z$, and locally defined holomorphic functions $f_1,...,f_{n}$ such that $Y\cap \pi^{-1}(U)$ locally reads as $\Gamma_{df_1}\sqcup...\sqcup\Gamma_{df_n}$ over $U$. Suppose now that $z$ is a branch point. Then the germ of $Y$ near the ramification point over $z$ is isomorphic to the germ of $(0,0)$ for the zero set of $\{(p^z)^2-z)=0\}$. The other smooth sheets of $Y$ over $z$ are given by the disjoint union $\Gamma_{dg_1}\sqcup...\sqcup \Gamma_{dg_{n-2}}$ for some locally defined holomorphic functions $g_1,...,g_{n-2}$. 

Equip $\tilde{C}$ with a K\"{a}hler metric $h=h_{z\bar{z}} dz d\bar{z}$. We can regard $h$ as an isomorphism $h:\overline{T^{1,0}_{\mathbb{C}}\tilde{C}}\to {T^{\ast}_{\mathbb{C}}}^{1,0}\tilde{C}$. Following \cite[Definition 14.5]{alginfraredpolytope} and \cite[Section 2.2.2]{Morseflowtree}

\begin{definition}
	Let $W$ be a locally defined holomorphic function on $\tilde{C}$. Then the \textit{$h$-gradient} of $W$ is defined by
	\begin{align}\label{h-gradient}
		\nabla_h(W)=\overline{h^{-1}(dW)}.
	\end{align} 
	Given a curve $z:[0,1]\to \tilde{C}$, a cotangent lift of $z$ is an ordered pair $\{z_1,z_2\}$ of lifts $z_i:[0,1]\to Y$ such that $z_1\neq z_2$, or their common value is a branch point of the projection $Y\to \tilde{C}$. 
	
	A curve $z:I\to \tilde{C}$, defined over an open interval $I$, with an ordered cotangent lift $(z_1,z_2)$, is called a \textit{holomorphic (Morse) flow line} of phase $\theta$ associated to $Y$ if the following equation is satisfied:
	\begin{align}
		\frac{dz}{dt}=-e^{+i\theta} \nabla_h(W_1-W_2),\label{BPSflowline}
	\end{align}
	whenever the local holomorphic functions $W_1$, $W_2$ associated to the lifts $z_1$ and $z_2$ are defined. 
\end{definition}

Now we restrict to the case $Y=\scurve$. Given a point $z\in {C}^{\circ}$, the quadratic differential $\phi$ determines local functions $W^{\pm}(z)$ such that
\begin{align*}
	&W^{\pm}(0)=0,\partial_z W^{\pm}=\pm \sqrt{\phi}(z).
\end{align*}
Furthermore, a GMN quadratic differential admits a conformal coordinate near a zero, which pulls back $\phi$ to $zdz^2$.

Recall that the GMN equation is the ODE
\begin{align}\label{GMNequation}
	Im(e^{-2i\theta}\phi(\gamma'))=0.
\end{align}

\begin{proposition}\label{prop:GMNholMorse}
	Holomorphic	flow lines associated to the spectral curve satisfy the GMN equation \eqref{GMNequation}. In particular, holomorphic Morse flow lines of phase $0$ lie on a horizontal trajectory. 
\end{proposition}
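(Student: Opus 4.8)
The plan is to reduce the flow-line equation \eqref{BPSflowline} to an explicit scalar ODE in a local conformal coordinate and then read off \eqref{GMNequation} by a direct substitution. First I would restrict to $C^{\circ}$ and fix a local conformal coordinate $z$ over which $\scurve$ reads $\Gamma_{dW^{+}}\sqcup\Gamma_{dW^{-}}$ with $\partial_z W^{\pm}=\pm\sqrt{\phi(z)}$; since the two lifts $z_1,z_2$ of a holomorphic flow line sit on the two distinct sheets, $\partial_z(W_1-W_2)=\pm 2\sqrt{\phi(z)}$, the sign depending only on the ordering of the cotangent lift. Writing the K\"ahler metric as $h=\rho\,dz\,d\bar z$ with $\rho>0$, the next step is to make $\nabla_h$ explicit: since $h$ carries $\overline{\partial_z}$ to $\rho\,dz$, one gets $h^{-1}(dW)=\rho^{-1}(\partial_z W)\,\overline{\partial_z}$, hence
\begin{align}
\nabla_h(W)=\overline{h^{-1}(dW)}=\rho^{-1}\,\overline{\partial_z W}\,\partial_z ,
\end{align}
a genuine $(1,0)$-tangent vector.

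Substituting into \eqref{BPSflowline} and writing $\gamma'=\dot z\,\partial_z$ for the velocity, the flow-line equation becomes the scalar ODE $\dot z=\mp 2e^{i\theta}\rho^{-1}\,\overline{\sqrt{\phi(z)}}$. The core computation is then the pairing of the quadratic differential with $\gamma'$, which by the transformation rule \eqref{transformationrule} is coordinate-independent and equals $\phi(\gamma')=\phi(z)\,\dot z^{\,2}$; plugging in,
\begin{align}
e^{-2i\theta}\phi(\gamma')=e^{-2i\theta}\,\phi(z)\cdot 4e^{2i\theta}\rho^{-2}\,\overline{\phi(z)}=4\rho^{-2}\,|\phi(z)|^{2}\in\mathbb{R}_{>0}.
\end{align}
In particular its imaginary part vanishes, which is precisely \eqref{GMNequation}; I would emphasize that one actually obtains the stronger positivity $e^{-2i\theta}\phi(\gamma')>0$ on $C^{\circ}$, not merely reality.

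For the second assertion I would set $\theta=0$ and pass to the flat coordinate $W=\int\sqrt{\phi}$, in which $\phi\equiv dW^{2}$; then $\tfrac{d}{dt}\bigl(W\circ z(t)\bigr)=\sqrt{\phi(z)}\,\dot z=\mp 2\rho^{-1}|\phi(z)|\in\mathbb{R}^{\ast}$, so $W\circ z$ traces a segment with $\Im W$ constant and $\Re W$ strictly monotone, and hence the image of a phase-$0$ flow line lies on a single maximal horizontal geodesic --- a horizontal trajectory --- the flow-line parametrization differing from the arc-length one by an orientation-preserving reparametrization. Near a zero of $\phi$ the local model $\phi=z\,dz^{2}$ gives $\partial_z(W_1-W_2)=\pm 2\sqrt{z}\to 0$, so the reduced ODE degenerates there, consistent with the way horizontal trajectories limit onto zeroes; no separate argument is needed at ramification points. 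The argument is an elementary computation once conventions are fixed, so the one point I would check most carefully is their mutual consistency: the convention for $h^{-1}$, the conjugation in the definition of $\nabla_h$, and the pairing of the quadratic differential with a tangent vector through \eqref{transformationrule}.
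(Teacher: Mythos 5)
Your proof is correct and follows essentially the same route as the paper: compute $\nabla_h(W_1-W_2)$ in a local conformal coordinate, substitute into \eqref{BPSflowline}, and observe that $e^{-2i\theta}\phi(\gamma')$ becomes $h^{-2}\abs{\phi}^2$ (up to a positive constant), hence real, which is exactly \eqref{GMNequation}. Your extra remarks (the positivity of $e^{-2i\theta}\phi(\gamma')$, the explicit monotonicity of $\Re W$ in the flat coordinate for $\theta=0$, and the behaviour at zeroes) only make explicit what the paper leaves implicit.
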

\begin{proof}
	This follows from \[e^{-2i\theta}\phi(\gamma')=e^{-2i\theta} \phi(z)\Big(\frac{dz}{dt}\Big)^2=e^{-2i\theta} \phi(z)\Big(h^{-1}e^{i\theta} \overline{\sqrt{\phi(z)}}\Big)^2= h^{-2} \phi(z) \overline{\phi(z)}.\]
\end{proof}

\subsection{The energy and boundary length estimate}\label{the energy estimate and the diameter control}

We prove the crucial energy and boundary length estimate. 

\begin{proposition}
	\label{uniformenergy}
	Suppose $u:\mathcal{Z}\to T^{\ast}\tilde{C}$ is an $\epsilon$-BPS disc ending at $z$ for $z\in C(\delta;E)$.  Then
	\[Area(u)\leq 2E\epsilon.\]
\end{proposition}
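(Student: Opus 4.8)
The plan is to identify the area with the symplectic area $\int_{\mathcal Z}u^{\ast}\omega$ and then evaluate the latter by Stokes' theorem, using exactness of both pieces of the boundary condition. Since $J=J_{\phi}$ is $\omega$-compatible, for any $J$-holomorphic $u$ we have
\[
\Area(u)=\frac12\int_{\mathcal Z}\abs{du}^2_J=\int_{\mathcal Z}u^{\ast}\omega,
\]
and this is finite: $u$ is an $\epsilon$-BPS disc, so both strip-like ends converge to intersection points of $F_z\cap\epsilon\scurve$ with the usual exponential decay, which is exactly what is needed to justify applying Stokes on the non-compact domain $\mathcal Z$.

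Next I would write $\omega=d\canliouvile$ and compute the boundary contributions. Real-exactness of $\phi$ gives a primitive $\primitive\colon\scurve\to\mathbb R$ with $d\primitive=\canliouvile\vert_{\scurve}$ (recall $\canliouvile=\lambda_{\theta=0}$), hence $\epsilon\primitive$ is a primitive of $\canliouvile$ on $\epsilon\scurve$; and $\canliouvile\vert_{F_z}=0$, so $F_z$ admits a constant primitive. Thus $u^{\ast}\canliouvile$ restricted to the $\epsilon\scurve$-edge is $d(\epsilon\primitive\circ u)$ and restricted to the $F_z$-edge is $d(\mathrm{const})=0$. Telescoping as in the energy identity of Section \ref{energyformulasss}, the only surviving terms are the values at $s\to\pm\infty$, so that
\[
\int_{\mathcal Z}u^{\ast}\omega=a(x_+)-a(x_-),
\]
where $x_\pm=\lim_{s\to\pm\infty}u(s,t)\in F_z\cap\epsilon\scurve$ and $a$ is the action \eqref{actionofintersectionpoint}. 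Because the constant primitive of $F_z$ cancels in the difference, $a(x_+)-a(x_-)=\epsilon\bigl(\primitive(z_+)-\primitive(z_-)\bigr)$, where $z_+,z_-\in\scurve$ are the two \emph{distinct} lifts of $z$ determined by the corners $x_\pm$ (distinctness is part of Definition \ref{BPSdiskmodels}).

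Finally I would invoke the construction of $C(\delta;E)$: by Proposition \ref{horizontaldomaindevision} (equivalently by \eqref{superpotentialexact} in Proposition \ref{horizontallengthcontrol}), for $z\in C(\delta;E)$ one has $\abs{\primitive(z^+)-\primitive(z^-)}<2E$. Hence $\abs{a(x_+)-a(x_-)}<2E\epsilon$. Since $\Area(u)=\tfrac12\int\abs{du}^2\ge 0$ and equals $a(x_+)-a(x_-)$, it follows that $\Area(u)\le 2E\epsilon$. The only genuinely nontrivial inputs are the convergence at the ends (needed to run Stokes on $\mathcal Z$) and the bookkeeping of signs and the $\epsilon$-scaling so that the $F_z$-primitive drops out cleanly; the geometric estimate $\abs{\primitive(z^+)-\primitive(z^-)}<2E$ is already built into the definition of $C(\delta;E)$.
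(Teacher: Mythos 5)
Your proposal is correct and follows essentially the same route as the paper: identify $\Area(u)$ with $\int u^{\ast}\omega$, apply Stokes' theorem using the primitive $\epsilon\primitive$ of $\canliouvile$ on $\epsilon\scurve$ (and the vanishing of $\canliouvile$ on $F_z$), and then bound $\abs{\primitive(z^+)-\primitive(z^-)}<2E$ via Proposition \ref{horizontaldomaindevision}. The extra care you take about decay at the strip-like ends and the sign bookkeeping is fine but not a departure from the paper's argument.
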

\begin{proof}
	Let $W$ be the primitive of $\lambda_{re}$ on $\scurve$. By Stokes' theorem,
	\begin{align}
		Area_{J}(u)=\int u^{\ast}\omega=\epsilon\abs{W(z^{0})-W(z^{1})}\leq 2E\epsilon
	\end{align}
	where the last inequality follows from Proposition \ref{horizontaldomaindevision}. 
	This finishes the proof. 
\end{proof}

We now need the estimate on the length of the boundary of $u_\epsilon$ on $\epsilon\scurve$ lying outside $T^{\ast}U(2\delta)\cap \epsilon\scurve$. We will use the following standard fact from measure theory. 
\begin{lemma}\label{lemma:monotoneabsolutecontinuitiy}
	Let $0\leq ...f_n\leq f_{n+1}...$ be a uniformly bounded monotone sequence of absolutely continuous functions defined on $[a,b]$ such that their derivatives are also non-negative almost everywhere. Then the pointwise limit $f=\lim_{n\to \infty}f_n$ is also absolutely continuous.
\end{lemma}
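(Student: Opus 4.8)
The statement is a classical fact, so the plan is to give the standard $\varepsilon$–$\delta$ argument built on two observations: (i) an absolutely continuous function with nonnegative derivative a.e.\ is nondecreasing and equals the integral of its derivative, and (ii) a pointwise-increasing, uniformly bounded sequence of monotone functions has a finite total variation bound on the limit. Concretely, first I would record that each $f_n$ is nondecreasing: since $f_n$ is absolutely continuous, $f_n(y)-f_n(x)=\int_x^y f_n'$ for $x\le y$, and $f_n'\ge 0$ a.e.\ forces this to be nonnegative. Hence $f=\lim_n f_n$ is also nondecreasing (a pointwise limit of nondecreasing functions), in particular of bounded variation on $[a,b]$, with total variation $f(b)-f(a)\le \sup_n f_n(b) - \inf_n f_n(a) =: M<\infty$ by the uniform bound.

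Next I would pass to the increments. Because $0\le f_n\le f_{n+1}$ and each $f_n$ is nondecreasing, for any subinterval $[c,d]\subseteq[a,b]$ the increments satisfy $0\le f_n(d)-f_n(c)\le f_{n+1}(d)-f_{n+1}(c)$, so $g_{[c,d]}:=\lim_n\big(f_n(d)-f_n(c)\big)=f(d)-f(c)$ is an increasing limit. The key step is then the absolute continuity estimate: fix $\varepsilon>0$. For the \emph{limit} function we cannot directly invoke absolute continuity of an $f_n$, because the required $\delta$ might degrade with $n$; instead I would use the finite-measure/Vitali-type argument. Given a finite collection of disjoint intervals $(c_k,d_k)$ with $\sum_k(d_k-c_k)<\delta$, we have $\sum_k\big(f(d_k)-f(c_k)\big)=\sum_k\lim_n\big(f_n(d_k)-f_n(c_k)\big)$. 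The honest route is to show $f$ is \emph{exactly} the integral of an $L^1$ function: by the monotone convergence theorem applied to the nonnegative sequence $f_n'$ (which is increasing in $n$, since $f_{n+1}-f_n$ is nondecreasing hence has nonnegative derivative a.e.), $f_n'\uparrow h$ pointwise a.e.\ for some measurable $h\ge 0$, and $\int_a^b h=\lim_n\int_a^b f_n'=\lim_n\big(f_n(b)-f_n(a)\big)=f(b)-f(a)\le M<\infty$, so $h\in L^1[a,b]$. Applying monotone convergence on each $[a,y]$ gives $f(y)-f(a)=\int_a^y h$ for every $y$. A function that is an indefinite Lebesgue integral of an $L^1$ function is absolutely continuous (standard: the absolute continuity of the integral), which is exactly the claim.

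The main obstacle, and the only place care is needed, is justifying that the derivatives $f_n'$ form an \emph{increasing} sequence a.e., so that the monotone convergence theorem applies to them and delivers an $L^1$ limit: this uses that $f_{n+1}-f_n$ is absolutely continuous and nondecreasing (hence $(f_{n+1}-f_n)'\ge 0$ a.e.), which in turn rests on the hypothesis $f_n\le f_{n+1}$ together with each $f_n$ being absolutely continuous. Once $f(y)-f(a)=\int_a^y h$ with $h\in L^1$ is established, absolute continuity of $f$ is immediate, so the write-up can be kept to a few lines; I would present it in exactly this order — nondecreasing, total variation bound, MCT on $f_n'$, recover $f$ as an indefinite integral, conclude.
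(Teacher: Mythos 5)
Your overall route (monotone convergence on the derivatives, then recover $f$ as an indefinite integral and invoke absolute continuity of the Lebesgue integral) is exactly the paper's route, but the step you yourself flag as ``the only place care is needed'' is where your argument breaks. From $0\le f_n\le f_{n+1}$ and each $f_n$ nondecreasing you cannot conclude that $f_n(d)-f_n(c)\le f_{n+1}(d)-f_{n+1}(c)$ for all $[c,d]$, i.e.\ that $f_{n+1}-f_n$ is nondecreasing and hence $f_n'\le f_{n+1}'$ a.e. Pointwise domination says nothing about domination of increments: on $[0,1]$ take $g(x)=x$ and $h(x)\equiv 1$; then $g\le h$, both are nondecreasing and absolutely continuous, but $g(1)-g(0)=1>0=h(1)-h(0)$. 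So the monotone convergence theorem cannot be applied to $(f_n')_n$ on the strength of the stated hypotheses, and your identity $f(y)-f(a)=\int_a^y h$ is not established (Fatou only gives $\int_a^y\liminf f_n'\le f(y)-f(a)$, an inequality in the wrong direction for absolute continuity).

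Moreover, this gap cannot be repaired within the hypotheses as written, because the statement is then literally false: let $c$ be the Cantor function on $[0,1]$ and $f_n(x)=\inf_y\{c(y)+n\abs{x-y}\}$. Each $f_n$ is $n$-Lipschitz (hence absolutely continuous), nonnegative, nondecreasing in $x$, the sequence increases pointwise with $0\le f_n\le f_{n+1}\le 1$, yet $f_n\uparrow c$, which is not absolutely continuous. What makes the lemma true in the paper is the additional (implicit) monotonicity of the derivatives: in the application the functions are $a_n(r)$, whose a.e.\ derivatives are integrals of a fixed nonnegative integrand over the increasing domains $K_n^c$, so $f_n'$ genuinely increases in $n$ and the paper's one-line MCT argument is legitimate there. (The paper's proof of the lemma, read as a proof of the bare statement, silently assumes this same monotonicity; your write-up differs only in that you tried to derive it from $f_n\le f_{n+1}$, which is the invalid inference.) The correct fix is to add the hypothesis that the increments $f_{n+1}-f_n$ are nondecreasing (equivalently $f_n'\le f_{n+1}'$ a.e.), after which your argument, and the paper's, goes through verbatim.
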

\begin{proof}
	By the monotone convergence theorem, 
	\[f(r)=\lim f_n(r)= \lim f_n(a)+\lim \int_0^r f'_n(r)dr=f(a)+\int_0^r \lim f'_n(r)dr,\]
	for all $r$. Now, apply the fundamental theorem of Lebesgue calculus.
\end{proof}

\begin{lem}
	There exists some $c=c(\delta,\eta)>0$ such that for all sufficiently small $0<\epsilon\leq 1$, the following holds. Let $u$ be an $\epsilon$-BPS disc ending at $z$. The length of $u({\partial\mathcal{Z}})$ outside $\big(T^{\ast}U(2\delta)\cup T^{\ast}B_{\frac{\eta}{2}}(z)\big)\cap \epsilon\scurve$ is bounded above by $c$.  \label{lengthestimate}
\end{lem}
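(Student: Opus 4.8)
The strategy is to parametrize the relevant portion of the boundary $\partial\mathcal{Z}$ lying on $\epsilon\scurve$ and estimate its length using the structure of $\scurve$ over the region where $g^{\phi}_{\delta}=g^{\phi}$. First I would recall that $\partial\mathcal{Z}$ decomposes into the two horizontal lines $(-\infty,\infty)\times\{0\}$ (mapped to $\epsilon\scurve$) and $(-\infty,\infty)\times\{1\}$ (mapped to $F_z$); only the former is at issue. Over the region $\pi(\epsilon\scurve)\cap (U((2+\eta)\delta))^c$, we have $g^{\phi}_{\delta}=g^{\phi}$ and, in the local flat coordinate $W=\int\sqrt{\phi}$, the spectral curve reads $\{p^x=\pm 1, p^y=0\}$, so $\epsilon\scurve$ is the \emph{graph} of the closed (indeed exact) $1$-form $\pm\epsilon\,dW$ over the base. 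Consequently, along the boundary arc $u(s,0)$ restricted to the preimage of this region, the ambient distance travelled is controlled (up to a factor $\sqrt{1+\epsilon^2}\le\sqrt 2$) by the $\phi$-distance travelled by $\pi\circ u(s,0)$ in the base. Thus the problem reduces to bounding the $\phi$-length of the base projection of the boundary arc outside $U(2\delta)$ and outside $B_{\eta/2}(z)$.

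The key input is the energy bound $\mathrm{Area}(u)\le 2E\epsilon$ from Proposition \ref{uniformenergy} together with the isoperimetric/monotonicity machinery of Section \ref{Monotonicity Techniques}. The plan is: cover the portion of $\epsilon\scurve$ lying outside $\big(T^{\ast}U(2\delta)\cup T^{\ast}B_{\eta/2}(z)\big)$ and inside the ambient region where the metric is the exact flat model by a controlled family of balls of radius comparable to $\delta$ and to $\eta$, in which $\epsilon\scurve$ appears as a single Lagrangian plane (this is legitimate by the geometric boundedness/tameness established in Corollary \ref{spectralcurvevertifin} and Proposition \ref{geoboundnonfamily}, with the uniform-in-$\epsilon$ remark after Proposition \ref{totaldomainestimate} applying since $r_{\epsilon\scurve}=O(\epsilon)$ but the rescaled picture is uniform). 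Each sub-arc of $u(\partial\mathcal{Z})$ of length $\ell$ in such a ball, by the monotonicity lemma applied to the half-disc case $p\in W$, forces area at least $c'\ell^{2}/(\text{ball radius})$ or, more precisely, using the boundary estimate (Proposition \ref{totaldomainestimate}) rescaled by $\epsilon$, the boundary arc cannot wander far; combined with $\mathrm{Area}(u)=O(\epsilon)$ this caps the total number of such sub-arcs and hence the total length by a constant $c=c(\delta,\eta)$ independent of $\epsilon\le 1$. I would also invoke Lemma \ref{lemma:monotoneabsolutecontinuitiy} only if needed to pass a limit, but for the fixed-$\epsilon$ statement a direct covering argument suffices.

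The main obstacle I anticipate is the uniformity in $\epsilon$: naively the tameness constant $C_W$ for $\epsilon\scurve$, and the monotonicity constant, could degenerate as $\epsilon\to 0$ because $r_{\epsilon\scurve}=O(\epsilon)$ and $C_5\to\infty$ as $r_W\to 0$ (cf. the remark after Proposition \ref{totaldomainestimate}). The resolution is exactly the rescaling trick flagged in that remark: after the fibrewise dilation $(q,p)\mapsto(q,\epsilon^{-1}p)$ the Lagrangian $\epsilon\scurve$ becomes the fixed $\scurve$ with fixed tameness data, while the energy $O(\epsilon)$ becomes $O(1)$ and the almost complex structure converges (the metric on the base is \emph{unchanged} by a fibre dilation, and $J$ is the Sasaki structure). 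So one should run the monotonicity argument in the rescaled picture, where all constants are uniform, and then transport the length bound back, picking up only a bounded factor. A secondary subtlety is that the boundary arc may re-enter $T^{\ast}U(2\delta)$ or $T^{\ast}B_{\eta/2}(z)$ many times; but each excursion outside contributes a definite amount of area by monotonicity, so the number of excursions and their total length are both controlled by the energy bound $2E\epsilon$ divided by the (rescaled, hence $O(1)$) per-excursion area lower bound — giving the desired constant $c(\delta,\eta)$, with the $E$-dependence absorbed since $E$ is fixed once and for all.
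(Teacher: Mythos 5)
Your reduction to the flat region, the observation that $\epsilon\scurve$ is a graph of $\pm\epsilon\,dW$ there, and the identification of the key scaling cancellation (energy $O(\epsilon)$ against tube radius $O(\epsilon)$ around $\epsilon\scurve$) all match the spirit of the paper's argument. However, the central analytic step is missing: the monotonicity lemma does not give ``a sub-arc of length $\ell$ in a ball forces area at least $c'\ell^2/(\text{ball radius})$.'' It gives an area lower bound $\delta^2/(2C)$ for a curve passing through a point with boundary on $W$, independent of how long the boundary is inside that ball; and Proposition \ref{totaldomainestimate} is a $C^0$ (diameter) estimate, not a length estimate. So your covering argument only bounds the number of disjoint $\epsilon$-balls the boundary arc meets (roughly, its coarse extent $\lesssim E$), not its length: a priori the boundary of a holomorphic half-disc with boundary on a Lagrangian plane can be long inside a single ball of radius $\sim\epsilon$ while contributing only one area quantum $\sim\epsilon^2$. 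Ruling this out is exactly a reverse isoperimetric inequality (length of the Lagrangian boundary controlled by area divided by the tube radius), which is a genuine theorem and is the actual content of the lemma — it cannot be quoted from the monotonicity/tameness package in Section \ref{Monotonicity Techniques}.

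This is precisely what the paper proves, by a length--area (truncated isoperimetric) argument rather than by covering: with $\rho$ the distance to $\epsilon\scurve$, a cut-off $\beta$ vanishing near the excluded set $K=T^{\ast}U((2-\tfrac{\eta}{2})\delta)\cup T^{\ast}B_{\eta/3}(z)$, and $a(r)$ the area of $u$ in $\{\rho\le\beta r\}$ away from $K$, one combines the coarea formula with Stokes' theorem and the identity $\tfrac12 dd^c(\rho^2)=\omega_{std}$ in the flat charts (plurisubharmonicity of the squared distance to the Lagrangian, with the boundary term on $\epsilon\scurve$ vanishing) to get the differential inequality $r\,a'(r)\ge a(r)/(1+r\xi)$; monotonicity of $a(r)(\xi r+1)/r$ evaluated at the tube radius $r=r_0\epsilon$, together with $\mathrm{Area}(u)\le 2E\epsilon$, yields $l\le E r_0^{-1}$, uniformly in $\epsilon$. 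The technical work with the thickening $N_t(K)$, the transversality of $u$ and $\partial u$ to $\partial N_t(K)$, and the cut-off $\beta$ is there to handle exactly the truncation issue (the boundary re-entering $T^{\ast}U(2\delta)$ or $T^{\ast}B_{\eta/2}(z)$), which your ``each excursion contributes a definite amount of area'' remark does not resolve either, since that again caps only the number of excursions, not their length. Your fibrewise-dilation rescaling is also shakier than stated (the Sasaki $J$ is not simply invariant under $(q,p)\mapsto(q,\epsilon^{-1}p)$); the paper avoids this by working directly at scale $\epsilon$ in the flat model where all constants are explicit. To repair your proposal you would need to prove the local reverse isoperimetric inequality at scale $\epsilon$ — at which point you would essentially be reproducing the paper's Steps 2--5.
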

\begin{figure}[t]
	\includegraphics[height=7cm]{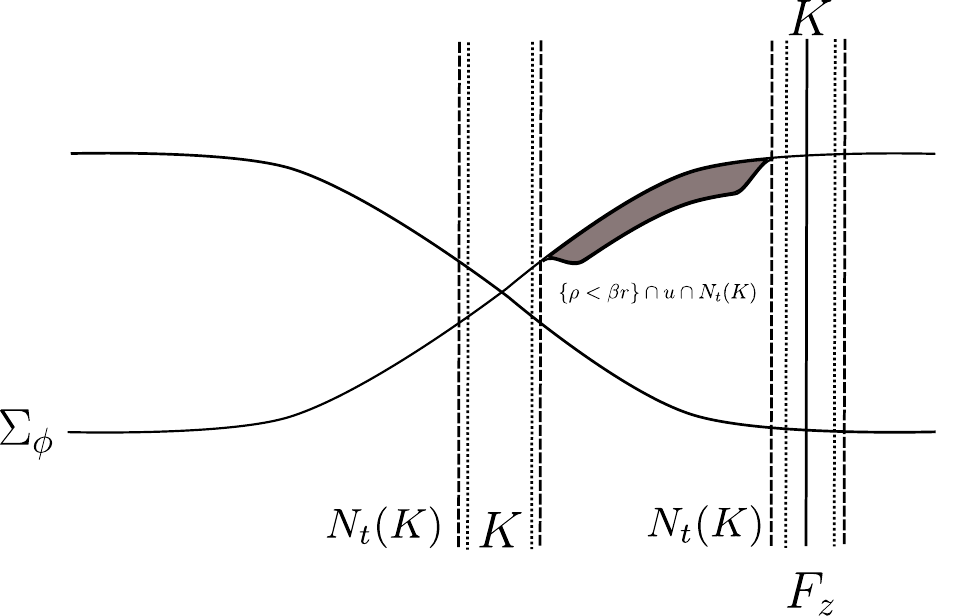}
	\centering
	\caption{The dashed lines indicate $\partial N_t(K)$ and the dotted lines indicate $\partial K$ in the proof of Lemma \ref{lengthestimate}. We need to slightly thicken $K$ to $N_t(K)$ in order to ensure that $u$ and $\partial u$ are transverse to the boundary of $N_t(K)$, so that the length of $u\cap \partial N_t(K)$ stay uniformly bounded with respect to $t$. Only its \textit{finiteness} will be important for the proof. The region inside is $K$. The region $\{\rho\leq \beta r\}\cap u\cap N_t(K)^c$ is colored. }
	\label{figure:compactsetremovalproof}
\end{figure}
\begin{proof}
	We split the proof of the lemma into several steps. Let $l$ be the length of $u({\partial\mathcal{Z}})$ on the region outside $\big(T^{\ast}U(2\delta)\cup T^{\ast}B_{\frac{\eta}{2}}(z)\big)\cap \epsilon\scurve$. Recall that we had chosen an $\eta>0$ such that over $U((2-\eta)\delta)^c$, $g_{\delta}^{\phi}=g^{\phi}$. Let $\rho$ be the distance function from $\epsilon\scurve$. The idea of the proof is to define a certain "truncated area function" $a(r)$ that measures the area of $u$ inside the dark grey domain illustrated in \cref{figure:compactsetremovalproof} and study its variations. 
	\paragraph{Step 1}
	Let $K=\big(T^{\ast}U((2-\frac{\eta}{2})\delta)\cup T^{\ast}B_{\frac{\eta}{3}}(z)\big)$. Outside $K$, the normal injectivity radius\footnote{The maximal radius of the normal disc bundle embedded via the radial exponential map.} of $\epsilon\scurve$ is $r'_0\epsilon$ for some $r'_0>0$ independent of $\epsilon$, which follows either the description of $\scurve$ in the $g^{\phi}$-flat $\int \sqrt{\phi}dz$-coordinate as parallel linear planes $\{p^z=\pm 1\}$, or the more general fact that the sheets of $\epsilon \scurve$ converge in the $C^{\infty}$ sense to the zero section outside the branch points. We take $r_0=\min(r_0',\frac{\eta}{8},\delta)$.
	
	Our goal is to approximate the length of $\partial u$ outside $K$ in terms of its energy. For technical reasons, we will need to slightly thicken $K$ so that that its boundary becomes transverse to $\partial u$ and $u$. We assure the reader that the constants in the bound won't depend on the particular choice of thickening. To do this, choose some small $0<\eta'\ll r_0$ such that the distance function from $K$ is smooth in $N_{\eta'}K-K$. This is fine since the normal bundle of $\partial K$ is necessarily trivial, and for small enough size of the tubular neighbourhood, the positive part of the normal bundle $N\partial K\simeq \partial K\times [-1,1]=\partial K\times [-1,0)\cup \partial K\times [0,1]$ is going to map strictly outside $\text{int}(K)$ under the exponential map. Then by Sard's theorem, we know that there exists some $0<t\ll \frac{1}{4}\eta'$ such that the maps $u$ and $\partial u$ are transverse to $\partial N_{t}(K)$. Furthermore, since $u^{-1}(\text{int}(K)^c)$ is compact, we see that $u^{-1}(\partial N_t(K))$ is a finite union of $1$-manifolds with boundary on $\partial S$. This condition is stable for small perturbations of $t$, and we can choose these perturbations so that the lengths of the preimages stay uniformly bounded. Therefore, there exists some $r_0\gg \mu>0$ such that $u$ and $\partial u$ are transverse to $\partial N_{t'}(K)$ for $t'\in (t-\mu,t+\mu)$ and $\text{Length}(u^{-1}(\partial N_{t'}(K)))$ is uniformly bounded with respect to the induced metric on $S$. Note that because the critical points of $u$ ($J$-holomorphicity implies that $du$ must entirely vanish at the critical points!) are discrete, and because the preimages of $\partial N_{t'}(K)$ avoid the critical points by regularity of $d(K,)$ at $t'$, this length is well-defined. 
	
	Here, we record the details since this standard fact will be an important part of the argument. Restrict $u$ to the open set $u^{-1}(N_{\eta'}K-K)$, then we can apply Sard's theorem for the smooth function $d(K,)$ for $u$ and $\partial u$. So for generic $t$, the level sets are $1$-manifolds (resp, $0$-manifolds) in $S$ (resp $\partial S$), and because $u^{-1}(\text{int}(K)^c)$ is compact, there are only finitely many points on $\partial S$ that map to $\partial N_t(K)$. Suppose $u^{-1}(\text{int}(K)^c)$ had infinitely many components. Choosing a point for each component, we see that we cannot have an accumulation point in the interior of $S$ because that will contradict the statement that $u^{-1}(\partial N_t(K))$ is a manifold. Therefore, the accumulation point must lie on the boundary, but there are finitely many points in $(\partial u)^{-1}(\partial N_t(K))$. In particular, this implies that $u^{-1}(\partial N_t(K))$ must be a $1$-manifold with boundary, and because there are finitely many boundary points, and there are no internal accumulation points, it must be compact. It is impossible for two different components of $u^{-1}(\partial N_t(K))$ to form a cusp at the boundary and share a tangency at their common boundary point because that will happen only when the gradient vector field vanishes on the boundary point (the two strands of the cusp will have normal vectors that are in \textit{opposite directions}), which violates the regularity of $d(K,)$ at $t$. Taking the gradient flow, we get a family of finitely many disjoint $1$-manifolds with boundary that we can uniformly bound their lengths.

	In the flat $\int \sqrt{\phi}$-coordinate on $U((2-\eta)\delta)^c$,  we have $J=J_{std}, \epsilon\scurve=\{y_1=\pm \epsilon, y_2=0\}$ and $\omega=\omega_{std}$. Translating the chosen sheet to $\{y_1=y_2=0\}$, we see the following.
	
	\begin{itemize}
		\item 
		In the neighbourhood of the boundary $\partial N_t K\cap \epsilon\scurve$, there are charts of radius $r_0 \epsilon$ contained in the complement of $\big(T^{\ast}U((2-\eta)\delta)\cup T^{\ast}B_{\frac{\eta}{4}}(z)\big)$ such that: $J=J_{std}$, $g=g_{std}$, $\epsilon\scurve=\mathbb{R}^2\subset \mathbb{C}^2$, and $N_tK=\overline{D}\times i\mathbb{R}^2$. Here $\mathbb{C}^2$ is given by the coordinates $(x_1,x_2,y_1,y_2)$ and $D$ is some open subdomain of $\mathbb{R}^2$.
		\item  Each point of $\epsilon\scurve\cap (N_{r_0}K)^c$ admits a chart of radius $\frac{r_0\epsilon}{2}$ in the complement of $\big(T^{\ast}U((2-\eta)\delta)\cup T^{\ast}B_{\frac{\eta}{4}}(z)\big)$ such that: $J=J_{std}$, $g=g_{std}$, and $\epsilon\scurve=\mathbb{R}^2\subset \mathbb{C}^2$.  
	\end{itemize}
	
	We choose a non-negative support function $\beta:T^{\ast}\tilde{C}\to\mathbb{R}_{\geq 0}$ such that $\beta=\beta(x_1,x_2)$ on each standard open chart chosen above, $\beta$ is positive on $(N_t K)^c$, $\beta$ vanishes on $N_tK$, and $\beta$ is equal to $1$ on $\big(T^{\ast}U(2\delta)\cup T^{\ast}B_{\frac{\eta}{2}}(z)\big)^c$. By making $t$ small, such a $\beta$ can be chosen in a way that $\xi=\sup \abs{\nabla \beta}$ depends only on $\delta$ and $\eta$. In the above local charts, $\rho=\abs{y}=\sqrt(y_1^2+y_2^2)$. Observe that $\rho$ is $J$-plurisubharmonic in the region $\{\rho\leq \beta r\}\cap K^c$
	\paragraph{Step 2}
	We now define our area and length functions. Choose a decreasing sequence of positive real numbers $\mu_n\to 0$ such that $\mu_n<\mu$. Write $K_n=N_{t+\mu_n}(K)$, and observe that $K_n\subset K_{n+1}\subset N_t(K)^c$. Let $r\leq r_0\epsilon$. We define the functions $a(r),a_n(r),a_n^{\beta}(r)$, and $l_n^{\beta}(r)$ as follows.
	
	\begin{align}
		&	a(r)=\int_{\{\rho\leq \beta r\}\cap u\cap {N_t(K)}^c} dA
		&&a_n(r)=\int_{\{\rho\leq \beta r\}\cap u\cap K_n^c} dA\\
		&a_n^{\beta}(r)=\int_{\{\rho\leq \beta r\}\cap u\cap K_n^c} \beta dA,&&l_n^{\beta}(r)=\int_{\{\rho=\beta r\}\cap u\cap K_n^c}\beta dl.
	\end{align}
	
	Observe that the functions $l_n^{\beta}(r)$ are only well-defined for almost every $r$ where $\{\rho\leq \beta r\}\cap u\cap int(K_n)^c$ is a $1$-manifold with boundary for all $n$. We are using Sard's theorem plus the fact that the countable union of measure $0$ sets have measure $0$. Observe that the integrands are all non-negative, and the domains of the integral become larger as $n$ increases. So, by the monotone convergence theorem, we can swap $\lim_{n\to \infty}$ with the integral, and we will freely do so unless it is unclear that the integrand is indeed non-negative.
	
	Our goal is to obtain a differential inequality of the form 
	\begin{align}\label{eq:differentialinequality1}
		ra'(r)\geq \frac{a(r)}{1+r \xi},
	\end{align}
	where $\xi$ is a constant that only depends on the geometry of $(T^{\ast}\tilde{C},J)$ and the relative geometry of $\epsilon \scurve$ outside $K$. 
	
	\paragraph{Step 3}
	As a first step, we show that $a(r)$ is absolutely continuous and obtain the following lower bound for its derivative in terms of $l^\beta_n$:
	\begin{align}\label{coarea2}
		a(r)\geq \int_0^{r} \lim_{n\to \infty}\frac{l_n^{\beta}(\tau)}{1+\tau\xi}d\tau\quad\text{and}\quad \frac{d}{dr}a(r)\geq \lim_{n\to \infty} \frac{l_n^{\beta}(r)}{1+r\xi} \text{ a.e} . 
	\end{align}
	Since $\beta>0$ on $K^c$, it follows that $\frac{\rho}{\beta}$ is Lipschitz on $N_{s}{K}^c$ for $s>t$. Hence, applying the coarea formula with respect to the induced metric, we get
	\begin{align}\label{coarea1}
		\int_{\{\rho\leq \beta r\}\cap u\cap N_s{K}^c} dA&=\int_{\{\rho\leq \beta r\}\cap u\cap N_s{K}^c} \frac{1}{\abs{\nabla (\rho/\beta)}}\cdot {\abs{\nabla (\rho/\beta)}}dA  \nonumber \\
		&=\int_{0}^r \int_{\{\rho=\beta \tau\}\cap u\cap N_s{K}^c}  \frac{1}{\abs{\nabla (\rho/\beta)}} dl d\tau.
	\end{align}
	By the monotone convergence theorem, we have \begin{align}\label{eq:absolutecontinuitiy} a(r)=\lim_{n\to \infty} a_n(r)&=\lim_{n\to \infty}\int_{0}^r \int_{\{\rho=\beta \tau\}\cap u\cap {K_n}^c}  \frac{1}{\abs{\nabla (\rho/\beta)}} dl d\tau\nonumber\\&=\int_{0}^r \lim_{n\to \infty}\int_{\{\rho=\beta \tau\}\cap u\cap {K_n}^c}  \frac{1}{\abs{\nabla (\rho/\beta)}} dl d\tau,
	\end{align}
	for all $r$	which proves the absolute continuity of $a(r)$.
	
	Now, by triangle inequality, $\abs{\nabla \rho}\leq 1$, and so on $\rho=\tau\beta$,
	\begin{align}\label{bound1}
		\abs{\nabla(\frac{\rho}{\beta})}\leq \frac{1}{\beta}(\abs{\nabla\rho}+\tau|\beta'|)\leq \frac{1+\tau \xi}{\beta}.
	\end{align}
	For \eqref{bound1}, we are taking the gradient by regarding it as a function on $T^{\ast}\tilde{C}$. However, the gradient of a smooth function restricted to a submanifold is given by the orthogonal projection of the original vector field. So, we get from \eqref{coarea1}-\eqref{bound1} that
	\begin{align}\label{coarea3}
		a_n(r)\geq \int_0^{r} \frac{l_n^{\beta}(\tau)}{1+\tau\xi}d\tau\quad\text{and}\quad \frac{d}{dr}a_n(r)\geq \frac{l_n^{\beta}(r)}{1+r\xi} \text{ a.e}. 
	\end{align}
	But $a(r)=\lim_{n\to \infty} a_n(r)$ and $a(r)$ is absolutely continuous almost everywhere. So by the monotone convergence theorem again, we obtain \eqref{coarea2}.
	\begin{align*}
		a(r)\geq \int_0^{r} \lim_{n\to \infty}\frac{l_n^{\beta}(\tau)}{1+\tau\xi}d\tau\quad\text{and}\quad \frac{d}{dr}a(r)\geq \lim_{n\to \infty} \frac{l_n^{\beta}(r)}{1+r\xi} \text{ a.e} . 
	\end{align*}
	\paragraph{Step 4}
	
	Having established the lower bound for the derivative of $a(r)$, we now show $\lim_{n\to \infty}rl^n_{\beta}(r)\geq a(r)$.
	For almost every $r$, we have the following:
	\begin{align}
		rl_n^{\beta}(r)&=\int_{\{\rho=\beta r\}\cap u\cap K_n^c} r\beta dl\geq \int_{\{\rho=\beta r\}\cap u\cap K_n^c} \frac{1}{2} \inner{\nabla h}{\nu}dl\label{eq:length-area1line1} \\
		&=\int_{\{\rho= \beta r\}\cap u\cap K_n^c} \frac{1}{2} d^c h=\frac{1}{2}\int_{\{\rho\leq \beta r\}\cap u\cap K_n^c} dd^c h-	
		\int_{\{\rho\leq \beta r\}\cap u\cap \partial K_n} d^c h. \label{eq:length-area1line2}
	\end{align}
	Here we have parametrized the oriented smooth curve $\{\rho=\beta r\}\cap u\cap K_n^c$ via $l(t)$ and took its unit normal $\nu(t)=-J(du\circ l(t))\abs{du\circ l'(t)}^{-1}$. Here we are using the fact that the critical points of $u$ do not lie on $l(t)$ for generic $r$. For the inequality in \eqref{eq:length-area1line1}, we use the standard fact that $\abs{\nabla \rho}=1$, and to pass from $\inner{\nabla h}{\nu}dl$ to $d^c h$, we use 
	\begin{align*}
		\inner{\nabla h}{\nu}dl=\inner{\nabla h}{\nu}\abs{du(l'(t))}l'(t)dt=-l'(t)dh\circ (J\circ du\circ \nu(t))=d^c h.\end{align*} The critical points of $u$ do not contribute because they are discrete, which is measure $0$. To arrive at the first equality in \eqref{eq:length-area1line2} is a bit more involved. We first use Stokes' theorem to obtain
	\begin{align}\label{eq:boundaryparts}
		\int_{\{\rho\leq \beta r\}\cap u\cap K_n^c} dd^c h&=
		\int_{\{\rho\leq \beta r\}\cap u\cap \partial K_n} d^c h+\int_{\{\rho\leq\beta r\}\cap \partial u\cap K_n^c}d^c h\nonumber \\
		&+\int_{\{\rho=\beta r\}\cap u\cap K_n^c}d^c h.
	\end{align}
	Here, $d^c h=0$ on $\{\rho\leq\beta r\}\cap \partial u$  since this set is contained in $L$. This explains why the second term in \eqref{eq:boundaryparts} vanishes. 
	
	We need to show that the term 	$\int_{\{\rho\leq \beta r\}\cap u\cap \partial K_n} d^c h$ converges to zero as $n\to \infty$. To see this, observe that the set is contained in $u^{-1}(\partial K_n)=u^{-1}(\partial N_{t+\mu_n}(K))$, whose length is uniformly bounded for all $n$. Therefore, we have
	\begin{align}
		\int_{\{\rho\leq \beta r\}\cap u\cap \partial K_n} d^c h&= \int_{\{\rho\leq \beta r\}\cap u\cap \partial K_n} \frac{1}{2} \inner{\nabla h}{\nu}dl\nonumber\\&\leq \sup_{\partial{N_{t+\mu_n}(K)}} \abs{\beta} r_0 \cdot \text{Length}(u^{-1}(\partial N_{t+\mu_n}(K))). 
	\end{align}
	Here $v$ means the unit normal vector field along $ u\cap \partial K_n$ defined similarly. We pass from $\inner{\nabla h}{\nu}dl$ to $d^c h$ by the same argument. To pass from the first line to the second line, we use that $\inner{\nabla h}{v}dl=\rho \inner{\nabla \rho}{v}dl\leq \rho dl\leq \beta r dl\leq r_0 \sup \beta dl$. 
	
	Since $\sup_{\partial_{N_{t+\mu_n}(K)}} \abs{\beta}$ converges to zero as $n\to \infty$, this term uniformly converges to zero as $n\to \infty$. The remaining term $\frac{1}{2}\int_{\{\rho\leq \beta r\}\cap u\cap K_n^c} dd^c h$ in $\eqref{eq:length-area1line2}$ converges to $\frac{1}{2}\int_{\{\rho\leq \beta r\}\cap u\cap {N_t(K)}^c}dd^ch$ since $dd^c h$ is $J$-plurisubharmonic that the integrand is necessarily non-negative. So we see that \begin{align}
		\lim_{n\to \infty} rl_n^{\beta}(r)\geq \frac{1}{2}\int_{\{\rho\leq \beta r\}\cap u\cap N_t(K)^c} dd^c h.
	\end{align}
	\paragraph{Step 5}
	We now derive the inequality and finish the proof. This is the only place where we use that the geometry is flat outside $K$. Since $h=\sum y_i^2$, we have $\frac{1}{2}dd^c h=\omega_{std}$. Therefore, combining \eqref{coarea2} and \eqref{eq:length-area1line1}--\eqref{eq:length-area1line2}, we see that
	\[ ra'(r)\geq \frac{a(r)}{1+r \xi}.\]
	Hence, we get the differential inequality
	\[\frac{d}{dr} \log \left(a(r)\cdot \frac{\xi r+1}{r}\right)\geq 0,\]
	which implies that the function
	\[r\to a(r)\cdot {\frac{\xi r+1}{r}}\]
	is non-decreasing.
	
	Now, if $r<\xi^{-1}$, then we get
	\begin{align}
		2\frac{a(r)}{r}\geq \lim_{s\to 0} \frac{Area(u;\big(T^{\ast}U(2\delta)\cup T^{\ast}B_{\frac{\eta}{2}}(z)\big)^c\cap \{\rho\leq s\})}{s}\Rightarrow 2\frac{a(r)}{r}\geq l.
	\end{align}
	Here we are using monotonicity of the function $a(r)\cdot {\frac{\xi r+1}{r}}$ and $(\xi r+1)<2$ for $r<\xi^{-1}$.  To see that the first inequality holds, observe that $(T^{\ast}U(2\delta)\cup T^{\ast}B_{\frac{\eta}{2}})^c\cap\{\rho\leq s\}$ is contained in the domain of integral of $a(s)$ which is $\{\rho\leq \beta s\}\cap u\cap N_t(K)^c$ since on $(T^{\ast}U(2\delta)\cup T^{\ast}B_{\frac{\eta}{2}})^c$, $\beta=1$ by definition. The second equality is the consequence of the right-hand side converging to the length outside $K$ as $s\to 0$. 
	
	The total energy of $u$ is bounded above by $<2E\epsilon$ by Proposition \ref{uniformenergy}. Setting $r=r_0 \epsilon$, it follows that we have
	\[Er_0^{-1}>l.\]
	Set $c=Er_0^{-1}$. This finishes the proof. 
\end{proof}

\begin{proposition}
	There exists a compact subset $K=K(\delta,\phi,E)\subset \tilde{C}$ containing $C(\delta;E)$ such that if $u$ is an $\epsilon$-BPS disc ending at $z$ for $z\in C(\delta;E)$, then $u$ lies in $P=D_1^{\ast}K^{\circ}$ for all small enough $\epsilon$. \label{boundaryestimate}
\end{proposition}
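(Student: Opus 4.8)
The plan is to confine $u(\mathcal{Z})$ in two stages: first \emph{vertically}, bounding the fibre norm strictly below $1$, which reduces us to the region where $J=J_\phi$ coincides with the controlled background structure $J_{con}$; and then \emph{horizontally}, bounding the projection to $\tilde{C}$, by combining the a priori energy bound of Proposition~\ref{uniformenergy}, the boundary length bound of Lemma~\ref{lengthestimate}, and a monotonicity argument. For the vertical step I would apply the maximum principle, Lemma~\ref{verticalconfinement}: by Proposition~\ref{proposition:contacttype}, $J_\phi$ is of rescaled contact type, hence of general contact type on all of $\{r>a\}$. Since $\scurve$ is bounded in the fibre direction (it reads $\{p^z=\pm1\}$ in the $\int\sqrt{\phi}$-coordinate and $\{(p^z)^2=z\}$ near a zero), we get $\epsilon\scurve\subset\{r\le\tfrac12\}$ for $\epsilon$ small. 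Then on $u^{-1}(\{r>\tfrac12\})$ the boundary of $u$ lies entirely on the cotangent fibre $F_z$, where $\canliouvile$ vanishes identically, so Lemma~\ref{verticalconfinement} forces $u$ to be locally constant there, which on the connected domain $\mathcal{Z}$ contradicts $u$ being non-constant. Hence $u(\mathcal{Z})\subset\{r\le\tfrac12\}\subset D_1^{\ast}\tilde{C}$, where $J_\phi=J_{con}$ (recall $\rho\equiv1$ near $S^{\ast}\tilde{C}$), so $u$ is also $J_{con}$-holomorphic and we may use the geometric boundedness, hence tameness, of $(T^{\ast}\tilde{C},J_{con},\omega,g_{con})$ from Proposition~\ref{geoboundnonfamily}.

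Next I would confine the boundary $u(\partial\mathcal{Z})=\big(u(\partial\mathcal{Z})\cap F_z\big)\cup\big(u(\partial\mathcal{Z})\cap\epsilon\scurve\big)$. Since $z$ ranges over the precompact set $C(\delta;E)$, the part on $F_z$ lies in a fixed compact subset of $D_1^{\ast}\tilde{C}$; so do $\epsilon\scurve\cap T^{\ast}U(2\delta)$ and $\epsilon\scurve\cap T^{\ast}B_{\eta/2}(z)$, using that $U(2\delta)$ is precompact, $z\in C(\delta;E)$, and $\epsilon\scurve\subset D_1^{\ast}\tilde{C}$. Call the union of these a fixed compact set $K_0$. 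By Lemma~\ref{lengthestimate} the remaining part of $\partial u$ on $\epsilon\scurve$, which lies outside $T^{\ast}U(2\delta)\cup T^{\ast}B_{\eta/2}(z)$, has total length at most $c=c(\delta,\eta)$; since each of its boundary arcs abuts $\partial(T^{\ast}U(2\delta))$, $\partial(T^{\ast}B_{\eta/2}(z))$, or the puncture limits in $F_z\cap\epsilon\scurve$ — all contained in $K_0$ — every such arc lies within distance $c$ of $K_0$. Therefore $u(\partial\mathcal{Z})\subset K_1:=\overline{B_c(K_0)}\cap\overline{D_1^{\ast}\tilde{C}}$, a compact set independent of $\epsilon$ and of $z$ for $\epsilon$ small.

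Finally, if some $p\in u(\mathcal{Z})$ had $d_{g_{con}}(p,K_1)>\delta_0$, then $B_{\delta_0}(p)$ would be disjoint from $u(\partial\mathcal{Z})\subset K_1$, and the Monotonicity Lemma would give $\mathrm{Area}(u)\ge\mathrm{Area}\big(u;u^{-1}(B_{\delta_0}(p))\big)\ge\delta_0^2/(2C)$, with $C$ depending only on the tameness constants of $(T^{\ast}\tilde{C},J_{con})$. By Proposition~\ref{uniformenergy}, $\mathrm{Area}(u)\le 2E\epsilon\le 2E$, forcing $\delta_0\le 2\sqrt{EC}$; hence $u(\mathcal{Z})\subset\overline{B_{2\sqrt{EC}}(K_1)}$, compact and $\epsilon$-independent. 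Taking $K$ to be a compact neighbourhood of $\pi\big(\overline{B_{2\sqrt{EC}}(K_1)}\big)\cup\overline{C(\delta;E)}$, and using the strict bound $r\circ u\le\tfrac12<1$ from the first step, gives $u(\mathcal{Z})\subset D_1^{\ast}K^{\circ}$ with $C(\delta;E)\subset K$, as required.

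The main obstacle is the vertical step: getting $u$ into $D_1^{\ast}\tilde{C}$, where $J_\phi=J_{con}$ and the ambient geometry is bounded, is precisely what makes the rest routine. It relies on the maximum principle applying to the non-standard $J_\phi$ (legitimate since it is of rescaled, hence general, contact type) and on $F_z$ being an honest cotangent fibre so that $\canliouvile$ restricts to zero there. One could instead bypass the reduction to $J_{con}$ and apply the boundary estimate of Proposition~\ref{totaldomainestimate} directly, invoking the remark following it to obtain an $\epsilon$-uniform constant $C_5$ from $r_{\epsilon\scurve}=O(\epsilon)$ together with $E(u_\epsilon)=O(\epsilon)$; but the argument above is cleaner since it only needs interior monotonicity once the boundary has been confined.
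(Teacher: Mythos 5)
Your proposal is correct and its first step (vertical confinement via the maximum principle, then noting that inside $D_{3/2}^{\ast}\tilde{C}$ the Sasaki structure agrees with $J_{con}$) is the same as the paper's, which simply invokes the integrated maximum principle to put $u$ in the unit disc bundle. Where you genuinely diverge is the horizontal confinement. The paper works directly with the degenerating Lagrangian boundary condition: outside a neighbourhood of the poles the tameness constants of $\epsilon\scurve$ satisfy $C_{\epsilon\scurve}=G$ independent of $\epsilon$ and $r_{\epsilon\scurve}=H\epsilon$, and since the energy is $\leq 2E\epsilon$ the constant $C_5$ in the boundary estimate (Proposition~\ref{totaldomainestimate}, via the remark that $C_5$ depends linearly on $E$ and $r_W^{-1}$) stays uniformly finite; this is exactly the alternative you mention in your closing sentence. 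You instead first trap $u(\partial\mathcal{Z})$ in a fixed compact set $K_1$ by combining the $\epsilon$-uniform boundary length bound of Lemma~\ref{lengthestimate} with the compactness of $D_1^{\ast}U(2\delta)$, $D_1^{\ast}B_{\eta/2}(\overline{C(\delta;E)})$ and the fibre part, and then you only need \emph{interior} monotonicity for $J_{con}$, whose tameness constants are $\epsilon$-independent by Proposition~\ref{geoboundnonfamily}. This buys you complete independence from the shrinking tameness radius of $\epsilon\scurve$, at the cost of importing Lemma~\ref{lengthestimate} (which is proved before this proposition and does not use it, so there is no circularity); the paper's route stays inside the boundary-estimate machinery but needs the compensation $E(u)\cdot r_{\epsilon\scurve}^{-1}=O(1)$.

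Two small imprecisions, neither fatal. First, your single-ball monotonicity step is not literally valid as written: the Monotonicity Lemma requires the radius to be below the tameness scale, so you cannot conclude "$\delta_0\leq 2\sqrt{EC}$" by taking one ball of arbitrary radius. The correct version fixes $\delta_0$ below that scale and chains disjoint balls along the image (each contributing area $\geq \delta_0^2/2C$), giving a distance bound \emph{linear} in the energy; equivalently, once the boundary is confined in $K_1$ you can just cite Proposition~\ref{interiorestimate1} or \ref{totaldomainestimate} with $K=K_1$. The conclusion is unchanged. Second, in the vertical step, "locally constant on $u^{-1}(\{r>1/2\})$ contradicts $u$ non-constant" is not quite the right deduction; the standard conclusion is that $u^{-1}(\{r>1/2\})$ must be empty (a nonempty component on which $u$ is constant would have its value on $\{r=1/2\}$ by continuity at the component's frontier), i.e.\ the image never enters $\{r>1/2\}$, which is what you actually use.
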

\begin{proof}
	The rescaled spectral curves $\epsilon\scurve$ for $0<\epsilon\leq 1$ lie inside the unit disc bundle $D_1^{\ast}\tilde{C}$. By the integrated maximum principle, the disc $u$ must lie in the unit disc bundle $D^{\ast}\tilde{C}$. Let $V$ be a sufficiently small neighbourhood of the poles of $\phi$ lying outside the region $C(\delta;E)\cup U(2\delta)$ such that $g\vert_V=g^{\phi}$. Let $K_1$ be the complement of $V$. For sufficiently small $\epsilon$, there exist some constants $G,H>0$ such that outside $T^{\ast}K_1$, the spectral curve $\epsilon\scurve$ has tameness constants (\ref{totallyrealgeombounddefini}) $C_{\epsilon \scurve}=G>0$ independent of $\epsilon$ and $r_{\epsilon \scurve}=H\epsilon$. Furthermore, by Proposition \ref{uniformenergy}, the total energy of $u$ is bounded above by $2E\epsilon$. So we can apply the proof of Proposition \ref{totaldomainestimate} to see that the discs cannot leave some neighbourhood of $D_1^{\ast}K$ by some precompact open subset $K$ containing $K_1$. Set $P=D_1^{\ast}K^{\circ}$.  
\end{proof}

\subsection{Gradient estimate} \label{subsection:gradientestimate}

We now follow {\cite[Section 5.1.3]{Morseflowtree} to prove the gradient estimates, which will be needed for the rest of the Section. We will only consider those fibres $F_z$ for $z\in C(\delta;E)$. From Proposition \ref{boundaryestimate}, we see that the discs of our interest are contained in a precompact neighbourhood $P$ of $C(\delta;E)$ in $T^{\ast}\tilde{C}$. For this reason, from now on, we only consider smooth functions that map into $P$.
	
	We start with the following gradient estimate:
	\begin{lemma}
		\label{Mainestimate} \cite[Lemma 4.3.1]{McduffSalamon}
		There exists some $\hbar>0$ such that for all $0<\epsilon\leq 1$, the following inequalities hold.
		\begin{itemize}
			\item
			If $u:A_\halfradius\to T^{\ast}\tilde{C}$ is a $J$-holomorphic disc, then 
			\[Area(u)<\hbar\Rightarrow \abs{du(0)}^2\leq \frac{8}{\pi {\halfradius}^2}\int_{A_\halfradius}\abs{du}^2.\]
			\item If $u:E_{2\halfradius}\to (T^{\ast}\tilde{C},\epsilon\scurve)$ is a $J$-holomorphic half-disc with $u(\partial E_{2\halfradius})\subset T^{\ast}U(2\delta)^c$ then
			\[Area(u)<\hbar\Rightarrow \sup_{E_\halfradius}\abs{du}^2\leq \frac{8}{\pi {\halfradius}^2}\int_{E_{2\halfradius}}\abs{du}^2.\]
			The same statement holds replacing $\epsilon\scurve$ with $F_z$ for $z\in C(\delta;E)$.
		\end{itemize}
	\end{lemma}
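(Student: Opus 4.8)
The plan is to reduce both statements to the standard mean-value inequality for $J$-holomorphic curves, \cite[Lemma 4.3.1]{McduffSalamon}, which applies on a \emph{fixed} tame almost K\"ahler background with a fixed totally real boundary condition. The only subtlety here is uniformity in $\epsilon$: as $\epsilon\to 0$ the Lagrangian $\epsilon\scurve$ collapses onto the zero section, so a priori the constant $\hbar$ and the elliptic constants in the mean-value estimate could degenerate. The point is that they do not, because of the geometry we set up in \cref{Flatness at infinity and finiteness conditions}. First I would recall that by Proposition \ref{boundaryestimate} all the discs in question have image in the fixed precompact set $P=D_1^{\ast}K^{\circ}$, so it suffices to work on the compact background $(P, J, \omega, g_J)$, on which the standard constants (isoperimetric constant, injectivity radius, and the $C^1$-bounds on $J$ entering the mean-value inequality) are finite and $\epsilon$-independent.

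Next I would handle the boundary condition. For the interior estimate there is nothing to do: the first bullet is literally \cite[Lemma 4.3.1(i)]{McduffSalamon} applied on $P$, with $\hbar$ the energy threshold from that lemma. For the half-disc estimates, the key observation is that outside $T^{\ast}U(2\delta)$ — more precisely on $U((2-\eta)\delta)^c$ — we have $g^{\phi}_{\delta}=g^{\phi}$, and in the flat $\int\sqrt{\phi}$-coordinate the sheets of $\epsilon\scurve$ read $\{y_1=\pm\epsilon,\ y_2=0\}$, i.e. they are \emph{affine} Lagrangian planes (and similarly $F_z$ is the affine plane $\{x\equiv \mathrm{const}\}$). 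After the affine translation sending the relevant sheet to $\{y_1=y_2=0\}$ (as already used in the proof of Lemma \ref{lengthestimate}), the boundary condition becomes exactly the standard linear $\mathbb{R}^2\subset\mathbb{C}^2$, with $J=J_{std}$, $g=g_{std}$, $\omega=\omega_{std}$. Crucially this translation is an isometry of the ambient metric and carries $\epsilon\scurve$ to a \emph{fixed} (coordinate) plane, independent of $\epsilon$; so the tameness constants $r_W, C_W$ of the boundary condition in \cref{totallyrealgeombounddefini} are $\epsilon$-independent on the relevant charts. Hence \cite[Lemma 4.3.1(ii)]{McduffSalamon} (the half-disc version, with totally real boundary) applies with a threshold $\hbar$ and a mean-value constant that do not depend on $\epsilon$, provided $u(\partial E_{2\halfradius})\subset T^{\ast}U(2\delta)^c$ so that the boundary of the half-disc indeed lands in one of these standard charts. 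The same argument, verbatim, gives the statement with $\epsilon\scurve$ replaced by $F_z$ for $z\in C(\delta;E)$, since $F_z$ is likewise an affine plane in the flat coordinate and $C(\delta;E)$ lies outside $U((2+\eta)\delta)$.

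Finally I would take $\hbar$ to be the minimum of the finitely many thresholds produced above (one for the interior case, one for each of the two boundary cases), and note the factor-of-$2$ rescaling $E_{2\halfradius}\supset E_{\halfradius}$ in the half-disc statement is exactly the overlap needed to pass from a mean-value bound at interior points to a uniform bound on the smaller half-disc, using that the balls $B_{\halfradius}(p)$ for $p\in E_{\halfradius}$ are contained in $E_{2\halfradius}$. The main obstacle, and the only place requiring genuine care, is the uniformity-in-$\epsilon$ claim: one must be sure that no constant in the mean-value inequality blows up as $\epsilon\to0$, and the resolution is precisely that \emph{in the flat $\phi$-coordinate the rescaled Lagrangian is an affine translate of a fixed plane}, so the boundary geometry is literally constant in $\epsilon$ on the charts where the estimate is applied; the ambient geometry is fixed because all maps land in the precompact set $P$.
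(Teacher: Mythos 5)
Your proposal is correct and follows essentially the same route as the paper: both reduce to the McDuff--Salamon mean-value inequality on the precompact region $P$, with the $\epsilon$-uniformity of $\hbar$ coming from the fact that outside $T^{\ast}U(2\delta)$ the metric is the flat $g^{\phi}$-metric and the sheets of $\epsilon\scurve$ (and the fibres $F_z$) are standard affine Lagrangian planes. The paper phrases this by checking the hypotheses of \cite[Lemma 4.3.4]{McduffSalamon} (that $\epsilon\scurve$ is totally geodesic there, $JT(\epsilon\scurve)$ is orthogonal to $T(\epsilon\scurve)$, and $J$ is skew-adjoint for $g^S$) and invoking \cite[Remark 4.3.2]{McduffSalamon} for the uniform threshold, which is just the coordinate-free form of your explicit flat-chart translation argument.
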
	
	\begin{proof}
		The Sasaki almost complex structure $J$ already satisfies the conditions in \cite[Lemma 4.3.4]{McduffSalamon} that \textit{outside} $T^{\ast}U(2\delta)$, $\epsilon\scurve$ is totally geodesic, $JT(\epsilon\scurve)$ is orthogonal to $T(\epsilon\scurve)$ and $J$ is skew-adjoint with respect to $g^S$. Then by  \cite[Remark 4.3.2]{McduffSalamon}, there exists some $\hbar=\hbar(g^{\phi}_{\delta},\eta)>0$ such that the statement of Lemma  \ref{Mainestimate} holds. The same argument applies for $F_z,z\in C(\delta;E)$
	\end{proof}

	Fix now some $\epsilon>0$. Suppose we have an $\epsilon$-BPS disc $u$ ending at $z$ and suppose $u$ admits a subdomain $(E_\halfradius,\partial E_\halfradius)\subset (\mathcal{Z},\partial \mathcal{Z})$ such that $u\vert_{\partial E_{\halfradius}}$ maps outside $T^{\ast}U(2\delta)$. Suppose $\epsilon$ is small enough so that $2E\epsilon<\hbar$. By Proposition \ref{uniformenergy}, the total energy of $u$ is bounded above by $2E\epsilon$, so $u\vert_{E_\halfradius}$ satisfies the conditions in Lemma \ref{Mainestimate}. From this, we see that $\sup_{E_{\frac{1}{2}\halfradius}}\abs{du}$ must be bounded above by $\frac{8}{\halfradius}\sqrt{\frac{E}{\pi}}\epsilon^{1/2}$. 
	
	The following estimate by Ekholm improves the above $O(\epsilon^{1/2})$-estimate to an $O(\epsilon)$-estimate, given that the image of the disc lies in some disc bundle of radius $O(\epsilon)$. Observe that for $u=(q,p)$, we get $\abs{p}\leq \epsilon$ from the integrated maximum principle (see also \cite[Lemma 5.5]{Morseflowtree}). 
	\begin{lemma}{\cite[Lemma 5.7]{Morseflowtree}}\label{Otestimte} Fix some positive constants $\halfradius,C_1,C_2>0$, then for sufficiently small $\epsilon>0$, the following holds.
		\begin{itemize}
			\item Let $u:A_{8\halfradius}\to D_{C_1\epsilon}^{\ast} \tilde{C}$ be a $J$-holomorphic disc such that $Area(u)<C_2\epsilon$. Then there exists a constant $k(\halfradius,\delta,\eta,\phi,C_1,C_2)>0$ such that
			\begin{align}
				\sup_{A_{\halfradius}}	\abs{Du}\leq k\epsilon. \label{interiorOtestimate}
			\end{align}
			\item Let $u:E_{8\halfradius}\to D_{C_1\epsilon}^{\ast} \tilde{C}$ be a $J$-holomorphic half-disc such that $Area(u)<C_2\epsilon$ and $u(\partial E_{8\halfradius})$ lies on either $\epsilon\scurve$ outside $T^{\ast}U(2\delta)$, or on $F_z$ for $z\in C(\delta;E)$. Then there exists a constant $k(\halfradius,\delta,\eta,\phi,C_1,C_2)>0$ such that
			\begin{align}
				\sup_{E_{\halfradius} }\abs{Du}\leq k\epsilon.
				\label{boundaryOtestimate}
			\end{align}
		\end{itemize}
	\end{lemma}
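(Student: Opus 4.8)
The plan is to follow Ekholm's strategy in \cite[Lemma~5.7]{Morseflowtree}: bootstrap the $O(\epsilon^{1/2})$ gradient bound coming from Lemma~\ref{Mainestimate} to an $O(\epsilon)$ bound by rescaling and invoking elliptic regularity together with a removal-of-singularity / contradiction argument. First I would treat the \emph{interior} estimate. Suppose it fails: then there is a sequence $\epsilon_n\to 0$, $J$-holomorphic discs $u_n:A_{8\halfradius}\to D_{C_1\epsilon_n}^{\ast}\tilde C$ with $\Area(u_n)<C_2\epsilon_n$, and points $\zeta_n\in A_{\halfradius}$ with $|Du_n(\zeta_n)|/\epsilon_n\to\infty$. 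Using Lemma~\ref{Mainestimate} we already know $|Du_n|=O(\epsilon_n^{1/2})$ uniformly on $A_{4\halfradius}$, so after translating $\zeta_n$ to the origin and performing the standard Hofer-type point-picking (choosing a near-maximum of $|Du_n|$ on a shrinking disc), we rescale the domain by $c_n:=|Du_n(\zeta_n)|$ and the target by $\epsilon_n^{-1}$; that is, set $v_n(\xi):=\epsilon_n^{-1}\big(u_n(\zeta_n+\xi/c_n)-u_n(\zeta_n)\big)$ in the flat $\int\sqrt\phi$-coordinate (valid since the image stays in $D_{C_1\epsilon_n}^{\ast}\tilde C$, well inside a single chart where $g=g^\phi$, $J=J_{std}$). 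Because the fibre coordinate is of size $O(\epsilon_n)$ while the horizontal size is $O(\epsilon_n^{1/2})$, the rescaled maps $v_n$ have uniformly bounded gradient, bounded area, and their domains exhaust $\mathbb C$ (since $c_n/\epsilon_n\to\infty$ forces $c_n\cdot\halfradius\to\infty$); by the ellipticity of $\bar\partial$ and a diagonal argument $v_n$ converges in $C^\infty_{loc}$ to a non-constant finite-area $J_{std}$-holomorphic plane $v_\infty:\mathbb C\to\mathbb C^2$ with $|Dv_\infty(0)|=1$. But finite symplectic area forces $v_\infty$ to extend over $\infty$ (removal of singularity) and hence to be constant, a contradiction.

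For the \emph{boundary} estimate the argument is the same with half-discs: one works near a boundary point, and the key point is that along $\partial E_{8\halfradius}$ the image lies either on $\epsilon\scurve$ outside $T^{\ast}U(2\delta)$ --- where $\epsilon\scurve$ is totally geodesic and reads as the \emph{flat} linear plane $\{y_1=\pm\epsilon,\,y_2=0\}$ in the $\int\sqrt\phi$-coordinate --- or on a cotangent fibre $F_z$ with $z\in C(\delta;E)$, which is also a linear Lagrangian plane. In either case, after rescaling target by $\epsilon_n^{-1}$ the boundary condition converges (a constant translation $y_1=\pm1$, resp. $y_1=y_2=0$ moves to a fixed affine Lagrangian plane as $n\to\infty$), so the limit $v_\infty:(\mathbb C,\mathbb R)\to(\mathbb C^2,\Lambda)$ for a linear Lagrangian $\Lambda$; doubling across the boundary and again invoking removal of singularity forces $v_\infty$ constant, contradicting $|Dv_\infty(0)|=1$. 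Here the hypotheses $z\in C(\delta;E)$ and ``outside $T^{\ast}U(2\delta)$'' are exactly what guarantee that the relevant patch of $\tilde C$ is in the flat region where $g_\delta^\phi=g^\phi$ and the Lagrangian boundary conditions are genuinely affine-linear, so that the rescaled limit problem is the linear one.

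The main obstacle is making the point-picking and rescaling bookkeeping precise when the domain carries a \emph{boundary}: one must ensure the chosen near-maximum point is either interior or boundary in a controlled way, that the rescaled half-discs $E_{c_n\halfradius}$ genuinely exhaust a half-plane (or the full plane, if the blow-up point recedes from the boundary), and that the boundary arcs $u_n(\partial E)$ --- a priori only known to be transverse to $\partial N_t(K)$ etc.\ --- stay in the flat chart; this is where one uses the $a\ priori$ confinement of Proposition~\ref{boundaryestimate} and the $O(\epsilon^{1/2})$ bound of Lemma~\ref{Mainestimate} simultaneously. A secondary technical point is that the constant $k$ must be uniform in $\halfradius,C_1,C_2$ (and depend on $\phi,\delta,\eta$ only through the fixed geometry): this follows because the contradiction argument is scale-invariant and the only geometric inputs are the injectivity radius and the flatness outside $U(2\delta)$, both of which are fixed once $\delta,\eta$ are. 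I would then remark that this is verbatim Ekholm's \cite[Lemma~5.7]{Morseflowtree}, the only change being the replacement of the zero section / Legendrian lift by $\epsilon\scurve$ and $F_z$, which, by the above, present identical linear boundary models in the flat region.
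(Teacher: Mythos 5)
Your proposed blow-up argument has a genuine gap, and it is exactly at the point where the anisotropy of the problem enters. First, the bookkeeping is inconsistent: with $v_n(\xi)=\epsilon_n^{-1}\bigl(u_n(\zeta_n+\xi/c_n)-u_n(\zeta_n)\bigr)$ and $c_n=\abs{Du_n(\zeta_n)}$ one gets $\abs{Dv_n(0)}=\epsilon_n^{-1}c_n^{-1}\abs{Du_n(\zeta_n)}=\epsilon_n^{-1}\to\infty$, not $1$; to normalize the derivative you would need $c_n=\epsilon_n^{-1}\abs{Du_n(\zeta_n)}$. Second, and more fundamentally, the finite-area input you rely on is not available: rescaling the target by $\epsilon_n^{-1}$ multiplies symplectic area by $\epsilon_n^{-2}$, so the hypothesis $Area(u_n)<C_2\epsilon_n$ only gives $\int v_n^{\ast}\omega_{std}=O(\epsilon_n^{-1})$, which is unbounded. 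Without a uniform area bound, removal of singularity does not apply and the limit need not be constant — indeed nonconstant holomorphic (half-)planes with bounded imaginary part and affine-linear Lagrangian boundary exist in abundance (e.g.\ $\xi\mapsto(\xi,0)$ with boundary on $\mathbb{R}^2\subset\mathbb{C}^2$), so no contradiction is reached. The constraint $u\subset D_{C_1\epsilon}^{\ast}\tilde{C}$ must be used quantitatively, not merely to say the boundary conditions converge.

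This is why the paper (following Ekholm) argues differently: in the flat chart one composes the doubled, fibre-rescaled map $\tilde v=\epsilon^{-1}\hat v$ (whose imaginary part is bounded by $C_1$, thanks to $\abs{p}\leq C_1\epsilon$) with $F(z_1,z_2)=(e^{iz_1},e^{iz_2})$ to obtain a \emph{bounded} holomorphic map, applies Cauchy's inequality to bound $\norm{D\tilde v}_{L^2}$ on a smaller disc, and undoes the rescaling to conclude that the energy of $u$ on $E_{2\halfradius}$ is $O(\epsilon^2)$; the pointwise bound $\sup_{E_{\halfradius}}\abs{Du}\leq k\epsilon$ then follows from the mean-value inequality of Lemma \ref{Mainestimate}. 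If you want to salvage a compactness-style proof you would have to build this $O(\epsilon^2)$ energy improvement (or an equivalent Cauchy-estimate step exploiting boundedness of the fibre coordinate) into the rescaling scheme; the naive point-picking/removal-of-singularity template by itself cannot see the difference between the scales $\epsilon$ (fibre) and $\epsilon^{1/2}$ (gradient), which is precisely what the lemma is about.
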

	\begin{proof}
		Take a small enough $\epsilon$ so that $C_2 \epsilon<\hbar$.
		The idea is to show that the geometric energy of $u$ restricted to $E_{2\halfradius}(p)$ is actually of the size $O(\epsilon^2)$.  Hence applying Lemma \ref{Mainestimate}, we see that $\norm{Du}$ on $E_{\halfradius}$ is of size $O(\epsilon)$ which is precisely (\ref{boundaryOtestimate}) in the case $\partial E_{8\halfradius}$ maps to either $\epsilon\scurve$ or $F_z$. The proof is essentially the same as the proof of \cite[Lemma 5.7]{Morseflowtree}. 
		
		The case where $\partial E_{8\halfradius}\cap \partial \triangle_m$ maps to $\epsilon\scurve$ is unchanged. For the case the boundary maps to $F_z$, note that since the energy of $u$ is bounded above by $C_1 \epsilon$ on $E_{8\halfradius}$, the $C^1$ norm of $u_\epsilon$ on $E_{4\halfradius}$ is of $O(\epsilon^{1/2})$ by Lemma \ref{Mainestimate}. This implies that after taking a uniformly bounded conformal isomorphism $\Phi:E_{4\halfradius}\simeq E_1$, the image of $E_1$ under $u\circ \Phi^{-1}$ remains $O(\epsilon^{1/2})$-close to $z$. So for $\epsilon>0$ small, we can ensure that for $z\in C(\delta;E)$, the image of $u\circ \Phi^{-1}$ on $E_1$ maps inside $T^{\ast}C(\delta;E)$. 
		
		However, we have a local isometry $G:(T^{\ast}\tilde{C},F_z)\simeq (\mathbb{C}^2,i\mathbb{R}^2)$ sending $J$ to the standard almost complex structure on $\mathbb{C}^{2}$ (induced from taking the coordinate $\int 
		\sqrt{\phi}$ near $z$). Composing with this isometry, we get holomorphic maps $v=G\circ u\circ \Phi^{-1}: A_1\to \mathbb{C}^2$, with the imaginary part bounded above by $C_1\epsilon$. Furthermore, we can double along $i\mathbb{R}^{2}$ to get maps $\hat{v}:E_1\to \mathbb{C}^2$. Let $\tilde{v}=\epsilon^{-1}\hat{v}$ then the imaginary part of $\tilde{v}$ is bounded above by $C_1$.
		
		Let $F(z_1,z_2)=(e^{iz_1},e^{iz_2})$, then $f=F\circ \tilde{v}$ is holomorphic. Furthermore, the image is uniformly bounded since the imaginary part of $\tilde{v}$ is uniformly bounded, and so is the derivative of $F$ on the images of $\tilde{v}$. The $L^2$-norm of $Df$ on the disc of radius $1/2$ can be uniformly bounded by $\sup \abs{f}$ by Cauchy's inequality\footnote{If $f:A_1\to \mathbb{C}$ is holomorphic, and $z\in A_{1/2}$, then \[\abs{D^{n}f(z)}\leq n!\cdot 
			\frac{\abs{f}_{
					\infty,D}}{(1/4)^n}.\]}. Furthermore, since by chain rule  $Df=DF(\tilde{v})D\tilde{v}(z)$ and both the norms of $Dv$ and $DF(\tilde{v})$ are bounded on $A_{1/2}$, so is the norm of $D\tilde{v}$. So we see that there exists some $k_1>0$ such that $\norm{D\tilde{v}}_{L^2,A_{1/2}}\leq k_1.$
		Now $\norm{D{\tilde{v}}}^2_{L^2,A_{1/2}}=\epsilon^{-2}\norm{D\hat{v}}_{L^2,A_{1/2}}^2$ hence
		\[\norm{D(u\circ \Phi^{-1})}_{L^2,E_{1/2}}^2=\norm{Dv}^2_{L^2,E_{1/2}}=\frac{1}{4}\norm{D\hat{v}}^2_{L^2,A_{1/2}}\leq \frac{1}{4}k_1\epsilon^2,\]				
		where the first equality follows from $v=G\circ u\circ \Phi^{-1}$ and $G$ being an isometry, and the second equality follows from $\hat{v}$ being a doubling of $v$. Here, recall that we had composed with a conformal equivalence $E_{4\halfradius}\simeq E_1$. Hence, we have shown that the energy of $u$ is of size $O(\epsilon^2)$ on $E_{2\halfradius}$, just as claimed. \footnote{The actual proof is more or less the same, except that there are some diffeomorphisms involved in sending the local graph $\epsilon\cdot graph(dg)$ uniformly to $\mathbb{R}^n$ and comparing the almost complex structure with the standard almost complex structure $J_0$ on $\mathbb{C}^n$. The resulting function $f$ in \cite[Lemma 5.6]{Morseflowtree} is not fully holomorphic but very close to one.}
	\end{proof}
	\subsection{Domain subdivision}\label{subsection domain subdivision}
	\begin{figure}[t]
		\includegraphics[width=0.8\textwidth]{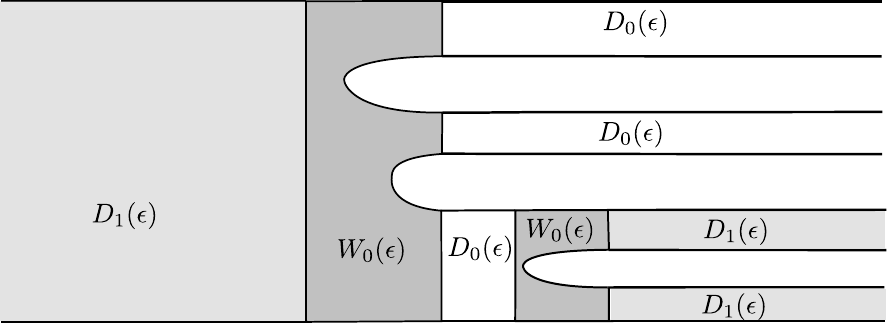}
		\centering
		\caption{$\stdm$ with the subdivisons $D_j(\epsilon)$. The white region is $D_{0}(\epsilon)-W_{0}(\epsilon)$. The dark gray region is $W_{0}(\epsilon)$. The light grey region is $D_1(\epsilon)$. The domains $D_1(\epsilon)$ and $D_0(\epsilon)$ may overlap. }
		\label{Domsubdivfig}
	\end{figure}
	To show  Theorem \ref{maintheorem}, we argue by contradiction. We assume that there exists a sequence of positive real numbers $\epsilon_n\to 0$ and a sequence of points $z_{\epsilon_n}\in C(\delta;E)$ converging to a point $z\in C(\delta;E)$ such that there exist $\epsilon_n$-BPS discs \[u_{\epsilon_n}:\mathcal{Z}\to T^{\ast}\tilde{C}\] ending at $z_{\epsilon_n}$. We will find a subsequence of $(z_{\epsilon_n},\epsilon_n)$ such that the corresponding discs lie strictly outside the desingularization region $T^{\ast}U((2+\eta)\delta)$.
	
	In order to do this, we modify the construction in \cite[Section 5.2]{Morseflowtree}, which will take the rest of Section \ref{subsection domain subdivision}. We introduce a uniformly finite number of punctures on the boundary of the domain $\mathcal{Z}$ of $u_\epsilon$ mapping to $\epsilon\scurve$. The new domain $\stdm$ admits a subdivison into domains $D_0(\epsilon)$ and $ D_{1}(\epsilon)$. Throughout this construction, we have to make choices for some auxiliary functions  $\delta_0(\epsilon)$. We now summarize their properties.
	
	\begin{itemize}
		\item $\partial D_j(\epsilon)-\partial \stdm$ consist of vertical line segments disjoint from the boundary minima.
		\item (Corollary \ref{D-1domainOt}) Over $D_{0}(\epsilon)$, we have
		\[\sup_{z\in D_{0}} \abs{Du_\epsilon(z)}\leq k\epsilon,\] 
		for some constant $k=k(\scurve,\delta)>0$.
		\item (Lemma \ref{escapeD-1}) The subdomain $D_{0}(\epsilon)$ is mapped outside of $T^{\ast}U((2+\frac{1}{2}\delta_{0})\delta)$ for some function $\delta_{0}=\delta_0(\epsilon)$ satisfying $0<\delta_{0}(\epsilon)<\frac{\eta}{10}$.
		\item The subdomain $D_1(\epsilon)$ is mapped inside $T^{\ast}U((2+\frac{9}{2}\delta_{0})\delta)$. 
	\end{itemize}
	\paragraph{Construction of domain subdivision}\label{Constructionofdomainsubdivisionparag}
	Now, we begin the construction. Fix a constant $0<\delta_{0}<\frac{\eta}{10}$ such that $u\vert \partial \mathcal{Z}$ is transverse to $\partial(T^{\ast}U((2+c\delta_{0})\delta))$ for $c\in \{1,2,3,4\}$.  Let $I\simeq \mathbb{R}$ be a boundary component of $\mathcal{Z}$. Let 
	\[b_1^c<b_2^c<....<b^c_{n(c)}, c=1,2,3,4\]
	be the points in $I$ such that $u(b_j^c)$ lies in the boundary $\partial(T^{\ast}U(2+c\delta_{0})\delta)$. Set $\infty=b^c_{k}$ for any $k>n(c)$. Let
	$B_i=\{b_1^c,....,b^c_{n(c)}\}$, $B=\cup B_i$, and $c(b):B\to \{1,2,3,4\}$ be the indexing function. For $2\leq c\leq 4$, we add a puncture at each $b_j^c$ and $b_{j+1}^c$ with the property that there exists some $b_k^{c-1}$ with $b_j^c<b_k^{c-1}<b_{j+1}^c$. In this case, we call $b_j^c$ an inward puncture and $b_{j+1}^c$ an outward puncture of type $c$.
	
	Intuitively, we are adding punctures every time the image of the boundary enters at the point $b_j^c$ and then leaves at the point $b_{j+1}^c$ the same ``level" $\partial(T^{\ast}U((2+c\delta_{0})\delta)$. Note also that at $b_{j+1}^c$, the image of the boundary points outward. Observe that all the punctures map into the neighbourhood $U((2+\eta)\delta)$.
	
	Taking out the punctures,  we arrive at a new domain $\stdm=\triangle_{2+m_1}$ with a holomorphic map $u:\stdm\to T^{\ast}\tilde{C}$. It can be readily checked that the boundary components $\tilde{I}$ of $\stdm$ separate into three different types:
	\begin{itemize}
		\item \textbf{Out}: $u(I)\subset T^{\ast}(\tilde{C}-U((2+{3}\delta_{0})\delta))$ (intervals between original punctures and $c=4$ punctures, or intervals between $c=4$ punctures and original punctures, or intervals between out $c=4$ punctures and in $c=4$ punctures.	)
		\item \textbf{In}: $u(I)\subset T^{\ast}U((2+2\delta_{0})\delta)$ (intervals between $b^2_i$ and $b^2_{i+1}$ such that there is some $b^2_i<b^1_j<b^2_{i+1}$)
		\item $\textbf{0}$: $u(I)\subset  T^{\ast}((U(2+{4}\delta_{0})\delta)-U((2+\delta_{0})\delta))$ (everything else)
	\end{itemize}
	\begin{figure}
		\centering
		\includegraphics[scale=0.8]{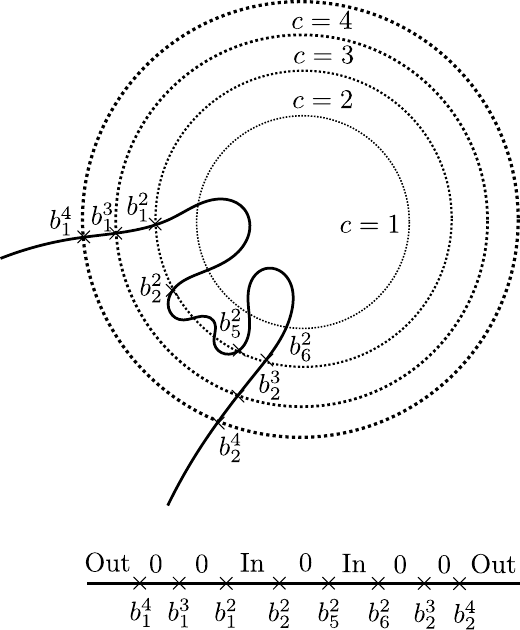}
		\caption{Illustration of $\partial u$ intersecting various $T^{\ast}U((2+c\delta_0)\delta)$. The punctures are introduced at $b_1^4,b_1^3,b_1^2,b_2^2,b^2_5,b^2_6,b_2^3$ and $b_2^4$. The complement of the punctures separate into intervals of type $0$, Out and In.}    \label{fig:puncturing}
	\end{figure}
	\begin{figure}
		\centering
		\includegraphics[scale=0.8]{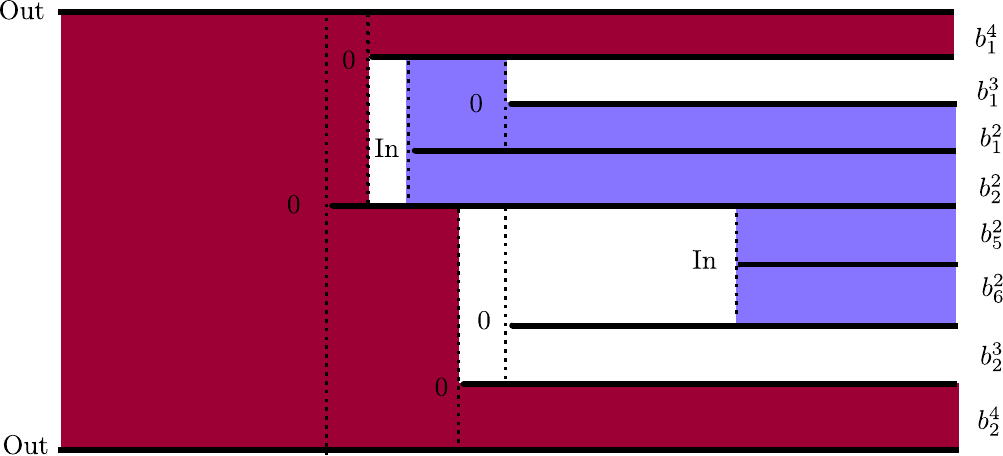}
		\caption{Removing the punctures, we arrive at our new domain $\stdm$. Here the slits are drawn as thickened black lines in the interior, and the vertical rays passing through the boundary minima are given by dotted lines. We colour the domains that are connected to \textbf{Out} boundary components via vertical rays \textit{red}. We colour the domains that are connected to \textit{In} boundary components via vertical rays \textit{blue}. The union of the blue domains give $D'$}
		\label{fig:domainsubdivision_1}
	\end{figure}
	See Figure \ref{fig:puncturing}. 
	
	One important property is that the number of added punctures is uniformly finite. 
	\begin{lemma}{\cite[Lemma 5.9]{Morseflowtree}}
		There exists a constant $R=R(\delta_0)>0$ such that the number $m_1$ of added punctures satisfies $m_1\leq R$.
	\end{lemma}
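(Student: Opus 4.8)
The plan is to follow Ekholm's argument in \cite[Lemma 5.9]{Morseflowtree}: each added puncture is an endpoint of a \emph{dip} of the boundary arc $u|_I$ into and out of one of the three concentric shells $\partial\big(T^{\ast}U((2+c\delta_0)\delta)\big)$, $c\in\{2,3,4\}$, so it suffices to bound, $\epsilon$-uniformly, the number of such dips. Recall the construction: for fixed $c$ the points $b_1^c<\dots<b_{n(c)}^c$ of $I$ with $u(b_j^c)\in\partial\big(T^{\ast}U((2+c\delta_0)\delta)\big)$ are listed in increasing order, and punctures are added at a consecutive pair $b_j^c$ (inward), $b_{j+1}^c$ (outward) exactly when there is some $b_k^{c-1}$ with $b_j^c<b_k^{c-1}<b_{j+1}^c$; call $(b_j^c,b_{j+1}^c)$ a dip of type $c$, and let $d_c$ be their number. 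Since each puncture of level $c$ is an endpoint of such a dip, and every dip has at most two endpoints, $m_1\le 2(d_2+d_3+d_4)$, so it is enough to bound each $d_c$.

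The first step is the \textbf{cost of a dip}. Let $(b_j^c,b_{j+1}^c)$ be a dip of type $c$. Because the $b_i^c$ exhaust the times at which $u|_I$ meets the shell of radius $(2+c\delta_0)\delta$, the arc $u|_{(b_j^c,b_{j+1}^c)}$ stays inside $T^{\ast}U((2+c\delta_0)\delta)$, begins and ends on that shell, and at $b_k^{c-1}$ reaches the inner shell of radius $(2+(c-1)\delta_0)\delta$. Hence $\pi\circ u|_{(b_j^c,b_{j+1}^c)}$ crosses the annulus $A_c:=U((2+c\delta_0)\delta)\setminus U((2+(c-1)\delta_0)\delta)$ at least twice, once inward and once outward. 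On $A_c$ one has $g^{\phi}_{\delta}=g^{\phi}$ (since $(2+(c-1)\delta_0)\delta\ge(2+\delta_0)\delta>(2-\eta)\delta$), so the $\phi$-width of $A_c$ is at least $\delta_0\delta$, and since the projection $\pi:\epsilon\scurve\to\tilde C$ is $1$-Lipschitz for $g^S$ (the horizontal distribution is an isometric copy of $T\tilde C$ and is orthogonal to the vertical one), the portion of the boundary arc $u|_I$ lying over $A_c$ has $\epsilon\scurve$-length at least $2\delta_0\delta$. Moreover, distinct dips of type $c$ are intervals $(b_j^c,b_{j+1}^c)$ with pairwise disjoint interiors, so their corresponding boundary arcs over $A_c$ are pairwise essentially disjoint arcs of $u|_I$.

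The second step is a \textbf{uniform length bound over $A_c$}. Note $A_c$ lies outside $U(2\delta)$ (as $(2+(c-1)\delta_0)\delta>2\delta$) and at $\phi$-distance $\ge 2\delta$ from every branch point, so over $A_c$ the curve $\epsilon\scurve$ is a trivial two-sheeted cover whose sheets are $C^\infty$-close to the zero section with tameness radius $r_{\epsilon\scurve}=O(\epsilon)$. By Lemma \ref{lengthestimate} the length of $u(\partial\mathcal Z)\cap\epsilon\scurve$ over $A_c\setminus T^{\ast}B_{\eta/2}(z)$ is bounded by an $\epsilon$-independent constant $\Lambda_0(\delta,\eta)$; over the precompact remainder $A_c\cap T^{\ast}B_{\eta/2}(z)$, since $\mathrm{Area}(u)\le 2E\epsilon$ by Proposition \ref{uniformenergy} and $r_{\epsilon\scurve}=O(\epsilon)$ there, the same monotonicity estimate as in Proposition \ref{totaldomainestimate} and the Remark following it bounds this length by some $\Lambda_1(\delta,\eta,E)$ independent of $\epsilon$. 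With $\Lambda:=\Lambda_0+\Lambda_1$ (independent of $\epsilon$ and of $\delta_0$), disjointness of the dip arcs over $A_c$ together with the per-dip lower bound $2\delta_0\delta$ gives $d_c\le \Lambda/(2\delta_0\delta)$. Summing over $c\in\{2,3,4\}$ yields $m_1\le 3\Lambda/(\delta_0\delta)=:R(\delta_0)$, since $\delta,\eta,E,\phi$ are fixed throughout.

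The only genuinely delicate point is the $\epsilon$-uniformity of the length bound over $A_c$ — that $u(\partial\mathcal Z)$ does not accumulate arbitrarily long near $z$ or near the shells as $\epsilon\to0$ — which is exactly what Lemma \ref{lengthestimate} and the $r_{\epsilon\scurve}=O(\epsilon)$ behaviour of the rescaled spectral curve are designed to provide; once these are in hand, the remainder is finite-length bookkeeping over the three disjoint annuli $A_2,A_3,A_4$, with the combinatorics of which $b_i^c$ count as inward or outward punctures handled as in \cite[Section 5.2]{Morseflowtree}.
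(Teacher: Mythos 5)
Your core argument is the same as the paper's: every added puncture forces a sub-arc of $u|_{\partial\mathcal Z}$ on $\epsilon\scurve$ joining $\partial\big(T^{\ast}U((2+c\delta_0)\delta)\big)$ to $\partial\big(T^{\ast}U((2+(c-1)\delta_0)\delta)\big)$, such a sub-arc has length bounded below by the $g^{\phi}_{\delta}$-distance between the two shells (the projection being $1$-Lipschitz for the Sasaki metric), and the total boundary length outside $T^{\ast}U(2\delta)$ is a priori bounded by Lemma \ref{lengthestimate}; the paper's proof is exactly this, without the dip/endpoint bookkeeping. Two remarks on your version. First, the claim that the $\phi$-width of the annulus $A_c$ is at least $\delta_0\delta$ is not correct for small $\delta$: near a simple zero $g^{\phi}$ is comparable to $\abs{\xi}\abs{d\xi}^2$, so the radial width of $A_c$ is of order $\delta_0\delta^{3/2}$. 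This is harmless, since all that is needed (and all the paper asserts) is a positive, $\epsilon$-independent lower bound depending only on $\delta_0$, $\delta$ and $\phi$.

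Second, and more substantively, your treatment of $A_c\cap T^{\ast}B_{\eta/2}(z)$ does not work as written. Proposition \ref{totaldomainestimate}, the monotonicity lemma, and the remark you cite control the \emph{area} of a curve passing through a point and the \emph{diameter} of its image; none of them gives an upper bound on the \emph{length} of $\partial u\cap\epsilon\scurve$. An upper bound on boundary length is precisely the content of the truncated reverse isoperimetric inequality, i.e.\ Lemma \ref{lengthestimate} itself, and is not a consequence of monotonicity, so the constant $\Lambda_1$ you introduce is unjustified. If you want to cover the possibility that the metric ball $B_{\eta/2}(z)$ meets the annuli $A_c$ (a corner case the paper's own citation of Lemma \ref{lengthestimate} silently ignores), the correct repair is to rerun the proof of Lemma \ref{lengthestimate} with the excised ball around $z$ shrunk so as to be disjoint from $U((2+4\delta_0)\delta)$; the argument goes through verbatim, only the ($\epsilon$-independent) constant changes, and then your counting gives $m_1\le R(\delta_0)$ as claimed.
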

	\begin{proof}
		Each new puncture corresponds to a segment in $u(\partial \triangle_m)$ connecting the boundary $\partial T^{\ast}U((2+c\delta_{0})\delta)$ to $\partial(T^{\ast}U(2+{(c-1)}\delta_{0})\delta)$, $c=2,3,4$. The lengths of these segments admit a positive lower bound given by 
		\[\min_{c=2,3,4} d_{g^{\phi}_{\delta}}(\partial(U(2+c\delta_{0})\delta),\partial(U(2+(c-1)\delta_{0})\delta))\]
		by the definition of the Sasaki almost complex structure. Then, the proof follows from the a priori bound on the total length of the boundary components outside $T^{\ast}U(2\delta)$ (Lemma \ref{lengthestimate}).
	\end{proof}
	
	Note that a boundary component $I$ which maps into fibres $F_z$ for $z\in C(\delta;E)$ is automatically an \textbf{Out} boundary component, since $C(\delta;E)$ does not intersect $U((2+\eta)\delta)$ which contains $U((2+2\delta_0)\delta)$ since $\delta_0<\frac{\eta}{10}$. \\ 
	From now on, given a subset $S\subset \stdm$ and $l>0$, let $B_l(S)$ denote the $l$-neighbourhood of $S$ in $\stdm$. 
	For $1/4>d>0$ let $\Omega_d=\stdm-\bigcup_{I\subset \partial \stdm} B_d(I)$. Fix a small $\halfradius>0$ so that for $p\in \textbf{Out}\cup \textbf{0}$, the conformal domain $B_\halfradius(p)$ is uniformly conformally equivalent to $E_{{\halfradius}/{2}}(p)$ independent of $\epsilon$. Let $\Theta_{\halfradius}=\Omega_{\halfradius}\cup \bigcup_{I\in \textbf{Out}\cup \textbf{0}} B_{\halfradius}(I)$. 
	
	We have from Theorem \ref{Otestimte} that:
	
	\begin{lemma}{\cite[Lemma 5.10]{Morseflowtree}}
		There exist a constant $k>0$ such that if $\epsilon>0$ is sufficiently small then
		\[\sup_{z\in \Theta_{\halfradius}} \abs{Du}\leq k\epsilon.\] \label{D0estimate}
	\end{lemma}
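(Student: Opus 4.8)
The statement to prove, \cite[Lemma 5.10]{Morseflowtree} in our setting, asserts a uniform $O(\epsilon)$ gradient bound on the enlarged region $\Theta_{\halfradius}$. The plan is to deduce this directly from the already-established boundary and interior $O(\epsilon)$-estimates of \cref{Otestimte}, combined with the a priori energy bound of \cref{uniformenergy} and the $C^0$-confinement of \cref{boundaryestimate}, by a compactness/covering argument on the conformal domain $\stdm$.

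First I would fix the radius $\halfradius>0$ small enough that for every $p$ lying on an \textbf{Out} or \textbf{0} boundary component, the conformal neighbourhood $B_{\halfradius}(p)\subset\stdm$ is uniformly conformally equivalent to a half-disc $E_{\halfradius/2}(p)$ (this is possible independently of $\epsilon$ precisely because the punctures added to form $\stdm$ are a uniformly bounded distance apart in the domain, away from the boundary minima). Similarly, every interior point $p\in\Omega_{\halfradius}$ admits a round disc $A_{\halfradius/2}(p)$ sitting inside $\stdm$. Then $\Theta_{\halfradius}=\Omega_{\halfradius}\cup\bigcup_{I\in\textbf{Out}\cup\textbf{0}}B_{\halfradius}(I)$ is covered by the half-discs $E_{8\halfradius'}(p)$ (for $p$ on \textbf{Out}/\textbf{0} components, after rescaling $\halfradius'=\halfradius/16$ or similar) and interior discs $A_{8\halfradius'}(p)$, with the shrunk discs $E_{\halfradius'}(p)$, $A_{\halfradius'}(p)$ still covering $\Theta_{\halfradius}$.

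Next I would verify the hypotheses of \cref{Otestimte} on each such disc. By \cref{boundaryestimate}, for all sufficiently small $\epsilon$ the disc $u_\epsilon$ maps into the precompact set $P=D_1^{\ast}K^{\circ}$; in particular $u_\epsilon=(q,p)$ has $\abs{p}\leq\epsilon$ by the integrated maximum principle, so $u_\epsilon$ lands in $D_{\epsilon}^{\ast}\tilde{C}\subset D_{C_1\epsilon}^{\ast}\tilde{C}$ with $C_1=1$. By \cref{uniformenergy}, $Area(u_\epsilon)\leq 2E\epsilon<C_2\epsilon$ with $C_2=2E$. Crucially, by construction of the domain subdivision, the boundary arcs of $\stdm$ of type \textbf{Out} and \textbf{0} map outside $T^{\ast}U((2+\delta_0)\delta)\supset T^{\ast}U(2\delta)$ is \emph{false} as stated, so one must be slightly careful: type \textbf{0} arcs map into $T^{\ast}(U((2+4\delta_0)\delta)-U((2+\delta_0)\delta))$, which still lies outside $T^{\ast}U(2\delta)$, and \textbf{Out} arcs map outside $T^{\ast}U((2+3\delta_0)\delta)\supset T^{\ast}U(2\delta)$; so in both cases the boundary $u_\epsilon(\partial E_{8\halfradius'}(p))$ lies on $\epsilon\scurve$ (or on $F_z$ for $z\in C(\delta;E)$, in the \textbf{Out} case coming from the $F_z$-boundary) outside $T^{\ast}U(2\delta)$, exactly the hypothesis of the second bullet of \cref{Otestimte}. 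Applying \cref{Otestimte} on each disc of the cover yields $\sup_{E_{\halfradius'}(p)}\abs{Du_\epsilon}\leq k_p\epsilon$ and $\sup_{A_{\halfradius'}(p)}\abs{Du_\epsilon}\leq k_p\epsilon$, where each $k_p$ depends only on $(\halfradius',\delta,\eta,\phi,C_1,C_2)$ — hence on $(\halfradius,\delta,\eta,\phi,E)$ but not on $p$ or $\epsilon$. Taking $k$ to be the supremum of these finitely many (indeed, uniformly bounded in number, by the finiteness of added punctures, \cite[Lemma 5.9]{Morseflowtree}) constants gives $\sup_{z\in\Theta_{\halfradius}}\abs{Du_\epsilon(z)}\leq k\epsilon$ for all sufficiently small $\epsilon$.

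The main obstacle, and the only genuinely non-formal point, is ensuring that the conformal geometry near the boundary components is uniformly controlled in $\epsilon$: when punctures are added to $\mathcal{Z}$ to form $\stdm$, the domain acquires slits, and a priori a point $p$ near a slit might not admit a half-disc $E_{8\halfradius'}(p)$ of fixed conformal size independent of $\epsilon$. This is handled exactly as in Ekholm's argument by choosing $\halfradius$ small relative to the (uniform, $\epsilon$-independent) minimal separation between consecutive punctures guaranteed by the length bound \cref{lengthestimate} together with \cite[Lemma 5.9]{Morseflowtree}; the vertical rays through the boundary minima are kept away from the discs so the conformal type stays $E_{\halfradius/2}$. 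Once this uniformity is in place, everything else is a direct citation of \cref{Otestimte}, \cref{uniformenergy}, and \cref{boundaryestimate}.
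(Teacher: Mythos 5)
Your proposal is correct and follows essentially the same route as the paper: the integrated maximum principle gives $\abs{p}\leq\epsilon$, \cref{uniformenergy} gives the $O(\epsilon)$ energy bound, and then \cref{Otestimte} is applied over the (uniformly conformally controlled) half-discs and interior discs covering $\Theta_{\halfradius}$, with the \textbf{Out} and \textbf{0} boundary arcs landing outside $T^{\ast}U(2\delta)$ so its boundary hypothesis holds. Your extra care about the covering and the uniform conformal size of the half-discs near the added punctures is exactly the point the paper compresses into its choice of $\halfradius$ before the lemma.
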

	\begin{proof}
		By the integrated maximum principle, for $u_\epsilon=(q_\epsilon,p_\epsilon)$, $\abs{p_\epsilon}\leq \epsilon$ (see also \cite[Lemma 5.5]{Morseflowtree}). Suppose $\epsilon$ is small enough that $2E\epsilon<\hbar$. By Proposition \ref{uniformenergy}, the total energy of $u_\epsilon$ is bounded above by $2E\epsilon$, so $u\vert_{\Theta_{\halfradius}}$ satisfies the conditions in Lemma \ref{Otestimte}, after restricting to a smaller neighbourhood of radius $\epsilon$ on the boundary which is uniformly conformally equivalent to $E_{\halfradius/{2}}$. 
	\end{proof}
	\begin{figure}
		\centering
		\usetikzlibrary{decorations.pathmorphing}
\usetikzlibrary{decorations.markings}
% Set the overall layout of the tree
\tikzstyle{level 1}=[level distance=3.5cm, sibling distance=3.5cm]
\tikzstyle{level 2}=[level distance=3.5cm, sibling distance=2cm]

% Define styles for bags and leafs
%\tikzstyle{bag} = [text width=4em, text centered]
%\tikzstyle{end} = [circle, minimum width=3pt,fill, inner sep=0pt]
\tikzset{
   red/.style={draw=red,
   , postaction={decorate},
        decoration={markings}},
    blue/.style={draw=blue, 
    postaction={decorate},
        decoration={markings}}, 
            null/.style={draw=blue, 
    postaction={decorate},
        decoration={markings}}, 
}

% The sloped option gives rotated edge labels. Personally
% I find sloped labels a bit difficult to read. Remove the sloped options
% to get horizontal labels. 
\begin{tikzpicture}[
        thick,
        % Set the overall layout of the tree
        level/.style={level distance=2cm},
        level 2/.style={sibling distance=2.6cm},
        level 3/.style={sibling distance=1cm}
    ]
    \coordinate
        child[grow=left]{
           edge from parent[red]
        }
        % I have to insert a dummy child to get the tree to grow
        % correctly to the right.
        child[grow=right, level distance=0pt] {
       child{
       child{
       node {$b_2^4$}
       edge from parent [red]
       }
       child{
       child{
       node {$b_2^3$}
       edge from parent [black]
       }
       child{
       child{
       node {$b_6^2$}
       edge from parent [blue]
       }
       child{
       node {$b_5^2$}
       edge from parent [blue]
       }
       edge from parent [black]
       }
       edge from parent [black]
       }
       edge from parent [red]
       }
       child{
       child{
       child{
       node {$b_2^2$}
       edge from parent [blue]
       }
       child{
       child{
       node {$b_1^2$}
       edge from parent [blue]
       }
       child{
       node {$b_1^3$}
       edge from parent [black]
       }
       edge from parent [blue]
       }
       edge from parent [black]
       }
        child{
       node {$b_1^4$}
       edge from parent [red]
       }
       edge from parent [red]
       }
    };

\end{tikzpicture}
		\caption{Illustration of the tree $T$ obtained from $\stdm$.}
		\label{fig:tree}
	\end{figure}
	
	Recall how in Section \ref{moduliproblemss} we constructed the subdivision of $\stdm$ induced from the vertical rays passing through the boundary minima and obtained a stable $\npunct$-leaved tree $T$ from $\stdm$, whose edges were given by the connected components of the complement of the rays. Colour an edge/component blue if the corresponding component contains an \textbf{In} horizontal boundary segment. Consider the union of all the blue connected components. Observe that this union is equivalent to $D'\subset \stdm$, the union of all the vertical line segments in $\stdm$ connecting a point in a type-\textbf{In} boundary component to some other boundary point on $\partial\stdm$. The set $\partial D'-\partial \stdm$ is a collection of vertical line segments. In particular, the number of edges of $T$ is uniformly bounded with respect to $\npunct$. We state {\cite[Lemma 5.11]{Morseflowtree}} without proof since the proof is word-to-word the same. 
	
	\begin{lemma}\label{domainseparationlemma}
		For any $0<a<1$ and for sufficiently small $\epsilon>0$ we have $d(p,D')>\epsilon^{-a}$ for any point $p\in I$, where $I$ is a boundary segment of type \textbf{Out}. In particular, a vertical segment $l$ in $\partial D'-\partial \stdm$ has its end points either on the boundary minimum of a boundary segment of type \textbf{in} or on a boundary segment of type $\textbf{0}$. 
	\end{lemma}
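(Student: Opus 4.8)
\textbf{Proof proposal for Lemma \ref{domainseparationlemma}.}

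The plan is to follow Ekholm's argument in \cite[Lemma 5.11]{Morseflowtree} essentially verbatim, with the only modification being that the relevant bounds on boundary length are now supplied by Lemma \ref{lengthestimate} rather than the corresponding estimate in Ekholm's setting. The key point is a competition between two scales: a horizontal boundary segment $I$ of type \textbf{Out} maps into $T^{\ast}(\tilde{C}-U((2+3\delta_0)\delta))$, whereas $D'$ consists of vertical segments whose horizontal boundary portions map into $T^{\ast}U((2+2\delta_0)\delta)$; hence a conformal path in $\stdm$ from a point $p\in I$ to $D'$ forces the image under $u_\epsilon$ to traverse a region of fixed $g^\phi_\delta$-width, namely a region separating $\partial(U((2+3\delta_0)\delta))$ from $\partial(U((2+2\delta_0)\delta))$. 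Since $\sup|Du_\epsilon|$ on the relevant subdomain $\Theta_{\halfradius}$ is $O(\epsilon)$ by Lemma \ref{D0estimate}, the preimage of any such traversal must have (intrinsic) diameter at least of order $\epsilon^{-1}$ times a constant depending only on $\delta,\eta,\phi$; this already beats $\epsilon^{-a}$ for any $a<1$ once $\epsilon$ is small.

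Concretely, I would argue by contradiction: suppose for a subsequence $\epsilon\to 0$ there is a point $p_\epsilon$ on a type-\textbf{Out} segment $I_\epsilon$ with $d(p_\epsilon,D')\leq \epsilon^{-a}$. First I would observe that, because the subdivision tree $T$ has a uniformly bounded number of edges (as noted just before the lemma) and the $D_j(\epsilon)$ have uniformly finitely many components, one can pass to a subsequence along which the combinatorial type of $\stdm$ and the adjacency pattern of $D'$ is constant. Then a conformal path $\gamma$ in $\stdm$ from $p_\epsilon$ to a point of $D'$ of length $\leq \epsilon^{-a}$ stays inside $\Theta_{\halfradius}$ (for a slightly smaller but still uniform collar parameter), where Lemma \ref{D0estimate} applies. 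Integrating $|Du_\epsilon|\leq k\epsilon$ along $\gamma$ gives that $u_\epsilon(\gamma)$ has length $\leq k\epsilon^{1-a}\to 0$; but the endpoints of $\gamma$ lie in the two disjoint closed sets $T^{\ast}(\tilde{C}-U((2+3\delta_0)\delta))$ and $\overline{T^{\ast}U((2+2\delta_0)\delta)}$, which are separated by a distance bounded below by a positive constant depending only on $\delta_0,\delta,\phi$ (and the fixed desingularized metric $g^\phi_\delta$). This is the contradiction, and it yields the first assertion.

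The second assertion is then a formal consequence of the first together with the structure of the vertical-ray subdivision. A vertical segment $l$ in $\partial D'-\partial\stdm$ is, by construction of $D'$, one of the vertical rays through a boundary minimum, or a maximal vertical segment on the boundary of the union of blue components; in either case each of its two endpoints lies on $\partial\stdm$. An endpoint lying on a type-\textbf{Out} segment would have distance zero to $D'$ (it lies on $\overline{D'}$ or is a limit of points of $D'$), contradicting the first assertion which shows type-\textbf{Out} points are at distance $>\epsilon^{-a}$ from $D'$; hence each endpoint lies either on the boundary minimum of a type-\textbf{In} segment or on a type-$\textbf{0}$ segment. The main obstacle, such as it is, is purely bookkeeping: making precise that the path $\gamma$ realizing $d(p_\epsilon,D')$ can indeed be taken inside the region where the $O(\epsilon)$ gradient bound of Lemma \ref{D0estimate} is valid, which requires checking that $D'\subset \Theta_{\halfradius}$ up to shrinking collars --- this is exactly the content handled in Ekholm's proof, so I would simply cite \cite[Lemma 5.11]{Morseflowtree} for the remaining routine details once the two inputs (Lemma \ref{lengthestimate} and Lemma \ref{D0estimate}) are in place.
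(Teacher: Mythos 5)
The paper itself gives no argument for this lemma (it is stated with the remark that the proof is word-for-word Ekholm's \cite[Lemma 5.11]{Morseflowtree}), so your reconstruction has to stand on its own, and while it captures the right mechanism --- an $O(\epsilon)$ derivative bound against an $\epsilon$-independent separation of image regions forces the domain distance to be of order $\epsilon^{-1}\gg\epsilon^{-a}$ --- the decisive step has a genuine gap. First, a point $q\in D'$ realizing $d(p,D')\leq\epsilon^{-a}$ is merely a point of some vertical segment one of whose endpoints lies on a type-\textbf{In} segment; nothing is known about $u_\epsilon(q)$ itself, so your claim that the far endpoint of $\gamma$ maps into $\overline{T^{\ast}U((2+2\delta_0)\delta)}$ is unjustified. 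The obvious repair --- extend $\gamma$ along that vertical segment to its \textbf{In} endpoint, at the cost of a uniformly bounded extra length --- collides with the second, more serious problem: the estimate $\sup\abs{Du_\epsilon}\leq k\epsilon$ of Lemma \ref{D0estimate} holds only on $\Theta_{\halfradius}=\Omega_{\halfradius}\cup\bigcup_{I\in\textbf{Out}\cup\textbf{0}}B_{\halfradius}(I)$, which by construction excludes the $\halfradius$-collar of every type-\textbf{In} segment. Your parenthetical claim that $D'\subset\Theta_{\halfradius}$ ``up to shrinking collars'' is false: $D'$ abuts the \textbf{In} segments by definition. Consequently the integration $\int_{\gamma}\abs{Du_\epsilon}\leq k\epsilon\cdot\mathrm{length}(\gamma)$ cannot be carried out on the final approach to the \textbf{In} boundary, which is exactly where the image is forced into $T^{\ast}U((2+2\delta_0)\delta)$; the cruder interior bounds available on that collar (of size $O(\sqrt{\epsilon})/s$ at distance $s$ from the boundary) do not integrate to anything useful, and the boundary-length bound of Lemma \ref{lengthestimate} only gives a finite, not small, constant. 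So the contradiction does not close as written, and deferring to Ekholm ``for the routine details'' hides the one step of the proof that is not routine.

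On the second assertion: excluding endpoints on \textbf{Out} segments via the first assertion is correct (such an endpoint lies in $\overline{D'}$, hence at distance $0$ from $D'$), but the statement also asserts that an endpoint lying on a type-\textbf{In} segment must be a boundary minimum, and you do not address this. At any non-minimum point of an \textbf{In} segment the vertical segments of $D'$ exist on both sides, so such a point cannot lie on a vertical segment of $\partial D'-\partial\stdm$; it is only at boundary minima, where the vertical rays switch the boundary component they hit, that a vertical piece of $\partial D'$ can terminate on an \textbf{In} segment. This part is purely combinatorial and easy, but it is part of the claim and should be recorded.
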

	
	Now, colour a component of the vertical ray subdivision red if the component contains an \textbf{Out} horizontal boundary segment. The lemma states that the union of all the red components is separated away from $D'$ by distance at least $\epsilon^{-a}$.  Note that $\epsilon^{-a}$ grows much faster than $\log{\epsilon^{-1}}$. 
	
	Let $\log(\epsilon^{-1})\leq d \leq 2\log(\epsilon^{-1})$ be chosen such that  $\partial \overline{B_d(D')}-\partial \stdm$ and $\partial B_{{\frac{1}{2}}d}(D')-\partial \stdm$ consist of vertical line segments disjoint from all the boundary minimum. Intuitively, we are taking a horizontal thickening of the blue and the red subdomains by length $d$. Let $D_{0}=\stdm-B_{{\frac{1}{2}}d}(D')$ and $D_1=B_d(D')$. We see that if $p\in \partial D_{0}\cap \partial \stdm$, then $p$ lies in a boundary component of type \textbf{0} or type \textbf{Out}, and if $q\in \partial D_1-\partial \stdm$ then $q$ lies in a boundary segment of type \textbf{0} or \textbf{In}. Note also that by thickening the red subdomain (or the blue subdomain) to $D_{0}$ (or $D_{1}$), we have not increased the number of connected components of the red subdomain (or the blue subdomain). Hence, the number of the components of $D_{0}$ and $D_{1}$ are still uniformly bounded. See Figures \ref{fig:domainsubdivision_1}, \ref{fig:tree} and \ref{fig:subdivision2}.
	
	Furthermore,
	\begin{corollary}\label{cor:Oeestimate}
		\[\sup_{z\in D_{0}} \abs{Du(z)}\leq k\epsilon.\]
	\end{corollary}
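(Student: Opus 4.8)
\textbf{Proof proposal for Corollary \ref{cor:Oeestimate}.}

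The plan is to derive the $O(\epsilon)$ bound on $\abs{Du}$ over $D_0(\epsilon)$ directly from the gradient estimate already established in Lemma \ref{D0estimate}, together with the geometric relationship between $D_0(\epsilon)$ and the set $\Theta_{\halfradius}$ introduced just before that lemma. First I would recall the construction: $D_0(\epsilon)=\stdm-B_{\frac{1}{2}d}(D')$ where $\log(\epsilon^{-1})\leq d\leq 2\log(\epsilon^{-1})$, and $D'$ is the union of the ``blue'' components (those containing an \textbf{In} horizontal boundary segment). The key observation, which follows from Lemma \ref{domainseparationlemma}, is that every point of $D_0(\epsilon)$ lies at distance at least some fixed $\halfradius>0$ from any horizontal boundary segment of type \textbf{In}: indeed $D_0(\epsilon)$ is obtained by deleting a neighbourhood of $D'$ of radius $\frac{1}{2}d\gg \halfradius$, and the only horizontal boundary components that $D_0(\epsilon)$ can meet are of type \textbf{0} or \textbf{Out} by the remark immediately preceding the corollary.

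Having established that the boundary segments adjacent to points of $D_0(\epsilon)$ are of type \textbf{0} or \textbf{Out}, I would check that $D_0(\epsilon)\subset \Theta_{\halfradius}$ (possibly after shrinking $\halfradius$ by a fixed factor, independent of $\epsilon$). For an interior point $p\in D_0(\epsilon)$ with $d(p,\partial\stdm)\geq \halfradius$ this is by definition of $\Omega_{\halfradius}$; for a boundary point $p\in D_0(\epsilon)\cap \partial\stdm$, the boundary segment through $p$ is of type \textbf{Out} or \textbf{0}, so $p\in B_{\halfradius}(I)$ for such an $I$, and by the choice of $\halfradius$ made before Lemma \ref{D0estimate} the domain $B_{\halfradius}(p)$ is uniformly conformally equivalent to a half-disc $E_{\halfradius/2}(p)$. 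Either way $p\in \Theta_{\halfradius}$. Then Lemma \ref{D0estimate} gives a constant $k>0$, depending only on $\scurve$ and $\delta$ (through $E$, $\hbar$, $\halfradius$, $\eta$), such that $\sup_{z\in\Theta_{\halfradius}}\abs{Du_\epsilon(z)}\leq k\epsilon$ for $\epsilon$ small, and restricting to $D_0(\epsilon)\subset \Theta_{\halfradius}$ yields the claimed bound.

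The step I expect to be the main (if modest) obstacle is the bookkeeping that $D_0(\epsilon)$ genuinely sits inside $\Theta_{\halfradius}$ \emph{uniformly in} $\epsilon$ — that is, ruling out the possibility that points of $D_0(\epsilon)$ approach a type-\textbf{In} segment or a slit as $\epsilon\to 0$ in a way that escapes every fixed neighbourhood of the ``good'' boundary components. This is controlled precisely because $d\geq \log(\epsilon^{-1})\to\infty$ grows without bound while $\halfradius$ is fixed once and for all, so the $\frac{1}{2}d$-neighbourhood of $D'$ that we excise eventually swallows any fixed-radius neighbourhood of the \textbf{In} segments; combined with Lemma \ref{domainseparationlemma} (separation of the \textbf{Out} subdomain from $D'$ by $\epsilon^{-a}\gg d$) this guarantees the two families of components stay coherently separated. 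Once this is in hand the corollary is immediate, and no new analysis beyond Lemmas \ref{D0estimate} and \ref{domainseparationlemma} is required.
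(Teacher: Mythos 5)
Your proposal is correct and follows the paper's own route: the paper's proof is literally "this follows from Lemma \ref{D0estimate}", and what you have done is spell out the implicit containment $D_{0}(\epsilon)\subset \Theta_{\halfradius}$ (using that $D_0(\epsilon)$ stays at distance $\tfrac12 d\geq \tfrac12\log(\epsilon^{-1})\gg \halfradius$ from $D'$, hence from every \textbf{In} segment, so any nearby boundary component is of type \textbf{0} or \textbf{Out}). This is exactly the bookkeeping the paper leaves to the reader, so no further comment is needed.
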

	\label{D-1domainOt}
	\begin{proof}
		This follows from Lemma \ref{D0estimate}. 
	\end{proof}
	
	The following is adapted from \cite[Lemma 5.12]{Morseflowtree}. Again, the proof is word-to-word the same. 
	\begin{lemma}\label{lem:regionseparation}
		$u(D_1)\subset T^{\ast}U((2+\frac{9}{2}\delta_{0})\delta)$ and $u(D_{0})\subset T^{\ast}(\tilde{C}-U((2+\frac{1}{2}\delta_{0})\delta))$ for sufficiently small $\epsilon$. \label{escapeD-1}
	\end{lemma}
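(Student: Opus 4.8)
The statement asserts two inclusions controlling where the subdivision pieces $D_0(\epsilon)$ and $D_1(\epsilon)$ map: $D_1(\epsilon)$ stays deep inside the desingularization neighbourhood $U((2+\frac{9}{2}\delta_0)\delta)$, while $D_0(\epsilon)$ stays outside the slightly smaller $U((2+\frac{1}{2}\delta_0)\delta)$. The plan is to argue each inclusion separately, running the same kind of ``length versus distance'' estimate that appears in the cited Lemma 5.12 of \cite{Morseflowtree}. For $D_0(\epsilon)$: recall $D_0(\epsilon)=\stdm-B_{\frac12 d}(D')$ with $\log(\epsilon^{-1})\le d\le 2\log(\epsilon^{-1})$, and by Lemma \ref{domainseparationlemma} the type-\textbf{Out} boundary components sit at distance $>\epsilon^{-a}$ from $D'$, hence at distance $>\epsilon^{-a}-\frac12 d$ from $\partial B_{\frac12 d}(D')$. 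The key point is that $u$ restricted to $D_0(\epsilon)$ has derivative of size $O(\epsilon)$ by Corollary \ref{cor:Oeestimate}, so the $u$-image of a path in $D_0(\epsilon)$ of length $\ell$ has $g^{\phi}_\delta$-length at most $k\epsilon\ell$. Starting from a type-\textbf{Out} boundary point (whose image lies outside $T^{\ast}U((2+3\delta_0)\delta)$ by construction), any point of $D_0(\epsilon)$ can be reached by a path whose length is $O(\log\epsilon^{-1})$ (the diameter of the complement of the thickened blue region is logarithmic in $\epsilon$, since the domain is built from a uniformly bounded number of strip pieces plus the thickenings), so the image has moved by at most $k\epsilon\cdot O(\log\epsilon^{-1})=o(1)$, which for small $\epsilon$ is far less than the fixed gap $d_{g^\phi_\delta}(\partial U((2+3\delta_0)\delta),\partial U((2+\frac12\delta_0)\delta))$. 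This forces $u(D_0(\epsilon))$ to remain outside $T^{\ast}U((2+\frac12\delta_0)\delta)$.

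For $D_1(\epsilon)=B_d(D')$: every point is within distance $d=O(\log\epsilon^{-1})$ of $D'$, and $D'$ is a union of vertical segments each meeting a type-\textbf{In} boundary component, whose image lies inside $T^{\ast}U((2+2\delta_0)\delta)$ by construction. The subtlety here is that $D_1(\epsilon)$ is \emph{not} a subset of $D_0(\epsilon)$, so the $O(\epsilon)$-derivative bound of Corollary \ref{cor:Oeestimate} does not apply there; instead one only has the coarser $O(\epsilon^{1/2})$ control from Lemma \ref{Mainestimate} on the portions mapping outside $T^{\ast}U(2\delta)$, and no pointwise derivative bound at all where the disc dips into $T^{\ast}U(2\delta)$. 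So the argument must be run differently: on $D_1(\epsilon)$ one follows the boundary-component structure. Any point of $D_1(\epsilon)$ lies on a vertical segment through a point $p$ of a type-\textbf{In} boundary component with $u(p)\in T^{\ast}U((2+2\delta_0)\delta)$; travelling vertically from $p$ the image cannot exit $T^{\ast}U((2+\frac92\delta_0)\delta)$ because to do so the boundary $\partial u$ would have to cross one of the hypersurfaces $\partial(T^{\ast}U((2+c\delta_0)\delta))$ for $c=3,4$, and every such crossing of the boundary arc was removed when we introduced the punctures defining $\stdm$ — more precisely, the \textbf{In}/\textbf{Out}/\textbf{0} trichotomy of boundary components together with the fact that a vertical segment in $\partial D'-\partial\stdm$ has endpoints on a \textbf{0}-type component or on the boundary minimum of an \textbf{In}-type component (Lemma \ref{domainseparationlemma}) pins the image of the whole blue region inside $T^{\ast}U((2+\frac92\delta_0)\delta)$. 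One then propagates this bound through the thickening $B_d(D')$ using that the additional horizontal extension of length $d=O(\log\epsilon^{-1})$ only occurs on \textbf{0}-type boundary pieces where $|Du|=O(\epsilon)$ holds (those pieces lie in $D_0(\epsilon)\cap D_1(\epsilon)$), so the image moves by $o(1)$, staying within the $\frac92\delta_0$-buffer.

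The main obstacle I expect is precisely the second inclusion: controlling $u(D_1(\epsilon))$ without a uniform gradient bound requires carefully bookkeeping which boundary segments of the thickened blue region $B_d(D')$ are of type \textbf{In} versus type \textbf{0}, and invoking the puncture construction to rule out the image ``escaping outward'' along a boundary excursion that was not cut off. The cleanest way to organize this is: (i) observe $\partial B_d(D')-\partial\stdm$ and $\partial B_{\frac12 d}(D')-\partial\stdm$ consist of vertical segments disjoint from boundary minima (already arranged in the construction); (ii) classify boundary points of $D_1(\epsilon)$ on $\partial\stdm$ as type \textbf{0} or \textbf{In} (done just before the statement); (iii) on type-\textbf{In} pieces use the defining inclusion $u(I)\subset T^{\ast}U((2+2\delta_0)\delta)$ directly; (iv) on type-\textbf{0} pieces and the interior, combine the starting inclusion with the $O(\epsilon)$-gradient estimate and the $O(\log\epsilon^{-1})$ diameter bound to absorb the error into the $\frac52\delta_0$ slack between $2\delta_0$ and $\frac92\delta_0$. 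Since the proof is, as the text notes, word-for-word that of \cite[Lemma 5.12]{Morseflowtree} after these translations, I would present it compactly, citing the reference for the combinatorial details and only spelling out the two metric estimates (logarithmic diameter $\times$ $O(\epsilon)$ derivative $\to$ $o(1)$ displacement) that make the buffers $\frac12\delta_0$ and $\frac92\delta_0$ work.
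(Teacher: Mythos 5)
Your first inclusion (the $D_0$ half) is essentially the right argument, though two details need repair: the claim that every point of $D_0(\epsilon)$ is within $O(\log\epsilon^{-1})$ of a type-\textbf{Out} boundary point is false in general, since $\stdm$ has unbounded horizontal extent and the \textbf{Out} arcs may be far away; what you should use is the vertical segment through the given point, which has length at most $\npunct$, stays at distance $\geq d/2-\npunct\gg \halfradius$ from $D'$ (hence from the \textbf{In} arcs, which lie in $\overline{D'}$, so Corollary \ref{cor:Oeestimate} applies along it), and terminates on a boundary arc of type \textbf{0} or \textbf{Out}. Such an arc maps outside $T^{\ast}U((2+\delta_0)\delta)$, and the $O(\epsilon)\cdot O(1)$ displacement then beats the fixed gap down to $U((2+\tfrac12\delta_0)\delta)$. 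This is the same mechanism you describe, just anchored at \textbf{0}-type arcs as well and with bounded rather than logarithmic path length.

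The $D_1$ half has a genuine gap, and it is exactly at the point you flagged. Your proposed mechanism --- that the image cannot exit $T^{\ast}U((2+\tfrac92\delta_0)\delta)$ along a vertical segment through an \textbf{In} boundary point ``because the boundary $\partial u$ would have to cross one of the hypersurfaces and every such crossing was removed by the punctures'' --- does not constrain anything: the punctures record crossings of the \emph{boundary curve} $u|_{\partial\mathcal{Z}}$ only, while the danger is that \emph{interior} points over $D'$ and over the collar of the \textbf{In} arcs (where Lemma \ref{Otestimte} is unavailable, since the nearby boundary may enter $T^{\ast}U(2\delta)$) wander far out; continuity of $u$ plus the classification of boundary arcs gives no bound there. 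The standard repair is different in kind: first pin down $u$ on $\partial D_1$ --- the \textbf{In}/\textbf{0} arcs map into the closed region $T^{\ast}U((2+4\delta_0)\delta)$ by definition, and the vertical cut segments of $\partial B_d(D')-\partial\stdm$ lie at distance $\geq d\geq\log\epsilon^{-1}$ from $\overline{D'}\supset$ the \textbf{In} arcs, so the $O(\epsilon)$ bound of Corollary \ref{cor:Oeestimate} applies along them and they map within $O(\epsilon)$ of that region. Then rule out interior escape by an energy/monotonicity argument rather than a gradient one: if some interior point of $D_1$ mapped outside $T^{\ast}U((2+\tfrac92\delta_0)\delta)$, the connected component of the preimage of the complement of (a slightly enlarged) $T^{\ast}U((2+4\delta_0)\delta)$ containing it would be an interior domain (it cannot reach $\partial\stdm\cap D_1$, whose image lies inside that level, nor cross the vertical cuts), whose $u$-boundary lies on a fixed hypersurface at definite distance from the point in question; interior monotonicity (Proposition \ref{interiorestimate1}) then forces area bounded below by a constant independent of $\epsilon$, contradicting $\mathrm{Area}(u)\leq 2E\epsilon$ from Proposition \ref{uniformenergy}. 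Some ingredient of this type is unavoidable, because no pointwise derivative estimate holds on the \textbf{In} collar; your sketch as written does not supply it.
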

	The upshot is that since $\delta_0<\frac{\eta}{10}$, $D_1$ is mapped inside the region where small horizontal neighbourhoods of a horizontal trajectory passing through $z\in C(\delta;E)$ cannot enter, and $D_{0}$ is mapped into a region outside all the deformations, and where the metric coincides with $g^{\phi}$. Observe that since $\delta_0<\frac{\eta}{10}$, $u(D_1)$ is inside $U((2+\eta)\delta)$ and $u(D_0)$ is outside $U((2+\frac{1}{20}\eta)\delta)$.
	
	\begin{figure}
		\centering 
		\includegraphics[scale=0.8]{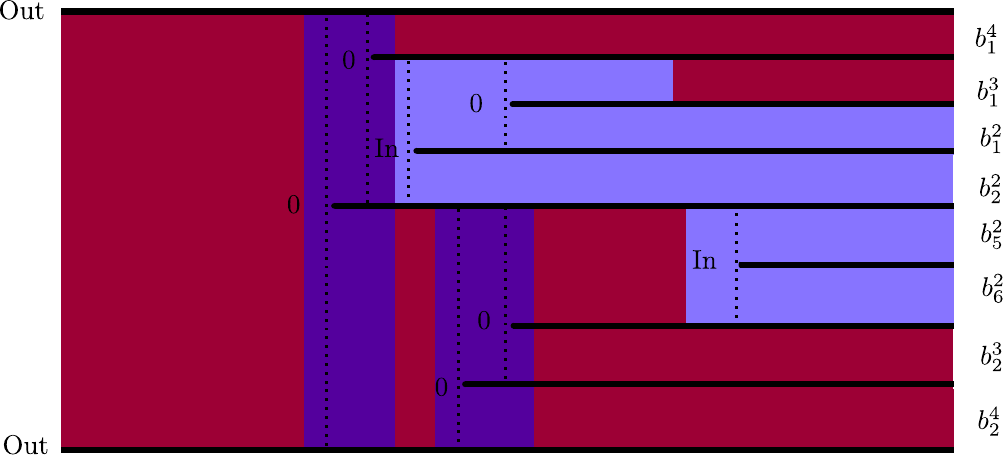}
		\caption{Here we illustrate an example of the thickenings $D_0$ (red+purple) and $D_1$. The purple domains ($W_0$) introduced in \cref{subsect convergence to gradient flows} are $O(-\log \epsilon^{-1})$-neighbourhoods of the boundary minima.}
		\label{fig:subdivision2}
	\end{figure}
	
	Now, given the sequence of $\epsilon_n$-BPS discs $u_{\epsilon_n}:\mathcal{Z}\to T^{\ast}\tilde{C}$, ending at $z_n$, we apply the same subdivision procedure by letting $\delta_{0}$ be a function of $\epsilon$ which is a very small variation of the constant $\delta_{0}$ such that $0<\delta_{0}(\epsilon)<\frac{1}{10}\eta$. By taking a subsequence, we assume that the number of added punctures,  the stable leaved tree $T$, and the red and the blue edges are, in fact, constant. The subdivision $\stdm=D_0(\epsilon) \cup D_1(\epsilon)$ can be chosen in a way that we have a decomposition of $T$ into open subsets $T_0$ and $T_1$ (with respect to the subspace topology after embedding $T$ into $\mathbb{R}^2$) that overlap in the interior of some edges. For example, in Figure \ref{fig:tree}, the edge terminating at $b_1^3$ contains a region where $D_0(\epsilon)$ and $D_1(\epsilon)$ overlap. By taking a subsequence, we may assume that the edges where they overlap become stationary as $\epsilon\to 0$.  Thus, we assume that the resulting tree $T$, and the subdivision $T=T_0\cup T_1$ (which we may regard as introducing a \textit{single} internal vertex to some black edges) are also constant. We have now finished the construction.% $\delta_0$-neighborhood of the caustics lie strictly inside $U(2\delta)$. 

	\subsection{Convergence to gradient flow lines}\label{subsect convergence to gradient flows}
	In this section, we introduce the auxiliary subdomain $W_0(\epsilon)$ of $D_0(\epsilon)$ such that the components of $W_0(\epsilon)-D_0(\epsilon)$ consist of strip-like domains, and they degenerate to solutions of gradient flow line equations. With respect to the tree $T$ above, we are essentially showing that the components corresponding to the edges of $T_0$ converge to the gradient flows. We also study limits of the auxiliary subdomains $W_0(\epsilon)$ and the $0$-special domains, which we recall to be the components of $D_0(\epsilon)-W_0(\epsilon)$ that contain a horizontal $F_z$-labelling. 
	
	We have the domain subdivision $\stdm= D_{0}(\epsilon)\cup D_1(\epsilon)$ as constructed in Section \ref{subsection domain subdivision}. Let $W_0(\epsilon)$ be a neighbourhood of the boundary minima of $D_0(\epsilon)$ such that:
	\begin{itemize}
		\item  the boundary $\partial W_0(\epsilon)$ consists of arcs in $\partial D_0(\epsilon)$ and vertical line segments,
		\item there is at least one boundary minimum on each component of $W_0(\epsilon)$.
	\end{itemize}
	For such $W_0(\epsilon)$, $D_0(\epsilon)-W_0(\epsilon)$ is a finite collection of strip regions. For an example, see \cref{fig:subdivision2}. For a connected component $W\subset W_0(\epsilon)$, we define the \textit{width} of $W$ as the maximum distance from a vertical line segment in the boundary of $W$ to a boundary minima inside $W$. We define the \textit{width} of the neighbourhood $W_0(\epsilon)$ to be the maximum of the width of the finitely many connected components of $W_0(\epsilon)$. 
	
	Given a vertical segment $l\simeq \{0\}\times[0,1]\subset D_0(\epsilon)-W_0(\epsilon)$ with $\partial l\subset \partial D_0(\epsilon)$, let $[-c,c]\times [0,1]\subset D_0(\epsilon)$ be a strip-like domain centred around $l$.  With $(s,t)\in [-c,c]\times [0,1]$, we write $u_\epsilon(s,t)=(q_\epsilon(s,t),p_\epsilon(s,t))$. Let $\epsilon b_\sigma$ denote the (1-form) section of the sheet that contains $u_\epsilon(0,\sigma)$ for $\sigma=0,1$. %By construction, we may assume that $c_\epsilon$ is of size $O(\log(\epsilon^{-1}))$. 
	
	We have the following estimate due to Ekholm {\cite[Lemma 5.13]{Morseflowtree}} which describes the degenerative behaviour of the components of $D_0(\epsilon)-W_0(\epsilon)$. 
	\begin{lemma}\label{flowlinedeg}
		For all sufficiently small $\epsilon>0$, we can find neighbourhoods $W_0(\epsilon)$ of the above type with width at most $2log(\epsilon^{-1})$, such that the following holds. Let $\Theta$ be a component of $D_0(\epsilon)-W_0(\epsilon)$ that is not a $0$-special domain. Then along any vertical line segment $l\subset \Theta$, we have
		\begin{align}
			&\frac{1}{\epsilon}\nabla_{t} p_\epsilon(0,t)-(b_1(q_\epsilon(0,0))-b_0(q_\epsilon(0,0)))=O(\epsilon)\\
			&\frac{1}{\epsilon}\nabla_s p_\epsilon(0,t)=O(\epsilon).
		\end{align}
	\end{lemma}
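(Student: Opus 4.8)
## Proof Proposal for Lemma \ref{flowlinedeg}

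The plan is to exploit the $O(\epsilon)$ gradient bound on $D_0(\epsilon)$ (Corollary \ref{cor:Oeestimate}) together with the fact that over $D_0(\epsilon)$ the metric is exactly $g^{\phi}$, so the Sasaki almost complex structure takes the explicit Levi-Civita form and the Cauchy-Riemann equation for $u_\epsilon = (q_\epsilon, p_\epsilon)$ in the horizontal/vertical splitting reads $\nabla_s q_\epsilon + g^{-1}\nabla_t p_\epsilon = 0$ and $\nabla_s p_\epsilon - g\,\nabla_t q_\epsilon = 0$ (or the analogous pair with $\rho(r) \equiv 1$ since the image lies in $D_1^*\tilde C$). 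First I would set up a strip-like coordinate chart $[-c,c]\times[0,1]$ around the vertical segment $l$, following Ekholm, and rescale: write $p_\epsilon = \epsilon\, \tilde p_\epsilon$ where $\tilde p_\epsilon$ is the rescaled fibre component, which by the integrated maximum principle satisfies $|\tilde p_\epsilon|\leq 1$, and by Corollary \ref{cor:Oeestimate} has derivatives of size $O(\epsilon)$. The two horizontal boundary arcs of $\Theta$ carry $\epsilon\scurve$-labels (it is not a $0$-special domain, so no $F_z$-label appears), hence $u_\epsilon(s,0)$ and $u_\epsilon(s,1)$ lie on two sheets $\epsilon\, \Gamma_{dg_0}$, $\epsilon\,\Gamma_{dg_1}$ of $\epsilon\scurve$ over $D_0(\epsilon)$, where $g_0, g_1$ are the local primitives of the sheets of $\scurve$ (these exist since $D_0(\epsilon)$ is mapped away from the branch points).

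Next I would Taylor-expand the Cauchy-Riemann system along the vertical segment. The key point is that $q_\epsilon$ varies by only $O(\epsilon c) = O(\epsilon \log\epsilon^{-1})$ across the strip (from the gradient bound and the width bound $2\log\epsilon^{-1}$ on the adjacent $W_0$-components controlling $c$), so to leading order $q_\epsilon(s,t)$ is a constant point $q_0$, and the boundary conditions become $\tilde p_\epsilon(0,0) = dg_0(q_0) + O(\epsilon\log\epsilon^{-1})$ and $\tilde p_\epsilon(0,1) = dg_1(q_0) + O(\epsilon\log\epsilon^{-1})$. Integrating $\nabla_t \tilde p_\epsilon$ from $t=0$ to $t=1$ and using $\nabla_t q_\epsilon = O(\epsilon)$ (so that the second CR equation gives $\nabla_s \tilde p_\epsilon = g\,\nabla_t q_\epsilon/\epsilon \cdot \epsilon = O(\epsilon)$ after rescaling — this is precisely the second claimed estimate), one gets
\[
\frac{1}{\epsilon}\nabla_t p_\epsilon(0,t) = \tilde p_\epsilon(0,1) - \tilde p_\epsilon(0,0) + O(\epsilon) = dg_1(q_0) - dg_0(q_0) + O(\epsilon\log\epsilon^{-1}).
\]
To upgrade the error from $O(\epsilon\log\epsilon^{-1})$ to the claimed $O(\epsilon)$, I would follow Ekholm's bootstrap: restrict to a slightly smaller strip (the width bound ensures there is room), apply the interior elliptic estimate of Lemma \ref{Otestimte} once more on the doubled map, and note that on a fixed-size strip the $C^1$-distance of $q_\epsilon$ to $q_0$ is genuinely $O(\epsilon)$, not just $O(\epsilon\log\epsilon^{-1})$, because the logarithmic factor only entered through the width of $W_0$, not through the size of the component $\Theta$ itself where we evaluate. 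Finally, identifying $dg_1 - dg_0$ with $h(b_1 - b_0)$ — or rather noting that in the $g = g^\phi$ region the relevant sheets are literally the affine graphs $\{p^z = \pm 1\}$ in flat coordinates, so $b_1 - b_0$ is a constant and the $h$-gradient in Definition \ref{h-gradient} collapses to an ordinary difference of covectors — yields the first displayed estimate.

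The main obstacle I expect is the bootstrap improving the error term to $O(\epsilon)$: one has to be careful that the logarithmic width of the $W_0$-components does not propagate into the estimate on $\Theta$, which requires re-running the gradient estimate on a conformally rescaled subdomain of uniform size and checking that the constant $k$ in Corollary \ref{cor:Oeestimate} is genuinely $\epsilon$-independent there. A secondary technical point is justifying the covariant manipulations of the Cauchy-Riemann system (commuting $\nabla_s$ and $\nabla_t$ picks up curvature terms) — but since the curvature of $g^\phi$ vanishes identically on $D_0(\epsilon)$, these terms disappear, which is exactly why the lemma is stated for components of $D_0(\epsilon)$ and uses crucially that the metric is flat there. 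Everything else is the routine Taylor expansion and integration already carried out in \cite[Lemma 5.13]{Morseflowtree}, so I would cite that proof for the parts that transcribe verbatim and only spell out the two points above where the quadratic-differential geometry enters.
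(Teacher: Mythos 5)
Both of your displayed estimates assert an improvement of the gradient bound by a full power of $\epsilon$, and your sketch never supplies that improvement. By the Cauchy--Riemann equation $\nabla_s p_\epsilon = g\,\partial_t q_\epsilon$, the second estimate $\frac{1}{\epsilon}\nabla_s p_\epsilon = O(\epsilon)$ is equivalent to $\partial_t q_\epsilon = O(\epsilon^2)$, whereas \cref{cor:Oeestimate} only gives $O(\epsilon)$; your step ``$\nabla_s \tilde p_\epsilon = g\,\nabla_t q_\epsilon/\epsilon\cdot\epsilon = O(\epsilon)$'' drops a factor of $\epsilon$ --- with $\tilde p_\epsilon = p_\epsilon/\epsilon$ and $\abs{\partial_t q_\epsilon} = O(\epsilon)$ you only obtain $O(1)$. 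Similarly, the first estimate is pointwise in $t$: integrating $\nabla_t \tilde p_\epsilon$ from $t=0$ to $t=1$ only yields the $t$-averaged statement, and upgrading it to a pointwise one needs $\abs{\nabla_t\nabla_t p_\epsilon} = O(\epsilon^2)$, which does not follow from bootstrapping \cref{Otestimte} on an interior subdomain (elliptic bootstrap of an $O(\epsilon)$ gradient bound gives only $O(\epsilon)$ second derivatives, hence an $O(1)$ error after the $\epsilon^{-1}$ rescaling).

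The missing mechanism --- and the entire reason the lemma removes neighbourhoods $W_0(\epsilon)$ of width comparable to $\log(\epsilon^{-1})$ --- is comparison with the exact model solution together with exponential decay of the deviation along the strip. In the flat coordinates valid on the image of $D_0(\epsilon)$ the two boundary sheets are affine, so $p_\epsilon$ minus the linear-in-$t$ interpolation between $\epsilon b_0$ and $\epsilon b_1$ is, up to errors of size $O(\epsilon^2)$ coming from the $O(\epsilon)$ variation of $q_\epsilon$, a harmonic function with vanishing data on the horizontal boundary of the strip region; such a function decays like $e^{-\pi d}$ in the distance $d$ to the vertical ends of that region. Choosing the width of $W_0(\epsilon)$ between $\log(\epsilon^{-1})$ and $2\log(\epsilon^{-1})$ forces every vertical segment of $\Theta$ to sit at distance at least of order $\log(\epsilon^{-1})$ from those ends, so the decay factor is $O(\epsilon^{\pi}) = o(\epsilon^2)$, and this is what produces both $O(\epsilon)$ estimates after dividing by $\epsilon$. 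This is the content of Ekholm's proof of \cite[Lemma 5.13]{Morseflowtree}, which the paper cites in place of a proof. Note also that the width of $W_0(\epsilon)$ has nothing to do with the horizontal length of $\Theta$ (which can be as large as $\epsilon^{-a}$, cf.\ Lemma \ref{domainseparationlemma}) nor with the variation of $q_\epsilon$ along a vertical segment, which is automatically $O(\epsilon)$ because vertical segments have length at most $\npunct$; so the $O(\epsilon\log\epsilon^{-1})$ error you set out to remove by bootstrap never arises, while the decay argument that is actually needed is absent from your proposal.
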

	Since $\Theta$ is an $\epsilon$-family of strip regions, we may write $\Theta=[-c_\epsilon,c_\epsilon]\times [0,1]$ for some $c_{\epsilon}>0$. Suppose $\Theta$ is a non-$0$-special component of $D_0(\epsilon)-W_0(\epsilon)$, then the rescaled strips $\tilde{u}_\epsilon=u_\epsilon(\epsilon^{-1}s,\epsilon^{-1}t)$ on $[-\epsilon c_\epsilon,\epsilon c_\epsilon]\times [0,\epsilon]$ locally converge to a gradient-flow equation determined by $b_{\sigma}$. Since the 1-form sections $b_{\sigma}$ are holomorphic, the resulting gradient flow equation is a \textit{holomorphic} gradient flow equation. 
	
	In Section \ref{conv flowline subsec}, we will show in \cref{vertex} that when $b_0=b_1$, the component degenerates to a point, and in \cref{non-vertex}, when $b_0\neq b_1$, the component degenerates to a flowline. Again, we will take a subsequence to ensure that the edges in $T$ whose corresponding components contain the vertical boundaries of $W_0(\epsilon)$ become constant for small enough $\epsilon$. 
	
	\begin{figure}[t]
		\includegraphics{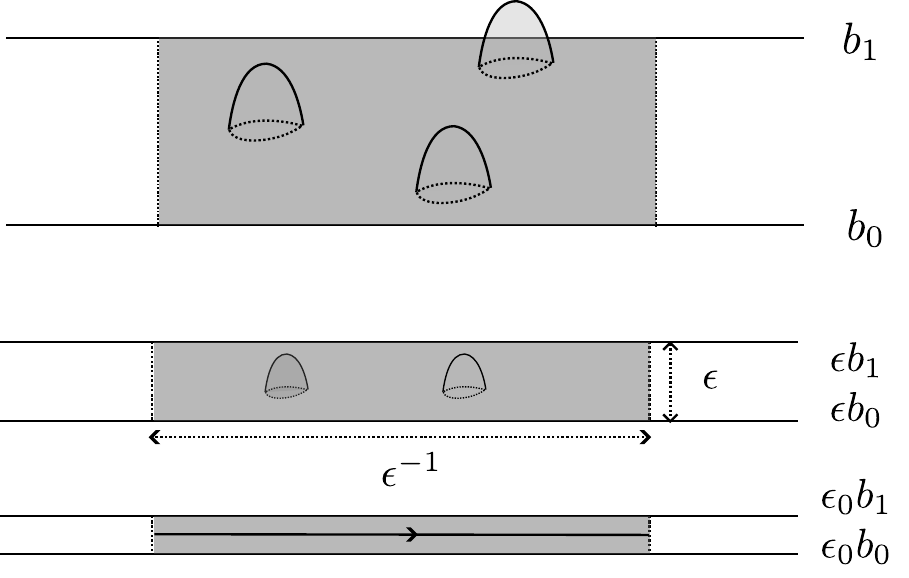}
		\centering
		\caption{Illustration of adiabatic degeneration. The shaded regions indicate $\Theta$. The small ``hills" in the diagram indicate possible deviations from the holomorphic strip, being a vertical strip along a holomorphic flow line trajectory. As $\epsilon\to 0$, the discs degenerate to vertical strips along a holomorphic flow line trajectory as the ``hills" disappear.}
		\label{Adiabatic Degeneration Fig}
	\end{figure}

	We now deal with the $0$-special domains.
	\begin{lemma}\label{zerospecialdomainmappoint}
		Let $\Theta\subset D_{0}(\epsilon)-W_{0}(\epsilon)$ be a $0$-special domain. Then $\lim_{\epsilon\to 0}d(u_\epsilon\vert_{\Theta},z)=0$. 
	\end{lemma}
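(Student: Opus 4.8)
\textbf{Proof strategy for Lemma \ref{zerospecialdomainmappoint}.}

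The plan is to exploit the two defining features of a $0$-special domain $\Theta$: first, that $\sup_{\Theta}|Du_\epsilon| \leq k\epsilon$ by Corollary \ref{cor:Oeestimate} (since $\Theta \subset D_0(\epsilon)$); and second, that $\Theta$ contains a horizontal boundary component $I$ carrying the $F_z$-labelling, so that $u_\epsilon(I) \subset F_{z_\epsilon}$ where $z_\epsilon \to z$. From the integrated maximum principle (as recalled in Lemma \ref{D0estimate} and \cite[Lemma 5.5]{Morseflowtree}) we also know the fibre component $p_\epsilon$ of $u_\epsilon = (q_\epsilon, p_\epsilon)$ satisfies $|p_\epsilon| \leq \epsilon$ throughout; so the image of $\Theta$ already lies in the $\epsilon$-disc bundle $D_\epsilon^*\tilde C$, and it remains only to control the \emph{base} component $q_\epsilon$.

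First I would pin down a single point: choose $w_\epsilon \in I \cap \Theta$ a point on the $F_{z_\epsilon}$-labelled boundary arc. Since $u_\epsilon(w_\epsilon) \in F_{z_\epsilon}$, we have $q_\epsilon(w_\epsilon) = z_\epsilon$ exactly, hence $d(q_\epsilon(w_\epsilon), z) = d(z_\epsilon, z) \to 0$. Next I would propagate this bound across all of $\Theta$ using the gradient estimate. For any other point $w \in \Theta$, connect $w_\epsilon$ to $w$ by a path $\gamma$ inside $\Theta$ (which, being a strip region $[-c_\epsilon, c_\epsilon] \times [0,1]$, has a controlled geometry); then
\[
d(q_\epsilon(w), q_\epsilon(w_\epsilon)) \;\leq\; \mathrm{Length}\big(u_\epsilon \circ \gamma\big) \;\leq\; k\epsilon \cdot \mathrm{Length}_{\stdm}(\gamma).
\]
The subtlety here is that $\Theta$ is an $\epsilon$-dependent family of strips whose width $c_\epsilon$ in the $s$-direction may grow as $\epsilon \to 0$ — indeed by construction (Lemma \ref{domainseparationlemma}) such separation distances can be as large as $\epsilon^{-a}$. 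So the naive bound $k\epsilon \cdot c_\epsilon$ need not go to zero, and this is the main obstacle.

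To circumvent this I would argue that a $0$-special domain is in fact \emph{not} a long strip: the horizontal boundary arc $I$ of type $\textbf{Out}$ carrying the $F_{z_\epsilon}$-label sits inside one of the red components of the vertical-ray subdivision, and by construction of $D_0(\epsilon) = \stdm - B_{d/2}(D')$ with $\log(\epsilon^{-1}) \leq d \leq 2\log(\epsilon^{-1})$, the horizontal extent of a $0$-special component is $O(\log \epsilon^{-1})$ rather than $O(\epsilon^{-a})$ — the large separation in Lemma \ref{domainseparationlemma} is between $\textbf{Out}$-type components and $D'$, which has already been removed. Combined with the $t$-width being bounded by $1$, we get $\mathrm{Length}_{\stdm}(\gamma) = O(\log \epsilon^{-1})$, so that
\[
d(q_\epsilon(w), z) \;\leq\; d(z_\epsilon, z) + C k \epsilon \log(\epsilon^{-1}) \;\xrightarrow[\epsilon\to 0]{}\; 0
\]
uniformly over $w \in \Theta$, which is precisely $\lim_{\epsilon \to 0} d(u_\epsilon|_\Theta, z) = 0$ (recalling $|p_\epsilon| \leq \epsilon \to 0$ handles the fibre direction). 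The one point requiring care in writing this up is the precise shape-control of $\Theta$: I would quote the discussion following Lemma \ref{escapeD-1} and the construction of $W_0(\epsilon)$ with width at most $2\log(\epsilon^{-1})$ (Lemma \ref{flowlinedeg}) to justify that every component of $D_0(\epsilon) - W_0(\epsilon)$, in particular each $0$-special one, has diameter $O(\log \epsilon^{-1})$ in the domain metric, which is the quantitative input that makes the $k\epsilon$-gradient bound win.
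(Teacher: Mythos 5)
Your setup (gradient bound $k\epsilon$ on $\Theta$, exact $F_{z_\epsilon}$-boundary condition, $\abs{p_\epsilon}\leq\epsilon$ for the fibre direction) is the right raw material, but the step that makes the estimate close — the claim that a $0$-special component has diameter $O(\log\epsilon^{-1})$ in the domain metric — is a genuine gap, and in fact it is false in general. A $0$-special domain is a component of $D_0(\epsilon)-W_0(\epsilon)$ whose horizontal boundary carries the $F_{z_\epsilon}$-label; such boundary is automatically of type \textbf{Out}, and these components typically contain the original strip-like ends of $\mathcal{Z}$ (where $u_\epsilon$ converges to the intersection points), so they are unbounded in the $s$-direction; even when bounded, the separation in Lemma \ref{domainseparationlemma} is a statement about distances \emph{inside} the Out-type region (from its boundary points to $D'$), so these components can have horizontal extent of order $\epsilon^{-a}$, not $O(\log\epsilon^{-1})$. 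Removing $B_{d/2}(D')$ with $d\sim\log(\epsilon^{-1})$ only deletes a log-sized collar around $D'$; it does not shrink the Out components, and the $2\log(\epsilon^{-1})$ bound you cite is the width of $W_0(\epsilon)$, not of the components of $D_0(\epsilon)-W_0(\epsilon)$. Consequently $k\epsilon\cdot\mathrm{Length}(\gamma)$ need not go to zero along a path to a single fixed boundary point $w_\epsilon$, and your inequality does not yield uniformity over $\Theta$.

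The repair is to abandon the "connect to one point" scheme and use the geometry of a strip region in the $t$-direction, which is what the paper does: since $\Theta$ is a strip region one of whose two horizontal boundary arcs is labelled $F_{z_\epsilon}$, the vertical line segment through \emph{any} point $w\in\Theta$ meets that boundary arc, and its length is bounded by $\npunct$ (because $\Theta\subset\stdm\subset\mathbb{R}\times[0,\npunct]$ and the number of added punctures is uniformly finite). Integrating the gradient bound of Corollary \ref{cor:Oeestimate} along this vertical segment gives $d\bigl(q_\epsilon(w),z_\epsilon\bigr)\leq k\epsilon\,\npunct$ uniformly in $w$, independent of the (possibly infinite) horizontal extent of $\Theta$; combined with $z_\epsilon\to z$ and $\abs{p_\epsilon}\leq\epsilon$ this gives $\lim_{\epsilon\to 0}d(u_\epsilon\vert_\Theta,z)=0$ with rate $O(\epsilon)$, which is exactly the paper's argument.
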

	\begin{proof}
		By \cref{cor:Oeestimate}, the size of the derivative of $u_\epsilon$ on $\Theta$ is $O(\epsilon)$. Let $l$ be a vertical line segment in $\Theta$, then $l$ intersects a boundary component labelled $F_z$. So any point on $u_\epsilon(l)$ is $O(\epsilon)$-close to the point $z$. Since $\Theta\subset \stdm\subset \mathbb{R}\times [0,\npunct]$, the length of a vertical line segment in $\Theta$ is bounded above by $\npunct$. Hence as $\epsilon\to 0$, $u(l)\to z$. Furthermore, the speed of convergence is independent of $l$ since it only depends on $\npunct$ and the $O(\epsilon)$-estimate. This finishes the proof. 
	\end{proof}
	
	Furthermore, we show that the domains $W_0(\epsilon)$ are mapped very close to points in $\tilde{C}$.
	\begin{lemma}\label{Wjttopoints}
		Let $\Theta$ be a component of $W_{0}(\epsilon)$. Then, after taking a subsequence if necessary, there exists a point $z$ in $C$ such that $\lim d(u_\epsilon\vert_{\Theta},z)\to 0$. 
	\end{lemma}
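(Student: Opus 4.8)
The plan is to bound the diameter of $u_\epsilon(\Theta)$ uniformly and show it shrinks to zero, then extract a convergent subsequence of base points. Recall that $\Theta$ is a component of $W_0(\epsilon)$, which by construction has width at most $2\log(\epsilon^{-1})$, and that by \cref{cor:Oeestimate} we have $\sup_{z\in D_0(\epsilon)}|Du_\epsilon(z)|\leq k\epsilon$ with $k$ independent of $\epsilon$ (since $W_0(\epsilon)\subset D_0(\epsilon)$). First I would observe that $\Theta$, as a planar domain sitting inside $\stdm\subset \mathbb{R}\times[0,\npunct]$, is connected and any two of its points can be joined by a path inside $\Theta$ whose length is controlled: the ``height'' direction contributes at most $\npunct$, and the ``width'' direction contributes at most $O(\log(\epsilon^{-1}))$ by the width bound, so any two points of $\Theta$ are joined by a path in $\Theta$ of length $O(\log(\epsilon^{-1}))$. (If $\Theta$ were not a priori path-connected with controlled-length paths, one would subdivide $\Theta$ along the finitely many vertical segments through its boundary minima and argue component by component; but by the construction in \cref{subsection domain subdivision} each component of $W_0(\epsilon)$ is a union of boundedly many strip pieces and this is fine.)

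Next I would estimate the diameter of the image. Integrating $|Du_\epsilon|\leq k\epsilon$ along such a path of length $O(\log(\epsilon^{-1}))$ gives
\[
\operatorname{diam} u_\epsilon(\Theta)\ \leq\ k\epsilon\cdot O(\log(\epsilon^{-1}))\ =\ O(\epsilon\log(\epsilon^{-1}))\ \xrightarrow{\epsilon\to 0}\ 0.
\]
Now pick any point $p_\epsilon\in\Theta$ and set $z_\epsilon:=\pi(u_\epsilon(p_\epsilon))\in\tilde C$. By \cref{boundaryestimate} the discs $u_\epsilon$ are all contained in the fixed precompact set $P=D_1^\ast K^\circ$, so the points $z_\epsilon$ lie in the precompact set $K^\circ$. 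After passing to a subsequence, $z_\epsilon\to z$ for some $z\in K\subset C$. Combining with the diameter estimate, $d(u_\epsilon|_\Theta, z)\leq d(u_\epsilon|_\Theta, u_\epsilon(p_\epsilon)) + d(u_\epsilon(p_\epsilon), z) \leq \operatorname{diam} u_\epsilon(\Theta) + d(z_\epsilon, z)\to 0$, which is exactly the claim. (Here one uses that $\pi$ is $1$-Lipschitz for $g^S$, or just works with the ambient distance on $T^\ast\tilde C$ and the point $u_\epsilon(p_\epsilon)$ directly, then notes its projection converges.)

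The main subtlety — really the only one — is making sure the width bound ``at most $2\log(\epsilon^{-1})$'' genuinely controls the internal path-length of $\Theta$, not just the distance from boundary minima to the bounding vertical segments. Since $W_0(\epsilon)$ was chosen (Section \ref{subsect convergence to gradient flows}) with boundary consisting of arcs in $\partial D_0(\epsilon)$ and vertical segments, and with each component containing at least one boundary minimum, a component $\Theta$ is a ``fattened neighbourhood'' of a boundary minimum of bounded combinatorial type; one checks directly from this description that $\Theta$ is contained in a rectangle of width $O(\log(\epsilon^{-1}))$ and height $\leq\npunct$, so the path-length bound holds. The rest is the routine integration of the $O(\epsilon)$ gradient estimate and a compactness extraction of the subsequence, exactly as above.
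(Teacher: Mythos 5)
Your argument is correct and is essentially the paper's own proof: the width bound of $2\log(\epsilon^{-1})$ combined with the $O(\epsilon)$ gradient estimate gives image diameters of size $O(\epsilon\log\epsilon^{-1})\to 0$, after which a subsequence of base points converges by the a priori confinement in the precompact set $P$. You have merely spelled out the path-length control inside $\Theta$ and the compactness extraction that the paper leaves implicit.
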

	\begin{proof}
		The widths of the domains $W_{0}(\epsilon)$ are controlled by $2\log(\epsilon^{-1})$. From the $O(\epsilon)$-estimate, we see that diameters of the discs restricted to the domains $W_0(\epsilon)$ are of size $O(\epsilon\log{\epsilon^{-1}})$. Since $\epsilon\log(\epsilon^{-1})$ converges to $0$ as $\epsilon\to 0$, we see that, after taking a subsequence if necessary, that $u_\epsilon\vert_{\Theta}$ uniformly converges to a point in $\tilde{C}$.
	\end{proof} 
	
	\subsection{Convergence to horizontal geodesics}\label{conv flowline subsec}
	In this section, we investigate the convergence of strip-like domains in $D_0(\epsilon)-W_0(\epsilon)$. We separate the non $0$-special strip domains in $D_{0}(\epsilon)-W_{0}(\epsilon)$ into vertex and non-vertex regions. We show that the vertex regions converge to points, and the non-vertex regions converge to $\phi$-horizontal geodesics. In view of the tree $T$ and the subdivison $T=T_0\cup T_1$, we will be essentially showing that the edges of $T_0$ further split into those domains that converge to flowlines and those that converge to points. We modify the approach in \cite[Section 5.3]{Morseflowtree}. 
	
	Given $\Theta\subset D_{0}(\epsilon)-W_{0}(\epsilon)$ a strip region, since $\abs{Du_\epsilon}=O(\epsilon)$, possibly passing to a subsequence, we see that given a vertical line segment $l\subset \Theta$, $\pi(u_\epsilon(l))$ is contained in an $O(\epsilon)$-ball around a point. Since this ball lies outside $U(2\delta)$, we have two sheets of $\scurve$ over this point. Call the region a \textit{vertex} region if we can find a subsequence of $\epsilon$ converging to $0$ such that the endpoints of the vertical segments lie on the same sheet.  The following lemma shows that such horizontal subdomains degenerate to points. 
	\begin{lemma}
		Let $\Theta\subset D_{0}(\epsilon)-W_{0}(\epsilon)$ be a vertex region and let $\rho>0$. Then, after passing to a subsequence, there exists a point $p\in C(\delta;E)^c$ such that $u_\epsilon(\Theta)$ is contained in a $\rho$-ball around $p$ in $T^{\ast}\tilde{C}$.\label{vertex}
	\end{lemma}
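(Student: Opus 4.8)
The plan is to combine the derivative bound $\abs{Du_\epsilon} = O(\epsilon)$ over $D_0(\epsilon)$ (Corollary \ref{cor:Oeestimate}) with the uniform bound on the "horizontal length" of $\Theta$ inside $\stdm$. Since $\stdm \subset \mathbb{R} \times [0,\npunct]$, every vertical line segment $l \subset \Theta$ has length at most $\npunct$, so $u_\epsilon(l)$ has diameter at most $\npunct k\epsilon$. The only thing that could cause $u_\epsilon(\Theta)$ to have large diameter is the horizontal extent $[-c_\epsilon,c_\epsilon]$ of the strip region $\Theta = [-c_\epsilon, c_\epsilon]\times[0,1]$, which is \emph{not} a priori bounded. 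So the first step is to control how much the image can drift in the $s$-direction. Writing $u_\epsilon(s,t) = (q_\epsilon(s,t),p_\epsilon(s,t))$, the key observation is that by the vertex hypothesis the two endpoints $q_\epsilon(0,0)$ and $q_\epsilon(0,1)$ lie over the \emph{same} sheet $b$ of $\scurve$ after passing to a subsequence. Then Lemma \ref{flowlinedeg} applies with $b_0 = b_1 = b$, giving
\begin{align*}
	\frac{1}{\epsilon}\nabla_t p_\epsilon(0,t) &= O(\epsilon), & \frac{1}{\epsilon}\nabla_s p_\epsilon(0,t) &= O(\epsilon),
\end{align*}
so that the rescaled strips $\tilde u_\epsilon(s,t) = u_\epsilon(\epsilon^{-1}s,\epsilon^{-1}t)$ satisfy a holomorphic gradient flow equation whose right-hand side, being the difference of two equal sheets, vanishes to leading order. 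This says the $s$-derivative of the rescaled map is $O(\epsilon^2)$ in the rescaled coordinate, i.e. the drift of $u_\epsilon$ along the full horizontal extent is controlled: $\abs{q_\epsilon(s,t) - q_\epsilon(0,t)} \le \int_{0}^{s}\abs{\partial_s q_\epsilon} \le c_\epsilon \cdot O(\epsilon)$, and one must show $c_\epsilon \cdot \epsilon \to 0$.

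To finish, I would bound $c_\epsilon$. The horizontal width of any component of $D_0(\epsilon) - W_0(\epsilon)$ is comparable to $\log(\epsilon^{-1})$: the domains $D_0(\epsilon), D_1(\epsilon)$ were built by thickening the blue/red subdomains of $\stdm$ by a distance $d$ with $\log(\epsilon^{-1}) \le d \le 2\log(\epsilon^{-1})$ (Section \ref{subsection domain subdivision}), and the slits in $\stdm$ have bounded horizontal extent between consecutive boundary minima, so $c_\epsilon = O(\log(\epsilon^{-1}))$. Hence the total drift is $O(\epsilon \log(\epsilon^{-1})) \to 0$, exactly as in Lemma \ref{Wjttopoints}. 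Combined with the $O(\epsilon)$ vertical diameter, this shows $\operatorname{diam}(u_\epsilon(\Theta)) \to 0$. Since $u_\epsilon(\Theta) \subset P = D_1^\ast K^\circ$ by Proposition \ref{boundaryestimate} and $P$ is precompact, after passing to a further subsequence $u_\epsilon(\Theta)$ converges to a single point $p \in T^\ast\tilde C$; because $\Theta \subset D_0(\epsilon)$ is mapped outside $T^\ast U((2+\tfrac12\delta_0)\delta) \supset T^\ast U(2\delta)$ by Lemma \ref{escapeD-1}, and $C(\delta;E)$ is contained in the complement of $U((2+\eta)\delta)$ but here we only need $p$ to avoid $U(2\delta)$ — actually we want $p \in C(\delta;E)^c$, which follows since the boundary segments of a vertex region are of type \textbf{0} or \textbf{Out} and the relevant fibre labels $F_z$ with $z \in C(\delta;E)$ force $\pi(p) \notin C(\delta;E)$ once we observe such a $\Theta$ cannot contain an $F_z$-boundary (that would make it $0$-special, not a vertex region). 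Thus for any $\rho > 0$ and $\epsilon$ small enough, $u_\epsilon(\Theta)$ lies in the $\rho$-ball around $p$.

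The main obstacle I anticipate is the bookkeeping in the second step: one must be careful that Lemma \ref{flowlinedeg} as stated gives pointwise estimates \emph{along a single vertical segment} $l$, whereas I need to integrate the $s$-derivative across the whole width $[-c_\epsilon, c_\epsilon]$ of $\Theta$. The clean way around this is to note that the estimate in Lemma \ref{flowlinedeg} holds along \emph{every} vertical segment $l \subset \Theta$ with uniform constants (since it ultimately rests on the $O(\epsilon)$-gradient estimate of Corollary \ref{cor:Oeestimate}, which is uniform over all of $D_0(\epsilon)$), so integrating $\partial_s q_\epsilon = -J \partial_s p_\epsilon + (\text{connection terms}) \cdot Du_\epsilon$ — all of size $O(\epsilon^2)$ after rescaling, hence $O(\epsilon)$ in the unrescaled picture but with the crucial gain that it is the $s$-\emph{component} which is small — yields the total drift bound. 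A secondary subtlety is ensuring the subsequence in the "same sheet" hypothesis can be taken compatibly with the subsequences already extracted for the tree $T$ and the subdivision becoming constant; since all these are finitely many discrete choices, a single diagonal subsequence suffices.
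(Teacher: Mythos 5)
Your argument breaks down at the point where you bound the horizontal extent of the vertex region. The width bound $O(\log(\epsilon^{-1}))$ applies only to the components of $W_{0}(\epsilon)$ (the neighbourhoods of the boundary minima) and to the thickening parameter $d$ used to pass from the red/blue subdomains to $D_0(\epsilon)$ and $D_1(\epsilon)$; it does \emph{not} bound the strip components of $D_{0}(\epsilon)-W_{0}(\epsilon)$ themselves. Those strips $\Theta=[-c_\epsilon,c_\epsilon]\times[0,1]$ can have $c_\epsilon$ of order $\epsilon^{-1}$ (this is exactly the regime in which flowline components converge to geodesics of finite length, cf.\ Lemma \ref{non-vertex}), and vertex regions can even be right-infinite strips: the outgoing strip-like ends of $\stdm$ are vertex components, as is used in Step 0 of the proof of Theorem \ref{maintheorem}. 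For non-vertex regions, unbounded $\epsilon c_\epsilon$ is excluded precisely because the limiting flowline has speed bounded below, an argument that is unavailable when $b_0=b_1$. Consequently, even with the improvement $\partial_s q_\epsilon=O(\epsilon^2)$ you extract from Lemma \ref{flowlinedeg} in the same-sheet case, integrating over the horizontal extent gives a drift of size $c_\epsilon\, O(\epsilon^2)$, which is not small for long or infinite strips; your proposed bound $c_\epsilon\cdot\epsilon\to 0$ simply does not hold, so the diameter-shrinking conclusion does not follow by this route. (Your final claim that $p\in C(\delta;E)^c$ because $\Theta$ carries no $F_z$-label is also not a proof of that localization, though this is secondary.)

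The paper avoids integrating derivatives in the $s$-direction altogether. It argues by contradiction: if some point $q_\epsilon\in\Theta$ maps a definite distance away from the limit $p$ of $u_\epsilon(l)$, one considers the substrip $\Theta_\epsilon$ between $l$ and the vertical segment through $q_\epsilon$ and shows $\mathrm{Area}(u_\epsilon;\Theta_\epsilon)=O(\epsilon^2)$ by Stokes' theorem: the two vertical boundary integrals of $pdq$ are $O(\epsilon^2)$ since $\abs{p_\epsilon}=O(\epsilon)$ and $\abs{Du_\epsilon}=O(\epsilon)$, and because both horizontal boundaries lie on the \emph{same} sheet of $\epsilon\scurve$ one may use a single primitive $f_\epsilon$ with $\abs{\nabla f_\epsilon}$ uniformly bounded, making the remaining boundary terms $O(\epsilon^2)$ as well. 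Feeding this $O(\epsilon^2)$ energy and the $O(\epsilon)$ tameness scale of $\epsilon\scurve$ into the boundary estimate (Proposition \ref{totaldomainestimate} and the remark following it) confines $u_\epsilon(\Theta_\epsilon)$ to an $O(\epsilon)$-neighbourhood of $B_{\rho/4}(p)\cup B_{\rho/4}(q)$, which is disconnected for small $\epsilon$, contradicting connectedness of the image. If you want to salvage your approach you would need an independent mechanism to handle strips whose horizontal length is not $o(\epsilon^{-2})$, and the monotonicity argument is precisely that mechanism; note that it works uniformly in $c_\epsilon$, including $c_\epsilon=\infty$.
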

	
	\begin{proof}
		We modify the proof of {\cite[Lemma 5.16]{Morseflowtree}}. After passing to a subsequence, we assume that $\pi(u_\epsilon(l))$ converges to some $p\in U((2+\eta)\delta)^c$. We know that for $0<\rho \ll \delta$, the ball $B_{\rho}(p)$ lies inside $U(2\delta)^c$. 
		
		Assume that for all small $\epsilon>0$, $u_\epsilon\vert_{\Theta}$ does not stay entirely in an $\rho$-ball around $p$. Then there exists a sequence of points $q_\epsilon\in \Theta$ such that $u_\epsilon(q_\epsilon)$ lies strictly outside the $\rho$-ball around $p$ for small enough $\epsilon$. By taking a subsequence, we may assume that $u_\epsilon(q_\epsilon)$ converges to a point $q$.  By the $O(\epsilon)$-estimate on the derivative of $u_\epsilon$ restricted to $D_0(\epsilon)$, the vertical line segment passing through $q_\epsilon$ must map outside the $\frac{1}{2}\rho$-ball around $p$ and must also uniformly converge to the point $q$. 
		
		Let $\Theta_\epsilon$ be the strip region inside $\Theta$ bound by the vertical line segment $l$ and the vertical line segment passing through $q_\epsilon$. We claim that there exists a $k>0$ such that $Area(u_\epsilon;\Theta_\epsilon)<k\epsilon^2$. Suppose for now this is true, and consider the disjoint union of balls
		\[B=B_{\rho/4}(q)\cup B_{\rho/4}(p)\]
		in $T^{\ast}\tilde{C}$. Again, by the $O(\epsilon)$-estimate and the convergence $u_\epsilon(q_\epsilon)\to q$, the boundary of $u_\epsilon\vert_{\Theta_\epsilon}$ is contained in $B \cup \epsilon\scurve$ for small enough $\epsilon$. In particular, since $\Theta$ maps outside $T^{\ast}U(2\delta)$, $u_\epsilon$ maps the horizontal boundary segments of $\Theta_\epsilon$ to the same sheets of $\scurve$. Since each sheet of $\epsilon\scurve$ over $\pi(u(\Theta))$ is uniformly geometrically bounded, we see that the curve $u_\epsilon$ restricted to $\Theta_\epsilon$ cannot leave some $O(\epsilon)$-small neighbourhood of $B$ by the boundary estimate (Proposition \ref{totaldomainestimate}). For small enough $\epsilon$, such a neighbourhood of $B$ is disconnected, but the image of $u_\epsilon$ over $\Theta_\epsilon$ must be connected, a contradiction.

		Let $\Theta_\epsilon=[a_\epsilon,b_\epsilon]\times [0,1]$. By connectedness, we see that the curves $[a_{\epsilon},b_{\epsilon}]\times \{i\},i=0,1$ must map to the same sheets of $\epsilon\scurve$ over the image of $\Theta_{\epsilon}$. Fix any primitive $f_{\epsilon}$ of $\canliouvile$ on $\scurve$ over this domain. Observe that $\abs{\nabla f_{\epsilon}}$ is bounded above by some constant $C_2>0$. By Stokes' theorem, we have	
		\begin{align}
			Area(u_{t})&= \int_{\partial([a_\epsilon,b_\epsilon]\times[0,1])} p dq \nonumber\\
			&=-\int_{\{a_\epsilon\}\times[0,1]} pdq+ \int_{\{b_\epsilon\}\times[0,1]} pdq \nonumber \\
			&\pm \epsilon(f_{\epsilon}(u_\epsilon(a_\epsilon,1))-f_{\epsilon}(u_\epsilon(b_\epsilon,1))-f_{\epsilon}(u_\epsilon(a_\epsilon,0))+f_{\epsilon}(u_\epsilon(b_\epsilon,0))). \label{eq:Oe2term}
		\end{align}

		Now since $\abs{p}=O(\epsilon)$, and $\abs{Du}=O(\epsilon)$ over $D_0(\epsilon)$, the first two terms in \eqref{eq:Oe2term} are $O(\epsilon^2)$. For the four terms after that, note that 
		\[\epsilon\abs{f_{\epsilon}(u_\epsilon(a_\epsilon,1))-f_{\epsilon}(u_\epsilon(b_\epsilon,1))}\leq \epsilon\sup \abs{\nabla f_{\epsilon}}\sup \abs{Du_{\epsilon}}.\]
		We have a similar inequality for the term  coming from $u_\epsilon(b_\epsilon,0)$ and $u_\epsilon(b_\epsilon,1)$ in the last four terms of \eqref{eq:Oe2term}. But since $\abs{Du_\epsilon}=O(\epsilon)$ and $\abs{\nabla f_{\epsilon}}$ is uniformly bounded, the area on the $\Theta_\epsilon$ region must be of $O(\epsilon^2)$. 
	\end{proof}
	
	We have the following statement on the adiabatic degeneration of non-vertex strip regions.
	
	\begin{lemma}\label{non-vertex}
		Let $\Theta(\epsilon)\subset D_{0}(\epsilon)-W_{0}(\epsilon)$ be a non $0$-special, non-vertex strip region and let $\epsilon>0$. Then, after passing to a subsequence, there exists a horizontal geodesic $\gamma$ contained in $U(2\delta)^c$ such that the image of $u_\epsilon(\Theta)$ is contained in an $\epsilon$-neighborhood of $\gamma$. 
	\end{lemma}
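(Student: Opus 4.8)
The plan is to follow \cite[Section 5.3]{Morseflowtree} closely while keeping careful track of the flat $\phi$-geometry on $U(2\delta)^c$. Write $\Theta(\epsilon)=[-c_\epsilon,c_\epsilon]\times[0,1]$ and $u_\epsilon=(q_\epsilon,p_\epsilon)$ in the horizontal/vertical splitting \eqref{horverdecom}. By \cref{escapeD-1}, $\Theta(\epsilon)$ maps into $T^{\ast}(\tilde C-U((2+\tfrac12\delta_0)\delta))$, where $g^{\phi}_\delta=g^{\phi}$ is flat, so I may work in the flat $\int\sqrt{\phi}$-coordinate, in which $\epsilon\scurve$ is a pair of parallel affine planes $\{p^z=\pm\epsilon\}$. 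Since $\Theta(\epsilon)$ is non-vertex, after passing to a subsequence the two endpoints of any vertical segment lie on distinct sheets; write $\epsilon b_0,\epsilon b_1$ for the two sheet-sections, which are then uniformly separated, $|b_1-b_0|\ge 2$, outside $U(2\delta)$. By \cref{cor:Oeestimate} we have $|Du_\epsilon|=O(\epsilon)$, and by \cref{lengthestimate} the $\phi$-length of $u_\epsilon(\partial\mathcal Z)$ outside the caustic region is uniformly bounded; since $|b_1-b_0|\ge 2$, this forces $\epsilon c_\epsilon$ to be bounded, and after a further subsequence $\epsilon c_\epsilon\to L\in[0,\infty)$ (the case $L=0$ giving a constant limit, which is fine).

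Next I would convert \cref{flowlinedeg} into an ODE. For the Sasaki structure, $\bar\partial_J u_\epsilon=0$ reads $\nabla_s q_\epsilon=-g^{-1}\nabla_t p_\epsilon$ and $\nabla_t q_\epsilon=g^{-1}\nabla_s p_\epsilon$. \cref{flowlinedeg} gives $\tfrac1\epsilon\nabla_t p_\epsilon(0,t)=(b_1-b_0)(q_\epsilon(0,0))+O(\epsilon)$ and $\tfrac1\epsilon\nabla_s p_\epsilon(0,t)=O(\epsilon)$ along vertical segments, hence $\tfrac1\epsilon\nabla_s q_\epsilon=-g^{-1}(b_1-b_0)(q_\epsilon)+O(\epsilon)$ and $\nabla_t q_\epsilon=O(\epsilon^2)$. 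Thus, after reparametrizing $\sigma=\epsilon s$, the curves $\sigma\mapsto q_\epsilon(\sigma/\epsilon,\tfrac12)$ solve, on $[-\epsilon c_\epsilon,\epsilon c_\epsilon]$,
\[\tfrac{d}{d\sigma}q_\epsilon=-g^{-1}(b_1-b_0)(q_\epsilon)+O(\epsilon),\]
an $O(\epsilon)$-perturbation of the flat geodesic vector field $V:=-g^{-1}(b_1-b_0)$, while $q_\epsilon$ is $t$-independent up to $O(\epsilon^2)$. By Arzel\`a--Ascoli and Gr\"onwall's inequality on the bounded interval $[-L,L]$, a subsequence converges in $C^1$ to $q_0\colon[-L,L]\to\tilde C$ with $\dot q_0=V(q_0)$, and moreover $q_\epsilon$ stays within distance $O(\epsilon)$ of the integral curve $\gamma$ of $V$ through $q_\epsilon(0)$.

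It remains to recognize $\gamma$ as a horizontal geodesic. Writing $b_i=d(\Re W_i)$ for local holomorphic primitives $W_i$ of $\holliouvile$ on the two sheets, a short computation with the Cauchy--Riemann equations identifies $-g^{-1}(b_1-b_0)=-\grad_{g^{\phi}}\Re(W_1-W_0)$ with $-\nabla_h(W_1-W_0)$; hence $q_0$ is a phase-$0$ holomorphic flow line in the sense of \eqref{BPSflowline} for $Y=\scurve$. By \cref{prop:GMNholMorse} such a flow line lies on a horizontal trajectory $\gamma$, and since the entire discussion takes place over $U(2\delta)^c$, $\gamma\subset U(2\delta)^c$. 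Combining $|p_\epsilon|=O(\epsilon)$ (integrated maximum principle) with the $O(\epsilon)$-closeness of $q_\epsilon$ to $\gamma$ gives $u_\epsilon(\Theta)$ within distance $O(\epsilon)$ of $\gamma$, i.e. inside an $\epsilon$-neighbourhood of $\gamma$ for $\epsilon$ small.

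The main obstacle is the passage to the ODE limit when $\Theta(\epsilon)$ is long, $c_\epsilon\sim\epsilon^{-1}$: after rescaling one needs uniform $C^1$ (and enough higher) bounds for the reparametrized strips so that Arzel\`a--Ascoli applies and the error terms are genuinely $o(1)$ on a fixed interval --- this is exactly where the improvement from $O(\epsilon^{1/2})$ to $O(\epsilon)$ in \cref{Otestimte}/\cref{cor:Oeestimate} and the refined estimate \cref{flowlinedeg} are essential. Secondary points are the uniform lower bound $|b_1-b_0|\ge 2$ (which is precisely what ``non-vertex'' buys, together with the $\sim 2\epsilon$ separation of the sheets of $\epsilon\scurve$ outside $U(2\delta)$) and the one-time computation identifying the real $\phi$-gradient with the holomorphic gradient $\nabla_h$. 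Apart from carrying the flat $\phi$-geometry and invoking \cref{prop:GMNholMorse}, the argument is parallel to the corresponding statement in \cite[Section 5.3]{Morseflowtree}.
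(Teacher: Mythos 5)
Your proposal is correct and follows essentially the paper's own route: rescale by $\epsilon$, combine Lemma \ref{flowlinedeg} with the $J$-holomorphic equations to get an $O(\epsilon)$-perturbed flow-line ODE, pass to a convergent subsequence, and identify the limit as a horizontal geodesic outside $U(2\delta)$ via Proposition \ref{prop:GMNholMorse}; your exclusion of $\epsilon c_\epsilon\to\infty$ by playing the boundary-length bound of Lemma \ref{lengthestimate} against the drift speed $\approx \epsilon\abs{b_1-b_0}$ is precisely the paper's second case, stated there as a contradiction argument. Two small points of care: the lower bound on the boundary length in terms of $\epsilon c_\epsilon$ comes from the drift estimate of Lemma \ref{flowlinedeg} (not merely from $\abs{Du_\epsilon}=O(\epsilon)$ and the sheet gap), so that step logically belongs after your ODE conversion, and the conclusion should be read as ``within any prescribed neighbourhood of a fixed horizontal geodesic along the subsequence'' (closeness $O(\epsilon)$ holds to the integral curve through $q_\epsilon(0)$, whose limit gives $\gamma$), which is what the proof of Theorem \ref{maintheorem} actually uses.
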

	\begin{proof}
		The proof is a very small modification of {\cite[Lemma  5.17]{Morseflowtree}}. We split into two cases. First, assume that $\Theta=[-c_\epsilon,c_\epsilon]\times [0,1]$ is such that $\epsilon  c_\epsilon\leq K$ for some $K$. Write $u_\epsilon=(q_\epsilon,p_\epsilon)$. Since $u_\epsilon$ is $J$-holomorphic, we have
		\begin{align*}
			&\frac{\partial q_\epsilon}{\partial s}+g^{-1}(\nabla_{t}p_\epsilon)=0,
			&\frac{\partial q_\epsilon}{\partial t}-g^{-1}(\nabla_s p_\epsilon)=0.
		\end{align*}
		Then consider the rescaling $\tilde{u_\epsilon}=(\tilde{q}_\epsilon,\tilde{p}_\epsilon)=u_\epsilon(\epsilon^{-1}s,\epsilon^{-1}t)$ defined on $[-\epsilon c_\epsilon,\epsilon c_\epsilon]\times [0,\epsilon]$. We see from Lemma  \ref{flowlinedeg} that 
		\begin{align}
			&\frac{\partial \tilde{q}_\epsilon}{\partial s}-Y=O(\epsilon),
			&\frac{\partial \tilde{q}_\epsilon}{\partial t}=O(\epsilon).
		\end{align}
		Here, $Y$ is the local gradient difference determined by the two local sheets of $\epsilon\scurve$. Pass to a subsequence for which both the rescaled lengths $\epsilon c_\epsilon$ and the points $u_\epsilon(-c_\epsilon,0)$ converge (recall that all the discs map into $P$; see Lemma \ref{boundaryestimate}). We see that the image of the strip region must lie in a small neighbourhood of a flow line. 
		
		We next consider the case where $\epsilon c_\epsilon$ is unbounded. We will show that such cases cannot happen. In order to do this, we first need to take some preliminary estimates on the length of the flowlines obtained from domains $[-\epsilon c_\epsilon,\epsilon c_{\epsilon}]\times [0,1]$. Indeed, observe that the length of the flowline is bounded below by 
		\[\lim_{\epsilon\to 0}{\epsilon c_{\epsilon}}\cdot {\inf_{C-U(2\delta)} \norm{\epsilon^{-1}Y}}_g.\]
		Here $\norm{\epsilon^{-1}Y}$ is the local gradient difference determined by the distinct local sheets of $\scurve$. However, this admits a positive lower bound on the complement of $U(2\delta)$. So we see that the length of the flowline is bounded below in terms of $\lim \epsilon c_{\epsilon}$, and some constants that depend only on $\delta$, $g$ and the geometry of $\scurve$. Furthermore, since these flow lines are contained outside $U(2\delta)$ where $g=g^{\phi}$, we see from \cref{prop:GMNholMorse} that they are, in fact, horizontal geodesics.  they must correspond to a unit speed horizontal geodesic by \cref{prop:GMNholMorse}.
		
		We now proceed with the rest of the proof. Since the strips are contained in $D_0(\epsilon)$, they map outside $U(2\delta)$. Therefore, by the previous argument there exists some $t>0$ such that a gradient flowline contained in this region must travel at least the distance $t>0$ within the flow-time $[0,1]$. Recall that the length of the boundary is bounded above by some $L$, and let $N>Lt^{-1}$.
		
		By the previous argument, we can choose some subsequence such that on $[-c_\epsilon,-c_{\epsilon}+N\epsilon^{-1}]\times [0,1]$, $\tilde{u}_{\epsilon}$ converges to a horizontal geodesic of length $Nt>L$. Indeed, since $\epsilon c_{\epsilon}$ diverges, $N\epsilon^{-1}<c_\epsilon$ for small enough $\epsilon$. This is a contradiction since the boundary length must be bounded above by $L$. So we see that we cannot have $\epsilon c_{\epsilon}$ unbounded. This finishes the proof. \end{proof}

	With Lemmas \ref{vertex} and \ref{non-vertex} established, we now modify the tree $T$ as follows. By
	definition, the boundaries of $W_0(\epsilon)$ consist of horizontal arcs in $\partial D_0(\epsilon)$ and vertical
	line segments. We introduce a vertex for edges that correspond to the regions where $D_0$-domains and $D_1$ domains overlap (For instance, the vertex $s$ in \ref{fig:tree2}).  Then we collapse each neighbourhood
	of the vertices of $T$ corresponding to $W_0(\epsilon)$, and obtain a semi-stable tree $\tilde{T}$. Now, among the edges of $\tilde{T}$, we have the edges that are in one-to-one correspondence with the components of
	$D_0(\epsilon)-W_0(\epsilon)$. We will call the edges that correspond to the flowline subdomain
	components of $D_0(\epsilon)-W_0(\epsilon)$ flowline edges. By taking a subsequence, we may
	assume that the resulting semi-stable tree $\tilde{T}$ is also constant. It is absolutely crucial
	that the tree $\tilde{T}$ by the nature of construction is a rooted semi-stable tree. See Figure \ref{fig:tree2} for an example. 
	\begin{figure}
		\usetikzlibrary{decorations.pathmorphing}
\usetikzlibrary{decorations.markings}
% Set the overall layout of the tree
\tikzstyle{level 1}=[level distance=3.5cm, sibling distance=3.5cm]
\tikzstyle{level 2}=[level distance=3.5cm, sibling distance=2cm]

% Define styles for bags and leafs
%\tikzstyle{bag} = [text width=4em, text centered]
%\tikzstyle{end} = [circle, minimum width=3pt,fill, inner sep=0pt]
\tikzset{
   red/.style={draw=red,
   , postaction={decorate},
        decoration={markings}},
    blue/.style={draw=blue, 
    postaction={decorate},
        decoration={markings}}, 
            null/.style={draw=blue, 
    postaction={decorate},
        decoration={markings}}, 
          treenode/.style = {align=center, inner sep=0pt, text centered,
    font=\sffamily},
  arn_n/.style = {treenode, circle, black, font=\sffamily\bfseries, draw=black,
    fill=white, text width=0.8em}
}

% The sloped option gives rotated edge labels. Personally
% I find sloped labels a bit difficult to read. Remove the sloped options
% to get horizontal labels. 
\begin{tikzpicture}[
        thick,
        % Set the overall layout of the tree
        level/.style={level distance=2cm},
        level 2/.style={sibling distance=2.6cm},
        level 3/.style={sibling distance=1cm}
    ]
    \coordinate
        child[grow=left]{
           edge from parent[red]
           node [above] {$A$}
        }
        % I have to insert a dummy child to get the tree to grow
        % correctly to the right.
        child[grow=right, level distance=0pt] {
       child{
       child{
       child{
       node {$b_2^4$}
       edge from parent [red]
       }
       child{
       node {$b_2^3$}
       edge from parent [red]
       }
       child{
       child{
       node {$b_6^2$}
       edge from parent [blue]
       }
       child{
       node {$b_5^2$}
       edge from parent [blue]
       }
       edge from parent [red] %{$3$}
       node [above] {$D$}
       }
        node [arn_n] {$v$}
       edge from parent [red] 
         node (n2) [above] {$C$}
         node (n3) [below] {}
       }
       node [arn_n] {$v$}
       edge from parent [red] %{$1$}
       }
       child{
       child{
       child{
       node {$b_2^2$}
       edge from parent [blue]
       }
       child{
       child{
       node {$b_1^2$}
       edge from parent [blue]
       }
       child{
       child{
          node {$b_1^3$}
          edge from parent [red]
       }
       node [arn_n] {$s$}
       edge from parent [blue]
       }
       edge from parent [blue]
       }
       edge from parent [blue]
       }
        child{
       node {$b_1^4$}
       edge from parent [red]
       }
       node [arn_n] {$v$}
       edge from parent [red] 
       node [above] {$B$}
       }
    };
\draw[dashed, shorten <=-2mm, shorten >=-2mm] 
    ([xshift=5mm,yshift=0mm] n2.north)       -- ([xshift=5mm, yshift=-1mm] n3.south);
\end{tikzpicture}
		\caption{The collapsed tree $\tilde{T}$ with the nodes $v$ indicating the vertices corresponding to the components of $W_0(\epsilon)$. This \textit{semi-stable} tree is obtained from the tree in \cref{fig:tree} by adding a uniformly finite number of internal vertices and/or collapsing subtrees. An example of a vertical cut passes through the edge $C$. Observe that the vertex $s$ indicates the region where $D_0$ and $D_1$ overlap.}
		\label{fig:tree2}
	\end{figure}
	\subsection{Proof of Theorem \ref{maintheorem}}\label{ProofSubsect}
	We now proceed with the proof of Theorem \ref{maintheorem}. The idea is to show that the degenerated flowtree of $u_{\epsilon}$ \textit{start off}} from initial trivalent rays.

\begin{proof}
	The stronger argument below will show the non-existence for $z\in \snetwork^c$. It also shows non-existence of strips that travel from $z^{+}$ to $z^{-}$ for $z\in \snetwork$. As usual, we argue by contradiction. Let $u_\epsilon:\mathcal{Z}\to T^{\ast}\tilde{C}$ be a sequence of $\epsilon$-BPS discs ending at $z_\epsilon$ with $z_\epsilon\in C(\delta;E)$ such that $z_\epsilon\to z$ and $\epsilon\to 0$. Let $D_0(\epsilon), W_0(\epsilon)$, $D_1(\epsilon)$, $T$ and $\tilde{T}$ be as in Sections \ref{subsection domain subdivision}-\ref{conv flowline subsec}. 
	\paragraph{Step 0}\label{paragraph:step0}
	
	By construction, the additional punctures we have introduced all get mapped into $T^{\ast}(U(2+\eta)\delta)$. Cut $\stdm$ along the vertical rays that pass through the boundary minima and connect to a horizontal boundary component with a cotangent fibre labelling. We know that the components of this subdivision with a cotangent fibre labelling all degenerate to points as $\epsilon\to 0$. As a result, we arrive at subdomains $\bar{\triangle}_{m_1},...,\bar{\triangle}_{m_n}$ \textit{truncated} at the $s=-\infty$ end, that is, it is the intersection of $\triangle_{m_i}$ with $[0,m_i-1]\times [a,\infty]$ for some $a$ lying strictly to the left of the  slit boundary components. Observe that the punctures of $\bar{\triangle}_{m_i}$ are the artificial punctures we had added, so the non-vertical boundaries of $\bar{\triangle}_{m_i}$ must close up in $\epsilon\Sigma_{\phi}$. Furthermore, the map $u_{\epsilon}\vert_{\triangle_{m_i}}$ necessarily degenerate to $z$ near the $\log(\epsilon^{-1})$-neighbourhood of the vertical cuts.
	
	The tree $\tilde{T}$ by construction is a rooted, directed, semi-stable tree. Therefore, starting from the unique root, we obtain a partial ordering on its vertices. We first claim that the out-going $D_0$-edges in $\tilde{T}$ must correspond to vertex components (Lemma \ref{vertex}). Indeed, such components correspond to right-infinite strips of the form $[M,\infty)\times [0,1]$ for some large $M\gg 0$, and by Lemma \ref{non-vertex}, these components cannot correspond to flowline components. 
	
	Now consider the maximal elements in the set of vertices that are terminal points of flowline edges. This ordering makes sense because we don't have outward $D_0$-unbounded edges that are flowline edges (they can be $D_1$-unbounded edges!). In other words, we want to consider those vertices $v$ such that the unique edge terminating at $v$ is a flowline edge, and if $w>v$, then the unique path from $v$ to $w$ cannot contain a flowline edge. For instance, suppose edges $A$ and edges $B$ in Figure \ref{fig:tree2} are flowline edges. Since we don't have outward $D_0$-unbounded edges, edge $B$ is a maximal flowline edge.  Similarly, suppose $D$ is a flowline edge, then $D$ is a maximal flowline edge. It can also happen that $C$ is a flowline edge. Then $C$ is maximal if $D$ is a vertex region.
	
	Now consider an unbounded edge. By construction, there exists at least one flowline edge that connects to this unbounded edge by a directed path in $\tilde{T}$. Consider the maximal one among such flowline edges. We know that after cutting the tree along such an edge, the resulting right-adjacent partial subtree cannot contain a flowline edge. For instance, in Figure \ref{fig:tree2}, the edge that is labelled $B$ may be a maximal flowline edge or the edge that is labelled $C$ or $D$ may be a maximal flowline edge. In the case that $C$ is a maximal flowline edge, we can cut the tree $\tilde{T}$ along a vertical line, as indicated in Figure \ref{fig:tree2}. However, the edges that terminate at $b_1^4,b_2^3,b_2^4,b_1^3$ cannot be flowline edges. From this discussion, it readily follows that all unbounded edges are contained in at least one right-adjacent subtree to a maximal flowline edge.
	
	Now, choose a maximal vertex, its flowline edge, and let $\triangle_{r_i}$ be the truncated domain containing the corresponding component $\Theta$ of $D_0(\epsilon)-W_0(\epsilon)$.

	\paragraph{Step 1}\label{paragraph:Step1}
	As a first step, we will show that there exists a vertical line segment in $\Theta$ such that after cutting $\triangle_{r_i}$ along the vertical line segment in $\Theta$, the image of the resulting right-adjacent component gets mapped into $T^{\ast}U((2+\eta)\delta)$ for small enough $\epsilon$. 
	
	Choose some $\frac{1}{2} \eta<\eta''<\eta$ such that $\partial U((2+\eta'')\delta)$ intersects the resulting flowline $\gamma:[-c,c]\to \tilde{C}$ transversely. By transversality, we know that there exists some $-c<c_0<c$ such that $\gamma\vert_{(c_0,c]}$ gets mapped either strictly inside $U((2+\eta'')\delta)$ or outside $U((2+\eta'')\delta)$. Suppose we are in the second situation.
	
	In that case, by Lemma \ref{lem:regionseparation}, we see that there exists some neighbourhood of the right vertical boundary of $\Theta$ that belongs strictly to the domain $D_0(\epsilon)$ for small enough $\epsilon$. Cut $\tilde{T}$ at the vertex $v$ and consider the components corresponding to the edges in the right-adjacent subtree. By assumption, none of the edges in the subtree are flowline edges, so any edge in $\tilde{T}$ that belongs in $D_0(\epsilon)$ must be a vertex edge. Now, consider the shortest path $\mathcal{P}=e_1...e_n$ such that $i(e_1)=f(e)$, the edges $e_1,...,e_n$ all correspond to horizontal components of $D_0(\epsilon)-W_0(\epsilon)$, and the edge $e_n$ corresponds to a component $\Theta_n$ that intersects with $D_1(\epsilon)$. But along such paths, all the components of $D_0(\epsilon)-W_0(\epsilon)$ (or $W_0(\epsilon)$) that correspond to the edges $e_1,...,e_{n-1}$ (or vertices) degenerate to points. Since $\gamma$ gets mapped outside $U((2+\eta'')\delta)$ for $c_0<s\leq c$, and since $D_1(\epsilon)$ gets mapped inside $U((2+\frac{9}{2}\delta_0)\delta)$ where $\delta_0$ was chosen to be less than $\frac{\eta}{10}$, we see that there can be no such paths. In that case, the non-vertical boundary of the right-adjacent component will never close up, which is a contradiction. 
	
	\paragraph{Step 2}\label{paragraph:Step2}
	Recall that $\frac{1}{2}\eta<\eta''<\eta$. Now that we have established that the flowline must have remained inside $U((2+\eta'')\delta)\cap U((2+\frac{1}{2}\eta))^c$ for $c_0<s\leq c$, we are going to prove the claim that the right-adjacent component must get mapped strictly inside $T^{\ast}(U((2+\eta)\delta))$. 
	
	Recall that $\Theta$ is still mapped inside $T^{\ast}U((2+\eta)\delta)$. By lemma \ref{non-vertex}, we know that $u_{\epsilon}\vert_{\Theta}\circ \epsilon^{-1}$ uniformly converges to a flowline on $[c_\epsilon,c_{\epsilon}]\times [0,\epsilon]$ where $c_{\epsilon}\to c$. In particular, we see that we can find a vertical segment $l_{\epsilon}$ that degenerates to some $\gamma(s)$ for $c_0<s<c$. We claim that the integral of the holomorphic Liouville form over the rescaled arc $\epsilon^{-1}u_{\epsilon}(l_{\epsilon})$ must converge to zero. 
	
	There are several ways to show this. Firstly, we observe that after taking the cotangent fibre rescaling, $\epsilon^{-1}u_{\epsilon}(l_{\epsilon})$ must uniformly converge to a continuous arc that gets mapped inside the cotangent fibre over $\gamma(s)$. This follows since the gradient estimate gives $\abs{Du_{\epsilon}}\vert_{l_{\epsilon}}=O(\epsilon)$, and so the image of the projection converges to a point, but $\abs{D(\epsilon^{-1}\cdot u_{\epsilon})\vert_{l_{\epsilon}})}$ stays uniformly finite, that we may apply the Arzela-Ascoli theorem, and conclude that the integral of the Liouville form on $l_{\epsilon}$ converges to zero. In fact, as pointed out in \cite[Remark 5.8]{Morseflowtree}, the $O(\epsilon)$-gradient estimate can be improved to a higher derivative estimate, and so $\int (\epsilon^{-1}\cdot u)\vert_{ l_{\epsilon}}^{\ast}\canliouvile\to 0$ follows more or less immediately. Alternatively, one can do this directly, since (i) for $u_{\epsilon}=(q_{\epsilon},p_{\epsilon})$, the fibre rescaling does not affect $q_{\epsilon}$, (ii) by integrated maximum principle, there exists some $C>0$ such that $\abs{p_{\epsilon}}\leq C\epsilon$, and (iii) the height of the vertical segment is uniformly bounded by $r_i$, we get
	\[\abs{\int (\epsilon^{-1}\cdot u)\vert_ {l_{\epsilon}}^{\ast}\lambda}\leq Cr\abs{\nabla q_{\epsilon}}\]
	and the right-hand side uniformly converges to zero. Since the real Liouville form is the real part of $\lambda$, we get the same convergence result for the integral of $\canliouvile$. 
	
	We now show the claim that the image of $u_{\epsilon}$ on the right-adjacent component $\bar{\triangle}_{\gamma}$ of $\triangle_{r_i}$ must get mapped strictly inside $T^{\ast}(U((2+\eta)\delta))$. To see this, observe that by the finality condition on $v$, there are no flowline edge subdomains that belong to the right-adjacent component. Since $D_1(\epsilon)$ gets strictly mapped inside $T^{\ast}(U((2+\eta)\delta))$, we see that by arguing inductively as before, there is no way for the image of the right-adjacent component to escape $T^{\ast}(U((2+\eta)\delta))$. This proves the claim.
	
	\paragraph{Step 3}\label{paragraph:Step3}
	We now show that the flowline edge $e$ must have degenerated to a flowline over the initial trivalent rays $\mathbb{R}_{\geq 0}\cdot e^{\frac{2\pi i k}{3}}, k=0,\pm 1$. Recall that these are the locus of points where $Im(z^{\frac{3}{2}})$ vanishes. Now, the horizontal boundaries of the right-adjacent component must close up since the corresponding punctures were something that we only artificially added in. So, the flowline uniquely determines the relative homotopy class $\alpha$ of the limit of the closed-up boundaries of  $\epsilon^{-1}u_{\epsilon}$ on $\scurve$, coming from the non-vertical boundaries of the right-adjacent component $\bar{\triangle}_{\gamma}$. In particular, the flowline condition tells us that the endpoints of $\alpha$ must lie on the \textit{distinct} lifts of the point $\gamma(s)$ on $\scurve$. Since the metric on $U((2+\eta)\delta)$ is K\"{a}hler, and the spectral curve is holomorphic, the same argument as in Proposition \ref{eq:boundaryparts} tells us that the $\epsilon^{-1}$-rescaling of the integral of the holomorphic symplectic form $\Omega$ on $\bar{\triangle}_{\gamma}$ must be real. Now, the rescaling of the vertical cut $l_{\epsilon}$ converges to an arc contained inside the cotangent fibre. So by Stokes' theorem, we see that the integral $\epsilon^{-1}\int u_{\epsilon}^{\ast}\lambda$ converges to $\int \alpha^{\ast}\lambda$ since $\int (\epsilon^{-1}\cdot u)\vert_{ l_{\epsilon}}^{\ast} \lambda$ will converge to zero. 
	
	Recall that the primitive of $\lambda$ on $\{(p^z)^2-z=0\}$ was given by $2(p^z)^3/3$. Then $\int \alpha^{\ast}\lambda=\primitive(\gamma(s))=\pm \frac{4}{3}(\gamma(s))^{3/2}$ which is real if and only if $\gamma(s)$ belongs to one of the initial trivalent rays.  In particular, we see that the limit of the image $u_{\epsilon}$ over $\Theta$ must have lain on the spectral network $\snetwork(0)$. This proves the claim.
	
	\paragraph{Step 4}\label{paragraph:Step4}
	
	We have seen thus far that after cutting the domain along a vertical segment contained in the "maximal" flowline component, the right-adjacent component gets mapped inside $U(2+\eta'')\delta$ and the flowline degenerates to the initial walls of the spectral network. We now claim that cutting $\stdm$ along all such vertical segments, the resulting unique component containing the negative strip-like end must belong entirely inside $D_0(\epsilon)$. Furthermore, we will show that the image of the negative strip-like end degenerates to a broken Morse sub-tree (in the sense of \cite[Section 2]{Morseflowtree}) on the spectral network. 
	
	Here, we need to use the fact that the spectral curve is rank $2$. The previous arguments all extend to the case where our Lagrangian is geometrically bounded at horizontal infinity, but otherwise just simply branched, given that one is able to prove the necessary isoperimetric inequalities and gradient estimates.
	
	Cut out the tree $\tilde{T}$ at the edges corresponding to the vertical line segments as above. Let $\bar{T}$ be the resulting truncated tree. Then, since the initial trivalent rays never return to $B_{2\delta}$, we see that the corresponding edge components must belong to $D_0(\epsilon)$. Now, suppose such edges meet at a vertex. This vertex must also belong to $D_0(\epsilon)$ because the initial trivalent rays never return to $B_{2\delta}$. The vertex must belong to the set where the corresponding initial rays intersect. However, these initial rays never meet because we are in the rank $2$ situation. Therefore, we see that the edge components that meet at a vertex must lie on the same walls of $\snetwork(0)$ and must travel in the direction of $\snetwork(0)$. However, since the tree $\tilde{T}$ is \textit{rooted}, there is precisely \textit{one} in-going edge at the corresponding vertex. In particular, the in-going edge cannot traverse back the walls of the spectral network because this edge must also have the same ordered pair of sheets labelling as the walls on $\snetwork(0)$ by the connectedness of each boundary component. Indeed, if the wall is labelled $12$, then all the out-going horizontal components of $D_0(\epsilon)$ adjacent to the component of $W_0(\epsilon)$ must have horizontal boundary labelling $12$, and so the in-going horizontal boundary labelling must also be $12$. So we see that the components remain inside $D_0(\epsilon)$.
	
	Now argue by induction, consider the shortest path contained in $\bar{T}$ connecting a positive external edge of $\bar{T}$ to a $D_1(\epsilon)$-component, and consider the unique sub-tree sharing the same initial vertex. Since the walls on $\snetwork(0)$ never return to the branch points, all the edges belong to $D_0(\epsilon)$, and so we see that all the left-adjacent components to the vertical segments belong to $D_0(\epsilon)$. In particular, they degenerate to Morse flow trees whose image is entirely contained in $\snetwork(0)$. 	This cannot be so if $z$ is on $C(\delta;E)$, a contradiction.
\end{proof}

We end this section with a brief discussion on generalizing to the higher-rank situation. We give the following definition.
\begin{definition}\label{definition:tamespectralcurve}
	We say that a properly embedded Lagrangian submanifold $\scurve\subset T^{\ast}\tilde{C}$ is a \emph{tame spectral curve} if there exists a metric $g$ on $\tilde{C}$, $\injradius,\heightR,\sheetgap>0$ and a compact subset $K\subset \tilde{C}$ such that the following holds. 
	\begin{itemize}
		\item The metric $g$ is complete, geometrically bounded, and the minimal injectivity radius of $g$ is bounded below by $r>0$. 
		\item With respect to the metric $g$, $\scurve$ is contained in $D_R^{\ast}M$.
		\item $\scurve\to \tilde{C}$ is a degree $n$ simple branched cover.
		\item  Outside $K$, $\scurve\to \tilde{C}$ is a proper covering map, and for $m\in T^{\ast}(\tilde{C}-N_{\injradius}(K))$, the smooth sheets of $\epsilon \scurve$ over $B_{\injradius}(m)$ uniformly $C^{\infty}$-converges to the zero section, independent of $m$, as $\epsilon\to 0$, with respect to $g$. Furthermore, over $B_{\injradius}(m)$, the sheets of $\scurve$ are given by $df_1,...,df_n$ such that $\abs{df_i-df_j}\vert_{B_{\injradius}(m)}>\sheetgap$.
		\item For all $m\in B_{\injradius}(z)$, and $m'\in \pi^{-1}(m)$ , $B^{g}_{\sheetgap}(m')\cap \scurve$ is connected. 
	\end{itemize} 
\end{definition}
For such tame spectral curves, the arguments in \cref{Floer Theory on Open Manifolds Sect 1} apply directly. We don't have strict energy controls as in \cref{Spectral Curves}. The only place where rank $2$ (and the flatness of the $g^{\phi}$-metric) is used in \cref{Adia Degen} is the truncated reverse isoperimetric inequality in \cref{lengthestimate} and Step 4 in the proof of \cref{Mainestimate} the adiabatic degeneration argument in the proof of \cref{maintheorem}. In the higher rank case, the truncated reverse isoperimetric inequality needs to be generalized to handle the situation where we don't have everything flat outside $K$. The only place where we needed flatness was relating $\int dd^c h$ to $2\omega$. The proof can be modified with a Weinstein neighbourhood argument and will appear elsewhere. On the other hand, the argument in the proof of \cref{maintheorem} breaks down if some walls return to the branch points. This is where the \textit{existence} of higher rank spectral networks becomes essential. 
\section{Wall-crossing analysis}\label{Wall-Crossing Analysis Section}
In this section, we compute the Floer cohomology local system $z\mapsto HF(\scurve,F_z)$ and prove Theorem \ref{FullNon-Abelianization}). To do this, we must show that the Floer-theoretic parallel transport along a path $\alpha$ contained in $C(\delta;E)$ is given by the pushforward of $\mathcal{L}$, and the Floer-theoretic parallel transport along the ``short paths" (see Section \ref{introNonAb}) admits the form \eqref{wallcrossingterm}. In Section \ref{subsection:Hofer-type Gromov Compactness}, we define and study the relevant passive continuation strips. In Section \ref{subsubsection:wall-orientation}, we set up some conventions, fix the branch cut and the sheet ordering data once and for all, and specify the path groupoid generators on $C$ that we will use throughout the section. In Section \ref{subsubsection:Floerdata}, we specify the Floer data that we will use for the Lagrangian pair $(\epsilon\scurve,F_z)$. In Section \ref{subsubsection:shortpaths}, we study the moduli problem for parallel transports along the ``short paths". The main result is Proposition \ref{proposition:shortpathspassive} which explains the form \eqref{wallcrossingterm} up to sign. In Section \ref{subsubsection:longarcs}, we study the moduli problem for parallel transports along arcs contained in $C(\delta;E)$. The main result is Proposition \ref{No-Go moving}; we show using Theorem \ref{maintheorem} that for infinitesimal fibre parallel transports, the relevant continuation strips are all constant strips. This explains the form \eqref{pushforwardoutside} up to sign. 

In Section \ref{gradings and spin structures}, we specify the necessary grading and spin structure data in order to compute the Floer cohomology local system. In Section \ref{gradingsss}, we define the grading functions. In Section \ref{subsubsection:spin structures}, we use twisted local systems to compute the signs of parallel transport maps. In Section \ref{NonAbsss}, we use the results in Section \ref{subsubsection:spin structures} to compute the Floer-theoretic parallel transport maps and prove Theorem  \ref{FullNon-Abelianization}. 

\subsection{Moduli problem for parallel transports}\label{subsection:Hofer-type Gromov Compactness}
In this section, we define and study the various moduli problem for Floer-theoretic parallel transport maps associated to horizontal and vertical geodesic arcs on the base. We use the conventions from Section \ref{continuationstripsss}. 

\subsubsection{Wall-chamber data}\label{subsubsection:wall-orientation}

In this section, we fix the branch cut data, a choice of a ``positive sheet" of $\phi$ for each component of $C-\snetwork(0)$, the set of base points $\mathcal{P}_C$ for the path groupoid over $\tilde{C}$ and the generators for the path groupoid morphisms. We will need the following definition.

\begin{definition}
	Let $\gamma$ be an oriented horizontal trajectory in $C$. Then the \emph{positive sheet} $+\sqrt{\phi}$ \emph{along} $\gamma$ is the unique sheet of $\sqrt{\phi}$ such that the line element $\sqrt{\phi}(\gamma(s))\cdot \gamma'(s)ds$ is real and positive, for any smooth parametrization of $\gamma$ that respects the chosen orientation.
	
	Let $\mathcal{V}\subset C^{\circ}$ be a vertical neighbourhood of $\gamma$, and let $+\sqrt{\phi}$ be the positive sheet along $\gamma$. Then we say that the point $z$ in $V-\gamma$ lies \emph{above} $\gamma$ if the integral $\int Im(+\sqrt{\phi})$ along the unique vertical segment between a point on $\gamma$ and $z$ is positive. Otherwise, we say the point $z$ lies \emph{below} $\gamma$.  
\end{definition}
Note that for small enough $\mathcal{V}$, $\mathcal{V}-\gamma$ consists of two connected components, the one that lies above $\gamma$, and the one that lies below $\gamma$.

Now let $w$ be a wall on $\snetwork(0)$. We always orient the walls in the outward direction, travelling away from the branch points. This orientation on the wall $w$ picks out a unique positive sheet of $\sqrt{\phi}$ along $w$.

\begin{definition}\label{definition:chamberabovewall}
	Let $w$ be a wall. We define $\mathcal{Z}^h(w)$ to be the unique component of $C-\snetwork(0)$ containing the point that lies above $w$.
\end{definition}
Note that the conformal equivalence in Proposition \ref{domaindecomposition} defined using the positive sheet $+\sqrt{\phi}$ sends $w$ to the lower right bottom corner.

\begin{lemma}\label{positivewallpair}
	Let $\mathcal{Z}^h$ be a component of $C-\snetwork(0)$. Then $\mathcal{Z}^h=\mathcal{Z}^h(w)$ for at most two walls. In fact, $w$ is unique if and only if $\mathcal{Z}^h$ has the conformal type of the upper half plane. 
\end{lemma}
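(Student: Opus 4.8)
The plan is to reduce to the two conformal models of Proposition \ref{domaindecomposition} and then run a one–line sign computation for each boundary wall. First note that if $\mathcal{Z}^h(w)=\mathcal{Z}^h$ then $w$ must be a wall on $\partial\mathcal{Z}^h$: by definition the point lying above $w$ is obtained by travelling a short vertical distance from a point of $w$, so it lies in a chamber adjacent to $w$, and $\mathcal{Z}^h(w)$ is exactly that chamber. Hence it suffices to enumerate the boundary walls of $\mathcal{Z}^h$ and decide, for each, whether $\mathcal{Z}^h$ is the chamber lying above it. By Proposition \ref{domaindecomposition} we may fix a $\phi$-flat conformal equivalence of $\mathcal{Z}^h$ with either a horizontal strip $\mathcal{Z}(a,b)$ or the upper half-plane $\mathcal{H}$, with $\phi$ pulled back to $dz^2$, $z=x+iy$. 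In the strip model the boundary is a union of four separating horizontal rays (two emanating from the bottom zero $a_0+ia$, one toward $\mathrm{Re}(z)\to+\infty$ and one toward $-\infty$, and two from the top zero $b_0+ib$); in the half-plane model it is the two separating rays emanating from the zero $x_0$. Each such ray has a zero at one end and, $\phi$ being saddle-free and complete, a pole at the other, so each is a wall of $\snetwork(0)$.

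Now fix a boundary wall $w$ and orient it outward, away from its zero endpoint. In the flat coordinate its outward direction is $\pm\partial_x$, so the positive sheet $+\sqrt{\phi}$ along $w$ equals $+dz$ when this direction is $+\partial_x$ and $-dz$ when it is $-\partial_x$; correspondingly $\mathrm{Im}(+\sqrt{\phi})=\pm\,dy$. A short vertical segment from a point of $w$ into $\mathcal{Z}^h$ runs in the direction $+\partial_y$ if $w$ is on the bottom of the model and $-\partial_y$ if $w$ is on the top, and integrating $\mathrm{Im}(+\sqrt{\phi})$ along it one finds the integral is positive — i.e. $\mathcal{Z}^h$ lies above $w$ — precisely for the bottom-right and the top-left ray of $\mathcal{Z}(a,b)$, and precisely for the right-hand ray of $\mathcal{H}$; for the remaining rays it is negative, so the opposite chamber lies above $w$. (This matches the remark after Definition \ref{definition:chamberabovewall}, since the conformal model built from $w$ sends $w$ to the bottom-right ray.) Consequently $\mathcal{Z}^h=\mathcal{Z}^h(w)$ for exactly two walls when $\mathcal{Z}^h$ is a strip and for exactly one wall when $\mathcal{Z}^h$ is a half-plane, which yields both ``at most two'' and ``unique iff $\mathcal{Z}^h\simeq\mathcal{H}$''.

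The part I expect to need the most care is bookkeeping the passage from the conformal model back to $\tilde{C}$: a priori the boundary map $\overline{\mathcal{Z}(a,b)}\to\tilde{C}$ (resp. $\overline{\mathcal{H}}\to\tilde{C}$) is not injective, so one must check that the two ``above'' rays of a strip descend to genuinely distinct walls of $\snetwork(0)$, and that the ``above'' ray of a half-plane is not identified with its other boundary ray. This is handled by noting that ``$z$ lies above $w$'' is a condition local near $w$ depending only on the outward orientation of $w$, and that the two boundary rays of the half-plane are two of the three distinct separating directions issuing from the simple zero $x_0$, hence distinct walls; the argument for the two ``above'' rays of a strip is the same. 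Once this is in place the proof is complete.
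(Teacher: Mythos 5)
Your argument is essentially the paper's proof written out in full: the paper's own (two-line) proof consists precisely of the observation that, in the flat conformal model, a wall $w$ with $\mathcal{Z}^h(w)=\mathcal{Z}^h$ must correspond to the bottom-right or the top-left boundary ray, with $z\mapsto -z$ exchanging the two. Your reduction to the models of Proposition \ref{domaindecomposition}, and your sign bookkeeping identifying the ``above'' rays as bottom-right and top-left for a strip and the right ray for a half-plane, are correct and consistent with the remark after Definition \ref{definition:chamberabovewall}.

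The one step that is not fully delivered is exactly the one you flag, and only in the strip case. For the half-plane your argument is fine: the two boundary rays issue from the same simple zero in two of its three separatrix directions, and since $\phi$ is saddle-free a wall meets the zero locus in a single germ, so distinct germs give distinct walls. But the two ``above'' rays of a strip issue from the two boundary zeroes of the model, which may well be identified to a single zero $b$ of $\phi$ in $C$; in that situation ``the argument is the same'' does not apply verbatim, because you must still rule out that the bottom-right germ and the top-left germ at $b$ are the \emph{same} germ, i.e.\ that a single wall carries the strip on both sides as its bottom-right and its top-left boundary. This distinctness is what the ``only if'' direction (strip $\Rightarrow$ two distinct walls) actually uses. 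It is true, and can be closed with one more observation: both a bottom-right and a top-left attachment put the chamber on the side of $w$ that lies \emph{above} $w$ (the left of the outward orientation), and along a fixed wall that side is a single chamber germ reached, under the conformal homeomorphism of Proposition \ref{domaindecomposition} (which is injective on the interior), through a single boundary ray of the model --- for instance because a connected subset of $\mathcal{Z}(a,b)$ of diameter less than $b-a$ cannot meet both horizontal boundary lines, or by counting the two disjoint sectors of the strip at $b$, which would otherwise both map onto the one sector above the common germ. With that line inserted your proof is complete; for fairness, note that the paper's own proof is silent on this point as well.
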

\begin{proof}
	Given a conformal equivalence of $\mathcal{Z}^h$ with a finite horizontal strip (which sends $\phi$ to $dz^2$), $w$ corresponds to either the right bottom boundary or the left upper boundary. Reversing the parametrization by $z\to -z$ swaps the two. 
\end{proof}

\paragraph{Branch-cut data}\label{paragraph:branchcutdata}
We fix the branch-cut data. Let $b$ be a zero of $\phi$. By Proposition \ref{zeroofphilocalform}, there exists a neighbourhood of zero $(U_b,\phi)$ and a biholomorphism $(U_b,b,\phi)\simeq (D,0,zdz^2)$ whose germ at $b$ is unique up to a phase factor of $e^{2\pi k i/3},k=0,1,2$. Choose a phase factor once and for all and introduce a branch cut on the negative real axis. 

Label the wall corresponding to the positive ray $\mathbb{R}_{>0}e^{i\cdot 0}$ by $w_0$, the wall corresponding to $\mathbb{R}_{>0}\cdot e^{i\cdot 2\pi/3}$ by $w_1$, and the wall corresponding to $\mathbb{R}_{>0}\cdot e^{i4\pi/3}$ by $w_{-1}$. Give $\pm$ labels for the two sheets of $\sqrt{\phi}$, $+$ and $-$ with respect to the branch cut. Then given a wall, if the unique positive sheet of $\sqrt{\phi}$ along the wall is the $+$-sheet, we label the wall $-+$. Otherwise, we label the wall $+-$. So we see that at each vertex of the spectral network $\snetwork(0)$, the three walls are labelled $+-$, $+-$ and $-+$. In particular, $w_0$ is now labelled $-+$ and $w_1$ and $w_{-1}$ are labelled $+-$. We do this for each of the zeroes of $\phi$. See Figure \ref{Arcsandnetworkfig}.

%We have a unique sheet of $\sqrt{\phi}$ along $w$ such that $\int \sqrt{\phi}$ along $w$ with respect to the outward orientation is positive. W Then $w$ separates this vertical neighbourhood into regions where $\int Im \sqrt{\phi}$ is increasing and where $\int Im \sqrt{\phi}$ is decreasing, respectively.  containing the former region.  and it comes along with a conformal equivalence sending the wall to the lower right bottom corner, or equivalently a choice of $+\sqrt{\phi}$).

\paragraph{Chamber sheet data}\label{paragraph:chambersheetdata}
We fix the ``positive sheet" of $\sqrt{\phi}$ over each component of $C-\snetwork(0)$. Let $\mathcal{Z}^h$ be a component of $C-\snetwork(0)$ and choose an oriented generic horizontal trajectory $\gamma(\mathcal{Z}^h)$. This orientation picks out a positive sheet of $\sqrt{\phi}$ along $\gamma$. We use the positive sheet of $\sqrt{\phi}$ with respect to $\gamma(\mathcal{Z}^h)$ to identify $\mathcal{Z}^h$ with the corresponding horizontal subdomain in $\mathbb{C}$. From now on,  we'll write $\mathcal{Z}^h(\delta;E)=C(\delta;E)\cap \mathcal{Z}^h$, and we will abuse notation and let $\mathcal{Z}^h$ also denote its representative as a horizontal subdomain in $\mathbb{C}$. On the other hand, when we write $\mathcal{Z}^h(w)$ for $w$ a wall, we will find its representative as a horizontal subdomain in $\mathbb{C}$, using the trivialization induced from the positive sheet of $\sqrt{\phi}$ along $w$. Note that there exists a single wall $w$ on the boundary of the closure of $\gamma(\mathcal{Z}^h)$ respecting the choice of $+\sqrt{\phi}$ making $\mathcal{Z}^h=\mathcal{Z}^h(w)$.  
\paragraph{Path groupoid data}\label{paragraph:pathgroupoidbasepoints}
We now fix the path groupoid base points $\mathcal{P}_C$ and the path groupoid generators. Let $\mathcal{V}$ be a component of $\mathcal{V}(\delta;E)$, and let $w$ be its core wall. On this component, we take the trivialization induced by the positive sheet of $\sqrt{\phi}$ with respect to $w$. For each wall $w$, we choose a point $b(w)\in \mathcal{V}(\delta;E)$ and the adjacent points $b^{u}(w)=b(w)+i\eta(w)$ and $b^{d}(w)=b(w)-i\eta(w)$ for some $\eta(w)>0$. We choose them in a way that the adjacent points contained in a component of $\mathcal{Z}^h(\delta;E)$ are connected by either a vertical or a horizontal arc fully contained in $\mathcal{Z}^h(\delta;E)$.  We can always manage this to happen by taking $\delta$ much smaller than $\min \frac{\abs{a-b}}{2}$ where the minimum is taken over all the horizontal strips $\mathcal{Z}(a,b)$ as in Proposition \ref{horizontaldomaindevision}. 

Having made these choices, we choose the set 
\begin{align}\label{definition:groupoidbasepoints}
	\mathcal{P}_C:=\{b^{\bullet}(w): w \text{ is a wall,} \bullet=h,v\},
\end{align}
to be the set of base points for the path groupoid of $\tilde{C}$.  
\begin{definition}\label{definition:groupoidgenerators}
	Let $w$ be a wall, and let $w'$ be a wall that lies on the left bottom boundary of $\mathcal{Z}^h(w)$. The arc $\alpha(w,w')$ is the unique horizontal arc contained in $\mathcal{Z}^h(w)\cap C(\delta;E)$ connecting $b^{u}(w)$ to $b^{d}(w')$. The arc $\alpha(w)$ is the shortest vertical arc connecting $b^{d}(w)$ to $b^{u}(w)$. The arc $\gamma(w,w'')$ is the unique vertical arc contained in $\mathcal{Z}^h(w)$ connecting $b^{u}(w)$ to $b^d(w'')$ where $w''$ is the wall on the right top boundary of $\mathcal{Z}^h(w)$. For any other $w'$, we set $\alpha(w,w')$ and $\gamma(w,w')$ to be the emptyset. 
\end{definition}
We then set the path groupoid generators to be \[\{\alpha(w,w')^{\pm 1},\gamma(w,w')^{\pm 1},\alpha(w)^{\pm 1}: w, w'\text{ a wall on } \snetwork(0)\}.\]

\begin{figure}[t]
	\includegraphics[width=0.5\textwidth]{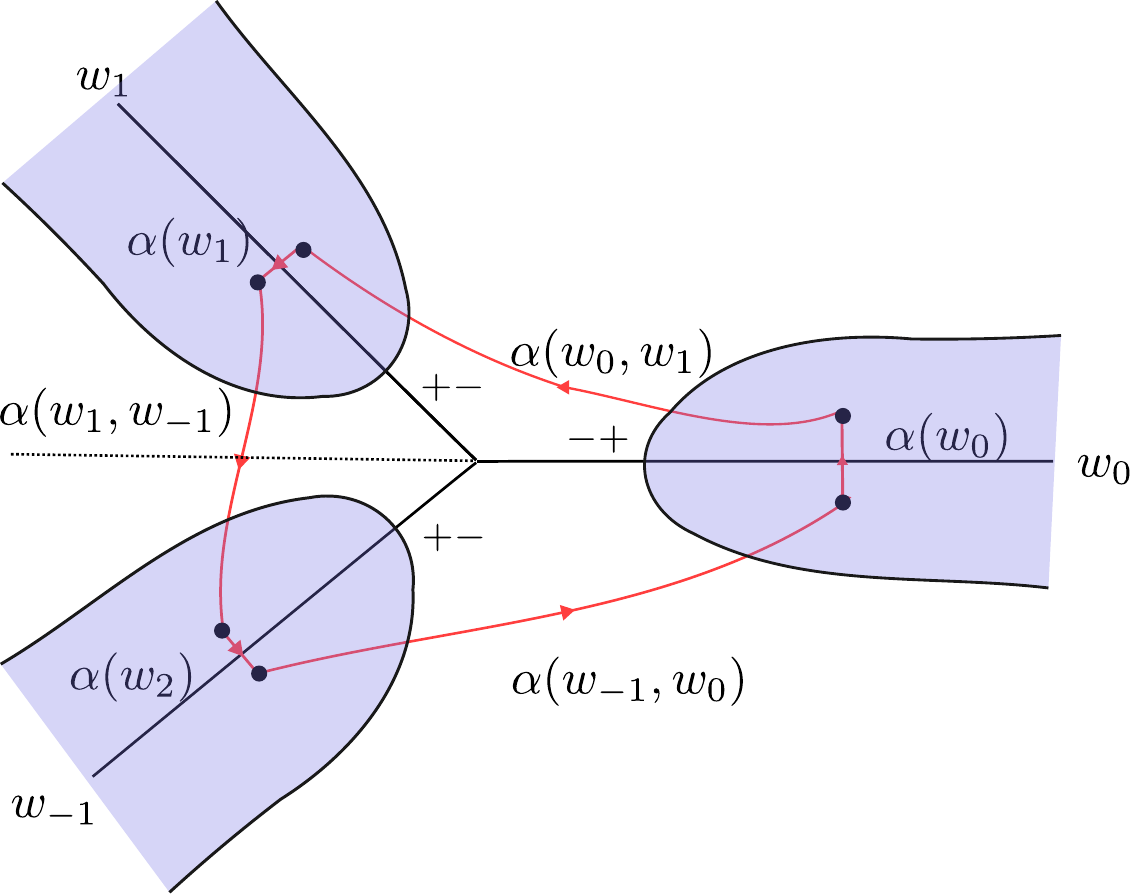}
	\centering
	\caption{The spectral network near a zero and the arcs $\alpha$ indicated on the diagram.}
	\label{Arcsandnetworkfig}
\end{figure}

\subsubsection{Floer data}\label{subsubsection:Floerdata}
We now study the passive continuation strips associated to fibre parallel transports. To define the moduli spaces, we fix a regular Floer datum for the pair $\epsilon\scurve$ and $F_z$ for $z\in C(\delta;E)$ and $0<\epsilon<\epsilon_0(\delta;E)$.  We start with the following lemma:
\begin{lemma}\label{lemma:Sasakilocalisation}
	There exists an auxiliary function $\rho:[1,\infty)\to [1,\infty]$ satisfying $\rho(r)=r$ for $r\gg 1$ such that $u$ is a $J_{g^{\phi}_{\delta}}$-strip bounded between $\epsilon\scurve$ and $F_z$ if and only if it is a $J_{con}$-strip bounded between $\epsilon\scurve$ and $F_z$, where $J_{con}$ is the conical deformation of $J_{g^{\phi}_{\delta}}$ obtained using $\rho$. 
\end{lemma}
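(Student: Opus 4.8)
The plan is to take for $\rho$ any one of the auxiliary functions appearing in the definition \eqref{deformSasaki} of the conical deformation, so that by construction $J_{con}$ agrees with the Sasaki almost complex structure $J=J_{g^{\phi}_{\delta}}$ on $\overline{D_1^{\ast}\tilde C}\subset\{r<3/2\}$; no special choice of $\rho$ is required. The entire content of the lemma then reduces to the claim that, for $J'=J$ and for $J'=J_{con}$ alike, every $J'$-holomorphic strip $u$ with boundary on $\epsilon\scurve$ and $F_z$ is actually contained in $\{r\le 1\}$, since on that region $J=J_{con}$ and the two Cauchy--Riemann equations coincide on the nose.

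First I would record that both $J$ and $J_{con}$ are of general contact type (\cref{generalcontact}) on $\{r>1\}$. For $J$ this is exactly \cref{proposition:contacttype}: the Sasaki structure is of rescaled contact type, i.e.\ $\canliouvile\circ J=r\,dr$. For $J_{con}$ a short computation in the covariant splitting \eqref{horverdecom}---using that $\canliouvile$ pairs only with the $H$-component there, and that the $\rho(r)$-rescaling in \eqref{deformSasaki} produces a single factor $\rho(r)^{-1}$---gives $\canliouvile\circ J_{con}=\rho(r)^{-1}(\canliouvile\circ J)=(r/\rho(r))\,dr$ with $r/\rho(r)>0$ everywhere; in particular $\canliouvile\circ J_{con}=dr$ for $r\gg1$, consistently with \cref{proposition:contacttype}. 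With these two facts in hand I would apply the vertical confinement / maximum principle \cref{verticalconfinement} at the level $r=1$. Its hypothesis $u^{\ast}\canliouvile|_{\partial S}\le 0$ on $u^{-1}(\{r>1\})$ holds because the boundary arc on $\epsilon\scurve$ never enters $\{r>1\}$ (by \cref{spectralcurvevertifin}, $\epsilon\scurve\subset D_1^{\ast}\tilde C=\{r<1\}$ for $\epsilon\le 1$), while on the cotangent fibre $F_z$ the form $\canliouvile$ restricts to zero, since tangent vectors to $F_z$ are vertical. Hence $u$ is locally constant on $u^{-1}(\{r>1\})$; because the domain is connected, if this set were nonempty then $u$ would be globally constant with value in $\{r>1\}$, whereas a constant strip takes its value in $\epsilon\scurve\cap F_z\subset\{r\le1\}$, a contradiction. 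Thus $u(S)\subset\{r\le1\}\subset\{r<3/2\}$, and the same argument verbatim with $J$ replaced by $J_{con}$ shows every $J_{con}$-holomorphic strip with these boundary conditions is also contained in $\{r<3/2\}$; on this region the two almost complex structures agree, so the two moduli problems have literally the same solutions, which is the assertion.

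The only mildly delicate point is the identity $\canliouvile\circ J_{con}=(r/\rho(r))\,dr$, i.e.\ that the conical deformation is still of general contact type all the way down to $r=1$ (not merely at infinity, as stated in \cref{proposition:contacttype}); once this is checked, positivity of $r/\rho(r)$ is immediate and \cref{verticalconfinement} applies with $a=1$. Everything else is a direct invocation of \cref{verticalconfinement}, \cref{proposition:contacttype} and \cref{spectralcurvevertifin}. The value of the lemma is that it legitimises carrying out all of the analysis in \cref{Adia Degen} with the genuinely Sasaki $J$---for which the local models near branch points and Ekholm's gradient and isoperimetric estimates are set up---while still knowing that the moduli spaces so computed are those of the contact-type-at-infinity datum $J_{con}$ needed for the Gromov compactness arguments of \cref{Floer Theory on Open Manifolds Sect 1}.
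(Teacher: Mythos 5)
Your proof is correct and takes essentially the same route as the paper: choose $\rho$ so that $J_{con}$ coincides with the Sasaki structure $J_{g^{\phi}_{\delta}}$ on a disc bundle containing $\epsilon\scurve$, then apply the vertical confinement/maximum principle (\cref{verticalconfinement}), using that both structures are of general contact type above that level and that $\canliouvile$ vanishes on $F_z$, to trap every strip in the region where the two structures agree, so the two moduli problems coincide on the nose. The only cosmetic difference is that the paper takes a fresh $\rho$ with $\rho(r)=1$ for $r<3$ and confines to $D_{2.5}^{\ast}$, whereas you reuse the $\rho$ from the definition and confine to $\{r\le 1\}$ after verifying $\canliouvile\circ J_{con}=(r/\rho(r))\,dr$ down to $r=1$.
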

\begin{proof}
	Choose a smooth, positive increasing function $\rho:[1,\infty)\to [1,\infty)$ such that $\rho(r)=1$ for $r<3$ and $\rho(r)=r$ for $r>5$. Let $J_{con}$ be the $\rho$-conically deformed almost complex structure. By Lemma \ref{verticalconfinement}, since $J_{con}$ is of general contact type, the discs lie in $D_{2.5}T^{\ast}M$, where $J_{con}=J_{g^{\phi}_{\delta}}$. This finishes the proof. 
\end{proof}
%For the rest of this section, we abuse notation and write $J$ for $J_{con}$. 
\begin{corollary}
	For $z\in C(\delta;E)$ and $0<\epsilon<\epsilon_0$, the Floer datum $(\epsilon\scurve,F_z,J_{con})$ is regular.\label{regularityofJ}
\end{corollary}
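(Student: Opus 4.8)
The claim is that for $z\in C(\delta;E)$ and $0<\epsilon<\epsilon_0$, the Floer datum $(\epsilon\scurve,F_z,J_{con})$ is regular, where $J_{con}$ is the conical deformation of $J_{g^\phi_\delta}$ produced by \cref{lemma:Sasakilocalisation}. The plan is to reduce regularity to the non-existence statement of \cref{maintheorem}. First I would recall that $\epsilon\scurve$ and $F_z$ are transverse for $z\in C(\delta;E)$ (by the choice of base points and the fact that $C(\delta;E)$ avoids the branch locus, so over such $z$ the spectral curve has two distinct smooth sheets meeting $F_z$ transversely), so the Floer complex $CF(\epsilon\scurve,F_z,J_{con})$ is defined. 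By \cref{lemma:Sasakilocalisation}, every $J_{con}$-holomorphic strip bounded between $\epsilon\scurve$ and $F_z$ is in fact a $J_{g^\phi_\delta}$-holomorphic strip contained in $D_{2.5}^\ast\tilde C$, hence is an $\epsilon$-BPS disc in the sense of \cref{BPSdiskmodels} \emph{unless} its two asymptotic intersection points coincide — i.e. unless it is a constant strip. (A non-constant strip with equal endpoints is impossible: positivity of area, since $\epsilon\scurve$ is exact, forces the two endpoints to have distinct actions, or one appeals directly to the integrated maximum principle together with the strip being asymptotically constant.)

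\textbf{Key steps.} Step 1: transversality of $\epsilon\scurve\pitchfork F_z$ and definedness of the complex. Step 2: apply \cref{lemma:Sasakilocalisation} to confine all relevant $J_{con}$-strips to $D_{2.5}^\ast\tilde C$ and identify them with $J_{g^\phi_\delta}$-strips. Step 3: invoke \cref{maintheorem}: for $0<\epsilon\le\epsilon_0(\delta;E)$ there are no non-constant $J_{g^\phi_\delta}$-holomorphic strips bounded between $F_z$ and $\epsilon\scurve$ for $z\in C(\delta;E)$. Step 4: conclude that the only $J_{con}$-holomorphic strips are the constant ones at the intersection points $F_z\cap\epsilon\scurve$. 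Step 5: observe that the constant strip at an intersection point $x$ is automatically regular — its linearized operator is the linearized Cauchy–Riemann operator on a point (equivalently, a half-strip operator with matching Lagrangian boundary data at the two ends), whose cokernel vanishes by the usual index/transversality computation for transverse intersections, as in \cite[Section 11]{SeidelZurich}. Since the moduli spaces $\mathcal{R}(\epsilon\scurve,F_z,J_{con})_{x\mapsto y}$ are empty for $x\ne y$ and consist of a single regular constant strip for $x=y$, all moduli spaces are transversely cut out, which is precisely the definition of a regular Floer datum.

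\textbf{Main obstacle.} The only genuinely substantive input is \cref{maintheorem} itself, which has already been proven in the excerpt; given it, the present corollary is essentially a bookkeeping argument. The one point that requires a little care is ruling out non-constant strips whose two asymptotic intersection points happen to agree — these are not covered by the literal statement "no $\epsilon$-BPS discs" if one reads \cref{BPSdiskmodels} as requiring distinct limits, but \cref{maintheorem} is stated for \emph{all} non-constant $J$-holomorphic strips between $F_z$ and $\epsilon\scurve$, so this subtlety is already absorbed. I would simply note that \cref{maintheorem} rules out \emph{every} non-constant strip, regardless of whether the endpoints coincide, so no separate exactness argument is even needed. Thus the proof is: transversality of the intersection $\Rightarrow$ constant strips are regular; \cref{lemma:Sasakilocalisation} $+$ \cref{maintheorem} $\Rightarrow$ there are no other strips; hence the datum is regular.
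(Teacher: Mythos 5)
Your proposal is correct and follows essentially the same route as the paper: combine \cref{lemma:Sasakilocalisation} with \cref{maintheorem} to conclude that all $J_{con}$-strips between $\epsilon\scurve$ and $F_z$ are constant, then note that constant strips at transverse intersection points are regular (the paper phrases this by observing the local configuration is the standard $\mathbb{R}^2$, $i\mathbb{R}^2$ in $\mathbb{C}^2$ with $J_{std}$, which is the same linearized computation you invoke). Your extra remark that \cref{maintheorem} already covers strips with coinciding asymptotics, so no separate exactness argument is needed, is a correct and harmless elaboration.
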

\begin{proof}
	By Theorem \ref{maintheorem} and Lemma \ref{lemma:Sasakilocalisation} we see that there are no non-trivial $J_{con}$ holomorphic strips bounded between $F_z$ and $\epsilon\scurve$ for $z\in C(\delta;E)$ and $0<\epsilon<\epsilon_0$. So all the strips are constant, which are regular because local configurations near the intersection points coincide with the intersection of $\mathbb{R}^2$ and $i\mathbb{R}^2$ in $\mathbb{C}^2$ equipped with the standard complex structure. 
\end{proof}

We now fix some $0<\epsilon<\epsilon_0$ for the rest of the section. Following   \cref{regularityofJ}, we set the Floer datum to be $(\epsilon\scurve,F_z,J_{con})$. 
\subsubsection{Moduli problem for arcs $\alpha(w)$}\label{subsubsection:shortpaths}

We now study the moduli problem associated to Floer-theoretic parallel transports along the arcs $\alpha(w)$. As before, let $w$ be a wall and let $\mathcal{V}(w)$ be the component of $\mathcal{V}(\delta;E)$ containing $w$ as its core. 
Choose a bump function $\xi: \mathcal{V}(w)\to [0,1]$ such that $\xi=1$ in a small neighbourhood of the arc $\alpha(w)$. Take the flat conformal coordinate $\int \sqrt{\phi}$ over $\mathcal{V}(w)$ using the positive sheet induced by the outward orientation on the wall $w$. The outward orientation allows us to order the lifts of $z\in \mathcal{V}(w)$ to $z^{\pm}$. Note that the canonical ordering introduced in Proposition \ref{horizontaldomaindevision} agrees with the ordering of the lifts $z^{\pm}$. 

Consider the autonomous Hamiltonian $S^v=\xi(x,y)p^y$. Recall that $2\eta(w)$ is the distance between $b^{d}(w)$ and $b^u(w)$. Let $\chi_s^v=\chi^v_s$ denote the Hamiltonian isotopy generated by $S^v$. The Hamiltonian isotopy $\chi_s^v$ has the following property:
\begin{lemma}\label{Invarianceunderverticaltransports}
	$\scurve$ and its $\mathbb{R}_{>0}$-rescalings are invariant under $\chi_s^v$. 
\end{lemma}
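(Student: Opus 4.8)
The plan is to prove the invariance by a direct computation of the Hamiltonian vector field $X_{S^v}$ in the flat conformal coordinate on $\mathcal{V}(w)$ and to observe that it is everywhere tangent to $\scurve$ and to each of its positive rescalings, the point being that $p^y$ vanishes identically on $t\scurve$ for every $t>0$.

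First I would fix coordinates. On the vertical neighbourhood $\mathcal{V}(w)$, working in the single-valued flat coordinate $W=x+iy=\int\sqrt{\phi}$ induced by the positive sheet associated to the outward orientation of the core wall $w$, the induced fibre coordinates $(p^x,p^y)$ satisfy $\canliouvile=p^x\,dx+p^y\,dy$ and $\omega=dp^x\wedge dx+dp^y\wedge dy$ (the conventions of \cref{sec:convention}). As recorded earlier in the discussion of chambers, in such a coordinate the spectral curve reads $\scurve\cap T^{\ast}\mathcal{V}(w)=\{p^y=0,\ p^x=1\}\sqcup\{p^y=0,\ p^x=-1\}$, and hence $t\scurve\cap T^{\ast}\mathcal{V}(w)=\{p^y=0,\ p^x=\pm t\}$ for $t>0$; each sheet is the graph of a constant section, so its tangent space at every point is $\operatorname{span}(\partial_x,\partial_y)$.

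Next I would compute $X_{S^v}$ from $\iota_{X_{S^v}}\omega=-dS^v$ with $S^v=\xi(x,y)p^y$, which gives $X_{S^v}=\xi\,\partial_y-p^y(\partial_x\xi)\,\partial_{p^x}-p^y(\partial_y\xi)\,\partial_{p^y}$. Restricting to $t\scurve$, where $p^y=0$, the last two terms vanish and $X_{S^v}|_{t\scurve}=\xi\,\partial_y$, which lies in $\operatorname{span}(\partial_x,\partial_y)=T(t\scurve)$. Thus $X_{S^v}$ is tangent to $t\scurve$ along $t\scurve$, so its flow $\chi_s^v$ preserves $t\scurve$ for all $s$; since $\xi$ is compactly supported in $\mathcal{V}(w)$, $S^v$ extends by zero and $\chi_s^v$ is the identity off $T^{\ast}\mathcal{V}(w)$, making the invariance global for $\scurve$ and all $\mathbb{R}_{>0}$-rescalings.

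There is no real obstacle: the only care needed is bookkeeping — verifying that the flat coordinate is genuinely single-valued on $\mathcal{V}(w)$ so that the two sheets $\pm\sqrt{\phi}$ are globally distinguished across the core wall (and hence $\scurve$ really is $\{p^y=0,\ p^x=\pm1\}$ on all of $T^{\ast}\mathcal{V}(w)$, not merely chamber by chamber), and keeping the sign conventions straight in the formula for $X_{S^v}$. Both are routine.
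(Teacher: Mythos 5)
Your proof is correct and follows essentially the same route as the paper: compute $X_{S^v}=\xi\,\partial_y-p^y(\partial_x\xi\,\partial_{p^x}+\partial_y\xi\,\partial_{p^y})$ in the flat coordinate on $\mathcal{V}(w)$, note that on $t\scurve=\{p^y=0,\ p^x=\pm t\}$ the $p^y$-terms vanish so the field restricts to $\xi\,\partial_y$, which is tangent to each sheet, hence the flow preserves $t\scurve$. Your explicit handling of the rescalings and the remark on single-valuedness of the flat coordinate on $\mathcal{V}(w)$ are fine additions but do not change the argument.
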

\begin{proof}
	The Hamiltonian vector field is given by:
	\begin{align}\label{eq:verticalHam}
		X_{S^v}=-p^y(\frac{\partial\xi}{\partial x}\frac{\partial}{\partial p^x}+\frac{\partial\xi}{\partial y}\frac{\partial}{\partial p^y})+\xi(x,y)\frac{d}{dy}.
	\end{align}
	However, on $T^{\ast}(\mathcal{V}(w))$, $\scurve$ equals $\{p^x=\pm 1, p^y=0\}$. So the vector field $X_{S^v}$ restricts there as $\rho(x,y)\frac{d}{dy}$, the flow of which preserves the set $\{p^x=\pm 1,p^y=0\}$. 
\end{proof}

By Corollary \ref{regularityofJ},  Floer data $(\epsilon\scurve,F_{b^u(w)},J_{con})$ and $(\epsilon\scurve,F_{b^d(w)},J_{con})$ are all regular. Let $J^{short}$ be a uniformly admissible family of almost complex structures on $\mathcal{Z}$ such that $J^{short}(s,t)=J_{con}$ for $s\ll 0$ and $J^{short}(s,t)=(\chi_1^v)^{\ast}J_{con}$ for $s\gg 0$. Let $\mathcal{M}^{short}(w)$ be the moduli space of $J^{short}$-holomorphic maps $u:\mathcal{Z}\to T^{\ast}\tilde{C}$ satisfying the following boundary conditions
\begin{align}\label{moduliproblem:short} 
	\begin{cases}
		u(s,0)\subset  \epsilon\scurve\\
		u(s,1)\subset F_{z}\\
		\lim_{s\to -\infty} u(s,t)\in F_{b^d(w)} \cap \epsilon\scurve \\
		\lim_{s\to +\infty} u(s,t)\in F_{b^d(w)} \cap \epsilon\scurve \\
		%\lim_{s\to -\infty} u(s,t)= \lim_{s\to +\infty} u(s,t).
	\end{cases}
\end{align}
By Lemma \ref{Invarianceunderverticaltransports}, $\mathcal{M}^{short}$ coincides with the moduli space of \textit{passive} continuation strip equation associated to $\epsilon\scurve$ and $\chi_s^v$. We choose a generic $J^{short}$ so that $\mathcal{M}^{short}$ is transversely cut out. We have the decomposition
\begin{align}\label{equation:shortdecomposition}
	\mathcal{M}^{short}=\mathcal{M}^{short,diag}(w)\sqcup \mathcal{M}^{short,nondiag}(w)
\end{align}
where $\mathcal{M}^{short,diag}(w)$ is the moduli of passive continuation strips that travel from $(\epsilon b^{d}(w))^{\pm}$ to $(\epsilon b^{d}(w))^{\pm}$, and $\mathcal{M}^{short,nondiag}(w)$ is the moduli of passive continuation strips that travel from $(\epsilon b^{d}(w))^{\pm}$ to $(\epsilon b^d(w))^{\mp}$. Let  $\mathcal{M}^{short,-}(w)$ denote the moduli space of continuation strips that travel from $(\epsilon b^{d}(w))^{+}$ to $(\epsilon b^{d}(w))^{-}$.

\begin{proposition}
	\label{proposition:shortpathspassive}
	$\mathcal{M}^{short,diag}(w)$ consist of constant maps and $\mathcal{M}^{short,-}(w)$ is empty. 
\end{proposition}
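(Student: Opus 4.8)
The strategy is to reduce the statement to the two energy-type facts that are already available: Stokes' theorem for the geometric energy of passive continuation strips (the energy formula of Section \ref{energyformulasss}), and the non-existence of $\epsilon$-BPS discs from Theorem \ref{maintheorem} together with Corollary \ref{uppertriangular}. Since $\chi^v_s$ is a compactly supported Hamiltonian isotopy and $\scurve$ (hence $\epsilon\scurve$) is invariant under it by Lemma \ref{Invarianceunderverticaltransports}, the moduli space $\mathcal{M}^{short}(w)$ is literally a moduli space of passive continuation strips with stationary Lagrangian boundary $\epsilon\scurve$ on one side and $F_{b^d(w)}$ on the other, with the perturbation one-form supported in $T^\ast\mathcal{V}(w)$. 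The first step is therefore to write down the energy identity \eqref{geometricenergymovingboundary} for an element $u\in\mathcal{M}^{short}(w)$ with asymptotics $x_-=(\epsilon b^d(w))^{\sigma_-}$ and $x_+=(\epsilon b^d(w))^{\sigma_+}$: it gives
\[
\int_{\mathcal{Z}}\tfrac12|du|^2 = \epsilon\big(\primitive(b^d(w)^{\sigma_+})-\primitive(b^d(w)^{\sigma_-})\big)+\int_{\partial\mathcal{Z}}S^v\!\circ u\,ds,
\]
and the boundary integral of $S^v$ vanishes because $S^v=\xi p^y$ vanishes on $\epsilon\scurve=\{p^x=\pm1,p^y=0\}$ over $\mathcal{V}(w)$, while on the $F_{b^d(w)}$-boundary the function $S^v$ is the restriction of a Hamiltonian whose time-$1$ flow fixes the relevant intersection points and one checks the contribution telescopes to zero; alternatively one uses that $\chi^v_s$ fixes $\epsilon\scurve$ so the passive continuation strip is gauge-equivalent to a genuine $J$-holomorphic strip between $\epsilon\scurve$ and $F_{b^d(w)}$ and directly applies Stokes. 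Either way, $\mathrm{Area}(u)=\epsilon(\primitive(b^d(w)^{\sigma_+})-\primitive(b^d(w)^{\sigma_-}))$.

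For $\mathcal{M}^{short,diag}(w)$, $\sigma_-=\sigma_+$ forces the right-hand side to be $0$, so $\mathrm{Area}(u)=0$; since $u$ is $J$-holomorphic this forces $du\equiv0$, i.e. $u$ is constant. For $\mathcal{M}^{short,-}(w)$, we have $\sigma_-=+$ and $\sigma_+=-$, and by Proposition \ref{horizontaldomaindevision} (the ordering on $\mathcal{V}(\delta;E)$ chosen via the outward orientation of $w$, which by the remark in Section \ref{subsubsection:shortpaths} agrees with the ordering $z^\pm$ used here), $\primitive(b^d(w)^{+})>\primitive(b^d(w)^{-})$. Hence $\mathrm{Area}(u)=\epsilon(\primitive(b^d(w)^-)-\primitive(b^d(w)^+))<0$, which is impossible since the geometric energy of a non-trivial $J$-holomorphic curve is positive and the only area-zero curve is constant — but a constant curve cannot travel between the two distinct lifts $(\epsilon b^d(w))^+\neq(\epsilon b^d(w))^-$. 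Therefore $\mathcal{M}^{short,-}(w)=\emptyset$. (One can phrase the last step more uniformly: after gauge-transforming to a genuine strip, this is exactly Corollary \ref{uppertriangular} applied to $z=b^d(w)\in\mathcal{V}(\delta;E)$.)

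The one point that needs care — and is the main obstacle — is justifying that the boundary term $\int_{\partial\mathcal{Z}}S^v\circ u\,ds$ does not spoil the sign, i.e. that the energy of a passive continuation strip in $\mathcal{M}^{short}(w)$ really equals $\epsilon\Delta\primitive$ with no extra contribution. The cleanest route is the gauge equivalence: Lemma \ref{Invarianceunderverticaltransports} says $\chi^v_s(\epsilon\scurve)=\epsilon\scurve$, so setting $\tilde u=\chi^v_{l(s)}\circ u$ as in \eqref{perturbedpassive} converts \eqref{moduliproblem:short} into a Floer equation with a Hamiltonian perturbation term supported where $S^v\neq0$, but with \emph{both} boundary components now fixed ($\epsilon\scurve$ and the moving fibre $F_{\chi^v_{l(s)}(b^d(w))}$); since $\chi^v_s$ fixes $b^d(w)$'s lifts on $\epsilon\scurve$ the asymptotics are unchanged, and the moving-boundary energy formula of Lemma \ref{movingboundarycomputation} applied to the fibre side gives exactly $\epsilon\Delta\primitive$ because the fibres carry the trivial primitive of $\canliouvile$ (they are cotangent fibres, $\canliouvile|_{F_z}=0$) and $S^v$ vanishes along the fibre-valued path $s\mapsto\chi^v_{l(s)}(b^d(w))$ which stays at $p^y=0$. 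Once this bookkeeping is nailed down the proposition follows immediately from the two arithmetic cases above; the transversality of $J^{short}$ and the compactness of $\mathcal{M}^{short}(w)$ needed to even speak of these moduli spaces are already in place from Section \ref{continuationmapsss} and Proposition \ref{compactyesmoving}.
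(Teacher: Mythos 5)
Your proof is correct and follows essentially the same route as the paper: apply Stokes/exactness to identify the geometric energy with the difference $\epsilon(\primitive(b^d(w)^{\sigma_+})-\primitive(b^d(w)^{\sigma_-}))$, so diagonal strips have zero energy and are constant, while strips from $(\epsilon b^d(w))^{+}$ to $(\epsilon b^d(w))^{-}$ would have negative energy by the ordering of Proposition \ref{horizontaldomaindevision} and hence cannot exist. The only difference is that the paper dispenses with your bookkeeping of the Hamiltonian boundary term altogether: since $\epsilon\scurve$ is $\chi^v_s$-invariant (Lemma \ref{Invarianceunderverticaltransports}), the boundary conditions in \eqref{moduliproblem:short} are genuinely fixed and the equation carries no Hamiltonian perturbation, so the energy is the action difference immediately (and note that in your ``alternatively'' gauge-transformed picture the $s\to+\infty$ asymptotic actually moves to the lifts of $b^u(w)$, so the asymptotics are not literally unchanged, though this does not affect your main argument).
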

\begin{proof}
	%By Proposition \ref{Invarianceunderverticaltransports}, $u\in \mathcal{M}^{short}(w)$ satisfies the following:
	%\[\begin{cases}
		%u(s,0)\subset t\scurve&\\
		%u(s,1)\subset F_{z}.
		%\end{cases}\]
		%In particular, 
		Let $u\in \mathcal{M}^{short,diag}(w)$ be a strip, then $\lim_{s\to -\infty} u(s,t)=\lim_{s\to +\infty} u(s,t)=(b^{d}(w))^{\pm}$. From the boundary conditions \cref{{moduliproblem:short}}, we see that the energy of the strip $u$ depends only on the primitive of $\lambda_{re}$ on $\epsilon \scurve$ and its end points. Since the end points of the strip are the same, we see that $\int u^{\ast}\omega=0$. Since the energy vanishes and $J^{short}$ is $\omega$-compatible, $u$ is constant, and so the moduli space $\mathcal{M}^{short,diag}(w)$ must consist of constant maps. 
		
		For $u\in \mathcal{M}^{short,-}$, we have $\lim_{s\to-\infty}u(s,t)=(b^d(w))^{+}$ and $\lim_{s\to-\infty}u(s,t)=(b^d(w))^{-}$. So the energy of discs in $\mathcal{M}^{short,-}$ is equal to $\epsilon \primitive(\epsilon^{-1}(b^d(w))^{-})-\epsilon \primitive(\epsilon^{-1}(b^d(w))^{+})$ which is negative by \cref{horizontaldomaindevision} . By positivity of energy, $\mathcal{M}^{short,-}(w)$ must be empty.
	\end{proof}
	
	\subsubsection{Moduli problem for arcs contained in $C(\delta;E)$}
	\label{subsubsection:longarcs}
	
	We now study the moduli problem associated to Floer-theoretic parallel transports along arcs contained in $C(\delta;E)$. As before, let $\mathcal{Z}^h$ be a horizontal chamber and let $\mathcal{Z}^h(\delta;E)=C(\delta;E)\cap \mathcal{Z}^h$. Let $+\sqrt{\phi}$ be the positive sheet of $\phi$ picked out by $\gamma(\mathcal{Z}^h)$. Let $z^{\pm}$ be the corresponding ordering on the lifts of $z\in \mathcal{Z}^h(\delta;E)$. 
	
	Choose a compactly supported smooth positive bump function $\rho(\mathcal{Z}^h):\mathcal{Z}^h\to [0,1]$ once and for all such that $\rho(\mathcal{Z}^h)=1$ on $C(\delta;E)\cap \mathcal{Z}^h$ and $\rho=0$ on $\mathcal{Z}^h\cap U((2+\eta)\delta)$. We will consider the following two Hamiltonians. %we will discuss in this section will be of the following two form
	\begin{align}\label{Hamiltonianform}
		H^h&:= \rho{(\mathcal{Z}^h)}p^x\\
		H^v&:= \rho{(\mathcal{Z}^h)}p^y.
	\end{align}
	By Proposition \ref{horizontaldomaindevision}, we see that we have the canonical ordering on the lifts of $z$ to $\scurve$, for $z\neq \snetwork(\pi/2)$. On the other hand, we also have the ordering on the lifts given by the choice of $+\sqrt{\phi}\vert_{\mathcal{Z}^h}$. For convenience, we may regard the first type of ordering as an {energy} ordering, and the second type of ordering as a {sheet} ordering. Note that for points contained in the right hand side of $\mathcal{Z}^h-\snetwork(\pi/2)$, the sheet ordering and the energy ordering coincide, but they become opposite when we cross $\snetwork(\pi/2)$. 
	
	For $z\in \mathcal{Z}^h(\delta;E)$, let $\alpha^{h}_z$ and $\alpha_z^{v}$ be arc-length parametrized horizontal and vertical arcs, respectively, beginning at $z$. Let $\psi^{h}_{\alp}$ denote the time-$\alp$ flow of the autonomous Hamiltonian $H^{h}$. Similarly, let $\psi^{v}_{\alp}$ denote the time-$\alp$ flow of the autonomous Hamiltonian $H^{v}$. Note that the time-$\alp$ flow $\psi^{\bullet}_\alp$ sends $F_z$ to $F_{\alpha_z^{\bullet}(\alp)},\bullet=v,h$. From now on, the superscript $\bullet$ will denote either $v$ or $h$. Given $z\in \mathcal{Z}^h(\delta;E)$, we will only consider those $\alp\in \mathbb{R}$ such that $\alpha^{\bullet}_z(\alp)\in \mathcal{Z}^h(\delta;E)$.
	
	We will need the following formula.
	\begin{lemma}\label{newprimitive}
		Let $z\in \mathcal{Z}^h(\delta;E)$. The primitive of $\primitive^{\bullet}_{\alp}$ of $\lambda_{re}$ on $(\psi^{\bullet}_{\alp})^{-1}(\epsilon \scurve)$ at $\epsilon z^{\pm}$ satisfies:
		\[\primitive^{\bullet}_{\alp}(\epsilon z^{\pm})=\epsilon\primitive(\psi^{\bullet}_{\alp}(z^{\pm})).\]
		In particular, $\primitive^h_{\alp}(\epsilon z^{\pm})=\epsilon\primitive(z^{\pm})\pm \epsilon \alp$ and $\primitive^v_{\alp}(\epsilon z^{\pm})=\epsilon\primitive(z^{\pm})$.
		
	\end{lemma}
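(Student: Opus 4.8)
The plan is to compute both primitives by combining the ``moving primitive'' formula of Lemma~\ref{exactisotopyprimitive} with the explicit description of $\epsilon\scurve$ over $\mathcal{Z}^h$ in the flat $\int\sqrt\phi$-coordinate, where $\epsilon\scurve=\{p^x=\pm\epsilon,\ p^y=0\}$. First I would apply Lemma~\ref{exactisotopyprimitive} to the exact Lagrangian $L=\epsilon\scurve$ with the Hamiltonian isotopy $\psi^{\bullet}_{\alp}$ generated by the autonomous Hamiltonian $H^{\bullet}=\rho(\mathcal{Z}^h)p^{\bullet}$: since the isotopy is autonomous, the formula reads $F_{\alp}=F+\int_0^{\alp}\big(-H^{\bullet}\circ i_t+\canliouvile(X_{H^{\bullet}})\circ i_t\big)\,dt$, where $i_t=\psi^{\bullet}_t\circ i$. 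The point is that on $\epsilon\scurve$ over the support of $\rho(\mathcal{Z}^h)$ we have $p^y=0$, $p^x=\pm\epsilon$, and on this locus $\rho(\mathcal{Z}^h)\equiv 1$ (since $\rho=1$ on $C(\delta;E)\cap\mathcal{Z}^h$, and for $z\in\mathcal{Z}^h(\delta;E)$ the relevant part of the orbit stays there), so that $H^{\bullet}$ restricted to (the image of) $\epsilon\scurve$ is just the linear fibre coordinate. Here I must also observe that $\epsilon\scurve$ is genuinely invariant under $\psi^{\bullet}_{\alp}$ on the region traced out, exactly as in Lemma~\ref{Invarianceunderverticaltransports}: the Hamiltonian vector field of $\rho\,p^x$ (resp.\ $\rho\,p^y$) restricts on $\{p^x=\pm\epsilon,p^y=0\}$ to $\pm\epsilon\,\partial_x$ (resp.\ to $\partial_y$), whose flow preserves $\{p^x=\pm\epsilon,p^y=0\}$; so $i_t$ lands back in $\epsilon\scurve$ and the integrand is well-defined along the orbit.

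Next I would evaluate the integrand on $\epsilon z^{\pm}$. For $H^h=\rho\,p^x$: on $\epsilon\scurve$ we get $\canliouvile(X_{H^h})=p^x\partial_{p^x}H^h$-type pairing, which a direct computation (using $\canliouvile=p^xdx+p^ydy$ and $X_{H^h}=\partial_x$ on the relevant locus) gives $\canliouvile(X_{H^h})\circ i_t=\pm\epsilon$, while $H^h\circ i_t=\pm\epsilon$; hence the two terms do \emph{not} cancel in general — rather one recomputes carefully that $-H^h\circ i_t+\canliouvile(X_{H^h})\circ i_t$ equals $\pm\epsilon$ after accounting for the actual form of $X_{H^h}$ (the $-p^y\nabla\rho$ terms in $X_{H^h}$ vanish on $p^y=0$). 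Integrating from $0$ to $\alp$ yields the shift $\pm\epsilon\alp$, so $\primitive^h_{\alp}(\epsilon z^{\pm})=\epsilon\primitive(z^{\pm})\pm\epsilon\alp$. For $H^v=\rho\,p^y$: on $\{p^y=0\}$ one has $H^v\circ i_t=0$ and $\canliouvile(X_{H^v})\circ i_t=0$ as well (the $\partial_y$ component of $X_{H^v}$ pairs with $p^y=0$), so the integrand vanishes identically and $\primitive^v_{\alp}(\epsilon z^{\pm})=\epsilon\primitive(z^{\pm})$. Finally, the identity $\primitive^{\bullet}_{\alp}(\epsilon z^{\pm})=\epsilon\primitive(\psi^{\bullet}_{\alp}(z^{\pm}))$ follows by the same computation applied with $\epsilon=1$ and then using the $\mathbb{R}_{>0}$-equivariance of everything in sight ($\lambda_{re}$ scales linearly under fibrewise rescaling, $\epsilon\scurve=\epsilon\cdot\scurve$, and $H^{\bullet}$ is linear in the fibre so its flow commutes appropriately with rescaling), together with Lemma~\ref{horizontallengthcontrol}/\ref{horizontaldomaindevision} to identify $\primitive(z^+)-\primitive(z^-)$ with twice the horizontal distance.

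The only mild subtlety — and the step I'd expect to require the most care — is bookkeeping the sign and the precise value of $\canliouvile(X_{H^{\bullet}})$ along $\epsilon\scurve$: one has to be honest that $X_{H^h}$ is not simply $\partial_x$ globally (it has $-p^y(\partial_x\rho\,\partial_{p^x}+\partial_y\rho\,\partial_{p^y})$ terms from differentiating $\rho$), and argue that these correction terms drop out precisely because they are proportional to $p^y$, which vanishes on $\epsilon\scurve$, and because on the support of the orbit through $\mathcal{Z}^h(\delta;E)$ one has $\rho\equiv 1$ so $\nabla\rho$ contributes nothing there anyway. Once that is pinned down, both formulas are immediate, and the corollary about the sign of $\primitive^h$ feeding into positivity-of-energy arguments (cf.\ Corollary~\ref{uppertriangular} and Proposition~\ref{proposition:shortpathspassive}) is the payoff. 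I do not anticipate any genuine analytic difficulty here; it is a Stokes-type computation dressed up via Oh's moving-boundary primitive formula.
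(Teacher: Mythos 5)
Your overall strategy (Oh's moving-primitive formula plus the flat description $\epsilon\scurve=\{p^x=\pm\epsilon,\,p^y=0\}$ and Lemma \ref{horizontallengthcontrol}) is the same as the paper's, but the key computation in the horizontal case is wrong, and the compensating bookkeeping hides it. For $H^h=\rho\,p^x$ one has $X_{H^h}=\rho\,\partial_x-p^x\partial_x\rho\,\partial_{p^x}-p^x\partial_y\rho\,\partial_{p^y}$, so $\canliouvile(X_{H^h})=\rho\,p^x=H^h$ \emph{identically} (Euler's relation for a Hamiltonian linear in the fibre), and the integrand $-H^h+\canliouvile(X_{H^h})$ in Lemma \ref{exactisotopyprimitive} vanishes everywhere — this is exactly what the paper asserts. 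Your claim that "the two terms do not cancel in general" and that the integrand equals $\pm\epsilon$ is false: both terms equal $\pm\epsilon$ on the relevant locus, so their difference is $0$, and the integral contributes nothing. The shift $\pm\epsilon\alp$ cannot come from the Hamiltonian term; it comes from the fact that the primitive value is transported along the flow (giving the first identity $\primitive^{\bullet}_{\alp}(\epsilon z^{\pm})=\epsilon\primitive(\psi^{\bullet}_{\alp}(z^{\pm}))$), while the evaluation point on the $\pm$ sheet is displaced horizontally by $\alp$ and $\primitive$ restricted to the $\pm$ sheet is $\pm x$ up to a constant (Lemma \ref{horizontallengthcontrol}); the vertical flow leaves the horizontal coordinate fixed, whence $\primitive^v_{\alp}(\epsilon z^{\pm})=\epsilon\primitive(z^{\pm})$. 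If one took your integrand value at face value \emph{and} correctly accounted for the displacement of the evaluation point, one would get $\pm 2\epsilon\alp$, contradicting the statement; you land on the right formula only by two errors cancelling.

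Two further inaccuracies in the same step: the correction terms of $X_{H^h}$ are proportional to $p^x$, not $p^y$, so they do not vanish on $\epsilon\scurve$ (where $p^x=\pm\epsilon\neq 0$); consequently $\epsilon\scurve$ is \emph{not} invariant under $\psi^h_{\alp}$ "exactly as in Lemma \ref{Invarianceunderverticaltransports}" — invariance holds only over the locus where $\nabla\rho=0$, which suffices here because the trajectory through $z\in\mathcal{Z}^h(\delta;E)$ stays in $\{\rho\equiv 1\}$, but your stated reason (proportionality to $p^y$) is only valid for $H^v$. Also, the restriction of $X_{H^h}$ to the curve over $\{\rho\equiv1\}$ is $\partial_x$, not $\pm\epsilon\,\partial_x$. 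Finally, the lemma concerns the primitive on $(\psi^{\bullet}_{\alp})^{-1}(\epsilon\scurve)$, so Oh's formula should be applied to the inverse isotopy (as the paper does via Lemma \ref{movingboundarycomputation}); since the integrand vanishes identically this does not change the answer, but as written you are computing the primitive of the forward-moved Lagrangian $\psi^{\bullet}_{\alp}(\epsilon\scurve)$ without tracking the sign flip.
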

	
	\begin{proof}
		By Lemma \ref{movingboundarycomputation},
		\begin{align}\label{deformedpotential}
			(\primitive^{\bullet}_\alp\circ (\psi^{\bullet}_{\alp})^{-1})(\epsilon z^{\pm})&=\epsilon\primitive(z^{\pm})+\int(H^{\bullet}\circ (\psi^{\bullet}_{\alp})^{-1})(\epsilon z^{\pm})-\lambda_{re}(X_{H^{\bullet}})(\psi^{\bullet}_{\alp})^{-1}(\epsilon z^{\pm}).
		\end{align}
		One can check directly that the integrand in the second term on the right hand side of \eqref{deformedpotential} vanishes. The next statement follows from the observation that $W$ depends only on the horizontal distance to a zero of $\phi$ on the boundary of $\mathcal{Z}^h$; $\psi_{\alp}^{h}$ changes this distance by $s$ whereas $\psi_{\alp}^{v}$ leaves this distance invariant. (see Lemma  \ref{horizontallengthcontrol}) 
	\end{proof}
	
	Choose a smooth strictly increasing elongation function $l:(-\infty,\infty)\to [0,1]$ once and for all such that $l(s)=0$ for $s\leq -2$ and $l(s)=1$ for $s\geq 2$. Write:
	\[J^{\bullet}_{\pm\alp}(s,t)=(\psi_{\pm \alp l(s)}^{\bullet})^{\ast}J_{con},\]
	and consider the moduli spaces $\mathcal{M}^{\bullet}_{\pm \alp,z}$ of solutions of 
	\begin{align}\label{continuationstripseq}
		\begin{cases}
			\bar{\partial}_{J^{\bullet}_{\pm\epsilon}} u=0 &  \\ 
			u(s,0)\subset (\psi^{\bullet}_{\pm \alp l(s)})^{-1}(\epsilon\scurve)&\\
			u(s,1)\subset F_{z}&\\
			\lim_{s\to -\infty} u(s,t)\in F_z \cap \epsilon\scurve \\
			\lim_{s\to +\infty} u(s,t)\in F_z \cap \epsilon\scurve.
		\end{cases}
	\end{align}
	Intuitively, these are the continuation strips that contribute to the parallel transport along the paths $s\mapsto \alpha_z^{\bullet}(\pm \alp s),\bullet=v,h$. Keep in mind that the flow $\psi^{\bullet}_{\alp l(s)}$ preserves $\epsilon\scurve$ and $J_{con}$ invariant in a neighbourhood of $z$. Hence, $F_z\cap {\psi^{\bullet}_{\pm \alp}}^{-1}(\epsilon\scurve)=F_z \cap \epsilon\scurve$. As before, we split the moduli space $\mathcal{M}^{\bullet}_{\pm \alp,z}$ into the diagonal part and the non-diagonal part:
	
	\begin{align}
		\mathcal{M}^\bullet_{\pm \alp,z}=\mathcal{M}^{diag,\bullet}_{\pm \alp,z}\sqcup \mathcal{M}^{nondiag,\bullet}_{\pm \alp,z}.
	\end{align}
	The diagonal part consists of solutions of \eqref{continuationstripseq} that travel from $\epsilon z^{\pm}$ to $\epsilon z^{\pm}$ with respect to the sheet ordering. The non-diagonal part consists of solutions of \eqref{continuationstripseq} that travel from $\epsilon z^{\mp}$ to $\epsilon z^{\pm}$. The moduli space $\mathcal{M}^{nondiag,\bullet}_{\pm\alp,z}$ further decomposes into $\mathcal{M}^{+,\bullet}_{\pm \alp,z}$  and $\mathcal{M}^{-,\bullet}_{\pm \alp,z}$ consisting of passive continuation strips travelling from $z^{-}$ to $z^{+}$ and $z^{+}$ to $z^{-}$, respectively. 
	
	We now state the main analytic result of Section \ref{subsection:Hofer-type Gromov Compactness}.
	\begin{proposition}\label{No-Go moving}
		Given $z\in \mathcal{Z}^h(\delta;E)$, there exists some $\alp(z)>0$ such that for any $0<\alp<\alp(z)$, the following holds.
		\begin{itemize}
			\item The moduli spaces $\mathcal{M}^{diag,\bullet}_{\pm \alp,z}$ consist of constant strips. 
			\item The moduli spaces $\mathcal{M}^{nondiag,\bullet}_{\pm \alp,z}$ are empty.
		\end{itemize}
		In particular, the moduli spaces $\mathcal{M}^{\bullet}_{\pm \alp,z}$ are regular.
	\end{proposition}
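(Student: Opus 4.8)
The plan is to argue by contradiction via a Gromov--compactness argument as $\alp\to 0$, fed by Lemmas \ref{movingboundarycomputation} and \ref{newprimitive}, the monotonicity lemma, and the non-existence result Theorem \ref{maintheorem} transported to $J_{con}$ through Lemma \ref{lemma:Sasakilocalisation}. First I would record two preliminary observations. By Lemma \ref{Invarianceunderverticaltransports} and the analogous direct computation for $H^h=\rho(\mathcal{Z}^h)p^x$, the flows $\psi^\bullet_{\pm\alp l(s)}$ preserve $\epsilon\scurve$ and $J_{con}$ in a neighbourhood of $z$, so $F_z\cap(\psi^\bullet_{\pm\alp l(s)})^{-1}(\epsilon\scurve)=F_z\cap\epsilon\scurve=\{\epsilon z^+,\epsilon z^-\}$ for every $\alp$, and \eqref{continuationstripseq} is precisely the passive continuation strip equation of Section \ref{continuationstripsss} for the isotopy generated by $\pm\alp H^\bullet$. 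Second, by Stokes' theorem together with Lemmas \ref{movingboundarycomputation} and \ref{newprimitive}, any $u\in\mathcal{M}^\bullet_{\pm\alp,z}$ with $\lim_{s\to-\infty}u=\epsilon z^a$ and $\lim_{s\to+\infty}u=\epsilon z^b$ has area equal to $\epsilon\bigl(\primitive(z^b)-\primitive(z^a)\bigr)$ up to an error of size $O(\epsilon\alp)$ coming from $\int H^\bullet_s(u)\,ds$ along the moving boundary (here $|p|\le\epsilon$ on the image by the integrated maximum principle). Consequently the areas are uniformly bounded, a diagonal solution ($a=b$) has area $O(\epsilon\alp)$, and whichever of $\mathcal{M}^{+,\bullet}_{\pm\alp,z}$, $\mathcal{M}^{-,\bullet}_{\pm\alp,z}$ is the energetically decreasing one has strictly negative area for small $\alp$ and is therefore empty by $\omega$-compatibility.

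Now suppose for contradiction that for some $\bullet\in\{v,h\}$ and some sign there is a sequence $\alp_n\downarrow 0$ together with \emph{non-constant} $u_n\in\mathcal{M}^\bullet_{\pm\alp_n,z}$. By the uniform area bound and Proposition \ref{compactyesmoving} (equivalently, the integrated maximum principle Lemma \ref{verticalconfinement} and the boundary estimate Proposition \ref{totaldomainestimate}, whose constants are uniform for $\alp_n\in(0,1]$), the $u_n$ are confined to a fixed compact subset of $T^\ast\tilde{C}$. Since $\omega$ is exact and $\epsilon\scurve$, $F_z$ and the moving Lagrangians are $\canliouvile$-exact, no sphere or disc bubbling can occur, so after passing to a subsequence the $u_n$ Gromov-converge without loss of area to a possibly broken $J_{con}$-holomorphic strip with boundary on $\epsilon\scurve$ and $F_z$ and with the same pair of asymptotic intersection points $\epsilon z^a,\epsilon z^b$. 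But $0<\epsilon<\epsilon_0$ is fixed and $\mathcal{Z}^h(\delta;E)\subset C(\delta;E)$, so Theorem \ref{maintheorem} and Lemma \ref{lemma:Sasakilocalisation} force every level of this limit to be constant; the limit is then a single constant strip at a point $x\in\epsilon\scurve\cap F_z$, and $\Area(u_n)\to 0$.

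It remains to derive the contradiction. If $u_n\in\mathcal{M}^{nondiag,\bullet}_{\pm\alp_n,z}$ then $a\neq b$, so $\epsilon z^a$ and $\epsilon z^b$ are distinct yet both equal the single asymptotic point $x$ of the limit, which is absurd. If instead $u_n$ is a non-constant diagonal strip, it is asymptotic to $x$ at both ends; a $J_{con}$-holomorphic strip contained in a small ball $B_{\delta_0}(x)$ with boundary on the two transverse Lagrangians through $x$ and asymptotic to $x$ is constant, so $u_n$ must leave $B_{\delta_0}(x)$, and the monotonicity lemma then gives $\Area(u_n)\ge c\,\delta_0^2>0$ with $c$ independent of $n$ (near $x$ the ambient geometry and both Lagrangians converge to the standard flat model, so the monotonicity constants are uniform), contradicting $\Area(u_n)\to 0$. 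Taking $\alp(z)$ below the finitely many thresholds produced above yields the first two bullet points, and the third follows because the surviving elements of $\mathcal{M}^\bullet_{\pm\alp,z}$ are constant strips at transverse intersection points with the standard local model $\mathbb{R}^2,\,i\mathbb{R}^2\subset\mathbb{C}^2$, at which the linearized Cauchy--Riemann operator is surjective, exactly as in the proof of Corollary \ref{regularityofJ}. I expect the Gromov-compactness step to be the main obstacle: one must check that the confinement of the passive continuation strips is uniform as $\alp_n\to 0$ and that the Gromov limit is genuinely a broken strip on the pair $(\epsilon\scurve,F_z)$; once that is arranged, Theorem \ref{maintheorem} supplies the essential input and the energy and monotonicity ingredients are routine.
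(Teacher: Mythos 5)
Your proposal is correct and follows essentially the same route as the paper: the energy identity from Lemma \ref{newprimitive} plus positivity of energy eliminates the energy-decreasing non-diagonal strips, a Gromov-compactness argument along a sequence $\alp_n\to 0$ (the content of Proposition \ref{Hofertypegromovcompacness}, whose uniform confinement you rightly flag as the delicate point) combined with Theorem \ref{maintheorem} via Lemma \ref{lemma:Sasakilocalisation} handles the remaining non-diagonal strips, and the small-energy/monotonicity/local flat-model argument disposes of the diagonal ones. The only differences are organizational — you derive the contradiction from preservation of the distinct asymptotics rather than from the uniform positive energy lower bound $\hbar$ that the paper uses to make the limit non-constant, and you fold the $z\in\snetwork(\pi/2)$ case into the compactness argument instead of the monotonicity one — but these are equivalent bookkeeping choices, not a different method.
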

	
	For the proof of Proposition \ref{No-Go moving}, we will need the following statement which we will show in Section \ref{subsubsection:proofofHofertype}. 
	\begin{proposition}\label{Hofertypegromovcompacness}Let $z\notin \snetwork(\pi/2)$ and let $\alp_n$ be a sequence of positive real numbers converging to zero. Let $u_n\in \mathcal{M}^{+,\bullet}_{\pm \alp_n,z}$ be a sequence of non-constant $J^{\bullet}_{\alp_n}$-holomorphic strips with respect to the energy ordering. Then $u_n$ Gromov converges to a non-constant broken strip bounded between $F_z$ and $\epsilon\scurve$.
	\end{proposition}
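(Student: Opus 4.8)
\textbf{Proof plan for Proposition \ref{Hofertypegromovcompacness}.}

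The plan is to set up a standard Gromov compactness argument for the sequence $u_n\in \mathcal{M}^{+,\bullet}_{\pm\alp_n,z}$, taking care of two issues: the moving Lagrangian boundary conditions degenerate to the \emph{fixed} pair $(\epsilon\scurve,F_z)$ as $\alp_n\to 0$, so one must rule out escape to infinity uniformly in $n$, and one must rule out the limit being \emph{constant}. First I would apply the gauge transformation as in \eqref{perturbedpassive}: replacing $u_n$ with $\tilde u_n=\psi^{\bullet}_{\pm\alp_n l(s)}\circ u_n$ converts \eqref{continuationstripseq} into an equation with fixed Lagrangian boundary $(\epsilon\scurve, F_z)$ and an inhomogeneous Hamiltonian term $l'(s)X^{\bullet}_{\pm\alp_n l(s)}$ whose size is $O(\alp_n)\to 0$, and whose support is a \emph{fixed} compact subset of $T^{\ast}\tilde C$ (the support of $\rho(\mathcal{Z}^h)$ or $\xi$). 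By Lemma \ref{movingboundarycomputation} and Lemma \ref{newprimitive}, the geometric energy of $u_n$ is uniformly bounded: for the energy ordering $z^{+}\to z^{-}$, it equals $\epsilon(\primitive(z^{-})-\primitive(z^{+}))$ up to an $O(\alp_n)$ correction coming from $\int_{\partial S}H^{\bullet}_s(u)ds$, hence uniformly bounded above and bounded away from $0$ for $n$ large (using $z\notin\snetwork(\pi/2)$, so $\primitive(z^{+})\neq \primitive(z^{-})$).

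Next I would establish $C^0$ confinement. Since the family of almost complex structures $J^{\bullet}_{\pm\alp_n l(s)}$ is uniformly admissible (equal to $J_{con}$ outside a fixed $T^{\ast}K$, cylindrical outside a fixed disc bundle) and the Hamiltonian perturbation is uniformly cylindrical and horizontally supported, Proposition \ref{compactyesmoving} (together with Lemma \ref{semiadmissiblegeoboundfamily} and Corollary \ref{geometricallyboundedtotallyrealfiniteLag}) gives a \emph{single} compact set $K'\subset T^{\ast}\tilde C$ containing the images of all $\tilde u_n$, hence of all $u_n$, independent of $n$. With uniform energy and uniform $C^0$ bounds in place, Gromov--Floer compactness for strips with Hamiltonian perturbations (as in \cite[Section 8]{SeidelZurich}) applies: after passing to a subsequence, $u_n$ converges to a broken configuration consisting of Floer trajectories for the limiting data. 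Because $l'(s)X^{\bullet}_{\pm\alp_n l(s)}\to 0$ in $C^\infty_{loc}$ and the moving boundary conditions converge to $(\epsilon\scurve,F_z)$, every component of the limit is an honest $J_{con}$-holomorphic strip bounded between $\epsilon\scurve$ and $F_z$ (possibly with some components being constant); no disc or sphere bubbles occur since $\epsilon\scurve$ and $F_z$ are exact and the ambient manifold is exact.

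The main obstacle — and the reason the statement is phrased with $z\notin\snetwork(\pi/2)$ and with the ``$+$'' (energy ordering) superscript — is showing the limiting broken strip is \emph{non-constant}, equivalently that at least one component carries positive energy. The point is that the total energy of the limit equals $\lim_n \operatorname{Area}(u_n)=\epsilon(\primitive(z^{-})-\primitive(z^{+}))$, which by Corollary \ref{uppertriangular}/Lemma \ref{newprimitive} is a strictly positive number (the $O(\alp_n)$ Hamiltonian correction vanishes in the limit since $H^{\bullet}$ vanishes on $\epsilon\scurve$ near $z$ and the boundary integral contributions telescope). Hence the limit configuration has strictly positive total area and cannot be entirely constant; at least one component is a non-constant $J_{con}$-holomorphic strip between $F_z$ and $\epsilon\scurve$, which is what the proposition asserts. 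I would close by noting that the asymptotic endpoints of this non-constant component are the distinct lifts $\epsilon z^{+}$ and $\epsilon z^{-}$ (the outer asymptotics of the broken configuration are preserved under Gromov convergence), so it is precisely the kind of strip whose non-existence for $z\in C(\delta;E)$ is guaranteed by Theorem \ref{maintheorem}; this contradiction is then exploited in the proof of Proposition \ref{No-Go moving}.
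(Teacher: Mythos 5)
Your overall strategy is the same as the paper's: uniform upper and lower energy bounds (the lower bound coming from $\primitive(z^{+})\neq\primitive(z^{-})$, i.e.\ $z\notin\snetwork(\pi/2)$), Gromov--Floer compactness with no bubbling, and non-constancy of the limit because the positive energy persists. However, the one step that actually requires care is handled by an incorrect claim. The gauge transformation of \eqref{perturbedpassive} applied to \eqref{continuationstripseq}, i.e.\ $\tilde u_n=\psi^{\bullet}_{\pm\alp_n l(s)}\circ u_n$, does \emph{not} produce an equation with both boundary components on the fixed pair $(\epsilon\scurve,F_z)$: it trades the moving boundary $({\psi^{\bullet}_{\pm\alp_n l(s)}})^{-1}(\epsilon\scurve)$ for the moving family of fibres, $\tilde u_n(s,0)\subset\epsilon\scurve$ but $\tilde u_n(s,1)\subset F_{\alpha^{\bullet}_z(\pm\alp_n l(s))}$ (compare the boundary conditions listed in \eqref{perturbedpassive}, where $L_{l(s)}$ still moves). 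So you cannot directly quote compactness for Hamiltonian-perturbed strips with fixed Lagrangian labels as in Seidel; in either gauge you are left with a moving boundary condition whose movie is only a totally real submanifold of $\overline{A_1}\times T^{\ast}\tilde C$. This is exactly the point the paper isolates: it proves a uniform gradient bound $\norm{Du_n}_\infty\leq C$ by blow-up analysis (Lemma \ref{uniformboudnmoving}), and the ``subtle part'' is that the movie $\mathcal{K}_n$ is totally real but $C^{\infty}$-converges to $\epsilon\scurve$ and is standard outside a compact set, which is what allows the Gromov compactness theorem for totally real boundary conditions (Frauenfelder--Zehmisch) to apply without having to produce uniformly taming symplectic forms. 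Your proposal either needs this ingredient, or a genuinely different (e.g.\ $t$-dependent) reformulation with fixed boundaries, neither of which is supplied; as written, the reduction step fails precisely where the difficulty sits.

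Two smaller points. First, there is a sign slip: with the energy ordering $\primitive(z^{+})>\primitive(z^{-})$, the strips in $\mathcal{M}^{+,\bullet}$ run from $z^{-}$ to $z^{+}$ and their area is $\epsilon(\primitive(z^{+})-\primitive(z^{-}))+O(\alp_n)$, not $\epsilon(\primitive(z^{-})-\primitive(z^{+}))$, which would be negative; this does not affect the substance, since only the positive lower bound (the paper takes $\hbar=\tfrac{\epsilon}{2}(\primitive(z^{+})-\primitive(z^{-}))$ from \eqref{geometricenergymovingboundarysequence}) is used, but the direction of travel should be stated consistently. Second, your non-constancy argument and the use of exactness to exclude bubbles agree with the paper; the paper additionally notes that the uniform positive lower bound on the energy of non-constant continuation strips forces only finitely many breakings, which you should record if you want the broken limit to be a finite configuration.
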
 
	
	\begin{proof}
		We first treat the case of $\mathcal{M}^{v}_{\alp,z}$. The case of $\mathcal{M}^v_{-\alp,z}$ is entirely analogous. We prove the first assertion in Proposition \ref{No-Go moving}. Let $u$ be a solution of the equation
		\begin{align}\label{equation:diagvertical}
			\begin{cases}
				\bar{\partial}_{J^{v}_{\epsilon}} u=0 &  \\ 
				u(s,0)\subset ({\psi^v_{\alp l(s)}})^{-1}\big(\epsilon\scurve\big)&\\
				u(s,1)\subset F_{z}&\\
				\lim_{s\to -\infty} u(s,t)= \epsilon{z}^{\pm}\\
				\lim_{s\to +\infty} u(s,t)= \epsilon{z}^{\pm}.
			\end{cases}
		\end{align}
		The action of the pair $((\psi^v_{\alp})^{-1}(\epsilon\scurve),F_z)$ is given by $ W_{\alp}^{v}$ and the pair $(\epsilon\scurve,F_z)$ by $\epsilon W=W_{0}^v$. By Lemma \ref{newprimitive}, the geometric energy is equal to 
		\begin{align}\label{eq:diagonalgeometricenergy}
			Area(u)&= W_{\alp}^{v}(\epsilon z^{\pm})- W_{0}^{v}(\epsilon{z}^{\pm}) +\alp \int_{-\infty}^{\infty} H^v(u(s,t))l'(s)ds\nonumber\\
			&=\alp \int_{-\infty}^{\infty}H^v(u(s,t))l'(s)ds.
		\end{align}
		For small enough $\alp$, $({\psi^v_{\alp l(s)}}^{-1})(\epsilon\scurve)\cap \supp H^v$ lies inside $D_1^{\ast}\tilde{C}$ for all $s\in [0,1]$. Hence $\sup H^v_s(u)$ is bounded above. So as $\alp\to 0$, \eqref{eq:diagonalgeometricenergy} uniformly converges to zero. 
		
		Now the Hamiltonian isotopy $\psi^v(z)$ leaves $\epsilon\scurve$ and $F_z$ invariant in some neighbourhood of $F_z$ since the generating function is locally of form $\pm \rho(x,y)p^y$. In this neighbourhood, the configuration $(T^{\ast}\tilde{C},J^v_{\alp},({\psi^v_{\alp l(s)}})^{-1}\big(\epsilon\scurve\big),F_z)$ is isometric to the standard configuration  $(T^{\ast}\mathbb{R}^2,J_{std},\{p^x=\pm \epsilon,\:p^y=0\},\{x=y=0\})$. In the latter configuration, the equation \eqref{equation:diagvertical} becomes the standard $J_{std}$-holomorphic strip equation with non-moving boundary conditions. Applying the boundary estimate, we see that the disc cannot escape such a neighbourhood of $F_z$,  for small enough $\alp$. However, in the standard configuration, there are no non-constant $J_{std}$-holomorphic strips. So we conclude that for small enough $\alp$, $\mathcal{M}^{diag,v}_{\alp,z}$ must consist of constant strips. 
		
		We now show that the non-diagonal part of $\mathcal{M}^v$ is empty and hence prove the second assertion. We first treat the case $z\in \snetwork(\pi/2)$. The difference $W(z^{+})-W(z^{-})$ of the primitive between the two lifts  vanishes by Lemma \ref{horizontallengthcontrol}. Then the difference of the action of the intersection point vanishes at $z$, and the geometric energy is again of size $O(\alp)$. By the previous observation, it follows that for some $\alp(z)>0$, all the passive continuation strips associated to the path $s\mapsto \alpha^{v}_z (s)$ for $0<\alp<\alp(z)$ and $s\in [0,\pm \alp]$ are the constant strips. 
		%\begin{itemize}
		%	\item The symplectic manifold is $T^{\ast}\mathbb{R}^2$.
		%	\item The Lagrangian is $\Sigma=\{p^x=\pm 1,\:p^y=0\}$.
		%	\item The fibre is just the fibre of $T^{\ast}\mathbb{R}^2$.
		%\end{itemize}
		
		Now suppose that $z\in \mathcal{Z}^h(\delta;E)-\snetwork(\pi/2)$. Without loss of generality, we assume that $z$ lies on the right-hand side of $\mathcal{Z}^h(\delta;E)-\snetwork(\pi/2)$. We see that $\mathcal{M}^{-,v}_{\epsilon,z}$ must be empty for small enough $\epsilon$ by  Equation \eqref{geometricenergymovingboundarysequence} below and positivity of energy. 
		
		We now show that $\mathcal{M}^{+,v}_{\alp,z}$ is empty for small enough $\alp$ for $z\notin \snetwork(\pi/2)$. Suppose there exists a strictly decreasing sequence of positive real numbers $0<\alp_n<1,\alp_n\to 0$, such that the moduli spaces $\mathcal{M}^{+,v}_{\alp_n,z}$ are all non-empty. We have a sequence of $J^{v}_{\alp_n}$-holomorphic strips $u_n$ satisfying the equation
		\begin{align}\label{movingboundaryfamilyeq}
			\begin{cases}
				\bar{\partial}_{J^{v}_{\alp_n}} u=0 &  \\ 
				u(s,0)\subset ({\psi^v_{\alp_n l(s)}})^{-1}\big(\epsilon\scurve\big)&\\
				u(s,1)\subset F_{z}&\\
				\lim_{s\to -\infty} u(s,t)=\epsilon {z}^{\pm}\\
				\lim_{s\to +\infty} u(s,t)= \epsilon{z}^{\mp},
			\end{cases}
		\end{align}However, by Proposition \ref{Hofertypegromovcompacness}, the sequence of the strips $u_n$ Gromov converges to a non-constant broken $J_{con}$-strip between $F_z$ and $\epsilon\scurve$. By Theorem \ref{maintheorem}, such strips cannot exist, a contradiction. 
		
		For the case $\bullet=h$, the proof is entirely analogous except that $W^h_{\alp}(\epsilon z^{\pm})-W^h_0(\epsilon z^{\pm})$ is now equal to $\epsilon \alp$, by Proposition \ref{deformedpotential}.  Hence we get
		\begin{align}\label{eq:diagonalgeometricenergyhorizontal}
			Area(u)= \alp\epsilon+\alp \int_{-\infty}^{\infty}H^h(u(s,t))l'(s)ds.
		\end{align}
		The same monotonicity argument applies for diagonal continuation strips and for $z\in \snetwork(\pi/2)$. We treat $\mathcal{M}^{nondiag,h}_{\alp,z}$ as before, using \eqref{geometricenergymovingboundarysequence} and Proposition \ref{Hofertypegromovcompacness}. This finishes the proof of Proposition \ref{No-Go moving}.
	\end{proof}
	\subsubsection{Proof of \cref{Hofertypegromovcompacness}}\label{subsubsection:proofofHofertype}
	We now proceed with the proof of Proposition \ref{Hofertypegromovcompacness} for $\mathcal{M}_{\alp_n,z}^{+,\bullet}$. The case where $\alp_n$ is replaced by $-\alp_n$ is entirely analogous. We first establish lower and upper bounds for energy. Recall that we had the following expression for the geometric energy
	\begin{align}\label{geometricenergymovingboundarysequence}
		\int_{\mathcal{Z}} \abs{du_n}^2_{J^{\bullet}_{\alp_n}}=\int_{\mathcal{Z}} u_n^{\ast}\omega= W_{\alp_n}^{\bullet}(\epsilon z^{+})- W_{0}^{\bullet}(\epsilon z^{+})+\alp_n \int_{-\infty}^{\infty} H^{\bullet}(u_n(s,t))l'(s)ds.
	\end{align}
	From Lemma \ref{newprimitive}, we see that \eqref{geometricenergymovingboundarysequence} is bounded above by $2E+\alp_n C$ for some $C>0$ and bounded below by $\hbar=\frac{\epsilon}{2}(W(z^{+})-W(z^{-}))$ for small enough $\alp_n$. Note that this lower bound depends on the fixed $\epsilon$. 
	
	We have the following blow-up analysis for the continuation strips.
	\begin{lemma}
		Let $u_n$ be a sequence of ${J^v_{\alp_n}}$-holomorphic strips with moving boundary conditions as above. Let $p>2$. Then there exists a constant $C=C(p)$ such that
		\begin{align}\norm{Du_n}_{\infty}\leq C.
		\end{align}
	\end{lemma}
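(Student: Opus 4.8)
The plan is to derive a uniform $C^1$ bound on the sequence $u_n$ by the standard bubbling-off argument, using the a priori energy bound $E(u_n) \leq 2E + \alp_n C$ established just before the statement. First I would argue by contradiction: suppose $\norm{Du_n}_\infty \to \infty$. Since the domain $\mathcal{Z}$ is the infinite strip, and the boundary conditions $({\psi^v_{\alp_n l(s)}})^{-1}(\epsilon\scurve)$ and $F_z$ converge in $C^\infty_{loc}$ to $\epsilon\scurve$ and $F_z$ respectively (the perturbation datum $l'(s)X_{H^v}$ has uniformly bounded $C^k$-norm and shrinking support-weight as $\alp_n \to 0$), I would apply the usual Hofer-type rescaling: pick points $z_n \in \mathcal{Z}$ where $\abs{Du_n}$ is comparable to its supremum, rescale the domain by $R_n = \abs{Du_n(z_n)}^{-1}$, and pass to a limit. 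Because the target $(T^*\tilde{C}, J_{con})$ is geometrically bounded (Proposition \ref{geoboundnonfamily}) and the relevant Lagrangians $\epsilon\scurve$, $F_z$ are tame, Gromov compactness applies and the rescaled maps converge to a non-constant $J_{con}$-holomorphic sphere, disc (with boundary on $\epsilon\scurve$ or on $F_z$), or plane.

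Each of these limiting objects must be ruled out. A non-constant sphere is impossible since $[\omega]$ is exact on $T^*\tilde{C}$. A non-constant disc with boundary on $F_z$ (a cotangent fibre, hence exact with vanishing primitive up to a constant) or on $\epsilon\scurve$ (exact by real-exactness) is impossible again by Stokes' theorem and $\omega$-compatibility, since the symplectic area would have to be both positive and zero. A non-constant finite-energy plane would, after removal of singularities, also yield a non-constant sphere, contradiction; alternatively one uses that there are no non-constant $J_{con}$-holomorphic planes in a cotangent bundle with these boundary conditions by the same exactness argument combined with the integrated maximum principle (Lemma \ref{verticalconfinement}) to confine images to $D_{2.5}^*\tilde{C}$. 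This forces $\norm{Du_n}_\infty$ to stay bounded, say by some $C_0$; then elliptic bootstrapping for the perturbed Cauchy–Riemann equation \eqref{continuationstripseq} upgrades this to a $W^{2,p}$ and hence a $C^1$ bound, giving the constant $C = C(p)$ in the statement. The $p>2$ hypothesis enters through the Sobolev embedding $W^{1,p} \hookrightarrow C^0$ used in the bootstrapping.

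I expect the main obstacle to be handling the \emph{moving} boundary condition carefully during the rescaling: near a boundary point the rescaled maps see the Lagrangian $({\psi^v_{\alp_n l(s)}})^{-1}(\epsilon\scurve)$ which depends on both $n$ and on the rescaling center, so one must verify that in the rescaled coordinates this family still converges (in $C^\infty_{loc}$) to a fixed Lagrangian plane; this is where the uniform control on $\psi^v_{\alp_n}$ and the fact that $l'$ is compactly supported with $\alp_n \to 0$ are essential — in the limit the Hamiltonian perturbation term $\alp_n l'(s) X_{H^v}$ vanishes entirely, so the bubble equation is genuinely $J_{con}$-holomorphic with static boundary. Once the $C^1$ bound is in hand, the Gromov convergence statement of Proposition \ref{Hofertypegromovcompacness} itself follows from the standard compactness machinery for strips with Lagrangian boundary together with the energy lower bound $\hbar = \tfrac{\epsilon}{2}(W(z^+)-W(z^-)) > 0$ (valid since $z \notin \snetwork(\pi/2)$, by Lemma \ref{horizontallengthcontrol}), which prevents the whole strip from converging to a constant and guarantees the broken limit is non-constant.
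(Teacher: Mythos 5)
Your proposal is correct and takes essentially the same route as the paper: the paper's own proof simply declares this to be standard blow-up analysis (citing Oh's boundary removal-of-singularities theorem and McDuff–Salamon, Section 4.6), with the one subtle point being the moving boundary condition, which it handles by noting that the movie $\mathcal{K}_n$ of Lagrangians is eventually constant equal to $\epsilon\scurve$ outside a compact set and $C^{\infty}$-converges to $\epsilon\scurve$, so that Gromov compactness for totally real submanifolds applies. Your discussion of the rescaled moving boundary and the vanishing of the Hamiltonian perturbation as $\alp_n\to 0$ is the same point phrased in rescaled coordinates, and your exactness/Stokes argument for excluding sphere and disc bubbles is the standard mechanism the paper implicitly invokes.
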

	\begin{proof}\label{uniformboudnmoving}
		This is standard blow-up analysis. See \cite[Theorem 3.3]{Oh1992RemovalOB} and \cite[Section 4.6]{McduffSalamon} for details. The only subtle part is that the movie 
		\[\mathcal{K}_{n}:=\{(s,p):s\in [0,1], p\in \psi^{-1}_{\alp_n l(s)}(\epsilon\scurve)\cap K\}\]
		is equal to $[-2,2]\times \epsilon \scurve$ outside some compact subset of $[-2,2]\times [0,1]\times T^{\ast}\tilde{C}$, and $C^{\infty}$-converges to $\epsilon \scurve$. This fact allows us to apply Gromov compactness theorem for totally real submanifolds (\cite[Theorem 1.1]{Frauenfelder_2015} and Remark \cite[Remark 4.1]{Frauenfelder_2015}). Note that otherwise, we would have had to produce a family of symplectic forms that uniformly tame the movie manifold. 
	\end{proof}

	%The manifolds $\mathcal{K}_n$ are not compact, but they are uniformly geometrically bounded, and their non-Lagrangian parts are compact. So we may apply the result in 
	
	%We appeal to the paper \cite{Frauenfelder_2015} on Gromov compactness for holomorphic curves with totally real boundary conditions. Since the non-Lagrangian part of $\mathcal{K}_n$ is compact, the mean value inequality \cite[Section 2.2]{Frauenfelder_2015} and the removal of singularity \cite[Section 2.3]{Frauenfelder_2015} hold for uniformly finite constants. Therefore, the disc bubble develops just as in the discussion in \cite[Section 2.5]{Frauenfelder_2015}. 

	%Alternatively, one could appeal to  But again, these disc bubbles cannot exist because they are localised, and the boundary conditi

	\begin{proof}[Proof Of Proposition \ref{Hofertypegromovcompacness}]
		This is again standard. The only subtle part is that there is a uniform lower bound on the energy of non-constant continuation strips. Therefore, the Gromov limit of the continuation strips undergo \textit{finitely} many breakings. 
	\end{proof}
	\subsubsection{Subdividing path groupoid generators}\label{subsubsection:pathgroupoidgenerators}
	With Proposition \ref{proposition:shortpathspassive} and \ref{Hofertypegromovcompacness} established, we subdivide the arcs $\alpha(w,w')$ and $\gamma(w,w')$ (see Definition \ref{definition:groupoidgenerators}) into smaller paths. Regard $\alpha(w,w')$ as a closed bounded interval in $\mathbb{R}$. By Proposition \ref{No-Go moving}, there exists an open cover $I_z$ of $\alpha(w,w')$ indexed by $z\in \alpha(w,w')$ such that if $z'\in I_z$, then the passive continuation strips from $z$ to $z'$ are all constant. By Lebesgue's number lemma, there exists some $\delta(w,w')>0$ such that any set of diameter $<\delta(w,w')$ is contained in some $I_z$. Take a partition of the interval $\alpha(w,w')$ into segments of length $<\delta(w,w')$. Each subinterval of the partition belongs in some $I_z$. Choose one such $I_z$ for each subinterval once and for all. By adding these points $z$, further refine the partition, and obtain a sequence of points $b(w,w')^{0},....,b(w,w')^{m(w,w')}$, which are in increasing order regarded as points in the interval $\alpha(w,w')$, with $b^u(w)=b(w,w')^{0}$ and $b^d(w')=b(w,w')^{m(w,w')}$. Then the points have the following property that:
	\begin{itemize}
		\item[] for $0\leq i<m(w,w')$, there exists $0\leq j\leq m(w,w')$ such that the passive continuation strips from $b(w,w')^i$ to $b(w,w')^j$ and $b(w,w')^{i+1}$ to $b(w,w')^j$ are all constants.
	\end{itemize}
	Do the same for $\gamma(w,w'')$ and obtain a sequence of points $c(w,w'')^{0}$,...,$c(w,w'')^{k(w,w')}$ which are in increasing order as points in the interval $\gamma(w,w'')$, with $b^u(w)=c(w,w'')^{0}$ and $b^d(w'')^=c(w,w'')^{k(w,w')}$ so that:
	\begin{itemize}
		\item[] for $0\leq i<k(w,w'')$, there exists $0\leq j\leq k(w,w')$ such that the passive continuation strips from $c(w,w'';)^i$ to $c(w,w'')^j$ and $c(w,w'')^{i+1}$ to $c(w,w'')^j$ are all constants.
	\end{itemize}
	We will now write $b(w,w')^{k}\to b(w,w')^{l}$ for the horizontal arc between $b(w,w')^{k}$ and $b(w,w')^{l}$ contained in $\alpha(w,w')$ for $w,w'$ walls and $0\leq k,l\leq k(w,w')$, and similarly for $c(w,w')^{i}\to c(w,w')^{j}$ for $0\leq i,j\leq m(w,w')$. 
	
	\subsection{Computation of family Floer cohomology local system}\label{gradings and spin structures}
	In this section, we compute the family Floer cohomology local system and prove Theorem \ref{maintheoremimprecise}. In Section \ref{gradingsss}, we define the grading data for the spectral curve. In Section \ref{subsubsection:spin structures}, we compute the signs of the continuation strips, using the discussions in Sections \ref{continuationmapsss} and \ref{pathgroupoidrepsss}. In Section \ref{NonAbsss}, we use the sign comparison formula to prove Theorem \ref{maintheoremimprecise}.
	
	\subsubsection{Grading}\label{gradingsss}
	Let $I$ be the complex structure on $C$. We have the following almost complex structure on $T^{\ast}\tilde{C}$
	\[\tilde{I}:=\begin{bmatrix}
		I & 0\\ 
		0 & I^t 
	\end{bmatrix}\]
	with respect to  $TT^{\ast}\tilde{C}=H\oplus V$. Let $\omega_I$ be a non-degenerate $2$-form defined by
	\[\omega_I=g^S(I,)\]
	where $g^S$ is the Sasaki metric on $T^{\ast}\tilde{C}$. Let $\omega_{Im}$ denote the imaginary part of the holomorphic volume form $\Omega$, then the 2-form
	\[\omega_I+i\omega_{Im}\]
	is non-degenerate and gives a preferred section of $\omega_{T^{\ast}\tilde{C}}^{\otimes 2}$. 
	
	The corresponding phase for $\scurve$ is constant since $\omega_{Im}\vert \scurve=0$ and so we choose the grading function to be the constant map $0$. Similarly, we choose the grading function on any of the fibres to be the constant map $-2$. This implies that the chain complex $CF(\scurve,F_z)$ is concentrated in degree $0$, for $z\in C^{\circ}$ (recall that $C^{\circ}$ is the complement of the zeroes and the poles of $\phi$).  
	
	\subsubsection{Spin structures} \label{subsubsection:spin structures}
	
	We now compute signs, using the discussions in Section \ref{pathgroupoidrepsss}. We first make the following simple observation.
	\begin{lemma}\label{lemma:pullbackspinstructure}
		Let $\mathfrak{s}$ be a spin structure on $C$. Then the pullback spin structure $\pi^{\ast}\mathfrak{s}$ on $\scurve^{\circ}$ does not extend to $\scurve$.
	\end{lemma}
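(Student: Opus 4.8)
The plan is to reduce the claim to a purely local monodromy computation around a single ramification point of $\pi\colon\scurve\to\tilde C$ and to show that the monodromy of $\pi^{\ast}\mathfrak{s}$ there equals $-1$, which is exactly the obstruction to extending a spin structure over that point. Concretely, view a spin structure on $\scurve^{\circ}$ as a double cover of the oriented frame bundle $\mathrm{Fr}^{+}\scurve^{\circ}$ restricting to the connected cover on each fibre; such a cover extends to a spin structure on $\scurve$ if and only if, for every ramification point $r$ over a zero $b$ of $\phi$, its monodromy around a small loop $\widetilde{\Gamma}$ encircling $r$ — framed by a local frame that extends over a disc containing $r$ (the ``disc framing'') — is trivial. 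Since $\phi$ is a complete GMN quadratic differential it has at least one simple zero, so it suffices to prove that $\pi^{\ast}\mathfrak{s}$ has monodromy $-1$ around one such $r$.

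For the local model I would use \cref{zeroofphiprop}: near $b$ there is a conformal coordinate $z$ with $\phi=z\,dz^{2}$, and then $\scurve$ near $r$ is $\{(p^{z})^{2}=z\}$; in the coordinate $w=p^{z}$ on $\scurve$ one has $z=w^{2}$, so $\pi$ is the squaring map $w\mapsto w^{2}$ and $d\pi(\partial_{w})=2w\,\partial_{z}$. Trivialize $\mathrm{Fr}^{+}\scurve$ near $r$ by the frame $\partial_{w}$ and let $\widetilde{\Gamma}(\theta)=(\,w=\rho e^{i\theta},\ \partial_{w}\,)$, $\theta\in[0,2\pi]$, be the corresponding disc-framed loop around $r$. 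By definition of the pullback of a cover, the monodromy of $\pi^{\ast}\mathfrak{s}$ around $\widetilde{\Gamma}$ equals the monodromy of $\mathfrak{s}$ around the image framed loop $\widehat{\Gamma}(\theta)=(\,\pi(\rho e^{i\theta}),\ d\pi(\partial_{w})\,)=(\,\rho^{2}e^{2i\theta},\ 2\rho e^{i\theta}\,\partial_{z}\,)$ in $\mathrm{Fr}^{+}\tilde C$.

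Finally I would compute: the base curve $\rho^{2}e^{2i\theta}$ bounds a small disc $D_{b}\subset C$ around $b$, and since $\mathfrak{s}$ is a genuine spin structure on the closed surface $C$ its restriction over $D_{b}$ is the standard one, i.e. trivializing $\mathrm{Fr}^{+}C|_{D_{b}}\cong D_{b}\times\mathbb{C}^{\ast}$ by $\partial_{z}$ the spin double cover is $(z,\zeta)\mapsto(z,\zeta^{2})$. In this trivialization $\widehat{\Gamma}$ has fibre coordinate $2\rho e^{i\theta}$, which winds exactly once around $0$ as $\theta$ runs over $[0,2\pi]$ — the base coordinate winds twice, but $D_{b}$ is contractible so this is irrelevant for the cover — so a lift of $\widehat{\Gamma}$ is $\theta\mapsto\sqrt{2\rho}\,e^{i\theta/2}$, which does not close up, and the monodromy is $-1$. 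Since $\widetilde{\Gamma}$ is null-homotopic in $\mathrm{Fr}^{+}\scurve$ once $r$ is filled in, no extension of $\pi^{\ast}\mathfrak{s}$ over $r$ can exist, which proves the lemma. The one point that requires care — the ``hard part'', such as it is — is the degeneration of $d\pi$ at $r$ and the ensuing winding-number bookkeeping: the base loop winds twice while the pushed-forward frame $2\rho e^{i\theta}\partial_z$ winds only once, and it is precisely this extra half-turn that the spin cover detects. This is also exactly why, later in the paper, $\pi^{\ast}\mathfrak{s}$ is corrected by tensoring with an almost flat $GL(1;\mathbb{Z})$-local system $\mathcal{B}$ having monodromy $-\mathrm{Id}$ around each ramification point, so that $\widetilde{\mathfrak{s}}=\pi^{\ast}\mathfrak{s}\otimes\mathcal{B}$ does extend over $\scurve$.
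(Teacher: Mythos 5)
Your argument is correct: reducing to the local model $w\mapsto w^{2}$ at a simple zero and computing that the pushed-forward frame $2\rho e^{i\theta}\partial_{z}$ winds only once (odd) against the disc trivialization by $\partial_{z}$ is exactly the obstruction, and your extension criterion via disc-framed monodromy is the standard one. The paper states this lemma without proof, so your computation simply supplies the omitted (standard) argument, and it is consistent with how the paper then corrects $\pi^{\ast}\mathfrak{s}$ by the almost flat local system $\mathcal{B}$ with monodromy $-\mathrm{Id}$ around ramification points.
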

	Choose now an almost flat $GL(1;\mathbb{Z}_2)$-local system $\mathcal{B}$ on $\scurve^{\circ}$. The corrected spin structure $\tilde{\mathfrak{s}}$ now extends to $\scurve$. Let $\mathcal{L}$ be an almost flat $GL(1;\mathbb{C})$-local system $L$ on $\scurve^{\circ}$. Tensoring with $\mathcal{B}$, we obtain a genuine $GL(1;\mathbb{C})$-local system on $\scurve$. As before, we have a twisted local system on $P_{\scurve}$. By Proposition \ref{pathgroupoidrepresentation}, the induced family Floer cohomology local system on $C$ is equivalent to the spin pull-back of the twisted family Floer cohomology local system on $C$.
	
	We write the decomposition
	\begin{align}\label{eq:ordereddecomposition}
		CF(\scurve,F_z;\mathbb{Z})=\abs{\mathfrak{o}}_{z^{+}}\oplus \abs{\mathfrak{o}}_{z^{-}}
	\end{align}
	with the ordered basis $\{(+1,0),(0,+1)\}$. Observe that the complex is concentrated in the degree $0$. 
	
	We now look at the case of $\epsilon\scurve$ in detail. Again, let $\mathcal{Z}^h$ be a horizontal chamber and $\mathcal{Z}^h(\delta;E)$ be the unique connected component of $C(\delta;E)$ contained in $\mathcal{Z}^h$. Let $z\in \mathcal{Z}^h\cap M_C$ and let $z'\in \mathcal{Z}^h\cap M_C$ be a point connected to $z$ by a geodesic arc $\alpha^{\bullet}_z$ of length $d$ less than $\alp(z)$ in the sense of Proposition \ref{No-Go moving}. Let $u\in \mathcal{M}^{\bullet}_{(-1)^i d,z}$ , where $i=0$ if the positive sheet picked out by $\alpha$ coincides with the positive sheet of $\sqrt{\phi}$ on $\mathcal{Z}^h(\delta;E)$, or $1$ otherwise. By Proposition \ref{No-Go moving}, $u$ is a constant map. The geodesic arc $\alpha_z^{\bullet}$ together with its velocity vectors lifts to an arc in the sphere bundles $P_{C}$ and $P_{\scurve}$. The following lemma computes the corresponding parallel transport for the \emph{twisted} Floer local system.
	\begin{lemma}
		The corresponding component in the parallel transport map for the \emph{twisted} Floer local system is $\Phi^{\tloc}:\tloc_{z^{\pm}}\to \tloc_{{z'}^{\pm}}$.  
	\end{lemma}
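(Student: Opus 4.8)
The claim is that for a constant passive continuation strip $u\in\mathcal{M}^{\bullet}_{(-1)^id,z}$ the induced map on the twisted Floer local system is precisely $\Phi^{\tloc}$ of the lifted geodesic arc, i.e. the ``off-diagonal'' contributions vanish and the ``diagonal'' contribution carries exactly the holonomy of $\tloc$ along the lifted arc. The strategy is to assemble three ingredients already in place: (i) Proposition~\ref{No-Go moving} tells us that $u$ is constant and that $\mathcal{M}^{nondiag,\bullet}_{\pm\alp,z}$ is empty, so the only contribution to the continuation map comes from the constant strip sitting over the intersection point $z^{\pm}$, and there is no mixing between the two sheets; (ii) the orientation/sign recipe of Section~\ref{subsubsection:Floerdata} and the sphere-bundle formalism of Section~\ref{pathgroupoidrepsss} express the sign attached to $u$ in terms of a sphere-bundle lift of the closed-up boundary of the glued disc; (iii) the gauge-equivalence $\tilde u=\psi^{\bullet}_{\alp l(s)}\circ u$ of \eqref{perturbedpassive} lets us identify the boundary component of the glued disc on $\epsilon\scurve$ with the lift of the geodesic arc $\alpha^{\bullet}_z$ together with its velocity framing.

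\textbf{Key steps, in order.} First I would record that by Corollary~\ref{regularityofJ} and Proposition~\ref{No-Go moving} the moduli space $\mathcal{M}^{\bullet}_{(-1)^id,z}$ consists of a single constant strip at $z^{\pm}$, hence the passive continuation map $c^{passive}:CF(\epsilon\scurve,F_z)\to CF(\epsilon\scurve,F_{z'})$ is upper/lower triangular with the sign $\pm1$ as its single relevant matrix entry; in fact, since there is no sheet mixing, it is diagonal in the ordered basis \eqref{eq:ordereddecomposition}. Second, I would apply the gauge transformation $\tilde u=\psi^{\bullet}_{\alp l(s)}\circ u$: under this transformation the boundary segment $u(s,0)\subset(\psi^{\bullet}_{\alp l(s)})^{-1}(\epsilon\scurve)$ becomes $\tilde u(s,0)\subset\epsilon\scurve$, and because $u$ is constant, $\tilde u(s,0)$ traces out exactly the orbit $s\mapsto\psi^{\bullet}_{\alp l(s)}(z^{\pm})$, which is (a reparametrization of) the lift $\epsilon\widetilde{\alpha^{\bullet}_z}$ of the geodesic arc to $\epsilon\scurve$. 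Third, following Section~\ref{pathgroupoidrepsss}, I would glue the constant half-strip operators at $z^{\pm}$ and $\psi_1^{-1}(z^{\pm})$ (which equals $(z')^{\pm}$ via the global isotopy) to the ends of $\tilde u$, concatenate the short circle-fibre paths at both ends, and obtain a sphere-bundle lift of the boundary of the glued disc connecting the lifts $\tilde z^{\pm}$ and $\widetilde{z'}^{\pm}$ modulo $2H$; twisting the count by $\tloc$, the $H$-ambiguity is absorbed by the $-1$ monodromy of $\tloc$ around the fibre, so the resulting map is exactly $\Phi^{\tloc}(\partial\tilde u|_{\cdot\times\{0\}}):\tloc_{z^{\pm}}\to\tloc_{(z')^{\pm}}$, i.e. the holonomy of $\tloc$ along the framed lifted arc. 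Finally, I would note that the path of Lagrangian subspaces and spin structures at the intersection points is fixed as in \eqref{eq:Lagrangianpath}, so the local orientation data at $z^{\pm}$ and $(z')^{\pm}$ matches and contributes $+1$, leaving only the $\tloc$-holonomy.

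\textbf{Main obstacle.} The analytic content (constancy, emptiness of the non-diagonal moduli) is already supplied by Proposition~\ref{No-Go moving}, so the real work is bookkeeping with orientations: one must check carefully that the sphere-bundle lift of the glued disc's boundary is independent of the auxiliary choices (the elongation function $l$, the bump function $\rho(\mathcal{Z}^h)$, the path of spin structures $P_T$), and that the $H$-contribution is counted with the correct parity so that it precisely cancels against the $-1$ monodromy of $\tloc$. The subtlety is that $\tilde u$ is only gauge-equivalent, not equal, to $u$, so I must verify that the Fredholm orientation transported along $D\psi^{\bullet}_s{}_{\ast}$ (which intertwines the two linearizations $D_1$ and $D_2$, as observed after \eqref{additionalperturbation}) does not introduce an extra sign; this is where I expect to spend the most care, invoking \cite[Proposition 8.17]{FOOO2} and \cite[Theorem 1A]{spinstructuresquadraticforms} to control the twisting behaviour under the homotopy of trivializations. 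Once this is settled, the statement follows immediately, and the diagonal form together with the $\tloc$-holonomy will feed directly into the computation of the full parallel transport maps in Section~\ref{NonAbsss}.
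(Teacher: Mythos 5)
Your proof is correct and takes essentially the same approach as the paper: constancy of the strip from Proposition \ref{No-Go moving}, gluing the half-strip operators, and the observation that the resulting loop of Lagrangian subspaces is the short path \eqref{eq:Lagrangianpath} concatenated with its inverse in the standard trivialization, so the determinant-line orientation is trivial and only the holonomy $\Phi^{\tloc}$ along the lifted boundary arc survives. The sphere-bundle lift and the cancellation of the $H$-twist against the $-1$ fibre monodromy that you flag as the main obstacle are exactly the mechanism already established in Section \ref{pathgroupoidrepsss} (and the gauge-equivalence of the two linearizations was settled in Section \ref{continuationmapsss}), which the paper's much shorter proof invokes implicitly.
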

	\begin{proof}
		We obtain a Cauchy-Riemann operator on the disc after glueing the half-strip operators at the strip-like ends. The induced loop of Lagrangian spaces on the disc is the same as the concatenation of the short path \eqref{eq:Lagrangianpath} together with its inverse, with the standard trivialization. Thus, the resulting orientation on the determinant line is the trivial one. This tells us that the corresponding component in the induced parallel transport map is given by $\Phi^{\tloc}$.
	\end{proof}
	
	Now, the choice of $\mathfrak{s}$ and $\tilde{\mathfrak{s}}=\mathfrak{s}\otimes \mathcal{B}$ implies that the induced parallel transport map on $C$ must be twisted by $\mathcal{B}$. Since $L=\mathcal{L}\otimes \mathcal{B}$, $\mathcal{B}$ cancels out and we obtain: 
	\begin{lemma}\label{lemma:signcomputation}
		The corresponding component in the parallel transport map for the Floer local system is $\Phi^{\mathcal{L}}$.  
	\end{lemma}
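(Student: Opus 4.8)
The plan is to deduce this from the previous lemma together with the twisted-versus-genuine local system dictionary of Section~\ref{pathgroupoidrepsss} and Proposition~\ref{pathgroupoidrepresentation}; the only real content is the cancellation of the auxiliary $GL(1;\mathbb{Z}_2)$-local system $\mathcal{B}$, and there is no analysis left to do at this point.

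First I would recall that, by Proposition~\ref{pathgroupoidrepresentation}, the family Floer cohomology local system $HF_\epsilon(\scurve,\mathcal{L},\mathfrak{s},\mathcal{B};\mathbb{C})$ on $\tilde C$ is the spin pullback --- along the spin structure $\mathfrak{s}$ on $\tilde C$ and the fibrewise spin structures $\mathfrak{f}_z$ --- of the twisted Floer local system $\widetilde{HF}(\epsilon\scurve,\tloc)$ on $P_{\tilde C}$, where $\tloc$ is the twisted local system on $P_\scurve$ attached to the genuine local system $L=\mathcal{L}\otimes\mathcal{B}$ and the extended spin structure $\tilde{\mathfrak{s}}=\pi^{\ast}\mathfrak{s}\otimes\mathcal{B}$. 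The previous lemma already computed that, with $u$ the constant strip furnished by Proposition~\ref{No-Go moving}, gluing the half-strip operators at the strip-like ends yields the trivial loop of Lagrangian subspaces (it is \eqref{eq:Lagrangianpath} concatenated with its inverse) and hence the trivial orientation on the determinant line; so the relevant component of the parallel transport for $\widetilde{HF}$ along the sphere-bundle lift $\tilde\alpha^{\bullet}_z$ of $\alpha^{\bullet}_z$ is the $\tloc$-holonomy $\Phi^{\tloc}\colon\tloc_{z^{\pm}}\to\tloc_{{z'}^{\pm}}$.

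The second and final step is to unwind the spin pullback. Here I would use that the assignment $(\text{spin structure},\ \text{genuine rank one local system})\mapsto(\text{twisted local system on }P_\scurve)$ of Section~\ref{pathgroupoidrepsss} is unchanged when both arguments are simultaneously tensored by a $GL(1;\mathbb{Z}_2)$-local system; thus $\tloc$, built from the pair $(\tilde{\mathfrak{s}},L)=(\pi^{\ast}\mathfrak{s}\otimes\mathcal{B},\,\mathcal{L}\otimes\mathcal{B})$, coincides over $\scurve^{\circ}$ with the twisted local system built from $(\pi^{\ast}\mathfrak{s},\mathcal{L})$. Concretely, pulling back along $\tilde{\mathfrak{s}}$ rather than along $\pi^{\ast}\mathfrak{s}$ multiplies the transport by the $\mathbb{Z}_2$-holonomy of $\mathcal{B}$ along $\alpha^{\bullet}_z$, and replacing the twisting local system $\mathcal{L}\otimes\mathcal{B}$ by $\mathcal{L}$ multiplies it by the same $\mathbb{Z}_2$-holonomy, so the two factors cancel. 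Since the lift $\tilde\alpha^{\bullet}_z$ stays in the regular locus of $\pi$ and, after concatenation with the shortest circle-fibre arcs, closes up to the boundary class of the constant disc, the $\tloc$-holonomy along $\tilde\alpha^{\bullet}_z$ is simply the $\mathcal{L}$-holonomy along $\alpha^{\bullet}_z$. I would then conclude that the corresponding component of the Floer-theoretic parallel transport is $\Phi^{\mathcal{L}}$.

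The only potential obstacle is bookkeeping: getting the signs right in the correspondence between spin structures, $GL(1;\mathbb{Z}_2)$-local systems and twisted local systems, and verifying that the sphere-bundle lift used to define $\widetilde{HF}$ matches the lift of $\alpha^{\bullet}_z$ together with its velocity field chosen in Section~\ref{subsubsection:spin structures}. Everything else is formal given Proposition~\ref{pathgroupoidrepresentation} and the previous lemma.
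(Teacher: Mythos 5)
Your proposal is correct and follows essentially the same route as the paper: the paper's own (very brief) argument is exactly the cancellation you describe, namely that correcting the spin structure by $\mathcal{B}$ twists the induced parallel transport on the base by the holonomy of $\mathcal{B}$, while twisting by $L=\mathcal{L}\otimes\mathcal{B}$ contributes the same factor, so the two cancel and leave $\Phi^{\mathcal{L}}$. Your write-up just spells out the bookkeeping (Proposition \ref{pathgroupoidrepresentation} and the sphere-bundle lifts) that the paper leaves implicit.
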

	%\begin{figure}[t]
	%	\includegraphics[height=7cm]{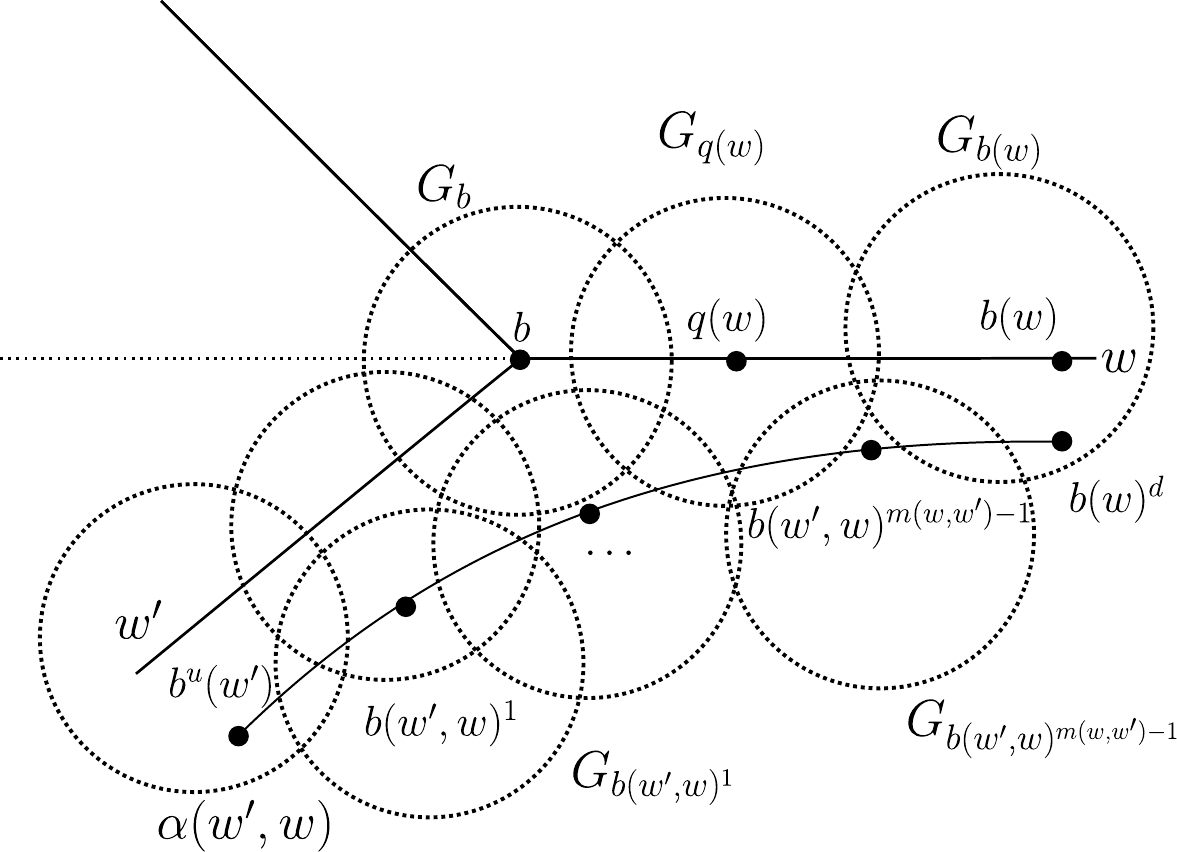}
	%	\centering
	%	\caption{An illustration of the open cover data. The dashed line is the branch cut.}
	%	\label{figure:opencoverdata}
	%\end{figure}
	%For $b\in zero(\phi)$, fix a simple closed curve $\gamma_b$ that winds around $b$ once, contained in $U(\delta)$. 
	
	\subsubsection{Proof of non-abelianization}\label{NonAbsss}
	We now use Lemma \ref{lemma:signcomputation} to compute Floer-theoretic parallel transports along the arcs $\alpha(w,w')$ and $\gamma(w,w')$. By construction, given any pair $b(w,w)^{i}, b(w,w)^{i+1}$, there exists some $b(w,w')^j$ such that the continuation strips from $b(w,w)^j$ to $b(w,w)^{i}$ and $b(w,w)^{i+1}$ are all constants, and so necessarily lie on $T^{\ast}\mathcal{Z}^h$. For $\alpha$ an arc contained in a horizontal chamber $\mathcal{Z}^h$, let ${\Phi^{\mathcal{L}}}(\alpha)^{\pm}$ denote the parallel transport map of $\mathcal{L}$ restricted to the $\pm$-lift of $\alpha$ to $\scurve^{\circ}$ with respect to the sheet ordering of the chamber  $\mathcal{Z}^h$. By Lemma \ref{lemma:signcomputation},
	\begin{proposition}\label{paralleltransportcomputation}
		We have 
		\begin{align}
			\Gamma_{\mathcal{L}}(\tilde{\mathfrak{s}})({b(w,w)^{j}\to b(w,w)^{k}})=\begin{bmatrix}
				{\Phi^{\mathcal{L}}}^{+} & 0\\ 
				0 &  {\Phi^{\mathcal{L}}}^{-}
			\end{bmatrix}. \label{computation:twistedparallel}
		\end{align}
		with respect to the ordered basis \eqref{eq:ordereddecomposition}.%orientation of $\mathbb{Z}$ determined by orientations on $\mathfrak{o}_{\tilde{v}}$.
	\end{proposition}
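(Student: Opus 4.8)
The plan is to combine the analytic non-existence result (Proposition \ref{No-Go moving}) with the sign computation (Lemma \ref{lemma:signcomputation}) and the path-groupoid formalism of Section \ref{pathgroupoidrepsss}. First I would recall how the arc $b(w,w)^j \to b(w,w)^k$ was constructed in Section \ref{subsubsection:pathgroupoidgenerators}: the subdivision points were chosen so that, for each consecutive pair, there is a point whose passive continuation strips to the two endpoints are all constant. Since the Floer-theoretic parallel transport map only depends on the path-homotopy class (Proposition \ref{propertiesofparalleltransportmaps}) and is compatible with concatenation, it suffices to establish \eqref{computation:twistedparallel} for each elementary segment of the subdivision, and then compose. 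So I would fix one such segment $b^i \to b^{i+1}$ together with the auxiliary point $b^j$ as in the partition property, and reduce to computing the parallel transport for a single ``short'' horizontal (or vertical) arc along which all passive continuation strips are constant.

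Next I would invoke Proposition \ref{No-Go moving}: for the arcs $\alpha_z^{\bullet}$ of length $<\alp(z)$, the moduli spaces $\mathcal{M}^{\bullet}_{\pm\alp,z}$ are regular, the diagonal part consists only of constant strips, and the non-diagonal part is empty. The emptiness of $\mathcal{M}^{nondiag,\bullet}_{\pm\alp,z}$ is what forces the parallel transport to be \emph{diagonal} with respect to the decomposition \eqref{eq:ordereddecomposition} --- there are simply no holomorphic strips contributing off-diagonal matrix entries. The diagonal entries are then computed by counting the constant strips in $\mathcal{M}^{diag,\bullet}_{\pm\alp,z}$; there is exactly one constant strip at each intersection point $\epsilon z^{\pm}$, and by Lemma \ref{lemma:signcomputation} the associated orientation/sign contribution, after twisting by $\mathcal{L}\otimes\mathcal B$ and using the chosen gradings and spin data, is precisely the holonomy $\Phi^{\mathcal L}$ of $\mathcal L$ along the appropriate lift of the arc. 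Here I must be careful about which lift: the positive sheet picked out by the oriented arc $\alpha$ may or may not agree with the sheet ordering $+\sqrt\phi$ fixed on $\mathcal Z^h$ in Section \ref{paragraph:chambersheetdata} --- this is the distinction ``$i=0$ or $i=1$'' in Section \ref{subsubsection:spin structures}. Tracking this bookkeeping gives exactly the block-diagonal matrix $\mathrm{diag}(\Phi^{\mathcal L +},\Phi^{\mathcal L -})$ in the ordered basis \eqref{eq:ordereddecomposition}, where the superscript $\pm$ refers to the sheet ordering of $\mathcal Z^h$ (not the energy ordering).

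Then I would assemble the full arc: by construction $b(w,w)^j \to b(w,w)^k$ decomposes (up to homotopy rel endpoints inside $\mathcal Z^h$) into consecutive elementary segments, each contributing a block-diagonal matrix of the above form with the \emph{same} sheet ordering, because the whole arc stays in a single horizontal chamber over which the double cover is trivial and $+\sqrt\phi$ is globally defined. Composing block-diagonal matrices gives a block-diagonal matrix, and since the holonomies of $\mathcal L$ also compose along the lifts, the product is $\mathrm{diag}\big(\Phi^{\mathcal L}(b^j\to b^k)^+,\Phi^{\mathcal L}(b^j\to b^k)^-\big)$, which is \eqref{computation:twistedparallel}. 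Finally I would note that the statement is phrased for the horizontal arc $b(w,w)^j \to b(w,w)^k$ but the identical argument, using the vertical Hamiltonian $H^v$ and the $\bullet = v$ case of Proposition \ref{No-Go moving}, covers the vertical generators $c(w,w'')^i \to c(w,w'')^j$ as well.

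The main obstacle I anticipate is not the analysis --- that is entirely supplied by Proposition \ref{No-Go moving} and Theorem \ref{maintheorem} --- but the sign and orientation bookkeeping: one must verify that the constant strip's contribution to the continuation map, computed via the glued Cauchy--Riemann operator with the Lagrangian path \eqref{eq:Lagrangianpath} and the twisted local system on $P_{\scurve}$, really does reduce to the bare holonomy $\Phi^{\mathcal L}$ with $+1$ coefficient, and that the cancellation of the $H$-class contribution against the monodromy of the twisted local system (as in Section \ref{pathgroupoidrepsss}) goes through uniformly along the subdivided arc. Lemma \ref{lemma:signcomputation} already packages most of this, so the remaining work is to check that concatenating many elementary segments does not introduce extra signs --- which follows because each segment is handled with the \emph{same} trivialization (the positive sheet on $\mathcal Z^h$) and the gluing is associative up to the canonical isomorphisms of orientation lines.
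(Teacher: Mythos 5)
Your proposal is correct and takes essentially the same route as the paper: diagonality follows from the emptiness of the non-diagonal moduli spaces in Proposition \ref{No-Go moving}, the diagonal entries are counted by the constant strips guaranteed by the construction of the subdivision points, and Lemma \ref{lemma:signcomputation} identifies their contribution with the holonomies ${\Phi^{\mathcal{L}}}^{\pm}$ in the sheet ordering of $\mathcal{Z}^h$. The only difference is that the concatenation over consecutive segments, which you fold into the argument, is in the paper deferred to Corollary \ref{cor:longarctransport}; this is a matter of packaging, not substance.
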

	
	Then composing the parallel transport map from $b(w,w')_{i}$ to $b(w,w')_j$ and the parallel transport map from $b(w,w')_j$ to $b(w,w')_{i+1}$, we see that 
	\begin{corollary}\label{cor:longarctransport}
		The parallel transport along $({b(w,w')_{i}\to b(w,w')_{i+1}})$ is given by the matrix
		\begin{align}
			\Gamma_{\mathcal{L}}(\tilde{\mathfrak{s}})({b(w,w')_{i}\to b(w,w')_{i+1}})&=\begin{bmatrix}
				{\Phi^{\mathcal{L}}}^{+} & 0\\ 
				0 & {\Phi^{\mathcal{L}}}^{-}
			\end{bmatrix}.
		\end{align}
		The parallel transport along $\alpha(w,w')$ is given by the matrix
		\[\begin{bmatrix}	\Phi^{\mathcal{L}}(w,w')^{+} & 0\\ 
			0 &  \Phi^{\mathcal{L}}(w,w')^{-}.
		\end{bmatrix}.\]
		The parallel transport along $\gamma(w,w'')$ is given by the matrix
		\[\begin{bmatrix}	\Phi^{\mathcal{L}}(w,w'')^{+} & 0\\ 
			0 &  \Phi^{\mathcal{L}}(w,w'')^{-}
		\end{bmatrix}.\]
	\end{corollary}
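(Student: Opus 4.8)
The plan is to reduce the proof of Corollary \ref{cor:longarctransport} entirely to the local computation already packaged in Proposition \ref{paralleltransportcomputation}, together with the concatenation property of parallel transport maps from Proposition \ref{propertiesofparalleltransportmaps}. First I would recall the construction in Section \ref{subsubsection:pathgroupoidgenerators}: the arc $\alpha(w,w')$ was subdivided into consecutive segments $b(w,w')^{i}\to b(w,w')^{i+1}$, and for each consecutive pair there is a common point $b(w,w')^{j}$ such that the passive continuation strips from $b(w,w')^{j}$ to $b(w,w')^{i}$ and from $b(w,w')^{j}$ to $b(w,w')^{i+1}$ are all \emph{constant}. Since all these intermediate points lie in the single horizontal chamber $\mathcal{Z}^h$, and since the chamber carries a fixed choice of positive sheet of $\sqrt{\phi}$, the sheet ordering is globally consistent along the whole arc; this is what lets us write all the matrices with respect to the same ordered basis \eqref{eq:ordereddecomposition}.

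The key steps are as follows. Step one: for each consecutive pair, apply Proposition \ref{paralleltransportcomputation} twice --- once for the segment $b(w,w')^{j}\to b(w,w')^{i}$ and once for $b(w,w')^{j}\to b(w,w')^{i+1}$ --- using Lemma \ref{lemma:signcomputation} to identify each component of the constant-strip contribution with $\Phi^{\mathcal{L}}$ restricted to the appropriate $\pm$-lift. Both matrices are diagonal of the form $\mathrm{diag}({\Phi^{\mathcal{L}}}^{+},{\Phi^{\mathcal{L}}}^{-})$. Step two: by the concatenation property, $\Gamma_{\mathcal{L}}({b(w,w')^{i}\to b(w,w')^{i+1}}) = \Gamma_{\mathcal{L}}({b(w,w')^{j}\to b(w,w')^{i+1}}) \circ \Gamma_{\mathcal{L}}({b(w,w')^{i}\to b(w,w')^{j}})$, which is the product of a diagonal matrix with the inverse of a diagonal matrix; functoriality of $\Phi^{\mathcal{L}}$ under path composition then shows the product is precisely $\mathrm{diag}({\Phi^{\mathcal{L}}(b(w,w')^{i}\to b(w,w')^{i+1})}^{+},{\Phi^{\mathcal{L}}(b(w,w')^{i}\to b(w,w')^{i+1})}^{-})$, establishing the first displayed matrix. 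Step three: compose these over all segments $i=0,\ldots,m(w,w')-1$; again using that the $\pm$-lift holonomies of $\mathcal{L}$ compose and the sheet ordering is constant over $\mathcal{Z}^h$, the product telescopes to $\mathrm{diag}(\Phi^{\mathcal{L}}(w,w')^{+},\Phi^{\mathcal{L}}(w,w')^{-})$. Step four: the identical argument applied to the subdivision $c(w,w'')^{0},\ldots,c(w,w'')^{k(w,w'')}$ of the vertical arc $\gamma(w,w'')$ gives the third matrix; here one also invokes Proposition \ref{No-Go moving} for the vertical Hamiltonians $H^v$ to guarantee the relevant continuation strips are constant, exactly as in the horizontal case.

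The main obstacle --- really the only nontrivial point --- is bookkeeping the sheet ordering: one must verify that the $+$ and $-$ labels on the lifts of each subdivision point, as determined by the chosen positive sheet of $\sqrt{\phi}$ on $\mathcal{Z}^h$ (paragraph \ref{paragraph:chambersheetdata}), are consistent with the energy/sheet ordering used in Proposition \ref{No-Go moving} to show that the diagonal moduli $\mathcal{M}^{diag,\bullet}_{\pm\alp,z}$ consist of constant strips and the off-diagonal moduli vanish. Inside a single horizontal chamber these two orderings do not flip (the flip only happens across $\snetwork(\pi/2)$, and the arcs $\alpha(w,w')$ and $\gamma(w,w'')$ are each contained in one chamber), so the matrices really are diagonal with no permutation. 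Once this is checked, the remaining manipulations are formal: everything is a composition of diagonal $2\times 2$ matrices whose entries are the $\pm$-lift holonomies of $\mathcal{L}$, and the statement follows from the functoriality already recorded in Proposition \ref{propertiesofparalleltransportmaps} and the sign identification of Lemma \ref{lemma:signcomputation}. I would therefore present the proof as: invoke Proposition \ref{paralleltransportcomputation} on each elementary segment, compose using concatenation, and note that the off-diagonal entries never appear because Proposition \ref{No-Go moving} kills all non-constant (hence all non-diagonal) continuation strips along these arcs.

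\begin{proof}
By Proposition \ref{No-Go moving}, for each consecutive pair of subdivision points the passive continuation strips from $b(w,w')^{j}$ to $b(w,w')^{i}$ and from $b(w,w')^{j}$ to $b(w,w')^{i+1}$ are constant, and all these points lie in the horizontal chamber $\mathcal{Z}^h$ over which the positive sheet of $\sqrt{\phi}$, hence the sheet ordering of the lifts, is fixed. Applying Lemma \ref{lemma:signcomputation} to these constant strips, each of the parallel transport maps $\Gamma_{\mathcal{L}}(\tilde{\mathfrak{s}})(b(w,w')^{j}\to b(w,w')^{i})$ and $\Gamma_{\mathcal{L}}(\tilde{\mathfrak{s}})(b(w,w')^{j}\to b(w,w')^{i+1})$ is the diagonal matrix $\mathrm{diag}({\Phi^{\mathcal{L}}}^{+},{\Phi^{\mathcal{L}}}^{-})$ of Proposition \ref{paralleltransportcomputation}, with respect to the ordered basis \eqref{eq:ordereddecomposition}. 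By the concatenation and homotopy-invariance properties of parallel transport maps (Proposition \ref{propertiesofparalleltransportmaps}),
\[\Gamma_{\mathcal{L}}(\tilde{\mathfrak{s}})(b(w,w')^{i}\to b(w,w')^{i+1})=\Gamma_{\mathcal{L}}(\tilde{\mathfrak{s}})(b(w,w')^{j}\to b(w,w')^{i+1})\circ \Gamma_{\mathcal{L}}(\tilde{\mathfrak{s}})(b(w,w')^{i}\to b(w,w')^{j}).\]
Since the $\pm$-lift holonomies of $\mathcal{L}$ compose under concatenation of paths in $\scurve^{\circ}$, the right-hand side equals
\[\begin{bmatrix}\Phi^{\mathcal{L}}(b(w,w')^{i}\to b(w,w')^{i+1})^{+} & 0\\ 0 & \Phi^{\mathcal{L}}(b(w,w')^{i}\to b(w,w')^{i+1})^{-}\end{bmatrix},\]
which is the first assertion (the stated formula records the special case where this matrix specialises, under the appropriate identification of bases, to $\mathrm{diag}({\Phi^{\mathcal{L}}}^{+},{\Phi^{\mathcal{L}}}^{-})$). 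Composing over all segments $i=0,\dots,m(w,w')-1$ and using again that the $\pm$-lift holonomies compose while the sheet ordering is constant over $\mathcal{Z}^h$, the product telescopes to
\[\begin{bmatrix}\Phi^{\mathcal{L}}(w,w')^{+} & 0\\ 0 & \Phi^{\mathcal{L}}(w,w')^{-}\end{bmatrix},\]
proving the statement for $\alpha(w,w')$. For $\gamma(w,w'')$ the argument is identical, using the subdivision $c(w,w'')^{0},\dots,c(w,w'')^{k(w,w'')}$ from Section \ref{subsubsection:pathgroupoidgenerators} and Proposition \ref{No-Go moving} for the vertical Hamiltonians, yielding
\[\begin{bmatrix}\Phi^{\mathcal{L}}(w,w'')^{+} & 0\\ 0 & \Phi^{\mathcal{L}}(w,w'')^{-}\end{bmatrix}.\]
This completes the proof.
\end{proof}
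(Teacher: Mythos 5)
Your proposal is correct and follows essentially the same route as the paper: the paper's own (very terse) proof is exactly "compose the transport from $b(w,w')^{i}$ to $b(w,w')^{j}$ with the transport from $b(w,w')^{j}$ to $b(w,w')^{i+1}$," each diagonal by Proposition \ref{paralleltransportcomputation} (the second being the inverse of a diagonal constant-strip transport), and then telescope over the subdivision of \cref{subsubsection:pathgroupoidgenerators}, with the identical argument for $\gamma(w,w'')$. Your additional bookkeeping of the fixed sheet ordering within a single chamber is consistent with the paper's setup and introduces no deviation.
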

	Finally, repeating the argument in the proof of \ref{paralleltransportcomputation}, we obtain the following:
	\begin{corollary} \label{parallelvert}
		The parallel transport along $\alpha(w)$ is given by the matrix\[\begin{bmatrix}
			1 & 	\mu(w)\\
			0&  1\\
		\end{bmatrix}\]
	\end{corollary}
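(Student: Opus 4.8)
\textbf{Proof proposal for Corollary \ref{parallelvert}.} The plan is to run essentially the same argument as in the proof of Proposition \ref{paralleltransportcomputation}, but now applied to the vertical arc $\alpha(w)$ connecting $b^d(w)$ to $b^u(w)$, using the moduli space $\mathcal{M}^{short}(w)$ and its decomposition \eqref{equation:shortdecomposition} together with the structural input of Proposition \ref{proposition:shortpathspassive}. The key point is that $\alpha(w)$ is a short vertical path, the Hamiltonian isotopy $\chi^v_s$ generated by $S^v=\xi(x,y)p^y$ preserves $\epsilon\scurve$ and its rescalings (Lemma \ref{Invarianceunderverticaltransports}), and hence the continuation map associated to $\alpha(w)$ is computed by counting elements of $\mathcal{M}^{short}(w)$ with the appropriate signs and $\mathcal{L}$-holonomy weights.

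First I would recall from Proposition \ref{proposition:shortpathspassive} that $\mathcal{M}^{short,diag}(w)$ consists only of constant strips and that $\mathcal{M}^{short,-}(w)$ — the strips travelling from $(\epsilon b^d(w))^+$ to $(\epsilon b^d(w))^-$ — is empty. Consequently, in the ordered basis coming from the sheet ordering at $b^d(w)$, the diagonal $(+,+)$ and $(-,-)$ entries of the parallel transport matrix receive only contributions from constant strips; by exactly the sign/holonomy computation of Lemma \ref{lemma:signcomputation} (the glued disc carries the trivial loop of Lagrangian subspaces, hence the trivial orientation, and the $\mathcal{B}$-twist cancels against $\mathcal{L}\otimes\mathcal{B}$), each constant strip contributes $+1$ after a compatible normalization of bases at $b^d(w)$ and $b^u(w)$. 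Thus the diagonal is $\mathrm{diag}(1,1)$. The lower-left $(-,+)$ entry vanishes because $\mathcal{M}^{short,-}(w)=\emptyset$. The only possibly nonzero off-diagonal contribution is the upper-right $(+,-)$ entry, coming from $\mathcal{M}^{short,nondiag}(w)$ strips travelling from $(\epsilon b^d(w))^-$ to $(\epsilon b^d(w))^+$; by definition this signed, $\mathcal{L}$-weighted count is the morphism $\mu(w)$, i.e. the map $\mu_w$ of \eqref{wallcrossingcount} transported to the basis at hand. This gives precisely the claimed matrix $\begin{bmatrix} 1 & \mu(w)\\ 0 & 1\end{bmatrix}$.

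The main obstacle I anticipate is bookkeeping rather than analysis: one must be careful that the sheet ordering used to write the matrix is the one induced by the positive sheet of $\sqrt{\phi}$ along $w$ (the convention fixed in \S\ref{subsubsection:wall-orientation}), that this is consistent with the ordering on $\pi^{-1}(z)$ given by Proposition \ref{horizontaldomaindevision}, and that the normalization \eqref{basisfixing} of the bases at $b^d(w)$ and $b^u(w)$ is chosen so that the constant strips really do contribute $+1$ and not $-1$. The identification of the off-diagonal count with the wall-crossing morphism $\mu_w$ of Definition \ref{NonAb} is then a matter of unwinding definitions: the short path $\alpha(w)$ crosses exactly the wall $w$ of $\snetwork(0)$, so \eqref{wallcrossingterm} forces the parallel transport to have the form $i_{f}^{-1}(\mathrm{Id}+\mu_w)\Phi^{\mathcal{L}}(\alpha)i_{i}$, and what we have computed is exactly $\mathrm{Id}+\mu(w)$ in the Floer-theoretic basis. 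Once the orientation conventions are pinned down, no further estimates are needed — the positivity-of-energy argument already packaged in Proposition \ref{proposition:shortpathspassive} does all the analytic work.
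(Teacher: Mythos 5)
Your proposal is correct and follows essentially the same route as the paper: the paper obtains Corollary \ref{parallelvert} by "repeating the argument in the proof of Proposition \ref{paralleltransportcomputation}", i.e. combining Proposition \ref{proposition:shortpathspassive} (constant diagonal strips, emptiness of $\mathcal{M}^{short,-}(w)$ giving the vanishing lower-left entry) with the sign/holonomy computation of Lemma \ref{lemma:signcomputation}, and defining $\mu(w)$ as the remaining weighted count from $\mathcal{M}^{short,nondiag}(w)$, exactly as you describe.
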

	Note that we get an \textit{upper triangular matrix} because the moduli spaces $\mathcal{M}^{short,-}(w)$ are empty. Here the basis of $\mathbb{Z}^2$ are chosen with respect to the orientations on the orientation lines. We now compute the number $\mu(w)$ explicitly using $\Phi^{\mathcal{L}}$ and finish the proof of the main theorem. The proof is essentially due to \cite[Section 5.6]{GNMSN}. We will abbreviate $\Phi^{\mathcal{L}}(\alpha^{\pm}(w_i,w_j))$ by $\Phi^{\mathcal{L}}(w_i,w_j)^{\pm}$ . 
	\begin{proposition}	\label{uniquedetermination}
		Let $z$ be a zero of $\phi$ and order the three walls $w_0,w_{\pm 1}$ as above. Let $\Gamma_{\mathcal{L}}(w_i,w_j)$ denote the parallel transport map with respect to $\alpha(w_i,w_j)$.  
		Then we have
		\begin{align}\label{BPSnumbercomputation}
			\mu(w_0)&=-\Phi^{\mathcal{L}}(w_1,w_{-1})^{+}\Phi^{\mathcal{L}}(w_0,w_1)^{-}\Phi^{\mathcal{L}}(w_{-1},w_{0})^{-}	\\
			\mu(w_1)&=-\Phi^{\mathcal{L}}(w_1,w_{-1})^{-}\Phi^{\mathcal{L}}(w_0,w_1)^{-}\Phi^{\mathcal{L}}(w_{-1},w_{0})^{+}		\\
			\mu(w_{-1})&=-\Phi^{\mathcal{L}}(w_0,w_1)^{+}\Phi^{\mathcal{L}}(w_1,w_{-1})^{-}\Phi^{\mathcal{L}}(w_{-1},w_0)^{-}.
		\end{align}
	\end{proposition}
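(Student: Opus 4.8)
The plan is to derive \eqref{BPSnumbercomputation} purely from homotopy invariance of the Floer-theoretic parallel transport, following the GMN argument in \cite[Section 5.6]{GNMSN}. The key input is that near a zero $z_0$ of $\phi$, the three walls $w_0, w_1, w_{-1}$ (labelled as in \cref{paragraph:branchcutdata}, so $w_0$ carries the $-+$ label and $w_{\pm 1}$ carry $+-$ labels) divide a small punctured disc around $z_0$ into three chambers, and a small loop encircling $z_0$ is contractible in $\tilde C$. Concretely, I would fix a small loop $\ell$ around $z_0$ passing through the base points $b^{u}(w_i), b^{d}(w_i)$, built by concatenating: the short arcs $\alpha(w_i)$ (crossing each wall $w_i$, contributing $\mathrm{Id}+\mu_{w_i}$ in the appropriate off-diagonal slot by \cref{parallelvert} and \cref{proposition:shortpathspassive}), and the long arcs $\alpha(w_i,w_j)$ or $\gamma(w_i,w_j)$ running through the chambers between consecutive walls (contributing the diagonal matrices $\mathrm{diag}(\Phi^{\mathcal L}(w_i,w_j)^{+},\Phi^{\mathcal L}(w_i,w_j)^{-})$ by \cref{cor:longarctransport}). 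Since $\ell$ is null-homotopic in $\tilde C$ but the monodromy of $\mathcal L$ around $z_0$ is $-\mathrm{Id}$ (almost flatness), the total Floer parallel transport around $\ell$ must equal $-\mathrm{Id}$ as well, once one accounts for the twisting by $\mathcal B$ and the grading data fixed in \cref{gradingsss}; equivalently, after cancelling the known holonomy factors, the ordered product of the three wall-crossing matrices and the three chamber matrices equals $-\mathrm{Id}$.

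The heart of the computation is then linear algebra in $2\times 2$ matrices. I would write each wall-crossing factor in the basis adapted to the $\pm$-sheet ordering of the chamber it is approached from — note that the sheet ordering flips as one crosses $\snetwork(\pi/2)$, but near $z_0$ one can choose the loop to stay on one side, or track the ordering reversal explicitly via the relabelling $z^{+}\leftrightarrow z^{-}$. A wall labelled $-+$ contributes an upper-triangular $\begin{bmatrix}1 & \mu\\ 0 & 1\end{bmatrix}$ and a wall labelled $+-$ contributes $\begin{bmatrix}1 & 0\\ \mu & 1\end{bmatrix}$ (the transpose slot), consistent with \cref{parallelvert} and the fact that $\mathcal M^{short,-}(w)$ is empty. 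Expanding the product $\prod(\mathrm{Id}+\mu_{w_i})\cdot(\text{diagonal chamber matrices}) = -\mathrm{Id}$ and reading off the four matrix entries yields: the two diagonal entries give relations forcing products of the $\Phi^{\mathcal L}(w_i,w_j)^{\pm}$ around the three chambers to be $-1$ in each sheet — these are automatically satisfied because a full loop of $\mathcal L$ around $z_0$ restricted to each lift contributes the factor $-1$ from almost flatness transported to the spectral curve — and the two off-diagonal entries give precisely three equations expressing $\mu(w_0), \mu(w_1), \mu(w_{-1})$ as the stated triple products of $\Phi^{\mathcal L}$'s. The signs in \eqref{BPSnumbercomputation} come from moving the $-\mathrm{Id}$ to the right-hand side. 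I would carry this out in a specific base chamber (say $\mathcal Z^h(w_0)$) and cyclically permute to obtain the other two formulas.

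A subtle point I would check carefully is the bookkeeping of which off-diagonal slot each $\mu_{w_i}$ occupies and how the sheet-ordering reversal across $\snetwork(\pi/2)$ interacts with the energy ordering from \cref{horizontaldomaindevision}: the arcs $\alpha(w_i,w_j)$ were built inside $C(\delta;E)$ and the $\pm$-labels there refer to the sheet ordering of the chamber, whereas the short arcs $\alpha(w_i)$ live in $\mathcal V(\delta;E)$ where the energy ordering applies, and \cref{subsubsection:shortpaths} records that these agree. I would also need to invoke \cref{paralleltransportcomputation} and \cref{cor:longarctransport} to know that no additional signs appear in the long-arc transports beyond the holonomies $\Phi^{\mathcal L}$, and \cref{lemma:signcomputation} to confirm the twisted-to-untwisted passage introduces no extra sign once $L = \mathcal L\otimes\mathcal B$.

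The main obstacle I anticipate is not any single hard estimate — all the analytic work is already done in \cref{maintheorem}, \cref{No-Go moving} and \cref{proposition:shortpathspassive} — but rather the careful sign and ordering bookkeeping: making sure the orientation conventions on the determinant lines (fixed via the Lagrangian path \eqref{eq:Lagrangianpath} and the sphere-bundle description in \cref{pathgroupoidrepsss}), the anticlockwise boundary orientation convention, the choice of branch cut on the negative real axis, and the cyclic ordering of $w_0, w_1, w_{-1}$ all combine to produce the minus sign in \eqref{BPSnumbercomputation} rather than a plus. I would resolve this by computing one case ($\mu(w_0)$) completely explicitly in the local model $\{(p^z)^2 - z = 0\}\subset \mathbb C^2$ from \cref{toycase subsec}, where everything is standard-holomorphic, and then appealing to naturality to transport the answer to the general setting.
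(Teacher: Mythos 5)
Your overall strategy is the same as the paper's: run homotopy invariance of the Floer parallel transport around a small loop encircling the zero, feed in Corollaries \ref{cor:longarctransport} and \ref{parallelvert}, and extract the $\mu(w_i)$ by $2\times 2$ linear algebra. But there is a genuine error at the central step. The total Floer parallel transport around your loop $\ell$ is $\mathrm{Id}$, not $-\mathrm{Id}$: by Proposition \ref{propertiesofparalleltransportmaps} the assignment $z\mapsto HF(\scurve,F_z)$ is an honest local system on $\tilde C$, and $\ell$ is contractible in $\tilde C$ (only the poles are removed, not the zeroes), so its monodromy is trivial. The almost-flatness monodromy $-\mathrm{Id}$ of $\mathcal L$ lives on the spectral curve around the ramification point and does not transfer to a $-\mathrm{Id}$ on the base; it enters only through the abelian holonomies $\Phi^{\mathcal L}$. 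Indeed your auxiliary claim that "a full loop of $\mathcal L$ around $z_0$ restricted to each lift contributes $-1$" does not parse: the zero is a branch point, so the base loop does not lift to a loop on either sheet — its preimage is a single loop double-covering it, with holonomy $-1$.

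This error is not harmless, because combined with your bookkeeping it yields an inconsistent system. If the three chamber transports are written as the diagonal matrices of Corollary \ref{cor:longarctransport} and the three wall crossings as unipotent (upper or lower) triangular matrices, then taking determinants of your identity gives $\prod_{\text{chambers}}\Phi^{+}\Phi^{-}=\det(-\mathrm{Id})=1$, whereas almost flatness forces $\prod_{\text{chambers}}\Phi^{+}\Phi^{-}=-1$. The paper's proof avoids this by noting that the sheet ordering (hence the ordered basis of $CF$) reverses each time the loop passes from $\mathcal Z^h(w)$ to $\mathcal Z^h(w')$: in a consistent basis convention the chamber transports become \emph{anti-diagonal} matrices $\begin{bmatrix}0 & \Phi^{-}\\ \Phi^{+} & 0\end{bmatrix}$, all three wall crossings stay upper triangular as in Corollary \ref{parallelvert}, and the product is set equal to $\mathrm{Id}$, as in \eqref{homotopyinvariance}. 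The determinant then gives $(-1)^3\prod\Phi^{+}\Phi^{-}=1$, consistent with almost flatness, and the minus signs in \eqref{BPSnumbercomputation} fall out of expanding the anti-diagonal product — not from placing $-\mathrm{Id}$ on the right-hand side. So to repair your argument you should replace the claimed $-\mathrm{Id}$ by $\mathrm{Id}$ and encode the chamber-to-chamber basis reversal either by anti-diagonal chamber matrices or by explicit permutation matrices; your fallback of fixing signs in the local model $\{(p^z)^2-z=0\}$ would not have rescued this, since the discrepancy is a global bookkeeping inconsistency, not a local orientation ambiguity.
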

	\begin{proof}
		Consider the concatenation of paths $\alpha(w_0)$, $\alpha(w_0,w_1)$, $\alpha(w_{1})$, $\alpha(w_1,w_{-1})$, $\alpha(w_{-1})$ and $\alpha(w_{-1},w_0)$ in that order. This gives a loop encircling $z$ once and on $C$ it is contractible. Notice that when we go from $\mathcal{Z}^h(w)$ to $\mathcal{Z}^h(w')$ along the loop, we reverse the ordering of the basis. The configuration is illustrated in Figure \ref{Arcsandnetworkfig}. 
		
		Let $\Gamma_{\mathcal{L}}(w)$ denote the parallel transport map with respect to $\alpha(w)$. Then from homotopy invariance, we have 
		\begin{align}\label{homotopyinvariance}
			Id=  \Gamma_{\mathcal{L}}(w_{-1},w_{0})\circ \Gamma_{\mathcal{L}}(w_{-1})\circ \Gamma_{\mathcal{L}}(w_1,w_{-1})\circ \Gamma_{\mathcal{L}}(w_1)\circ \Gamma_{\mathcal{L}}(w_0,w_1)\circ \Gamma_{\mathcal{L}}(w_0)
		\end{align}
		which we rewrite in the form
		\begin{align*}
			Id=
			&\begin{bmatrix}
				0 & \Phi^{\mathcal{L}}(w_{-1},w_0)^{-} \\
				\Phi^{\mathcal{L}}(w_{-1},w_0)^{+} & 0 \\
			\end{bmatrix}\begin{bmatrix}
				1 & 	\mu(w_{-1}) \\
				0 & 1 \\
			\end{bmatrix}\begin{bmatrix}
				0 & \Phi^{\mathcal{L}}(w_1,w_{-1})^{-} \\
				\Phi^{\mathcal{L}}(w_1,w_{-1})^{+} & 0 \\
			\end{bmatrix}\\
			&\begin{bmatrix}
				1 & \mu(w_{+1}) \\
				0 & 1 \\
			\end{bmatrix}\begin{bmatrix}
				0 & \Phi^{\mathcal{L}}(w_0,w_{1})^{-} \\
				\Phi^{\mathcal{L}}(w_0,w_1)^{+} & 0 \\
			\end{bmatrix}\begin{bmatrix}
				1 & \mu(w_0) \\
				0 & 1 \\
			\end{bmatrix}
		\end{align*}
		Expanding the matrix out, it follows that $\mu(w_0),\mu(w_1),\mu(w_2)$ are given by products of the transport coefficients ${\Phi^{\mathcal{L}}}^{\pm}$ as in (\ref{BPSnumbercomputation}).
	\end{proof}

	%The almost flat $GL(1;\mathbb{C})$-local system $\mathcal{L}$ on $\scurve^{\circ}$ gives rises to a path groupoid representation of an almost flat $GL(1;\mathbb{C})$-local system on $\scurve^{\circ}$ as follows:
	
	%\begin{itemize}
	%\item The $\mathbb{C}$-vector spaces correspond to ${\mathcal{L}(\tilde{\alpha})}, \tilde{\alpha}\in \pi^{-1}(\mathcal{P}_C)$.
	%\item Given a path $\gamma$, simply take
	%$hol(\mathcal{L})(\gamma):{\mathcal{L}(\tilde{\alpha})}\to {\mathcal{L}(\tilde{\alpha}')}$.
	%break it in terms of finitely many paths $\alpha_i$ that are contained in components of $C-S(0)$ and short paths crossing walls of $S(0)$. Let $\nabla(\alpha_i)$ denote the flat rank 1 $\mathbb{Z}$-connection given by the difference of $\mathfrak{s}_0(w)$ and the restriction of the spin structure on $\mathcal{R}^h(w)$. Then write 
	%\[\beta(\alpha):=\prod hol(\beta_i)hol((\nabla(\pi(\alpha_i)).\] This is independent of the choice of paths $\alpha_i$ and the representative $\alpha$. We denote the corresponding almost flat $\mathcal{Z}$-local system as $\beta(\mathcal{P}_{\scurve^{\circ}})$.
	%\end{itemize}
	Summarizing everything, we obtain our main theorem.
	
	\begin{proof}(Theorem  \ref{FullNon-Abelianization})
		Corollaries \ref{cor:longarctransport}, \ref{parallelvert}, and \cref{uniquedetermination} give the full description of the path groupoid representation of the Floer cohomology local system, in terms of $\mathcal{L}$.  This is the non-abelianization. 
	\end{proof}
\printbibliography
\end{document}